\documentclass[10pt]{amsart}
\usepackage{mathrsfs, geometry}
\usepackage{graphicx}
\usepackage{epstopdf}
\usepackage{epsfig,psfrag}
\usepackage{amsmath, amsfonts, amsthm,amssymb}
\usepackage{hyperref}
\usepackage{aurical}
\usepackage[T1]{fontenc}
\usepackage{yfonts}

\usepackage[usenames, dvipsnames]{xcolor}
\newtheorem{theorem}{Theorem}[section]
\newtheorem{definition}[theorem]{Definition}
\newtheorem{lemma}[theorem]{Lemma}

\newtheorem{corollary}[theorem]{Corollary}
\newtheorem{remark}[theorem]{Remark}

\newtheorem{proposition}[theorem]{Proposition}

\numberwithin{equation}{section}


  \newcommand{\set}[1]{{\left\{#1\right\}}}

\newcommand{\HS}{{\mathtt{HS}}}
\newcommand{\s}{{\sigma}}
\newcommand{\id}{{\rm Id}}

\newcommand{\ii}{{\rm i}}


\newcommand{\Oo}{\Omega}
\newcommand{\Op}{{\rm Op}}
\newcommand{\divisor}{{\mathtt d}}
\newcommand{\Lipg}{{{\rm Lip}(\g,\Oo)}}
\newcommand{\Lipn}[1]{{{\rm Lip}(\g,\Omega_{#1})}}



\newcommand{\Dc}{{\mathtt{ D}_\g}}




\newcommand{\C}{{\mathbb C}}

\newcommand{\N}{{\mathbb N}}

\newcommand{\R}{{\mathbb R}}

\newcommand{\T}{{\mathbb T}}
\newcommand{\Z}{{\mathbb Z}}
\newcommand{\Ro}{{\mathtt R_0}}

\newcommand{\cB}{{\mathcal B}}

\newcommand{\cD}{{\mathcal D}}

\newcommand{\cF}{{\mathcal F}}

\newcommand{\cH}{{\mathcal H}}

\newcommand{\cM}{{\mathcal M}}

\newcommand{\cP}{{\mathcal P}}

\newcommand{\cR}{{\mathcal R}}

\newcommand{\cV}{{\mathcal V}}
\newcommand{\cW}{{\mathcal W}}


\newcommand{\fp}{{\mathfrak{p}}}



\newcommand{\tD}{{\mathtt{D}}}




\newcommand{\g}{\gamma}
\newcommand{\f}{\varphi}





\newcommand{\im}{{\rm i}}
\newcommand{\jap}[1]{\langle #1 \rangle}


\newcommand{\e}{{\varepsilon}}

\newcommand{\zia}{\eta}

\oddsidemargin  0.1cm  
\evensidemargin 0.2cm  
\topmargin  0.05cm      
\headheight 0.37cm      
\headsep    0.38cm      
\textwidth   16.8cm 
\textheight 22.5cm     
\footskip   1.5cm      

\begin{document}

\title{Linear Schr\"odinger equation with an almost periodic potential}
\author{Riccardo Montalto, Michela Procesi}
\address{Riccardo Montalto: Universit\`a degli Studi di Milano, Dipartimento di Matematica Federigo Enriques, Via Saldini 50, 20133, Milano}
\email{riccardo.montalto@unimi.it}
\address{Michela Procesi: Universit\`a di Roma tre, Dipartimento di Matematica e fisica, Largo S. Leonardo Murialdo, 1, 00146, Roma.}
\email{procesi@mat.uniroma3.it}
\begin{abstract}We study the reducibility of a Linear Schr\"odinger equation subject to a small unbounded almost-periodic perturbation which is analytic in time and space. Under appropriate assumptions on the smallness, analiticity  and on the frequency of the almost-periodic perturbation, we prove that such an equation is reducible to constant coefficients via an anaytic almost-periodic change of variables. This implies control of both Sobolev and Analytic norms for the solution of the corresponding Schr\"odinger equation for all times.
\end{abstract}

\maketitle

\tableofcontents

\section{Introduction}
The problem of control of Sobolev norms for Linear Schr\"odinger operators on a torus with smooth time dependent potential has been studied by various authors. Groundbreaking results were proved by Bourgain in \cite{Bo99a} in the case of quasi-periodic bounded potentials with a Diophantine frequency, then in \cite{Bo99b} for general time dependent potentials. The main result was an upper bound on the growth in time of the Sobolev norm, respectively logaritmic and polynomial in time. Such results were generalized to unbounded potentials in see \cite{D}, \cite{MR}, \cite{Mo:Asym2018}, \cite{BamMo:JMP2018},\cite{BGMR2}, \cite{Mo:JDE2019}, \cite{Mo:IMRN2019}, \cite{BM19}, \cite{MF}.
\\
{The main feature of such results is that they are very general, require little or no conditions on the time dependence of the potential and can often be applied also in non-perturbative settings. At this level of generality such results are in fact optimal as showed in \cite{Bo99b}. See also \cite{Ma18}, \cite{HaMa18} for examples of growth.}
 \\
  A parallel point of view is to study the reducibility of Schr\"odinger operators with quasi-periodic  potentials by requiring stronger non-resonance conditions on the frequency, see \cite{EK1}. We recall that a first order differential equation is said to be reducible if there exists a (uniformly bounded) time dependent operator which conjugates it to an equation whose vector field is diagonal (or block diagonal). 
  Thus one gets a uniform control in time of the Sobolev norms to the price of restricting to small quasi-periodic potentials with rather involuted  non-resonance conditions on the frequency. We remark that reducibility is a key argument in KAM for non-linear PDEs. This is a strong motivation for studying reducibility for linear PDEs. Conversly many KAM results can be adapted to the reducibility setting.
  \\
  As can be expected the (block) diagonalization algorithm relies on lower bounds on the difference of distinct eigenvalues (the spectral gaps) as well as on a strong control on their possible multiplicity. Indeed the first results were for bounded potentials in the case of Dirichlet boundary conditions on $[0,\pi]$, where the eigenvalues are simple (see for instance \cite{K1}, \cite{Poschel:1989}, \cite{Poschel:1996}, \cite{Kuksin-Poschel:1996}, \cite{Ku2}). 
  The last ten years have seen considerable progress in this field, particularly in the case of unbounded potentials. The first results were in \cite{IPT} in the case of periodic potentials and \cite{BBM}, \cite{BBM16} for the quasi-periodic case. Regarding Schr\"odinger equations we mention \cite{FP}, \cite{F},\cite{Bam17},\cite{Bam18}. Note that all the preceding papers deal with Sobolev stability; generalizing to  the analytic case, especially in the case of unbounded potentials of order two and in the context of a nonlinear KAM scheme, is not  straightforward. A strategy was discussed in \cite{CFP},\cite{FP2}. While the literature on reducibility of quasi-periodic potentials is quite extensive in the case of one space dimension, the case of higher dimensional manifolds is still largely open. We mention \cite{EK}, \cite{GP}, \cite{EGK} and finally \cite{BGMR}, \cite{FGMP},\cite{Mo:IMRN2019}, \cite{CoMo}, \cite{BLM} for an unbounded potential. 
 \\
  Common features of the reduction algorithms are : 1. they are perturbative, 2. they require complicated non-resonance conditions depending on the potential, 3. they strongly depend on the  number of frequencies. 
 
  In the present paper we study the reducibility of  Schr\"odinger equations on the circle with a small  {\it unbouned almost periodic} potential of the form
  \vskip-10pt
  \begin{equation}\label{main equation}
  \begin{aligned}
  \partial_t u & = \im   \Big(  \partial_x^2 + \e { P}( t)  \Big)u\,, \quad  \\
  {P}( t ) & := V_2( x, t) \partial_x^2 +V_1( x, t) \partial_x + V_0( x, t)\,, \quad x \in \T := \R/(2 \pi \Z)\,, t\in \R\,.   
  \end{aligned}
  \end{equation}
  Here $V_0, V_1, V_2$ are analytic (in an appropriate sense) almost periodic functions of time with frequency  $\omega$ which is an infinite dimensional Diophantine vector in $\ell^\infty( \N, \R)$ (see definitions  \ref{suca}  and \eqref{diofantino}). For small $\e$ we prove a reducibility result under the assumption  that for any $t \in \R$, the operator ${ P}( t)$ is $L^2$ self-adjoint and  that $\omega$ belongs to some (explicit but convoluted) Cantor set of asymptotically full measure.

    Of course the difficulty of such a result is strongly related to the {\it regularity} of the almost-periodic potential.
    Indeed, by definition, an almost periodic function is the limit  of {\it quasi-periodic}  ones with an increasing number of frequencies.
    If the limit is reached sufficiently fast, the most direct strategy is to diagonalize iteratively the Schr\"odinger operators with {\it quasi-periodic}  potentials, by considering at each step $n$  the operator  as a small perturbation of the one of the previous step. This procedure in fact works if one considers a sufficiently {\it  smoothing} and {\it regular} potentials but becomes very delicate in the case of unbounded potentials.
  \\ 
Good comparisons are:
 \cite{Poschel:2002} which studies a {\it smoothing} nonlinear Schr\"odinger equation with external parameters and proves existence of on almost-periodic solutions with superexponential decay in the Fourier modes.
 \cite{Bourgain:2005}, on almost-periodic solutions for a  nonlinear Schr\"odinger equation with external parameters with subexponential decay in the Fourier modes.
 In the first paper the very fast decay implies that at each KAM step, one only needs to construct quasi-periodic solutions (with increasing number of frequencies) which is a well known result; the only point  is to show that they converge superexponentially to a {\it non-trivial} almost periodic solution. In the second paper the author does not rely on quasi-periodic approximations, this requires to completely revisit the KAM scheme but leads to solutions with much less regularity.
 In this paper we follow the general point of view of \cite{Bourgain:2005}, see also \cite{BMP2}, using the same infinite dimensional Diophantine vectors and various technical lemmata (detailed proofs of all the technical Lemmata can ber found in \cite{BMP1:2018}). 
\\
 In order to give the precise statement of our Theorems, we introduce some notations and definitions. 


\noindent
We define the {\it parameter space} of frequencies as a subset of\footnote{Here and in the follwing $\N$ does not contain $\{0\}$.}  $\ell^\infty(\N,\R)$, where we recall that 
$$
\ell^{\infty}(\N,\R) := \Big\{ \omega = (\omega_j)_{j \in \N} \in \R^\N : \| \omega \|_{\infty} := \sup_{j \in \N} |\omega_j|< \infty \Big\}\,. 
$$
More precisely, our set of frequencies is the infinite dimensional cube 
\begin{equation}\label{def spazio dei parametri}
\Ro :=  \Big[ 1\,,\,  2 \Big]^\N\,.
\end{equation} 
We endow the space of parameters $\Ro$ with the $\ell^\infty$ metric, namely we set 
\begin{equation}\label{metrica spazio parametri}
d_\infty(\omega_1, \omega_2) := \| \omega_1 - \omega_2 \|_\infty, \quad \forall \omega_1, \omega_2 \in \Ro\,. 
\end{equation}
Furthermore, we endow $\Ro$ with the probability measure
${\mathbb P}$ induced by the product measure of the infinite-dimensional cube $\Ro$. 
\\
We now define the set of {\sl Diophantine} frequencies. The following definition is a slight generalization of the one given by Bourgain in \cite{Bourgain:2005}.
\begin{definition}\label{diofantino} Given $\gamma \in (0, 1)$, $\mu > 1$, we  denote by $\mathtt{D}_{\gamma, \mu}$ the set of {\it Diophantine} frequencies  
	\begin{equation}\label{diofantinoBIS}
	\mathtt{D}_{\gamma, \mu}:=\set{\omega\in \Ro \,:\;	|\omega\cdot \ell|> \gamma \prod_{j\in \N}\frac{1}{(1+|\ell_j|^{\mu} \jap{j}^{\mu})}\,,\quad \forall \ell\in \Z^\N: 0<\sum_{j\in \N}|\ell_j|<\infty}.
	\end{equation} 
	In the following we shall fix $\mu=2$ and denote $\Dc:= \mathtt{D}_{\gamma, 2}$.
\end{definition}
For all $\mu >1$, Diophantine frequencies are {\sl typical} in the set $\Ro$ in the sense of the following measure estimate, proved in \cite{Bourgain:2005} (see also \cite{BMP1:2018}).
\begin{lemma}\label{misura}
	For $\mu >1$ the exists a positive constant $C(\mu ) > 0$ such that
	\[{\mathbb P}\big(\Ro \setminus \mathtt D_{\g,\mu}\big)
	\leq C(\mu) \g\,.
	\]
\end{lemma}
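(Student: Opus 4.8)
The plan is the standard resonant-slab argument: write the complement of $\mathtt D_{\g,\mu}$ as a countable union of ``bad'' sets, bound each of them by Fubini in one well-chosen coordinate, and sum the resulting series. Concretely, fix $\g\in(0,1)$ and $\mu>1$, and for every $\ell\in\Z^\N$ with $0<\sum_{j\in\N}|\ell_j|<\infty$ set
\begin{equation*}
\delta_\ell := \g\prod_{j\in\N}\frac{1}{1+|\ell_j|^\mu\jap{j}^\mu}\,,\qquad
B_\ell := \Big\{\omega\in\Ro:\ |\omega\cdot\ell|\le\delta_\ell\Big\}\,,
\end{equation*}
so that by Definition \ref{diofantino} one has $\Ro\setminus\mathtt D_{\g,\mu}=\bigcup_{\ell}B_\ell$, a countable union of (closed, hence measurable) sets, and $\delta_\ell\le\g$ since the product is $\le 1$.

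The first step is to estimate $\mathbb P(B_\ell)$ for a fixed $\ell$. Since $\ell$ has finite support, pick any index $j_0$ with $\ell_{j_0}\neq 0$; then $\omega\cdot\ell=\sum_{j}\omega_j\ell_j$ is a finite sum, and for fixed values of $(\omega_j)_{j\neq j_0}$ the map $\omega_{j_0}\mapsto\omega\cdot\ell$ is affine with slope $\ell_{j_0}$, $|\ell_{j_0}|\ge 1$. Hence the set of $\omega_{j_0}\in[1,2]$ with $|\omega\cdot\ell|\le\delta_\ell$ is an interval of length at most $2\delta_\ell/|\ell_{j_0}|\le 2\delta_\ell$. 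Integrating this conditional bound over the remaining coordinates with Fubini for the product measure $\mathbb P$ gives
\begin{equation*}
\mathbb P(B_\ell)\ \le\ 2\delta_\ell\ =\ 2\g\prod_{j\in\N}\frac{1}{1+|\ell_j|^\mu\jap{j}^\mu}\,.
\end{equation*}

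The second step is to sum over $\ell$. Using that the $j$-th factor equals $1$ whenever $\ell_j=0$, the sum over finitely supported multi-indices factorizes as an Euler product (justified by truncating to the first $N$ coordinates and letting $N\to\infty$ by monotone convergence):
\begin{equation*}
\sum_{\ell}\ \prod_{j\in\N}\frac{1}{1+|\ell_j|^\mu\jap{j}^\mu}
\ =\ \prod_{j\in\N}\Big(1+2\sum_{n\ge1}\frac{1}{1+n^\mu\jap{j}^\mu}\Big)\,.
\end{equation*}
Bounding $\sum_{n\ge1}(1+n^\mu\jap{j}^\mu)^{-1}\le \jap{j}^{-\mu}\sum_{n\ge1}n^{-\mu}=\zeta(\mu)\jap{j}^{-\mu}$ and $1+t\le e^t$, the product is at most $\exp\big(2\zeta(\mu)\sum_{j\in\N}\jap{j}^{-\mu}\big)$, which is finite \emph{precisely} because $\mu>1$ (used twice: once for $\zeta(\mu)<\infty$, once for $\sum_{j}\jap{j}^{-\mu}<\infty$). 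Setting $C(\mu):=2\exp\big(2\zeta(\mu)\sum_{j\in\N}\jap{j}^{-\mu}\big)$ and combining the three displays yields $\mathbb P(\Ro\setminus\mathtt D_{\g,\mu})\le\sum_\ell \mathbb P(B_\ell)\le C(\mu)\,\g$.

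The only genuinely delicate point is the last step: the interchange of the sum over finitely supported multi-indices with the infinite product over coordinates, and the verification that the resulting product converges. Once one observes that each $\ell$ involves only finitely many coordinates (so the factorization is legitimate) and that the per-coordinate series is summable in $j$ for $\mu>1$, this is routine; the slab estimate in the first step is a one-variable computation and presents no difficulty.
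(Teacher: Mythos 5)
Your proof is correct, and it is essentially the standard argument for this lemma (resonant slabs, Fubini in one coordinate, Euler-product summation over finitely supported multi-indices). The paper itself does not present a proof: Lemma \ref{misura} is stated with a pointer to \cite{Bourgain:2005} and \cite{BMP1:2018}, where precisely this argument appears. One trivial imprecision: the display claiming $\sum_\ell \prod_j(1+|\ell_j|^\mu\jap{j}^\mu)^{-1}=\prod_j\big(1+2\sum_{n\ge1}(1+n^\mu\jap{j}^\mu)^{-1}\big)$ holds as an equality only if the left-hand sum includes $\ell=0$; restricting to $\ell\neq0$ it becomes $\le$ (one subtracts the term $1$ from the product), which of course is all you need for the upper bound.
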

For $\zia>0$, we define the set of infinite integer vectors with {\it finite support}  
\begin{equation}\label{Z inf *} 
\Z^\infty_* :=   \Big\{ \ell \in \Z^\N : |\ell|_\zia := \sum_{j\in \N}  j ^\zia |\ell_j| < \infty \Big\}. 
\end{equation}
Note that $\ell_j \neq 0$ only for finitely many indices $j \in \N$. 
\begin{definition}\label{suca}
	Given $\omega\in \Dc$ and a Banach space $X,\|\cdot\|_X$,  we say that $F(t):\R\to X$ is almost-periodic in time with frequency $\omega$ and analytic in the strip $\s>0$ 
	 if we may write it in totally   convergent Fourier series
\[
F(t)= \sum_{ \ell \in \Z^\infty_*} \widehat F(\ell) e^{\im \ell \cdot \omega t }
\quad \text{such that} \quad \widehat F(\ell) \in X\,,\;\forall \ell \in \Z^\infty_* \quad \text{and}
\quad
\sum_{ \ell \in \Z^\infty_*} \| \widehat F(\ell) \|_X e^{\s |\ell|_\zia } <\infty.
\]
\end{definition}
We shall be particularly interested in almost-periodic functions where $X=\cH(\T_\s)$ 
\[
\cH(\T_\s):= \Big\{ u= \sum_{n \in\Z}\hat u_n e^{\im n x}\,,\; \hat u_j \in \C\,:\quad \|u\|_{\cH(\T_\s)}:= \sum_{n \in\Z}|\hat u_n| e^{\s |n| } <\infty \Big\}
\]
is the space of analytic functions $\T_\sigma\to \C$, where   
$\T_\sigma := \{ \f \in \C : {\rm Re}(\f) \in \T, \quad |{\rm Im}(\f)| \leq \sigma \}$ is the thickened torus.

\smallskip

\noindent
Now we are ready to state precisely our main result. We make the following assumptions. 
\begin{itemize}
\item{\bf (H1)} The functions $V_0, V_1, V_2 $ are almost-periodic and analytic,  in the sense of Definition \ref{suca}, for $\overline \sigma > 0$ and $X= \cH(\T_{\overline \sigma})$.

\item{\bf (H2)} We assume that 
\begin{equation}\label{condizioni V0 V1 V2}
\begin{aligned}
& V_2(x , t) = \overline{V_2(x , t)}\,, \quad \forall (x , t) \in \T \times \R\,, \\
& V_1(x , t) = 2 \overline{\partial_x V_2(x , t)} - \overline{V_1(x , t)}\,,\quad \forall (x , t) \in \T \times \R \\
& V_0(t, x) = \overline{V_0(x , t)} - \overline{\partial_x V_1(x , t)} + \overline{\partial_{xx} V_2(x , t)}\,,\quad \forall (x , t) \in \T \times \R \,. 
\end{aligned}
\end{equation}
This implies that the operator ${ P}(t)$ in \eqref{main equation} is $L^2$ self-adjoint for $t  \in \R$.  Here and in the following we denote by $\cB(E,F)$ the space of bounded linear operators from $E$ to $F$.
\end{itemize}
\begin{theorem}[\bf Reducibility]\label{teorema principale}
Let $\overline \sigma > 0$ and assume the hypotheses {\bf (H1)} and {\bf (H2)}. Then there exists $\e_0 \in (0, 1)$ small enough such that for any $\e \in (0, \e_0)$ there exists a subset $\Omega_\e \subset \Ro = [1, 2]^\N$ satisfying 
\begin{equation}\label{stima misura main theorem}
\lim_{\e \to 0} {\mathbb P}(\Omega_\e) = 1
\end{equation}
such that the following holds. For any $\omega \in \Omega_\e$, $t \in \R$, $0 < \sigma < \sigma'  \leq \overline \sigma/4$, $\rho > 0$ there exists $\delta = \delta(\sigma, \sigma') \in (0, 1)$ such that if $\e \gamma^{- 1} \leq \delta$ then there exists a unitary (in $L^2(\T)$) operator $W_\infty(t) \equiv W_\infty(t; \omega)$ such that:
\begin{enumerate}
	\item $W_\infty(t), W_\infty(t)^{- 1}$ are almost periodic and analytic maps on the strip $\overline{\sigma}/4$ into $X= {\mathcal B}	\Big({\mathcal H}(\T_{\sigma'} ), {\mathcal H}(\T_{\sigma} ) \Big)$\,.
\item
 $u(\cdot , t)$ is a solution of the Schr\"odinger equation \eqref{main equation} if and only if $v( \cdot, t) = {W}_\infty( t)^{- 1}[u(\cdot, t)]$ is a solution of the time independent equation 
\begin{equation}\label{eq ridotta}
\partial_t v = \ii {\mathcal D}_\infty v
\end{equation}
where ${\mathcal D}_\infty$ is a linear, self-adjoint,  time independent, $2 \times 2$ block-diagonal operator\footnote{We recall that an operator $L$ on a vector space $V$  is $d\times d$  block diagonal if there exists a decomposition of $V=\overline{\oplus V_j}$ such that $L$ maps each $V_j$ in itself and all the $V_j$ have dimension at most $d$.} of order two such that the  commutator $[{\mathcal D}_\infty, \partial_{xx}] = 0$. 
\item For any $s \geq 0$, the maps $\R \to {\mathcal B}\Big(H^s(\T ),H^s(\T) ) \Big)$, $t \mapsto {W}_\infty(t)^{\pm 1}$ are bounded. 
\end{enumerate}
\end{theorem}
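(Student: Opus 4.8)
\medskip\noindent\emph{Proof strategy.} The plan is to prove Theorem~\ref{teorema principale} by a KAM reducibility scheme adapted to the infinite-dimensional frequency $\omega$, following \cite{Bourgain:2005}, \cite{BMP2} and the analytic--almost-periodic calculus of \cite{BMP1:2018}. \emph{Step 1 (regularization).} Since $\e P(t)$ contains the top order term $\e V_2\partial_x^2$, I would first carry out a finite number of conjugations that need no small divisors: a time-dependent change of the space variable $x\mapsto x+\e\beta(x,t)$ close to the identity on $\T$ (composed with the Jacobian factor making it unitary on $L^2(\T)$) normalizes the coefficient of $\partial_x^2$ to a function of $t$ only; a conjugation by $e^{\im b(t)\partial_x^2}$ with $b(t)$ almost-periodic (here $\omega\in\Dc$ is used to invert $\partial_t$ on zero-mean almost-periodic functions) turns it into a constant; finally a conjugation by a multiplication operator and a bounded skew-Hermitian order-$0$ pseudodifferential corrector remove the first and zeroth order terms. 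By {\bf (H2)} every step can be taken unitary on $L^2(\T)$, and by the algebra and composition lemmata of \cite{BMP1:2018} it preserves the structure of Definition~\ref{suca} up to arbitrarily small losses of the analyticity strips. This reduces \eqref{main equation} to $\partial_t v=\im(\partial_x^2+R_0(t))v$ with $R_0$ a \emph{bounded}, skew-Hermitian, almost-periodic and analytic operator of size $\bnorm{R_0}\lesssim\e$.

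\emph{Step 2 (iterative lemma).} The core is the inductive step. Assume $\partial_t v=\im(\cD_k+R_k)v$, with $\cD_k$ time-independent, self-adjoint, $2\times 2$ block-diagonal of order two, $[\cD_k,\partial_x^2]=0$, and $R_k$ skew-Hermitian, almost-periodic and analytic on a strip $\s_k$, with $\bnorm{R_k}_{\s_k}\leq\e_k:=\e_0^{(3/2)^k}$. Fix an ultraviolet truncation $N_k$ growing superexponentially, and solve for $A_k$ (skew-Hermitian, almost-periodic, analytic) the homological equation
\[
\partial_t A_k-\im[\cD_k,A_k]=\im\big(\Pi_k R_k-\Pi_{N_k}R_k\big),
\]
where $\Pi_{N_k}$ projects onto the modes $|\ell|_\zia\leq N_k$ and $\Pi_k R_k$ is the resonant part, i.e. the time-average of the block-diagonal component of $R_k$. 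In Fourier variables, off the resonant set,
\[
\widehat{A_k}(\ell)_n^m=\frac{\widehat{R_k}(\ell)_n^m}{\im\big(\omega\cdot\ell+(\lambda_k)_n-(\lambda_k)_m\big)}\,,\qquad \ell\in\Z^\infty_*\,,\ n,m\in\Z\,,
\]
with $(\lambda_k)_n=(\lambda_k)_n(\omega)$ the (block) eigenvalues of $\cD_k$. The unitary change $v=e^{A_k(t)}w$ then gives $\partial_t w=\im(\cD_{k+1}+R_{k+1})w$, with $\cD_{k+1}:=\cD_k+\Pi_k R_k$ again self-adjoint, $2\times2$ block-diagonal of order two and commuting with $\partial_x^2$, and $R_{k+1}$ collecting the high modes $\Pi_{N_k}^\perp R_k$ (superexponentially small by analyticity) and the quadratic commutator terms $\sim[A_k,R_k]+[A_k,[A_k,\cD_k]]+\dots$; choosing $\sum_k(\s_k-\s_{k+1})<\s'-\s$ one gets $\bnorm{A_k}_{\s_{k+1}}\lesssim(\s_k-\s_{k+1})^{-\tau}\e_k$ and $\bnorm{R_{k+1}}_{\s_{k+1}}\lesssim(\s_k-\s_{k+1})^{-\tau}\e_k^2\leq\e_{k+1}$ for $\e_0$ small, which closes the induction.

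\emph{Step 3 (small divisors, measure and conclusion).} The solvability of the homological equation with its analytic norm under control rests on the lower bounds $|\omega\cdot\ell+(\lambda_k)_n-(\lambda_k)_m|\geq\g\prod_{j\in\N}(1+|\ell_j|^2\jap{j}^2)^{-1}$ for the finitely many non-resonant $(\ell,n,m)$ with $|\ell|_\zia\leq N_k$; this is exactly the shape that the analytic weight $e^{\s|\ell|_\zia}$ absorbs at the price of an arbitrarily small loss of strip. Since $(\lambda_k)_n(\omega)=-n^2+O(\e)$ is Lipschitz in $\omega$ with constant $O(\e)$, I would estimate the violating set $\cR_k\subset\Omega_k$ as a union, over such $(\ell,n,m)$, of sub-level sets of the affine maps $\omega\mapsto\omega\cdot\ell+(\lambda_k)_n-(\lambda_k)_m$: for each $\ell\neq0$ there is a non-degenerate direction; only the polynomially many pairs with $|n^2-m^2|\lesssim|\ell|_\infty$ and $n\neq\pm m$ matter (the pairs $n=\pm m$, whose unperturbed divisor vanishes identically, being kept inside the $2\times2$ blocks --- whence the block-diagonal normal form); and the product weight is summable over $\Z^\infty_*$, as in Lemma~\ref{misura}. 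This yields ${\mathbb P}(\cR_k)\lesssim\g\,2^{-k}$ and ${\mathbb P}(\Ro\setminus\Omega_\infty)\leq{\mathbb P}(\Ro\setminus\Dc)+\sum_k{\mathbb P}(\cR_k)\lesssim\g$; setting $\Omega_\e:=\Omega_\infty$ and taking $\g=\g(\e)\to0$ with $\e\g(\e)^{-1}\to0$ (e.g. $\g=\sqrt\e$) gives \eqref{stima misura main theorem}. Finally, $W_\infty(t)$ is the composition of the map of Step~1 with $e^{A_0(t)}e^{A_1(t)}\cdots$: the superexponential decay of $\bnorm{A_k}$ and the summable strip losses give convergence of $W_\infty^{\pm1}$ in $\cB(\cH(\T_{\s'}),\cH(\T_\s))$, which is item~(1); by construction $W_\infty(t)^{-1}$ conjugates \eqref{main equation} to \eqref{eq ridotta} with $\cD_\infty=\lim_k\cD_k$ self-adjoint, time-independent, $2\times2$ block-diagonal of order two and $[\cD_\infty,\partial_x^2]=0$, which is item~(2); and since $\cD_\infty$ is a self-adjoint Fourier multiplier commuting with $\partial_x^2$, the reduced flow $e^{\im t\cD_\infty}$ is an isometry of every $H^s(\T)$, while the factors of $W_\infty(t)$ are bounded on $H^s(\T)$ uniformly in $t\in\R$, which is item~(3).

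\emph{Main obstacle.} The hard part is running Steps~2 and~3 simultaneously in the infinite-dimensional setting: one has to check that the product weight $\prod_j(1+|\ell_j|^2\jap{j}^2)$ entering the small divisors --- together with the apparent order-one loss in $[\cD_k,A_k]$, which is recovered from the spectral gap factorization $n^2-m^2=(n-m)(n+m)$ --- is compensated by an arbitrarily small loss of analytic strip, which is exactly why the Diophantine condition \eqref{diofantinoBIS} is calibrated to be polynomial in each $\ell_j$; and that the $O(\e)$ drift of the eigenvalues $(\lambda_k)_n(\omega)$ along the infinitely many KAM steps spoils neither the non-resonance nor the total measure of the excised set. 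The unboundedness of $P(t)$ is disposed of once and for all in Step~1, but it forces careful bookkeeping of operator orders throughout.
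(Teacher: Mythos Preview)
Your overall architecture (regularization then KAM) is correct, but two concrete steps would fail as written.

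\textbf{The gauge transformation $e^{\im b(t)\partial_x^2}$.} To make the top coefficient constant you conjugate by $e^{\im b(t)\partial_x^2}$. For real $t$ this is a unitary Fourier multiplier, but item~(1) of the Theorem requires analyticity in the sense of Definition~\ref{suca}, i.e.\ extension to the thickened torus $\T^\infty_\sigma$. For $\f\in\T^\infty_\sigma$ complex, $b(\f)$ has nonzero imaginary part and the multiplier $e^{-\im b(\f)k^2}$ grows like $e^{ck^2}$ in the space frequency $k$; by the Moser estimate $\|e^{-\im b(\cdot)k^2}\|_\sigma\le e^{k^2\|b\|_\sigma}$, and no finite loss $\sigma'-\sigma$ (linear in $k$) can absorb this. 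The paper circumvents this by a \emph{time reparametrization} $\f\mapsto\f+\omega\alpha(\f)$ (Section~\ref{sezione riparametrizzazione tempo}), a diffeomorphism of $\T^\infty_\sigma$ that never touches the $x$-frequency.

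\textbf{Regularization only to bounded order.} You stop Step~1 at ``$R_0$ bounded'' and argue that the apparent loss is recovered from $n^2-m^2=(n-m)(n+m)$. That handles the off-block part $|n|\neq|m|$, but not the block-diagonal, time-dependent modes $|n|=|m|=j$, $\ell\neq0$: these are not in your resonant projector $\Pi_kR_k$ (which is the time-average), so they must be solved by the homological equation, with divisor $\omega\cdot\ell+\mu_j^{(+)}-\mu_j^{(-)}=\omega\cdot\ell+O(\e j)$. Imposing the uniform bound $\ge\gamma/\divisor(\ell)$ wrecks the measure estimate (for each $\ell$ there are $\sim|\ell|_1/\e$ dangerous $j$, giving a factor $\e^{-1}$ incompatible with $\e\gamma^{-1}\ll1$). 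The paper instead imposes the weaker $\ge\gamma/(\divisor(\ell)\langle j\rangle^2)$, summable in $j$; but then the solution $\cF_{n+1}$ picks up a $\langle j\rangle^2$ on the diagonal blocks, and this is harmless only because the remainder has first been pushed to order $-2$ via the extra steps $\Phi^{(5)},\Phi^{(6)},\Phi^{(7)}$. With $R_0$ merely bounded, your $A_k$ would be of order $+2$ and $e^{A_k}$ ill-defined. A related quantitative point: the loss in the homological equation is not polynomial $(\sigma_k-\sigma_{k+1})^{-\tau}$ but $\exp\big(\tau(\sigma_k-\sigma_{k+1})^{-1/\eta}\ln(\tau/(\sigma_k-\sigma_{k+1}))\big)$ (Lemma~\ref{bound per stima di Cauchy}), since $\divisor(\ell)$ is a product over all indices; this forces the tuned scales $\sigma_n-\sigma_{n+1}\sim n^{-2}$, $N_n\sim n^3\chi^n$ and convergence $e^{-\chi^n}$ rather than the standard $\e_0^{(3/2)^k}$.
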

From the Theorem stated above, we can deduce the following Corollaries:
\begin{corollary}[Asymptotics of the eigenvalues]
The spectrum of the operator ${\mathcal D}_\infty$ is given by
\begin{align}
\label{spettro}
{\rm spec}({\mathcal D}_\infty) &= \{\mu_0(\omega)\}\cup \{ \mu_j^{(+)}(\omega), \mu_j^{(-)}(\omega)\}_{j\in \N_0} \subset \R \,,\\
 \mu_j^\s(\omega)&= \lambda_2 j^2 + \s \lambda_1 j+ \lambda_0(\omega) + \s\frac{\lambda_{-1}(\omega)}{{j}} + \frac{r_j^\s}{{j}^2} \,,\;j >0\nonumber
\end{align}
where $\lambda_2-1\,,\, \lambda_1 \sim \e$ do not depend on $\omega$, while $\lambda_0, \lambda_{- 1}, r_j^\sigma$ are Lipschitz w.r. to $\omega$ and of order $\e$. Finally $\mu_0$ is Lipschitz w.r. to $\omega$ and of order $\e$.
\end{corollary}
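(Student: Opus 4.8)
The plan is to deduce the Corollary from the structure of $\cD_\infty$ provided by Theorem~\ref{teorema principale} together with elementary spectral theory of $2\times 2$ Hermitian matrices. First, since $[\cD_\infty,\partial_{xx}]=0$, the operator $\cD_\infty$ preserves each eigenspace of $\partial_{xx}$ on $L^2(\T)$: the line $\C\cdot1$ of constants and, for every $j\ge1$, the plane $E_j:=\mathrm{span}\{e^{\im jx},e^{-\im jx}\}$. Being $L^2$-selfadjoint, on $\C\cdot1$ it acts as multiplication by a real scalar $\mu_0=\mu_0(\omega)$, and on each $E_j$ as a Hermitian $2\times2$ matrix $D_j=D_j(\omega)$; this already yields the decomposition \eqref{spettro} of $\mathrm{spec}(\cD_\infty)$, with $\mu_j^{(+)}(\omega)\ge\mu_j^{(-)}(\omega)$ the two eigenvalues of $D_j(\omega)$.

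To obtain the asymptotic expansion I would unwind how $\cD_\infty$ is produced. After the preliminary changes of variables that normalize the top and sub-top order coefficients — these use only torus diffeomorphisms and time-dependent translations, not small divisors, hence produce $\omega$-independent constants — and after convergence of the KAM iteration, $\cD_\infty$ is, modulo lower-order terms, a function of $D:=-\im\partial_x$; precisely
\[
\cD_\infty=\lambda_2\,D^2+\lambda_1\,D+\lambda_0+\lambda_{-1}\,D^{-1}\Pi^{\perp}+\cR_\infty\,,
\]
where $\Pi^{\perp}$ is the projection of $D$ off its kernel (the constants), $\lambda_2,\lambda_1\in\R$ do not depend on $\omega$ and satisfy $\lambda_2-1,\lambda_1=O(\e)$ (being, up to the preliminary transformations, the space–time averages of the second- and first-order coefficients, which are $\omega$-independent because averaging in time kills the $e^{\im\ell\cdot\omega t}$ dependence), $\lambda_0=\lambda_0(\omega),\lambda_{-1}=\lambda_{-1}(\omega)$ are $O(\e)$ and Lipschitz in $\omega$, and $\cR_\infty$ is a selfadjoint, block-diagonal (w.r.t.\ $\{e^{\pm\im jx}\}$) operator of order $\le-2$ — in fact smoothing, since its off-diagonal entries $e^{\im jx}\mapsto e^{-\im jx}$ decay exponentially in $j$ by the analyticity in $x$ — again $O(\e)$ and Lipschitz in $\omega$. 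All of this descends, by passing to the limit, from the normal form retained at each step of the iteration. \emph{Establishing this structure is the only substantial point}, and it is precisely what the bookkeeping of the reduction scheme has been arranged to deliver; in particular the $\omega$-independence of $\lambda_2,\lambda_1$ and the fact that the part of $\cD_\infty$ beyond order $-1$ is genuinely of order $\le-2$ are the steps I expect to be the main obstacle.

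Granting this, the rest is routine. On $E_j$, $j\ge1$, the first four terms act diagonally, $D$ having eigenvalue $\pm j$ on $e^{\pm\im jx}$, so
\[
D_j(\omega)=\diag\!\big(\lambda_2 j^2+\lambda_1 j+\lambda_0+\lambda_{-1}/j,\ \lambda_2 j^2-\lambda_1 j+\lambda_0-\lambda_{-1}/j\big)+\cE_j(\omega)\,,\qquad \|\cE_j(\omega)\|_{\cB(E_j)}\le C\e\,j^{-2}\,,
\]
where $\cE_j$ gathers the $O(n^{-2})$ tail of the multiplier symbol and the contribution of $\cR_\infty|_{E_j}$. Since the ordered spectrum of a Hermitian matrix is a $1$-Lipschitz function of the matrix in the operator norm, the two eigenvalues of $D_j(\omega)$ lie within $\|\cE_j(\omega)\|$ of its diagonal entries; orienting the labelling so that $\s=+$ denotes the larger eigenvalue (which fixes the sign of $\lambda_1$ once and for all, and costs at most a harmless $|\cdot|$ on $\lambda_1,\lambda_{-1}$), this gives
\[
\mu_j^{\s}(\omega)=\lambda_2 j^2+\s\,\lambda_1 j+\lambda_0(\omega)+\s\,\frac{\lambda_{-1}(\omega)}{j}+\frac{r_j^{\s}(\omega)}{j^2}\,,\qquad \sup_{j\ge1,\ \s=\pm}\,|r_j^{\s}(\omega)|\le C\e\,,
\]
for $j$ large, while for the finitely many remaining $j$ the discrepancy is absorbed into the bounded quantity $r_j^{\s}$. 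Evaluating the displayed form of $\cD_\infty$ on the constants gives likewise $\mu_0(\omega)=\lambda_0(\omega)+O(\e)$. Finally, the same $1$-Lipschitz property together with the Lipschitz dependence on $\omega$ of $\lambda_0,\lambda_{-1},\cR_\infty$ and $\mu_0$ recorded along the iteration shows that $\mu_0$, $\mu_j^{(\pm)}$, and hence $r_j^{\s}=j^2\big(\mu_j^{\s}-\lambda_2 j^2-\s\lambda_1 j-\lambda_0-\s\lambda_{-1}/j\big)$, are Lipschitz in $\omega$; and since every correction to the free symbol $n^2$ produced by the scheme is $O(\e)$, all the indicated quantities are of order $\e$, completing the proof.
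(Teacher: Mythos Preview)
Your proposal is correct and follows essentially the same route as the paper: the corollary is not given a separate proof there, but the ingredients you invoke are exactly those assembled along the way—namely the explicit block form of ${\mathcal D}_0$ in \eqref{cal D0 cal P0 inizio KAM}--\eqref{blocco iniziale riducibilita}, the bound $|{\mathcal D}_\infty-\widetilde{\mathcal D}_0|_{\sigma,-2}^{\Lipg}\lesssim\e$ from \eqref{stima D infty - Dn}, and the $1$-Lipschitz dependence of ordered eigenvalues on Hermitian matrices (Lemma~\ref{risultato astratto operatori autoaggiunti}), which the paper combines at the start of Section~\ref{sezione stime di misure} (see \eqref{prima espansione D infty j}--\eqref{proprieta autovalori D infty j}). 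One small correction: your heuristic for the $\omega$-independence of $\lambda_2,\lambda_1$ (``averaging in time kills the $e^{\im\ell\cdot\omega t}$ dependence'') is not quite the mechanism—the intermediate coefficients do depend on $\omega$ through the diophantine inversions, and the paper instead checks directly (Lemmata~\ref{lemma stime step 2}, \ref{lemma stime step 4} and Lemma~\ref{lemma per media lambda 1}) that the relevant averages reduce to averages of the original, $\omega$-independent, coefficients $a_1,a_2$.
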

For compactness of notations we set $\mu_0^{(+)} = \mu_0^{(-)} = \mu_0$. 
\begin{corollary}[Characterization of the Cantor set]
	The Cantor set $\Omega_{\e}$, given in Theorem \ref{teorema principale}, is defined explicitly in terms of the spectrum of the block diagonal operator ${\mathcal D}_\infty$. More precisely it is equal to the set $\Omega_\infty(\gamma)$, $\gamma = \e^{a}$ for some $a \in (0, 1)$, where 
\begin{equation}\label{espressione Omega infty autovalori intro}
\begin{aligned}
\Omega_\infty(\gamma) & := \Big\{ \omega \in \mathtt D_\gamma : |\omega \cdot \ell + \mu_j^{(\sigma)} - \mu_{j'}^{(\sigma')}| \geq \frac{2 \gamma}{{\divisor}(\ell)}, \quad \forall (\ell, j, j') \in \Z^\infty_* \times \N_0 \times \N_0, \quad j \neq j', \quad \sigma, \sigma' \in \{ +, - \} \\
& |\omega \cdot \ell + \mu_j^{(\sigma)} - \mu_{j}^{(\sigma')}| \geq \frac{2 \gamma}{{\divisor}(\ell) \langle j \rangle^2}, \quad \forall (\ell, j) \in (\Z^\infty_* \setminus\{ 0 \}) \times \N_0 , \quad \sigma, \sigma' \in \{ +, - \} \Big\} 
\end{aligned}
\end{equation}
where 
\[
{\divisor}(\ell) := \prod_{n\in \N}(1+|\ell_n|^{4} \jap{n}^{4}), \quad \forall \ell \in \Z^\infty_*\,. 
\]
\end{corollary}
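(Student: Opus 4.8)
The corollary is essentially a bookkeeping statement: it asserts that the Cantor set $\Omega_\e$ produced at the end of the KAM iteration of Theorem \ref{teorema principale} coincides with the explicit set $\Omega_\infty(\gamma)$ written in terms of the limiting spectrum. The plan is therefore to trace, through the iterative scheme, exactly which non-resonance conditions are imposed at each step and to show that their intersection telescopes to $\Omega_\infty(\gamma)$. First I would recall the inductive structure: at step $n$ one has a block-diagonal normal form operator $\cD_n$ with eigenvalues $\mu_{j,n}^{(\sigma)}(\omega)$, defined for $\omega$ in a set $\Omega_n$, and one removes from $\Omega_n$ the ``resonant'' parameters for which a small divisor $\omega \cdot \ell + \mu_{j,n}^{(\sigma)} - \mu_{j',n}^{(\sigma')}$ is smaller than $2\gamma/\divisor(\ell)$ (respectively $2\gamma/(\divisor(\ell)\jap{j}^2)$ in the diagonal case $j = j'$), obtaining $\Omega_{n+1}$. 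By construction $\Omega_\e = \Omega_\infty(\gamma) := \bigcap_{n \geq 0} \Omega_n$ — this is how the set is defined inside the proof — so the content of the corollary is that this intersection equals the set defined using the \emph{final} eigenvalues $\mu_j^{(\sigma)} = \mu_{j,\infty}^{(\sigma)}$.

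\textbf{Key steps.} The argument has two halves. For the inclusion $\bigcap_n \Omega_n \subseteq \Omega_\infty(\gamma)$ (with final eigenvalues): on $\bigcap_n \Omega_n$ the eigenvalues converge, $\mu_{j,n}^{(\sigma)} \to \mu_j^{(\sigma)}$, uniformly in $j$ after the natural rescaling (this convergence, with rate $O(\e_n)$ where $\e_n \to 0$ superfast, is part of the output of the KAM scheme and is exactly what produces the asymptotics \eqref{spettro}); passing to the limit in each inequality $|\omega \cdot \ell + \mu_{j,n}^{(\sigma)} - \mu_{j',n}^{(\sigma')}| \geq 2\gamma/\divisor(\ell)$ gives the same inequality with $\mu_j^{(\sigma)}$ and a non-strict sign, i.e. membership in $\Omega_\infty(\gamma)$. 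For the reverse inclusion one argues that the measure of the set removed at \emph{all} steps using the final eigenvalues already controls everything: more precisely, one shows $\bigcap_n \Omega_n$ and $\Omega_\infty(\gamma)$ differ by a set of zero measure, and since both are the relevant full-measure Cantor sets the identification in the statement of the corollary is understood modulo this (the measure estimate $\mathbb P(\Omega_\e) \to 1$ in \eqref{stima misura main theorem} is then re-derived directly from $\Omega_\infty(\gamma)$ using Lemma \ref{misura} together with the asymptotics $\mu_j^{(\sigma)} = \lambda_2 j^2 + \sigma\lambda_1 j + O(1)$, which makes each excluded resonance a condition on $\omega \cdot \ell + (\text{quadratic in } j, j')$ of the classical Diophantine type). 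One also records that $\gamma$ is chosen as $\gamma = \e^a$ for a fixed $a \in (0,1)$, consistently with the smallness requirement $\e\gamma^{-1} \leq \delta$, which is what makes $\mathbb P(\Ro \setminus \Omega_\infty(\gamma)) \leq C\gamma \to 0$.

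\textbf{Main obstacle.} The delicate point is the \emph{uniform-in-$j$} control of the eigenvalue corrections at each KAM step: to pass from the step-$n$ conditions to the final ones one needs $\sup_{j}|\mu_{j,n}^{(\sigma)} - \mu_j^{(\sigma)}| \lesssim \e_n$ and, for the diagonal ($j = j'$) conditions, the sharper $\jap{j}^2$-weighted estimate $\sup_j \jap{j}^2 |\,(\mu_{j,n}^{(\sigma)} - \mu_{j,n}^{(\sigma')}) - (\mu_j^{(\sigma)} - \mu_j^{(\sigma')})\,| \lesssim \e_n$, reflecting that the off-diagonal $2\times 2$ blocks are smoothing of order $2$; this is precisely where the unboundedness of order two of the perturbation $P(t)$ makes the estimate non-trivial and forces the somewhat involuted weights $\divisor(\ell)$ and $\jap{j}^2$ appearing in \eqref{espressione Omega infty autovalori intro}. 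Granting these estimates (which are part of the iterative lemma underlying Theorem \ref{teorema principale}), the rest is the routine limiting argument sketched above together with an application of Lemma \ref{misura}.
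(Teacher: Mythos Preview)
Your proposal rests on a misreading of how the paper defines the relevant sets, and as a consequence you argue for the wrong inclusion.

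First, $\Omega_\e$ is \emph{not} defined as $\bigcap_{n\ge 0}\Omega_n(\gamma)$. In Section~\ref{sezione dim teo finali} the paper simply sets $\Omega_\e:=\Omega_\infty(\gamma)$ with $\gamma=\e^a$, where $\Omega_\infty(\gamma)$ is introduced in \eqref{definizione Omega infinito} as an explicit condition on the \emph{final} blocks ${\mathcal D}_\infty(j)$ (via the operator bounds $\|{\mathcal O}_\infty(\ell,j,j')^{-1}\|_{\rm Op}\le \divisor(\ell)/(2\gamma)$, etc.). The actual content of the corollary is then the identity between this operator-norm definition and the eigenvalue formulation \eqref{espressione Omega infty autovalori intro}; this is obtained in Section~\ref{sezione stime di misure} by pure finite-dimensional spectral algebra: each ${\mathcal O}_\infty(\ell,j,j')$ is self-adjoint on ${\mathcal B}({\bf E}_{j'},{\bf E}_j)$, its spectrum is $\{\omega\cdot\ell+\mu_j^{(\sigma)}-\mu_{j'}^{(\sigma')}\}$ by Lemma~\ref{properties operators matrices}, and Lemma~\ref{risultato astratto operatori autoaggiunti}-(iii) converts the operator-norm bound into the eigenvalue lower bounds. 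No limiting argument in $n$ is involved.

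Second, your ``easy'' inclusion $\bigcap_n\Omega_n(\gamma)\subseteq\Omega_\infty(\gamma)$ is false as stated: the step-$n$ conditions carry constant $\gamma$ while $\Omega_\infty(\gamma)$ carries $2\gamma$, so passing to the limit only yields inequalities with $\gamma$, not $2\gamma$. The paper proves (and needs) the \emph{opposite} inclusion $\Omega_\infty(\gamma)\subseteq\bigcap_n\Omega_n(\gamma)$ (Lemma~\ref{lemma inclusione cantor}), via a Neumann-series perturbation: on $\Omega_\infty(\gamma)$ one has $\|{\mathcal O}_\infty^{-1}\|\le \divisor(\ell)/(2\gamma)$, and since $\|{\mathcal O}_n-{\mathcal O}_\infty\|\lesssim \e e^{-\chi^n}\langle j\rangle^{-2}$ (this is where your $\langle j\rangle^2$-weighted estimate genuinely enters), one gets $\|{\mathcal O}_\infty^{-1}({\mathcal O}_n-{\mathcal O}_\infty)\|\le 1/2$ and hence $\|{\mathcal O}_n^{-1}\|\le \divisor(\ell)/\gamma$. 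The factor-of-two slack between $2\gamma$ and $\gamma$ is precisely what absorbs the eigenvalue drift; your limiting argument does not account for it. Finally, the corollary asserts exact equality of sets, not agreement up to measure zero, so a measure-theoretic patch would not suffice.
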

	
\begin{corollary}[Dynamical consequences]\label{corollario sobolev}
Under the same assumptions of Theorem \ref{teorema principale} the following holds
\begin{itemize}
\item{\bf Analytic stability.} For any $0 < \sigma < \overline \sigma/4$, $\rho > 0$, $u_0 \in {\mathcal H}(\T_{\overline \sigma})$, the unique solution of the equation \eqref{main equation} with initial datum $u( x, 0) = u_0(x)$ satisfies the estimate $\| u(\cdot, t)\|_{{\mathcal H}(\T_{\sigma})} \lesssim_{\sigma, \overline \sigma} \| u_0\|_{{\mathcal H}(\T_{\overline \sigma })}$ uniformly w.r. to $t \in \R$. 
\item{\bf Sobolev stability.} For any $s \geq 0$, $u_0 \in H^s(\T)$, the unique solution of the equation \eqref{main equation} with initial datum $u( x, 0) = u_0(x)$ satisfies the estimate $\| u(\cdot, t)\|_{H^s(\T)} \lesssim_s \| u_0\|_{H^s(\T)}$ uniformly w.r. to $t \in \R$.
\end{itemize}
\end{corollary}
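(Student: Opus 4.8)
The plan is to deduce Corollary \ref{corollario sobolev} directly from Theorem \ref{teorema principale}, using that the reducing transformation $W_\infty(t)$ is simultaneously (i) an analytic almost-periodic family of bounded operators between thickened-torus spaces, (ii) bounded on every $H^s(\T)$, and (iii) unitary in $L^2$, together with the fact that the reduced operator $\cD_\infty$ commutes with $\partial_{xx}$. First I would fix a solution $u(\cdot,t)$ of \eqref{main equation} with datum $u_0$, and set $v(\cdot,t):=W_\infty(t)^{-1}[u(\cdot,t)]$. By item (2) of the Theorem, $v$ solves $\partial_t v=\ii\,\cD_\infty v$; since $\cD_\infty$ is time independent, self-adjoint, and $[\cD_\infty,\partial_{xx}]=0$, it is diagonalized by the Fourier basis (up to the $2\times2$ blocks) with purely imaginary spectrum for $\ii\cD_\infty$, so the flow $e^{\ii t \cD_\infty}$ is, for each fixed $t$, an \emph{isometry} of every space $\cH(\T_s)$ and of every $H^s(\T)$: indeed $|\widehat{(e^{\ii t\cD_\infty}v)}_n|=|\widehat v_n|$ up to the action of the block on the pair of modes $\{+n,-n\}$ (or rather the index pair $(+,-)$ at fixed $j$), and this action is by a unitary $2\times 2$ matrix because $\cD_\infty$ is self-adjoint and block-diagonal, hence does not change the $\ell^1$- or $\ell^2$-weighted norms. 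Thus $\|v(\cdot,t)\|_{\cH(\T_s)}=\|v(\cdot,0)\|_{\cH(\T_s)}$ and likewise in $H^s$.

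Next I would transfer these a priori bounds back to $u$. For the analytic statement, given $0<\sigma<\overline\sigma/4$, pick $\sigma<\sigma'<\sigma''\le\overline\sigma/4$ and use item (1): $W_\infty(t)^{-1}\in\cB(\cH(\T_{\sigma''}),\cH(\T_{\sigma'}))$ and $W_\infty(t)\in\cB(\cH(\T_{\sigma'}),\cH(\T_{\sigma}))$, both with operator norms bounded \emph{uniformly in $t$} because an analytic almost-periodic family is in particular a bounded family (the totally convergent Fourier series in Definition \ref{suca} gives $\sup_t\|W_\infty(t)\|_{\cB}\le\sum_\ell\|\widehat{W_\infty}(\ell)\|_{\cB}<\infty$). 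Hence
\[
\|u(\cdot,t)\|_{\cH(\T_\sigma)}\le \|W_\infty(t)\|_{\cB(\cH(\T_{\sigma'}),\cH(\T_\sigma))}\,\|v(\cdot,t)\|_{\cH(\T_{\sigma'})}=\|W_\infty(t)\|_{\cB}\,\|v(\cdot,0)\|_{\cH(\T_{\sigma'})}
\]
and then $\|v(\cdot,0)\|_{\cH(\T_{\sigma'})}=\|W_\infty(0)^{-1}u_0\|_{\cH(\T_{\sigma'})}\le \|W_\infty(0)^{-1}\|_{\cB(\cH(\T_{\sigma''}),\cH(\T_{\sigma'}))}\|u_0\|_{\cH(\T_{\sigma''})}\le C\|u_0\|_{\cH(\T_{\overline\sigma})}$, since $\sigma''\le\overline\sigma/4\le\overline\sigma$ so that $\cH(\T_{\overline\sigma})\hookrightarrow\cH(\T_{\sigma''})$ continuously with norm $1$. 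Chaining the constants gives $\|u(\cdot,t)\|_{\cH(\T_\sigma)}\lesssim_{\sigma,\overline\sigma}\|u_0\|_{\cH(\T_{\overline\sigma})}$ uniformly in $t$. For the Sobolev statement the argument is the same but shorter, using item (3): $\sup_t\|W_\infty(t)^{\pm1}\|_{\cB(H^s,H^s)}=:C_s<\infty$, whence $\|u(\cdot,t)\|_{H^s}\le C_s\|v(\cdot,t)\|_{H^s}=C_s\|v(\cdot,0)\|_{H^s}\le C_s^2\|u_0\|_{H^s}$.

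One technical point to address carefully is existence and uniqueness of the solution of \eqref{main equation}, which is implicitly invoked in the corollary's statement: for $u_0\in\cH(\T_{\overline\sigma})$ one should note that \eqref{eq ridotta} has the obvious global-in-time solution $v(t)=e^{\ii t\cD_\infty}v(0)$ in each $\cH(\T_s)$ and in each $H^s$, and then $u(t):=W_\infty(t)[v(t)]$ is the corresponding solution of \eqref{main equation} by item (2); uniqueness follows because the correspondence $u\leftrightarrow v$ in item (2) is a bijection on solutions. The main obstacle — really the only place where care is needed — is to be sure that the operator norms of $W_\infty(t)^{\pm1}$ are genuinely bounded \emph{uniformly in $t\in\R$} and not merely finite for each $t$; this is exactly what ``almost periodic and analytic map on the strip $\overline\sigma/4$ into $X=\cB(\cdot,\cdot)$'' in item (1) supplies, via the absolute summability condition of Definition \ref{suca} evaluated at $s=0$, and analogously item (3) supplies it for the Sobolev scale. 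Everything else is a two-line norm estimate, so I would present the whole corollary as a short deduction rather than a separate development.
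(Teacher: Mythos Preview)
Your argument is correct and follows essentially the same route as the paper: conjugate via $W_\infty$, use that the block-diagonal self-adjoint flow $e^{\ii t\cD_\infty}$ preserves the relevant norms block by block, and then pull the bound back through the uniform-in-$t$ operator norms of $W_\infty(t)^{\pm1}$ provided by items (1) and (3) of Theorem~\ref{teorema principale}. One small imprecision worth fixing: a unitary $2\times 2$ block preserves the $\ell^2$ norm of the pair $(\widehat v_j,\widehat v_{-j})$ exactly, but in general it only preserves the $\ell^1$ norm up to a factor of $\sqrt{2}$, so for the $\cH(\T_\sigma)$ norm you should write $\|v(\cdot,t)\|_{\cH(\T_\sigma)}\le\sqrt{2}\,\|v(\cdot,0)\|_{\cH(\T_\sigma)}$ rather than equality; since the corollary only asserts $\lesssim$, this changes nothing in the conclusion.
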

{\begin{remark}
By Theorem \ref{teorema principale}, items $(1)$ and $(3)$, one gets boundedness properties of the maps $W_\infty(t)^{\pm 1}$ both on analytic and Sobolev spaces. This is the reason why in Corollary \ref{corollario sobolev}, we get a stability result for both analytic and Sobolev initial data, see Section \ref{sezione dim teo finali}. 
\end{remark}}

{ {\bf Strategy of the Proof. } The overall strategy of the proof is the one proposed in \cite{BBM} and consists of two main steps: a {\it regularization procedure} and a {\it KAM reduction scheme}. The aim of the first step is to conjugate \eqref{main equation} to a simpler dynamical system where  the vector field is space and time independent up to a sufficiently smoothing remainder. Here one uses the fact that the linear operator in \eqref{main equation} has a pseudo-differential structure. 
\\
In the second step one completes the reduction by applying a KAM  scheme, which relies on the fact that the eigenvalues are at most double, with a quantitative control on the differences.
\\
In order to explain which are the main difficulties to overcome in order to deal with almost-periodic potentials let us describe the strategy more in  detail.

It is convenient  to  think of almost-periodic in time functions as restrictions  functions on an infinite dimensional torus. To this purpose we define analytic functions of infinitely many angles as the class of totally  convergent Fourier series with a prescribed (and very strong) decay on the Fourier coefficients. We  show that in fact this definition coincides with the set of holomorphic functions on a {\it thickened torus} (see Appendix \ref{appendice funzioni olomorfe}) and discuss properties of our set of functions which shall be needed in order to perform the reduction procedure. The interesting point is that we  work with functions  on the thickened torus:
\[
\T^\infty_\s:= \set{\f=(\f_j)_{j\in \N}\,,\quad \f_j\in\C\,:\; {\rm Re}(\f_j)\in \T\;,\;|{\rm Im}(\f_j)|\le \s \jap{j}^\zia}\,. 
\]
so not only we consider analytic functions but the {\it radius of analiticity} increases as $j\to \infty$. This is quite a strong condition but it is not at all clear to us whether it may be weakened, even in apparently harmless ways like requiring  $|{\rm Im}(\f_j)|\le \s \log(1+\jap{j})^p$ with $p\gg 1$.
In the description of the strategy we shall point out  where such a strong assumption is needed.

\vskip10pt
\noindent In the {\it regularization} procedure the first step is to reparameterize the $x$ variable ($x\rightsquigarrow x+ \beta(x,\omega t)$), in order to remove the space dependence in the leading order term $V_2$ of \eqref{main equation}. This induces an invertible  linear operator which acts on the dynamical system removing the $x$ dependence from $V_2$. Here the time behaves as a parameter, so no condition on the time dependence of the potential is needed. Note however that this change of variables {\it mixes} time and space. Namely if we start with a potential which is analytic in time but only Sobolev in space,  after the change of variables it will have finite regularity both in time and in space. For this reason, since we need to preserve analiticity in time throughout our procedure, we require that our potentials are analytic also in space.

\vskip10pt
\noindent In the second step one reparametrizes the  variables $\f\in \T^\infty_\s$ so as to remove the {\it angle}  dependence in $V_2$. Here there are various non-trivial points to discuss, both in order to guarantee that the change of variables is well defined and "invertible" and in order to describe the action on analytic functions.
\\
Indeed even in the case of a finite number of angles,  the regularization procedure is performed on $C^\infty$ potentials and working in the analytic class requires some extra care (see also \cite{FP2}).
\\
In this step one uses the fact that $\omega$ is Diophantine in the sense of \eqref{diofantinoBIS} as well as the fact that the potentials are analytic  with growing radius of analiticity as $j\to \infty$. 
\vskip10pt
\noindent
The remaining steps in the regularization procedure do not introduce further problems w.r.t. the first two steps.
As is typical in this kind of results one could further push the regularization procedure up to an arbitrarily smoothing remainder. We have chosen to regularize our problem up to order $-2$ because this is the {\it minimal action} required in order to complete the successive KAM iterative  procedure.
\\
An interesting point  is that all the regularization steps apart from the first {three}, do not mix the regularity of time and space so that one could work with potentials that are only analytic in time.  
A simple consequence is that if in \eqref{main equation} {we assume that $V_2$ and $V_1$ are constant in time} then we can require that {$V_0$} has only finite regularity in space (but is still analytic in time).  

\smallskip

\noindent
Since we work with a perturbation which is a differential operator whose coefficients are analytic both in time and space, we cannot apply as a {\it black box} the regularization procedure as in \cite{BGMR2}, \cite{Mo:Asym2018}, which is based on Egorov-type theorems and is  developed for general pseudo-differential perturbations of class ${\mathcal C}^\infty$. Indeed developing a general Egorov-type theorem in analytic class does not appear a straightforward question (actually the quantitative estimates that we need might not hold true in a general setting). 

Therefore we perform the regularization procedure in the class of analytic functions,  with quantitative estimates, see Sections \ref{astratto operatori Toplitz} and \ref{sezione riduzione ordine}. The main feature which we exploit is that our perturbation $P$  is a classical pseudo-differential differential operator (i.e. it admits an expansion in homogeneous symbols of decreasing order).

\smallskip

\noindent
We remark that in the regularization procedure, one could impose much weaker analiticity conditions. One sees that in fact the only condition needed here is that there exists $\rho>0$ such that
	\begin{equation}
	\label{zuzzu}
	\sup_{ \ell \in \Z^\infty_*}  \prod_{i\in \N}(1+\jap{i}^2 \ell_i^2) e^{-\rho\sum_j \jap{j}^\eta |\ell_j|} <\infty\,.
	\end{equation}
If we choose different radii of analiticity, such as
\[
\widehat\T^\infty_\rho:= \set{\f=(\f_j)_{j\in \N}\,,\quad \f_j\in\C\,:\; {\rm Re}(\f_j)\in \T\;,\;|{\rm Im}(\f_j)|\le \rho F(j)}\,, \;F(j)\ge 1\,, 
\]
condition \eqref{zuzzu} becomes
\[
\sup_{ \ell \in \Z^\infty_*}  \prod_{i\in \N}(1+\jap{i}^2 \ell_i^2) e^{-\rho \sum_j |\ell_j|F(j)} <\infty\,.
\]
and one can construct many examples where this holds.
\vskip15pt

In the KAM scheme  most diffculties come from quantitative issues, particularly measure estimates.
At a purely formal level our scheme is essentially classical. 
At each step one considers a linear operator of the form $\cD + \cP(\f)$ where $\cP$ is very small while $\cD$ is time independent and block-diagonal with blocks of dimension at most two. First we introduce an "ultraviolet cut-off" operator, so that $ \Pi_N {\mathcal P}$ depends on finitely many angles (depending on $N$), while the remainder $(\id - \Pi_N) {\mathcal P}$ is very small.
\\
Then one applies a linear change of variables  $e^{\cF(\f)}$ where $\cF$ solves the homological equation
\[
-  \omega \cdot \partial_\f {\mathcal F} + [\ii {\mathcal D}, {\mathcal F}] +  \Pi_N {\mathcal P} = [\widehat{\mathcal P}(0)]\,,
\]
where $[\widehat{\mathcal P}(0)]$ is the time-independent and block-diagonal part of $P$.
\\
Direct computations show that (at least at a purely formal level) this change of variables conjugates $\cD + \cP(\f)$ to an operator of the form 
$\cD_{+} + \cP_{+}(\f)$ where $\cP_{+}(\f)\ll \cP(\f)$.  In order to  ensure that a solution to the homological equation exists and in order to give quantitative estimates, one restricts $\omega$ to a set where the spectrum of the operator
\begin{equation}
\label{melnikov}
L(\f) \mapsto -  \omega \cdot \partial_\f L(\f) + [\ii {\mathcal D}, L(\f)] 
\end{equation}
is appropriately bounded from below.
Iterating this {\it KAM step} infinitely many times  one  reduces the operator $\cD + \cP(\f)$,  for all $\omega $ in some implicitly defined set where the condition \eqref{melnikov} holds througout the procedure.
\\The difficult part is to verify that the Melnikov conditions \eqref{espressione Omega infty autovalori intro} are such that:
\;
1. The Cantor set $\Omega_\infty(\gamma)$ has positive measure;\;
2. for all $\omega \in \Omega_\infty(\gamma)$  \eqref{melnikov} holds at each KAM step  with a quantitative control in the solution of the homological equation;\;
3. the iterative scheme converges.
\\
Here one needs not only for \eqref{zuzzu} to hold for all $\rho>0$ but also that the supremum in \eqref{zuzzu} does NOT diverge too badly when $\rho\to 0$. It is here that the special choice of analiticity comes into play, and it is not clear to us if it can be weakened in any significant way.
}

\bigskip

\noindent
The paper is organized as follows. In Section \ref{sez funzioni analitiche generali} we state the properties of the analytic functions on the infinite dimensional torus that we need in our proofs. In Section \ref{sezione astratta op lineari}, we provide some definitions and quantitative estimates for the class of linear operators that we deal with. In particular we define the norms that we use in Sections \ref{sezione riduzione ordine}, \ref{sez KAM redu} and their corresponding properties. In Section \ref{sezione riduzione ordine} we show that our equation can be reduced to another one whose vector field is a two-smoothing perturbation of a diagonal one. This is enough to perform the KAM reducibility scheme of Section \ref{sez KAM redu}. In Section \ref{sezione stime di misure} we provide the measure estimate of the non resonant set of parameters $\Omega_\infty(\gamma)$ (see \eqref{espressione Omega infty autovalori intro}) and in Section \ref{sezione dim teo finali} we conclude the proofs of Theorem \ref{teorema principale} and Corollary \ref{corollario sobolev}. Finally, in the appendices \ref{appendice funzioni olomorfe}, \ref{appendiceA} and \ref{appendiceB} we collect some technical proofs of some lemmas that we use along our proofs. 

\bigskip

\noindent
{\sc Acknowledgements.} Riccardo Montalto is supported by INDAM-GNFM. Michela Procesi is supported by PRIN 2015, "Variational methods with applications to problems in Mathematical Physics and Geometry". The authors wish to thank L. Biasco, J. Massetti and E. Haus for helpful suggestions.
	
\section{Analytic functions on an infinite dimensional torus}\label{sez funzioni analitiche generali}

As is habitual in the theory of quasi-periodic functions we shall study  almost periodic functions in the context of analytic functions on an infinite dimensional torus. To this purpose, for $\zia,\s>0$, we define
 the  {\it thickened} infinite dimensional torus $\T^\infty_\s$ as 
\[
\f=(\f_j)_{j\in \N}\,,\quad \f_j\in\C\,:\; {\rm Re}(\f_j)\in \T\;,\;|{\rm Im}(\f_j)|\le \s \jap{j}^\zia\,. 
\]
Given a Banach space $(X, \| \cdot \|_X )$ we consider the space $\cF$ of 
pointwise absolutely convergent formal Fourier series $\T^\infty_\sigma \to X$
\begin{equation}
\label{fourier}
u(\f) = \sum_{\ell \in \Z^\infty_*} \widehat u(\ell) e^{\im \ell \cdot \f }\,,\quad \widehat u(\ell) \in X
\end{equation}
and define the analytic functions as follows.
\begin{definition}\label{funzioni analitiche T infty}
	Given a Banach space $(X, \| \cdot \|_X )$ and $\sigma > 0$, we define the space of analytic functions $\T^\infty_\sigma \to X$ as the  subspace 
	$$
	{\mathcal H}( \T^\infty_\sigma, X) := \Big\{ u(\f) = \sum_{\ell \in \Z^\infty_*} \widehat u(\ell) e^{\im \ell \cdot \f}\in \cF \; \;:\quad \| u \|_{\sigma} := \sum_{\ell \in \Z^\infty_*} e^{\s |\ell|_\zia} \| \widehat u(\ell)\|_X < \infty  \Big\}\,.
	$$
	In the case ${\mathcal H}( \T^\infty_\sigma, \C)$ se shall use the shortened notation ${\mathcal H}( \T^\infty_\sigma)$
\end{definition}

\begin{remark} We have chosen to work with an infinite torus $\T^\infty_\s$ whose angles are $\f_j$ with $j\in \N$ which in our notations does NOT contain $0$. Of course it would be completely equivalent to working on $\T_{\s}\times\T^\infty_{\s}$ with angles $\theta_j$ with $j\in \N_0:=\N\cup\{0\}$. 
	\\	
	To this purpose one just needs to define $ \widehat\Z^\infty_*:= \{ k\in \Z^{\N_0}\,:\; |k |_\zia:= \sum_{i \in \N_0}\jap{i}^\zia |k_i|<\infty \} =\Z\times \Z^\infty_*
	$ and consider Fourier series 
	\[
	u=	\sum_{ k\in \widehat\Z^\infty_*}\widehat u(k)e^{\im  k \cdot \theta}\quad \mbox{such that} 	\sum_{ k\in \widehat\Z^\infty_*}|\widehat u(k)|e^{\s|k|_\zia}<\infty.
	\]
\end{remark}
This notation is useful when working with the space ${\mathcal H}(\T^\infty_\sigma, {\mathcal H}(\T_\sigma))$ which can thus be identified with $  {\mathcal H}(\T_{\s}\times\T^\infty_{\s}, \C)\equiv {\mathcal H}(\T_{\s}\times\T^\infty_{\s})$. 
Indeed $u\in {\mathcal H}(\T^\infty_\sigma, {\mathcal H}(\T_\sigma))$ means
\[
u= \sum_{ \ell \in \Z^\infty_*}\widehat u(\ell,x)e^{\im  \ell \cdot \f}= \sum_{ (\ell,n )\in \Z^\infty_*\times \Z}\widehat u_n(\ell)e^{\im  \ell \cdot \f +\im n x}= \sum_{ k\in \widehat\Z^\infty_*}\widehat u(k)e^{\im  k \cdot \theta}
\]
where $\theta= (x,\f)\in\T_\s\times\T^\infty_\s $ and $k  =(n,\ell)$.

\medskip

With this definitions an almost-periodic function as in Definition \ref{suca} is the restriction of a function in ${\mathcal H}(\T^\infty_\sigma,X)$ to $\f=\omega t$.  Given $\cF\in{\mathcal H}(\T^\infty_\sigma,X)$ we define $f(t)= \cF(\omega t)$. Note that the condition $u\in {\mathcal H}( \T^\infty_\sigma, X)$ implies that the series in \eqref{fourier} is totally convergent for $\f\in\T^\infty_\s$. 

\medskip

\subsection{ Reformulation of the reducibility problem.}
In order to prove Thorem \ref{teorema principale}, we then  consider  analytic $\f$-dependent families of linear operators $ {\mathcal R} : \T^\infty_\sigma \to {\mathcal B}( L^2_0(\T_x))  $, 
$\f \mapsto {\mathcal R}(\f) $.
 Given a frequency vector $\omega \in \Ro$ and two operators ${\mathcal L}, \Phi : \T^\infty_\sigma \to {\mathcal B}(L^2_x)$, under the change of coordinates $u = \Phi(\omega t) v$, the dynamical system 
$$
\partial_t u =  {\mathcal L}(\omega t) u
$$
transforms into 
\begin{equation}\label{push forward}
\partial_t v =  {\mathcal L}_+(\omega t) u, \quad {\mathcal L}_+(\f) \equiv  (\Phi_{\omega*}){\mathcal L}(\f) := \Phi(\f)^{- 1} {\mathcal L}(\f) \Phi(\f) - \Phi(\f)^{- 1} \omega \cdot \partial_\f \Phi(\f)\,,
\end{equation}
where \footnote{ If we set $F(t)= \Phi(\omega t)$, since the  series expansion for $t\in \R$  is totally convegent we have 
	clearly $\partial_t F(t)= \omega\cdot\partial_{\f} \Phi (\omega t) \,.$}
\begin{equation}
\label{defdeomega}
\omega\cdot\partial_{\f} \Phi :=  \sum_{ \ell \in \Z^\infty_*} \im  (\ell\cdot \omega) \widehat \Phi(\ell) e^{\im \ell\cdot\f}\,.
\end{equation}
A direct calculation shows that if ${\mathcal L}(\omega t)$ is skew-self adjoint and $\Phi(\omega t)$ is unitary, then ${\mathcal L}_+(\omega t)$ is skew self-adjoint too. 
\\
In conclusion our goal is to prove the existence of maps
$\cW,\cW^{-1}\in \cH(\T^\infty_{\bar{\s}/4}, \cB(\cH(\T_\s),\cH(\T_{\s'}))$, such that $W(t)= \cW(\omega t)$ and $W(t)= \cW^{-1}(\omega t)$ which solve the reduction equation:
\begin{equation}
\label{riduci}
  \cW(\f)^{- 1} \im (\partial_{x}^2 + \e  \cP(\f)) \cW(\f) - \cW(\f)^{- 1} \omega \cdot \partial_\f \cW(\f) = \im \cD_\infty
\end{equation}
where
 the operator $\cP(\f)\in \cH(\T^\infty_\sigma, \cB(\cH(\T_\s),\cH(\T_{\s'})))$ is of the form $\cP(\f)= \cV_2( x, \f) \partial_x^2 +\cV_1( x, \f) \partial_x + \cV_0( x, \f)$  with $\cV_i\in {\mathcal H}(\T^\infty_\sigma,\cH(\T_\s))$ and is such that $P(t)= \cP(\omega t)$. 
Note that for $\f\in\T^\infty$, $(\partial_{x}^2 + \e  \cP(\f))$ is self-adjoint, hence $\cW(\f)$ is unitary.
\medskip
We remark that solving \eqref{riduci}  is equivalent to diagonalizing the linear  operator 
\[
  \im \omega \cdot\partial_{\f} + \partial_x^2 + \e \cP \in \cB(\cH(\T_{\s}\times\T^\infty_{\s},\C),\cH(\T^\infty_{\s'}\times\T_{\s'},\C))
\]
via a bounded change of variables with the special property that it is {\it T\"oplitz in time}. 
\medskip
\subsection{Properties  of analytic functions}
We now discuss  some fundamental properies of the space $\cH(\T^\infty_{\s},X)$, note  that all the results hold verbatim for $\cH(\T_\s\times\T_{\s}^\infty,X)$. For completeness, in the appendix \ref{appendice funzioni olomorfe}, we discuss another (equivalent) way of defining the space ${\mathcal H}(\T^\infty_\sigma, X)$ by approximation with holomorphic functions of a finite number of variables.

\smallskip

For any function $u \in {\mathcal H}(\T^\infty_\sigma, X)$, given $N > 0$, we define the projector $\Pi_N u$ as 
\begin{equation}
\Pi_N u(\f) := \sum_{|\ell|_\zia \leq N} \widehat u(\ell) e^{\ii \ell \cdot \f}\quad \text{and} \quad \Pi_N^\bot u := u - \Pi_N u\,.
\end{equation}
the following Lemma holds: 
\begin{lemma}\label{lemma astratto proiettore}
Let $\sigma, \rho > 0$, $u \in {\mathcal H}(\T^\infty_{\sigma + \rho}, X)$. Then the following holds: 
$$
\| \Pi_N^\bot u \|_{\sigma} \leq e^{- \rho N} \| u \|_{\sigma + \rho}\,.  
$$
\end{lemma}
\begin{proof}
One has 
$$
\| \Pi_N^\bot u \|_\sigma = \sum_{|\ell|_\zia > N} e^{\sigma |\ell|_\zia} \| \widehat u(\ell)\|_X \leq e^{- \rho N} \sum_{\ell \in \Z^\infty_*} e^{(\sigma + \rho)|\ell|_\zia} \| \widehat u(\ell)\|_X
$$
and the lemma follows. 
\end{proof}
\begin{lemma}\label{embedding L infty}
Let $\sigma > 0$, $u \in {\mathcal H}(\T^\infty_\sigma, X)$. Then $\| u \|_{L^\infty(\T^\infty_\sigma, X)} \leq \| u \|_\sigma$.
\end{lemma}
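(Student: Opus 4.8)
The plan is to bound the defining Fourier series pointwise and then take the supremum over the thickened torus. Fix $\f \in \T^\infty_\sigma$. Since $u \in {\mathcal H}(\T^\infty_\sigma, X)$ the series in \eqref{fourier} is totally convergent (as remarked right after Definition \ref{funzioni analitiche T infty}), so by the triangle inequality in $X$ one has $\| u(\f) \|_X \leq \sum_{\ell \in \Z^\infty_*} \| \widehat u(\ell) \|_X\, |e^{\im \ell \cdot \f}|$. The only thing to check is the elementary exponential bound $|e^{\im \ell \cdot \f}| \leq e^{\sigma |\ell|_\zia}$. Writing $\ell \cdot \f = \sum_{j} \ell_j \f_j$ (a finite sum, since $\ell$ has finite support), we have $|e^{\im \ell \cdot \f}| = e^{- {\rm Im}(\ell \cdot \f)} = e^{- \sum_j \ell_j {\rm Im}(\f_j)}$, and since $|{\rm Im}(\f_j)| \leq \sigma \jap{j}^\zia$ for $\f \in \T^\infty_\sigma$, this gives $|{\rm Im}(\ell \cdot \f)| \leq \sigma \sum_j \jap{j}^\zia |\ell_j| = \sigma |\ell|_\zia$, hence $|e^{\im \ell \cdot \f}| \leq e^{\sigma |\ell|_\zia}$.

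Substituting this estimate into the bound for $\| u(\f) \|_X$ yields $\| u(\f) \|_X \leq \sum_{\ell \in \Z^\infty_*} \| \widehat u(\ell) \|_X\, e^{\sigma |\ell|_\zia} = \| u \|_\sigma$, and since the right-hand side does not depend on $\f$, taking the supremum over $\f \in \T^\infty_\sigma$ gives $\| u \|_{L^\infty(\T^\infty_\sigma, X)} \leq \| u \|_\sigma$, as claimed. There is essentially no obstacle here: the proof is a one-line application of the triangle inequality together with the exponential bound above. The only point requiring a moment's care is bookkeeping: one must make sure that the weights $\jap{j}^\zia$ entering the definition of the thickened torus $\T^\infty_\sigma$ are consistent with those used in $|\ell|_\zia$ in \eqref{Z inf *} (replacing $j^\zia$ by $\jap{j}^\zia$ only improves the inequality, so no harm is done), and that total convergence legitimizes exchanging the $X$-norm with the infinite sum.
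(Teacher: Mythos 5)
Your proof is correct and takes exactly the same route as the paper, which simply records the pointwise bound $\| u(\f)\|_X \leq \sum_{\ell} \| \widehat u(\ell)\|_X e^{\sigma |\ell|_\zia} = \| u \|_\sigma$; you have merely spelled out the elementary exponential estimate $|e^{\im \ell\cdot\f}| \leq e^{\sigma|\ell|_\zia}$ that the paper leaves implicit. One small correction to your parenthetical: for that estimate to hold one genuinely needs $|\ell|_\zia$ to be computed with the weight $\jap{j}^\zia$ (the one appearing in the definition of $\T^\infty_\sigma$), since the torus gives $|{\rm Im}(\ell\cdot\f)| \leq \sigma \sum_j \jap{j}^\zia |\ell_j|$ and one cannot then pass to the smaller quantity $\sigma\sum_j j^\zia|\ell_j|$; the literal formula in \eqref{Z inf *} (with $j^\zia$) is a typo, and the paper itself uses $\jap{i}^\zia$ in $|k|_\zia$ in the remark introducing $\widehat\Z^\infty_*$, so your reading of the intended definition is the right one.
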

\begin{proof}
For any $\f \in \T^\infty_\sigma$, one has 
$$
\| u (\f) \|_X \leq \sum_{\ell \in \Z^\infty_*} \| \widehat u(\ell)\|_X e^{\sigma |\ell|_\zia} = \| u \|_\sigma\,. 
$$
\end{proof}
\begin{lemma}\label{Lemma prodotto}
Assume that $X$ is a Banach algebra and $u, v \in {\mathcal H}(\T^\infty_\sigma, X)$. Then $u v \in {\mathcal H} (\T^\infty_\sigma, X)$ and $\| u v \|_\sigma \leq \| u \|_\sigma \| v \|_\sigma$. 
\end{lemma}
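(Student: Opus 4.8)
The plan is to identify the Fourier coefficients of the pointwise product $uv$ as the convolution of those of $u$ and $v$, and then to bound this convolution using submultiplicativity of the norm on the Banach algebra $X$ together with subadditivity of the weight $|\cdot|_\zia$.

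First I would write $u(\f)=\sum_{m\in\Z^\infty_*}\widehat u(m)e^{\im m\cdot\f}$ and $v(\f)=\sum_{n\in\Z^\infty_*}\widehat v(n)e^{\im n\cdot\f}$; by Definition \ref{funzioni analitiche T infty} and Lemma \ref{embedding L infty} both series converge absolutely in $X$ for every $\f\in\T^\infty_\sigma$, so since $X$ is a Banach algebra the product equals the double series $u(\f)v(\f)=\sum_{m,n\in\Z^\infty_*}\widehat u(m)\widehat v(n)e^{\im(m+n)\cdot\f}$. I want to reorganize this by collecting, for each fixed $\ell\in\Z^\infty_*$, all pairs with $m+n=\ell$; equivalently all $m\in\Z^\infty_*$ with $n=\ell-m$ (admissible for every $m$, since $|\ell-m|_\zia\le|\ell|_\zia+|m|_\zia<\infty$), so the inner sum is genuinely infinite and its convergence must be addressed.

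The key inequality is that $|\cdot|_\zia$ is a weighted $\ell^1$-norm on $\Z^\infty_*$, hence subadditive: $m+n=\ell$ implies $|\ell|_\zia\le|m|_\zia+|n|_\zia$, so $e^{\sigma|\ell|_\zia}\le e^{\sigma|m|_\zia}e^{\sigma|n|_\zia}$. Combining this with $\|ab\|_X\le\|a\|_X\|b\|_X$ on $X$ and Tonelli's theorem for the resulting nonnegative double series yields
\[
\sum_{\ell\in\Z^\infty_*}e^{\sigma|\ell|_\zia}\Big\|\sum_{m+n=\ell}\widehat u(m)\widehat v(n)\Big\|_X\le\sum_{m\in\Z^\infty_*}\sum_{n\in\Z^\infty_*}e^{\sigma|m|_\zia}\|\widehat u(m)\|_X\,e^{\sigma|n|_\zia}\|\widehat v(n)\|_X=\|u\|_\sigma\,\|v\|_\sigma<\infty\,.
\]
This single finite majorant settles everything at once: for each $\ell$ the series $\widehat{(uv)}(\ell):=\sum_{m+n=\ell}\widehat u(m)\widehat v(n)$ is absolutely convergent and so defines an element of $X$ (here I use completeness of $X$), the rearrangement of the double series into $uv(\f)=\sum_{\ell\in\Z^\infty_*}\widehat{(uv)}(\ell)e^{\im\ell\cdot\f}$ is legitimate, and the displayed bound reads exactly $\|uv\|_\sigma\le\|u\|_\sigma\|v\|_\sigma$, so in particular $uv\in\mathcal H(\T^\infty_\sigma,X)$.

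I do not expect a genuine obstacle here: the only delicate points are the absolute convergence of the infinite convolution sum defining each Fourier coefficient and the legitimacy of the reindexing of the double series, both immediate from the a priori finiteness of the majorant. The argument carries over verbatim to $\mathcal H(\T_\sigma\times\T^\infty_\sigma,X)$, as claimed in the text.
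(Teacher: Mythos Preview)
Your proof is correct and follows essentially the same approach as the paper: identify the Fourier coefficients of $uv$ as the convolution of those of $u$ and $v$, then use the subadditivity $|\ell|_\zia\le|m|_\zia+|n|_\zia$ together with the Banach algebra inequality to obtain the bound $\|uv\|_\sigma\le\|u\|_\sigma\|v\|_\sigma$. If anything, you are slightly more careful than the paper in justifying the absolute convergence of the infinite convolution sums and the legitimacy of the reindexing, but the core argument is identical.
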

\begin{proof}
One has 
$$
u(\f ) v(\f) = \sum_{\ell, k \in \Z^\infty_*} \widehat u(\ell - k) \widehat v(k) e^{\ii \ell \cdot \f}
$$
and therefore, one obtains that 
$$
\begin{aligned}
\| u v \|_\sigma & \leq \sum_{\ell, k \in \Z^\infty_*} e^{\sigma |\ell|_\zia} \| \widehat u(\ell - k)\|_X \| \widehat v(k)\|_X\,.
\end{aligned}
$$
Using the triangular inequality $|\ell|_\zia \leq |\ell - k|_\zia + |k|_\zia$, one gets $e^{\sigma |\ell|_\zia} \leq e^{\sigma |\ell - k|_\zia} e^{\sigma |k|_\zia}$, implying that 
$$
\| u v \|_\sigma \leq \sum_{\ell , k \in \Z^\infty_*} e^{\sigma |\ell - k|_\zia} \| \widehat u(\ell - k)\|_X  e^{\sigma |k|_\zia} \| \widehat v(k)\|_X \leq \| u \|_\sigma \| v \|_\sigma\,. 
$$

\end{proof}
\begin{lemma}
	Let $u\in {\mathcal H}(\T^\infty_\sigma, X)$. Then 
	\begin{equation}\label{proprieta media infinita}
	\int_{\T^\infty} u(\f)\, d \f := \lim_{N \to + \infty} \frac{1}{(2 \pi)^N} \int_{\T^N} u(\f) d \f_1 \ldots d \f_N = \widehat u(0)\,.
	\end{equation}
	Moreover, for any $\ell\in  \Z^\infty_* \setminus \{ 0 \}$:
	\begin{equation}\label{coeff fourier media infinita}
	\widehat u(\ell)= \int_{\T^\infty} u(\f) e^{- \ii \ell \cdot \f}\, d \f = \lim_{N \to \infty} \frac{1}{(2\pi)^N}\int_{\T^N} u(\f) e^{-\ii \ell\cdot\f}\,. 
	\end{equation}
\end{lemma}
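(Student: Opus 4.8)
The plan is to expand $u$ in its Fourier series and integrate term by term. By Definition \ref{funzioni analitiche T infty} the series $u(\f)=\sum_{\ell\in\Z^\infty_*}\widehat u(\ell)e^{\ii\ell\cdot\f}$ converges absolutely in $X$, uniformly on $\T^\infty_\sigma$: indeed $\sum_{\ell}\|\widehat u(\ell)\|_X\le\sum_{\ell}\|\widehat u(\ell)\|_X e^{\sigma|\ell|_\zia}=\|u\|_\sigma<\infty$ since $|\ell|_\zia\ge 0$ (cf.\ Lemma \ref{embedding L infty}). Hence for every fixed $N$ I may exchange the finite-dimensional integral $\frac1{(2\pi)^N}\int_{\T^N}(\cdot)\,d\f_1\cdots d\f_N$ with the sum, and using $\frac1{2\pi}\int_\T e^{\ii\ell_j\f_j}\,d\f_j=\delta_{\ell_j,0}$ I obtain
\[
\frac1{(2\pi)^N}\int_{\T^N}u(\f)\,d\f_1\cdots d\f_N=\sum_{\substack{\ell\in\Z^\infty_*\\ \ell_1=\cdots=\ell_N=0}}\widehat u(\ell)\,e^{\ii\sum_{j>N}\ell_j\f_j}=\widehat u(0)+R_N(\f),
\]
where $R_N(\f):=\sum_{\ell\in\Z^\infty_*\setminus\{0\},\ \ell_1=\cdots=\ell_N=0}\widehat u(\ell)\,e^{\ii\sum_{j>N}\ell_j\f_j}$.

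Next I would show that $\sup_{\f\in\T^\infty_\sigma}\|R_N(\f)\|_X\to 0$ as $N\to\infty$. The crude bound $\|R_N(\f)\|_X\le\sum_{\ell\in I_N}\|\widehat u(\ell)\|_X$ holds with $I_N:=\{\ell\in\Z^\infty_*\setminus\{0\}:\ell_1=\cdots=\ell_N=0\}$. The sets $I_N$ are decreasing and $\bigcap_{N}I_N=\varnothing$, since any $\ell\ne 0$ in $\Z^\infty_*$ has finite support, hence $\ell_j\ne 0$ for some finite $j$, so $\ell\notin I_N$ for all $N\ge j$. As $\sum_{\ell\in\Z^\infty_*}\|\widehat u(\ell)\|_X<\infty$, dominated convergence for series gives $\sum_{\ell\in I_N}\|\widehat u(\ell)\|_X\to 0$. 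This proves \eqref{proprieta media infinita}.

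For \eqref{coeff fourier media infinita}, fix $\ell\in\Z^\infty_*\setminus\{0\}$ and set $v(\f):=u(\f)\,e^{-\ii\ell\cdot\f}$. Then $\widehat v(m)=\widehat u(m+\ell)$, and by the triangle inequality $|m|_\zia\le|m+\ell|_\zia+|\ell|_\zia$ one gets $\|v\|_\sigma\le e^{\sigma|\ell|_\zia}\|u\|_\sigma<\infty$, so $v\in{\mathcal H}(\T^\infty_\sigma,X)$. Applying \eqref{proprieta media infinita} to $v$ yields $\lim_{N}\frac1{(2\pi)^N}\int_{\T^N}u(\f)e^{-\ii\ell\cdot\f}\,d\f_1\cdots d\f_N=\widehat v(0)=\widehat u(\ell)$, which is the assertion.

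The argument is essentially elementary; the only point requiring care is that integrating over the first $N$ angles does not produce a constant but still a function of the remaining angles $\f_{N+1},\f_{N+2},\dots$, carried by the remainder $R_N$. The content of the lemma is precisely that this residual dependence disappears uniformly as $N\to\infty$, and this is where the finite-support property of $\Z^\infty_*$ together with the absolute convergence of the Fourier series enters. No substantial obstacle arises beyond this observation.
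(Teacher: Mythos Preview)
Your proof is correct and follows essentially the same approach as the paper: expand in Fourier series, integrate term by term, and control the remainder as the tail of an absolutely convergent series; for \eqref{coeff fourier media infinita} both you and the paper reduce to \eqref{proprieta media infinita} by multiplying by $e^{-\ii\ell\cdot\f}$. The only cosmetic difference is that the paper indexes the tail by $|\ell|_\zia>N^\zia$ (using that $|\ell|_\zia\le N^\zia$ forces $\ell_j=0$ for $j>N$), whereas you index it directly by $I_N=\{\ell\ne 0:\ell_1=\cdots=\ell_N=0\}$; both tails vanish for the same reason.
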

\begin{proof}
	Let $\ell \in \Z^\infty_* \setminus \{ 0 \}$ and let $N^\zia \le |\ell|_\zia$. Then surely $\ell_j=0$ for all $j>N$, thus
	$$
	e^{\ii \ell \cdot \f} = e^{\ii \ell_1 \f_1} \ldots e^{\ii \ell_N \f_N}
	$$
	implying that 
	$$
	\frac{1}{(2 \pi)^N} \int_{\T^N} e^{\ii \ell \cdot \f}\, d \f_1 \ldots d \f_N = 0\,. 
	$$
Hence
	$$
	\begin{aligned}
	\frac{1}{(2 \pi)^N}\int_{\T^N}u(\f) d \f_1 \ldots d \f_N & = \frac{1}{(2 \pi)^N}\int_{\T^N} \Big(\widehat u(0) + \sum_{0 < |\ell|_\zia \leq N^\zia} \widehat u(\ell) e^{\ii \ell \cdot \f} + \sum_{|\ell|_\zia > N^\zia} \widehat u(\ell) e^{\ii \ell \cdot \f} \Big) d \f_1 \ldots d \f_N \\
	& = \widehat u(0) + \frac{1}{(2 \pi)^N}\int_{\T^N} \sum_{|\ell|_\zia > N^\zia} \widehat u(\ell) e^{\ii \ell \cdot \f}  d \f_1 \ldots d \f_N\,.
	\end{aligned}
	$$
	Since $u \in {\mathcal H}(\T^\infty_\sigma, X)$, the tail of the series $\sum_{|\ell|_\zia > N^\zia}$ goes to zero as $N \to \infty$. This proves \eqref{proprieta media infinita}.  
	
	\noindent
	Now let $\ell \in \Z^\infty_* \setminus \{ 0 \}$. Then we set
	$$
	u_\ell(\f) := u(\f) e^{- \ii \ell \cdot \f} = \sum_{k \in \Z^\infty_*} \widehat u(k) e^{\ii (k - \ell) \cdot \f} = \sum_{h \in \Z^\infty_*} \widehat u(h + \ell) e^{\ii h \cdot \f}\,.
	$$
	By applying the claim \eqref{proprieta media infinita} to the function $u_\ell$ and observing that $\widehat u_\ell(0) = \widehat u(\ell)$, the equality \eqref{coeff fourier media infinita} follows. 
\end{proof}
Given two Banach spaces $X$ and $Y$, for any $k \in \N$, we define the space ${\mathcal M}_k(X, Y)$ of the $k$-linear and continuous forms endowed by the norm 
\begin{equation}\label{norma operatori multilineari}
\| M \|_{{\mathcal M}_k(X, Y)} := \sup_{\| u_1\|_X, \ldots, \| u_k\|_X \leq 1} \|  M[u_1, \ldots, u_k]\|_Y, \quad \forall M \in {\mathcal M}_k(X, Y)\,. 
\end{equation}

 To shorthen notations, we denote $\ell^\infty:= \ell^\infty(\N,\C)$, moreover for  $k \in \N$, we write ${\mathcal M}_{k}$ instead of ${\mathcal M}_k(\ell^{\infty}, X)$ where $X$ is an arbitrary Banach space.
 
 Let us now discuss the differentiability of functions.
We define for $\widehat \f_1,\dots,\widehat{\f}_k\in \ell^\infty$ 
 \begin{equation}
 \label{kdiff}
 d^k_\f u[\widehat \f_1,\dots,\widehat \f_k] := \sum_{ \ell \in \Z^\infty_*} \im^k \prod_{j=1}^k (\ell\cdot \hat \f_j) \widehat u(\ell) e^{\im \ell\cdot\f}
 \end{equation}
 Note that  if $u \in \cH(\T^\infty_{\s+\rho},X)$ for any $\rho>0$ then the series in \eqref{kdiff} is totally convergent on $\T^\infty_\s$.
 \begin{lemma}[\bf Cauchy estimates]\label{stima di cauchy}
 Let $\sigma, \rho > 0$ and $u \in {\mathcal H}(\T^\infty_{\sigma + \rho}, X)$. Then for any $k \in \N$, the $k$-th differential $d^k_\f u$ satisfies the estimate 
$$
\| d^k_\f u \|_{{\mathcal H}(\T^\infty_\sigma, {\mathcal M}_{k})} \lesssim_k \rho^{- k} \| u \|_{\sigma + \rho}\,. 
$$
\end{lemma}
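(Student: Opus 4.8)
The plan is to estimate the $\mathcal M_k$-norm of each Fourier coefficient of $d^k_\f u$ directly from formula \eqref{kdiff}, and then sum the resulting series after absorbing the polynomial growth in $|\ell|_\zia$ into a fraction of the exponential weight. First I would fix $\widehat\f_1,\dots,\widehat\f_k\in\ell^\infty$ with $\|\widehat\f_j\|_\infty\le 1$ and bound, for each $\ell\in\Z^\infty_*$, the quantity $|\ell\cdot\widehat\f_j|\le \sum_{i\in\N}|\ell_i|\le \sum_{i\in\N}\langle i\rangle^\zia|\ell_i| = |\ell|_\zia$ (using $\langle i\rangle^\zia\ge 1$). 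Hence $\prod_{j=1}^k|\ell\cdot\widehat\f_j|\le |\ell|_\zia^{\,k}$, so that the $\ell$-th Fourier coefficient of $d^k_\f u$, as an element of $\mathcal M_k=\mathcal M_k(\ell^\infty,X)$, has norm at most $|\ell|_\zia^{\,k}\,\|\widehat u(\ell)\|_X$.

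Next I would invoke the elementary inequality $t^k e^{-\rho t}\le (k/(e\rho))^k\le C_k\,\rho^{-k}$ for all $t\ge 0$, applied with $t=|\ell|_\zia$. This gives
\[
|\ell|_\zia^{\,k}\le C_k\,\rho^{-k}\,e^{\rho|\ell|_\zia}, \qquad \forall \ell\in\Z^\infty_*.
\]
Therefore, by Definition \ref{funzioni analitiche T infty} applied to the function $d^k_\f u$ taking values in $\mathcal M_k$,
\[
\| d^k_\f u \|_{{\mathcal H}(\T^\infty_\sigma, {\mathcal M}_{k})}
= \sum_{\ell\in\Z^\infty_*} e^{\sigma|\ell|_\zia}\,|\ell|_\zia^{\,k}\,\|\widehat u(\ell)\|_X
\le C_k\,\rho^{-k}\sum_{\ell\in\Z^\infty_*} e^{(\sigma+\rho)|\ell|_\zia}\,\|\widehat u(\ell)\|_X
= C_k\,\rho^{-k}\,\| u \|_{\sigma+\rho},
\]
which is exactly the claimed bound, with the implicit constant depending only on $k$. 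The total convergence of the series \eqref{kdiff} on $\T^\infty_\sigma$, already noted in the text, justifies that $d^k_\f u$ indeed defines an element of ${\mathcal H}(\T^\infty_\sigma,\mathcal M_k)$ rather than a merely formal object, so the manipulation above is legitimate.

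There is essentially no hard obstacle here: the only point requiring a little care is the passage from the multilinear action on $\widehat\f_1,\dots,\widehat\f_k$ to a scalar bound, i.e. checking that the operator norm of the symmetric $k$-form $\widehat\f_1,\dots,\widehat\f_k\mapsto \im^k\prod_j(\ell\cdot\widehat\f_j)\,\widehat u(\ell)$ on $(\ell^\infty)^k$ is controlled by $|\ell|_\zia^{\,k}\|\widehat u(\ell)\|_X$, which is the step above using $\langle i\rangle^\zia\ge1$. Everything else is the standard Cauchy-estimate trick of trading a polynomial factor $|\ell|_\zia^{\,k}$ against a sliver $e^{\rho|\ell|_\zia}$ of the analyticity weight, at the cost of the factor $\rho^{-k}$.
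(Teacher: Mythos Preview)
Your proof is correct and follows essentially the same approach as the paper: bound $|\ell\cdot\widehat\f_j|\le\|\ell\|_1\le|\ell|_\zia$ by duality, then trade the resulting factor $|\ell|_\zia^{\,k}$ against $e^{\rho|\ell|_\zia}$ via $\sup_{x\ge0}x^k e^{-\rho x}=k^k\rho^{-k}e^{-k}\lesssim_k\rho^{-k}$. The only cosmetic difference is that the paper phrases the intermediate step as bounding $\|d^k_\f u[\widehat\f_1,\dots,\widehat\f_k]\|_\sigma$ for fixed unit vectors and then passing to the sup, whereas you bound the $\mathcal M_k$-norm of each Fourier coefficient directly; these are equivalent.
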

\begin{proof}
For any $k \in \N$, $\f \in \T^\infty_\sigma$, $\widehat \f_1, \ldots, \widehat \f_k \in \ell^{\infty}$, $\| \widehat \f_j \|_{\infty} \leq 1$ for any $j = 1, \ldots, k$, one has  by duality $|\ell \cdot \widehat \f| \leq \| \ell\|_1 \| \widehat \f\|_\infty \leq  |\ell|_\zia \| \widehat \f \|_{\infty} $, and substituting in \eqref{kdiff} one gets 
$$
\begin{aligned}
\| d^k_\f  u(\f)[\widehat \f_1, \ldots, \widehat \f_k] \|_{\sigma} & \leq \sum_{\ell \in \Z^\infty_*} |\ell|_\zia^k e^{\sigma |\ell|_\zia} \| \widehat u(\ell)\|_X \leq \sup_{\ell \in \Z^\infty_*} \Big( |\ell|_\zia^k e^{- \rho |\ell|_\zia} \Big) \| u \|_{\sigma + \rho} \,.
\end{aligned} 
$$
A straightforward calculation shows that 
$$
\sup_{\ell \in \Z^\infty_*} |\ell|_\zia^k e^{- \rho |\ell|_\zia} \leq \sup_{x \geq 0} x^k e^{- \rho x} = k^k \rho^{- k} e^{- k} \lesssim_k \rho^{- k}
$$
which implies the claimed estimate. 
\end{proof}
\begin{remark}
Note that if we endow the torus $\T^\infty_\s$ with the $\ell^\infty$ metric ,  namely given two angles $\f_1 = (\f_{1, j})_{j \in \N} \in \T^\infty_\sigma$ and $\f_2 = (\f_{2, j})_{j \in \N} \in \T^\infty_\sigma$, we define
\begin{equation}\label{distanza toro infinito}
d_{\infty}( \f_1, \f_2) := {\rm sup}_{j \in \N}\Big( \left| {\rm Re}({\f_{1, j} - \f_{2, j}})\right|_{\text{mod}\, 2\pi} + |{\rm Im}(\f_{1, j}) - {\rm Im}(\f_{2, j})| \Big)\,. 
\end{equation}
then \eqref{kdiff} is the $k$'th differential in the usual sense. Moreover  the tangent space to $\T^\infty_\s$ is $\ell^\infty(\C)$. 
\end{remark}

\medskip

Given a frequency vector $\omega \in \Ro$ and $u \in {\mathcal H}^\sigma(X)$, we define $\omega \cdot \partial_\f u$ as  in \ref{defdeomega}
\begin{equation}\label{definizione omega dot partial vphi}
\omega \cdot \partial_\f u (\f) := \sum_{\ell \in \Z^\infty_*} \im (\omega \cdot \ell )\widehat u(\ell) e^{\im \ell \cdot \f} = d u(\f)[\omega]\,. 
\end{equation}
If we set $f(t)= u(\omega t)$, since the  series expansion for $t\in \R$  is totally convegent we have 
 clearly $\partial_t f(t)= \omega\cdot\partial_{\f} u (\omega t) \,.$
 
 \medskip
 
The following Lemma holds
\begin{lemma}\label{lemma om dot partial vphi}
$(i)$ Let $\sigma, \rho > 0$, $u \in {\mathcal H}^{\sigma + \rho}(X)$, $\omega \in \Ro$. Then 
$$
\| \omega \cdot \partial_\f u \|_\sigma \lesssim  \rho^{- 1} \| u \|_{\sigma + \rho}\,.
$$
\end{lemma}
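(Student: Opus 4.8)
The plan is to read off the bound directly from the Cauchy estimate of Lemma \ref{stima di cauchy}. First I would note that, by \eqref{definizione omega dot partial vphi}, $\omega\cdot\partial_\f u(\f) = du(\f)[\omega]$, i.e. $\omega\cdot\partial_\f u$ is obtained by evaluating the first differential $d_\f u$ on the constant tangent vector $\omega\in\ell^\infty$. Since $\omega\in\Ro=[1,2]^\N$ we have $\|\omega\|_\infty\le 2$, so by the definition \eqref{norma operatori multilineari} of the norm on ${\mathcal M}_1={\mathcal M}_1(\ell^\infty,X)$ one gets pointwise in $\f$ the bound $\|du(\f)[\omega]\|_X\le \|\omega\|_\infty\,\|d_\f u(\f)\|_{{\mathcal M}_1}$, and hence, taking the ${\mathcal H}(\T^\infty_\sigma,\cdot)$-norm,
$$
\| \omega\cdot\partial_\f u \|_\sigma \;\le\; \|\omega\|_\infty\,\| d_\f u \|_{{\mathcal H}(\T^\infty_\sigma,{\mathcal M}_1)} \;\le\; 2\,\| d_\f u \|_{{\mathcal H}(\T^\infty_\sigma,{\mathcal M}_1)}\,.
$$
Applying Lemma \ref{stima di cauchy} with $k=1$ then yields $\| d_\f u \|_{{\mathcal H}(\T^\infty_\sigma,{\mathcal M}_1)}\lesssim \rho^{-1}\|u\|_{\sigma+\rho}$, which gives the claim.

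Alternatively, and to keep the proof self-contained, I would argue directly from the Fourier expansion \eqref{definizione omega dot partial vphi}: since $\jap{j}^\zia\ge 1$ for all $j\in\N$ one has $\|\ell\|_1=\sum_j|\ell_j|\le\sum_j\jap{j}^\zia|\ell_j|=|\ell|_\zia$, so by duality $|\omega\cdot\ell|\le\|\omega\|_\infty\|\ell\|_1\le 2\,|\ell|_\zia$; therefore
$$
\| \omega\cdot\partial_\f u \|_\sigma \;=\; \sum_{\ell\in\Z^\infty_*}|\omega\cdot\ell|\,e^{\sigma|\ell|_\zia}\|\widehat u(\ell)\|_X \;\le\; 2\Big(\sup_{\ell\in\Z^\infty_*}|\ell|_\zia e^{-\rho|\ell|_\zia}\Big)\sum_{\ell\in\Z^\infty_*}e^{(\sigma+\rho)|\ell|_\zia}\|\widehat u(\ell)\|_X\,,
$$
and using $\sup_{x\ge 0}x e^{-\rho x}=(\rho e)^{-1}\lesssim\rho^{-1}$ together with the definition of $\|u\|_{\sigma+\rho}$ one concludes $\| \omega\cdot\partial_\f u \|_\sigma\lesssim\rho^{-1}\|u\|_{\sigma+\rho}$.

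There is essentially no real obstacle here: the statement is a one-line corollary of the $k=1$ Cauchy estimate. The only minor points to get right are that $\omega\in\Ro$ forces $\|\omega\|_\infty\le 2$ (so the implicit constant is harmless and $\omega$-independent) and the elementary inequality $\|\ell\|_1\le|\ell|_\zia$, which is where the weight $\jap{j}^\zia\ge1$ is used; both are immediate.
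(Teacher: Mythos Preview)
Your proof is correct and matches the paper's approach exactly: the paper's proof simply says the lemma follows from the formula \eqref{definizione omega dot partial vphi} and a straightforward application of Lemma \ref{stima di cauchy}, which is precisely your first argument. Your alternative direct Fourier computation just unwinds the $k=1$ case of that Cauchy estimate, so there is nothing to add.
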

\begin{proof}
The lemma follows by the formula \eqref{definizione omega dot partial vphi} and by applying Lemma \ref{stima di cauchy} in a straightforward way. 
\end{proof}
{\bf Parameter dependence.} Let $Y$ be a Banach space and $\gamma \in (0, 1)$. If $f : \Oo \to Y$, $\Oo \subseteq \Ro := [1, 2]^\N$ is a Lipschitz function we define 
\begin{equation}\label{norme Lip gamma}
\begin{aligned}
& \| f \|_Y^{\rm sup} := \sup_{\omega \in \Oo } \| f(\omega)\|_Y, \quad \| f \|_Y^{\rm lip} := \sup_{\begin{subarray}{c}
	\omega_1, \omega_2 \in \Oo \\
	\omega_1 \neq \omega_2
	\end{subarray}} \frac{\| f(\omega_1) - f(\omega_2) \|_Y}{\| \omega_1 - \omega_2\|_\infty}\,, \\
& \| f \|_Y^\Lipg := \| f \|_Y^{\rm sup} + \gamma \| f \|_Y^{\rm lip}\,. 
\end{aligned}
\end{equation}
If $Y = {\mathcal H}(\T^\infty_\sigma, X)$ we simply write $\| \cdot \|_\sigma^{\rm sup}$, $\| \cdot \|_\sigma^{\rm lip}$, $\| \cdot \|_\sigma^{\Lipg}$. If $Y$ is a finite dimensional space, we write $\| \cdot \|^{\rm sup}$, $\| \cdot \|^{\rm lip}$, $\| \cdot \|^{\Lipg}$.

\vskip15pt
The following result follows directly
\begin{lemma}\label{stime lip}
In Lemmata \ref{lemma astratto proiettore}, \ref{Lemma prodotto},\ref{stima di cauchy}, \ref{lemma om dot partial vphi}, if $u(\cdot; \omega)$ is Lipschitz w.r. to $\omega \in \Oo \subseteq \Ro$, the same estimates hold verbatim   replacing  $\| \cdot \|_\sigma$ by $\| \cdot \|_\sigma^\Lipg$.
\end{lemma}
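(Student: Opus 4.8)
The plan is to go through the four cited lemmata one at a time, observing that in each case the underlying operation interacts well with difference quotients in the parameter $\omega$, and that the $\Lipg$-norm is simply $\| \cdot \|_\sigma^{\rm sup} + \g \| \cdot \|_\sigma^{\rm lip}$. The general principle I would invoke is: whenever an operation $T$ is linear in $u$ and independent of $\omega$, one has $T u(\cdot;\omega_1) - T u(\cdot;\omega_2) = T\big(u(\cdot;\omega_1) - u(\cdot;\omega_2)\big)$, so the previously established bound, applied to the difference, then divided by $\| \omega_1 - \omega_2\|_\infty$ and supremized over $\omega_1 \neq \omega_2$, gives the same bound for the $\mathrm{lip}$ seminorm; adding $\g$ times this to the $\mathrm{sup}$-estimate yields the $\Lipg$-estimate verbatim. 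This immediately disposes of Lemma \ref{lemma astratto proiettore} (where $T = \Pi_N^\bot$) and of Lemma \ref{stima di cauchy} (where $T = d^k_\f$), since neither $\Pi_N^\bot$ nor $d^k_\f$ depends on $\omega$.

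For the product estimate, Lemma \ref{Lemma prodotto}, I would first write the telescoping identity
\[
u(\omega_1) v(\omega_1) - u(\omega_2) v(\omega_2) = \big(u(\omega_1) - u(\omega_2)\big)\, v(\omega_1) + u(\omega_2)\, \big(v(\omega_1) - v(\omega_2)\big)
\]
and apply the submultiplicativity $\| w v \|_\sigma \le \| w \|_\sigma \| v \|_\sigma$ already proven, obtaining $\| u v \|_\sigma^{\rm lip} \le \| u \|_\sigma^{\rm lip} \| v \|_\sigma^{\rm sup} + \| u \|_\sigma^{\rm sup} \| v \|_\sigma^{\rm lip}$. Combining this with $\| u v \|_\sigma^{\rm sup} \le \| u \|_\sigma^{\rm sup} \| v \|_\sigma^{\rm sup}$ and using the elementary inequality $ac + \g(ad + bc) \le (a + \g b)(c + \g d)$, valid for all $a,b,c,d \ge 0$ (the left side is missing only the nonnegative term $\g^2 b d$), one gets $\| u v \|_\sigma^{\Lipg} \le \| u \|_\sigma^{\Lipg} \| v \|_\sigma^{\Lipg}$, as desired.

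The only step that is not completely mechanical — and the one I would flag as the main (minor) obstacle — is Lemma \ref{lemma om dot partial vphi}, because $u \mapsto \omega \cdot \partial_\f u$ depends on $\omega$ both through $u(\cdot;\omega)$ and explicitly through the Fourier multiplier $\ii(\omega \cdot \ell)$. Here I would use the splitting
\[
\omega_1 \cdot \partial_\f u(\omega_1) - \omega_2 \cdot \partial_\f u(\omega_2) = \omega_1 \cdot \partial_\f\big(u(\omega_1) - u(\omega_2)\big) + (\omega_1 - \omega_2) \cdot \partial_\f u(\omega_2)\,.
\]
The first term is estimated by $\lesssim \rho^{-1} \| u(\omega_1) - u(\omega_2)\|_{\sigma + \rho}$ exactly as in the non-parametric proof of Lemma \ref{lemma om dot partial vphi} (via Lemma \ref{stima di cauchy}), and after dividing by $\| \omega_1 - \omega_2\|_\infty$ this contributes $\lesssim \rho^{-1}\| u \|_{\sigma+\rho}^{\rm lip}$. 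For the second term, the bound $|(\omega_1 - \omega_2) \cdot \ell| \le \| \omega_1 - \omega_2\|_\infty \, |\ell|_\zia$ (duality between $\ell^1$ and $\ell^\infty$, as in the proof of Lemma \ref{stima di cauchy}) together with $\sup_{\ell} |\ell|_\zia\, e^{-\rho|\ell|_\zia} \lesssim \rho^{-1}$ gives $\| (\omega_1 - \omega_2)\cdot\partial_\f u(\omega_2)\|_\sigma \lesssim \rho^{-1}\| \omega_1 - \omega_2\|_\infty\, \| u \|_{\sigma+\rho}^{\rm sup}$, hence this term contributes $\lesssim \rho^{-1} \| u \|_{\sigma+\rho}^{\rm sup}$ to the $\mathrm{lip}$ seminorm. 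Multiplying the resulting $\mathrm{lip}$ bound by $\g$ and recalling $\g \in (0,1)$, the spurious $\| u \|_{\sigma+\rho}^{\rm sup}$ contribution is absorbed into $\| u \|_{\sigma+\rho}^{\Lipg}$; adding the $\mathrm{sup}$-estimate one concludes $\| \omega \cdot \partial_\f u\|_\sigma^{\Lipg} \lesssim \rho^{-1} \| u \|_{\sigma+\rho}^{\Lipg}$. All the remaining manipulations are the routine ones already carried out in the non-parametric versions, so the estimates hold verbatim with $\| \cdot \|_\sigma$ replaced by $\| \cdot \|_\sigma^{\Lipg}$.
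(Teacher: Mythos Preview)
Your proposal is correct; the paper itself gives no proof for this lemma beyond the remark ``The following result follows directly,'' and your write-up supplies exactly the routine verifications that justify that remark. In particular, your identification of Lemma~\ref{lemma om dot partial vphi} as the only case requiring a split (because $\omega$ enters both through $u(\cdot;\omega)$ and through the multiplier $\ii\,\omega\cdot\ell$) is the one point worth making explicit, and you handle it correctly.
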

As is tipical in KAM reduction schemes, a fundamental tool in reducibility is to solve the "homological equation", i.e. to invert the operator
$\omega\cdot \partial_{\f}$. 
\begin{lemma}[\bf Homological equation] \label{stima diofantea D omega inv}
Let $\sigma, \rho > 0$, $f \in {\mathcal H}(\T^\infty_{\sigma + \rho}, X)$, $\omega \in \mathtt D_{\g,\mu} $ (see \eqref{diofantinoBIS}). with $\widehat f(0) = 0$. Then there exists a unique solution $u := (\omega \cdot \partial_\f)^{- 1} f  \in {\mathcal H}(\T^\infty_{\sigma}, X)$ of the equation 
$$
\omega \cdot \partial_\f u = f
$$
satisfying the estimates 
\begin{equation}\label{stima om partial f inv}
\| u \|_\sigma \lesssim {\rm exp}\Big(\frac{\tau}{\rho^{\frac{1}{\zia}}} \ln\Big(\frac{\tau}{\rho} \Big) \Big) \| f \|_{\sigma + \rho }\,. 
\end{equation}
for some constant $\tau = \tau(\zia, \mu) > 0$. If $f(\cdot; \omega) \in {\mathcal H}(\T^\infty_{\sigma + \rho},X)$ is Lipschitz w.r. to $\omega \in \Oo \subseteq \Dc$, then 
$$
\| u \|_\sigma^\Lipg \lesssim {\rm exp}\Big(\frac{\tau}{\rho^{\frac{1}{\zia}}} \ln\Big(\frac{\tau}{\rho} \Big) \Big) \| f \|_{\sigma + \rho }^\Lipg\,. 
$$
for some constant $\tau(\zia, \mu) > 0$ (eventually larger than the one in \eqref{stima om partial f inv}). 
\end{lemma}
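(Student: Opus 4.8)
The plan is to solve the equation by the standard Fourier–series Ansatz and to reduce the whole statement to one scalar estimate on the small divisors $(\omega\cdot\ell)^{-1}$, weighted against the loss of analyticity $\rho$. Concretely, I would set
\[
u(\f):=\sum_{\ell\in\Z^\infty_*\setminus\{0\}}\frac{\widehat f(\ell)}{\im\,\omega\cdot\ell}\,e^{\im\ell\cdot\f}\,,
\]
which is well defined coefficient by coefficient: the index $\ell=0$ is excluded because $\widehat f(0)=0$, and $\omega\cdot\ell\ne0$ for every $\ell\in\Z^\infty_*\setminus\{0\}$ by the Diophantine condition \eqref{diofantinoBIS}. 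Since $\omega\cdot\partial_\f$ annihilates exactly the $\f$-independent functions, $u$ is the unique solution normalized by $\widehat u(0)=0$ (implicit in the notation $(\omega\cdot\partial_\f)^{-1}$); once $\|u\|_\sigma<\infty$ is established, $u\in{\mathcal H}(\T^\infty_\sigma,X)$, the series is totally convergent on $\T^\infty_\sigma$, and $\omega\cdot\partial_\f$ acts on it term by term, giving back $\sum_\ell\widehat f(\ell)e^{\im\ell\cdot\f}=f$, so $u$ genuinely solves the equation. For the norm, the Diophantine lower bound yields $|\omega\cdot\ell|^{-1}<\g^{-1}\prod_{j\in\N}(1+|\ell_j|^\mu\jap j^\mu)$ for $\ell\ne0$, and writing $e^{\sigma|\ell|_\zia}=e^{-\rho|\ell|_\zia}e^{(\sigma+\rho)|\ell|_\zia}$ one gets
\[
\|u\|_\sigma=\sum_{\ell\ne0}\frac{\|\widehat f(\ell)\|_X}{|\omega\cdot\ell|}\,e^{\sigma|\ell|_\zia}\le\g^{-1}\Big(\sup_{\ell\in\Z^\infty_*}\Gamma_\mu(\ell)\Big)\|f\|_{\sigma+\rho}\,,\qquad\Gamma_\mu(\ell):=\Big(\prod_{j\in\N}(1+|\ell_j|^\mu\jap j^\mu)\Big)e^{-\rho|\ell|_\zia}\,.
\]
Thus — the $\g$-dependent factor being absorbed in $\lesssim$ — the lemma reduces to the scalar bound $\sup_\ell\Gamma_\mu(\ell)\le\exp\big(\tfrac{\tau}{\rho^{1/\zia}}\ln\tfrac{\tau}{\rho}\big)$.

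This scalar bound is the heart of the matter and the only genuinely delicate point. The key is that $\Gamma_\mu$ factorizes over the coordinates, $\Gamma_\mu(\ell)=\prod_j\gamma_j(\ell_j)$ with $\gamma_j(n)=(1+|n|^\mu\jap j^\mu)e^{-\rho j^\zia|n|}$ and $\gamma_j(0)=1$. For $n\ne0$, bounding $\gamma_j(n)\le 2|n|^\mu\jap j^\mu e^{-\rho j^\zia|n|}$, splitting the exponent, and using the elementary maximum $\sup_{x\ge0}x^\mu e^{-cx}=(\mu/ec)^\mu$, one gets $\gamma_j(n)\le C(\mu)\,\rho^{-\mu}\,j^{-\mu\zia}\jap j^\mu\,e^{-\frac12\rho j^\zia}$, so $\sup_n\gamma_j(n)=1$ for every $j$ past a threshold $J_0=J_0(\rho,\zia,\mu)$ of size $\asymp\rho^{-1/\zia}$ up to a logarithmic factor. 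Consequently $\sup_\ell\Gamma_\mu(\ell)$ is the \emph{finite} product $\prod_{j\le J_0}\sup_n\gamma_j(n)$, and taking logarithms — using $\sup_{n\ge1}(\mu\ln|n|-\tfrac12\rho|n|)\le\mu\ln\tfrac{2\mu}{e\rho}$ and $\sum_{j\le J_0}\ln\jap j\le\ln(J_0!)$ — yields $\ln\sup_\ell\Gamma_\mu(\ell)\lesssim_{\zia,\mu}J_0\ln\tfrac1\rho+\ln(J_0!)$, which is of the asserted form once the value of $J_0$ is inserted (the sharp quantitative version is carried out in \cite{BMP1:2018}). This is exactly the step where the specific analyticity weights $\jap j^\zia$ are used in an essential way: an index with $\ell_j\ne0$ automatically satisfies $j^\zia\le|\ell|_\zia$, so only the finitely many coordinates with $j\le|\ell|_\zia^{1/\zia}$ are active, and it is this polynomial confinement that keeps the product finite; with a weaker weight the supremum would diverge.

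For the Lipschitz statement over $\Oo\subseteq\Dc$, I would differentiate the explicit series in $\omega$: for $\omega_1,\omega_2\in\Oo$,
\[
\widehat u(\ell)(\omega_1)-\widehat u(\ell)(\omega_2)=\frac{\widehat f(\ell)(\omega_1)-\widehat f(\ell)(\omega_2)}{\im\,\omega_1\cdot\ell}+\widehat f(\ell)(\omega_2)\,\frac{(\omega_2-\omega_1)\cdot\ell}{\im\,(\omega_1\cdot\ell)(\omega_2\cdot\ell)}\,.
\]
The first term is handled exactly as above, producing $\g^{-1}\big(\sup_\ell\Gamma_\mu(\ell)\big)\|f\|^{\rm lip}_{\sigma+\rho}$. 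In the second term $|(\omega_2-\omega_1)\cdot\ell|\le|\ell|_\zia\|\omega_1-\omega_2\|_\infty$, while the product of two small divisors is bounded by $\g^{-2}\prod_j(1+|\ell_j|^\mu\jap j^\mu)^2$ via \eqref{diofantinoBIS}; the extra polynomial factor $|\ell|_\zia$ is absorbed by reserving an arbitrarily thin additional slice of the strip (an $O(\rho^{-1})$ cost, negligible against the leading exponential), and the \emph{squared} product is controlled by the very same scalar estimate applied with $2\mu$ in place of $\mu$, hence by $\exp\big(\tfrac{\tau}{\rho^{1/\zia}}\ln\tfrac{\tau}{\rho}\big)$ with a possibly larger $\tau=\tau(\zia,\mu)$. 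Adding the two contributions and recalling the definition \eqref{norme Lip gamma} of $\|\cdot\|^{\Lipg}_\sigma$ gives the Lipschitz bound. The scalar estimate of the middle paragraph is the only real obstacle; everything else is bookkeeping.
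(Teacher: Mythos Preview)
Your proof is correct and follows essentially the same route as the paper: the same Fourier Ansatz, the same reduction to $\sup_{\ell}\Gamma_\mu(\ell)$, and the same difference formula for the Lipschitz part. The paper simply invokes its appendix Lemma (``bound per stima di Cauchy'') for the scalar estimate $\sup_\ell\Gamma_\mu(\ell)\le\exp\big(\tfrac{\tau}{\rho^{1/\zia}}\ln\tfrac{\tau}{\rho}\big)$, whose proof is exactly the factorize-and-threshold argument you sketch, and it leaves the Lipschitz bound at the level of the difference identity without spelling out the two-term estimate you carry through.
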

\begin{proof}
Since $\omega \in \Dc$, the solution $u$ of the equation $\omega \cdot \partial_\f u = f$ is given by 
$$
u (\f) = (\omega \cdot \partial_\f)^{- 1} f(\f) = \sum_{\ell \in \Z^\infty_* \setminus \{ 0 \}} \frac{\widehat f(\ell)}{\ii \omega \cdot \ell} e^{\ii \ell \cdot \f}\,. 
$$
Hence, using that $\omega \in \tD_{\g,\mu}$
$$
\begin{aligned}
\| u \|_\sigma & \leq \gamma^{- 1} \sum_{\ell \in \Z^\infty_* \setminus \{ 0 \}} \prod_{i} (1 + \langle i \rangle^{\mu} |\ell_i|^{\mu}) \| \widehat f(\ell)\|_X e^{\sigma |\ell|_\zia} \\
& \leq \gamma^{- 1} \sup_{\ell \in \Z^\infty_*} \Big( e^{- \rho |\ell|_\zia} \prod_{i} (1 + \langle i \rangle^{\mu} |\ell_i|^{\mu}) \Big) \| f \|_{\sigma + \rho}\,. 
\end{aligned}
$$
and the claimed estimate follows by applying Lemma \ref{bound per stima di Cauchy}-$(i)$. Regarding the Lipschitz estimates we remark that
\[
u(\omega_1)- u(\omega_2) =  -\im \sum_{\ell \in \Z^\infty_* \setminus \{ 0 \}} \Big(\frac{ \widehat f(\ell,\omega_1) - \widehat f(\ell,\omega_2)}{ (\omega_2 \cdot \ell)}- \widehat f(\ell,\omega_1) \frac{(\omega_1-\omega_2)\cdot\ell}{{( \omega_2 \cdot \ell)(\omega_1 \cdot \ell)}} 
\Big )
e^{\ii \ell \cdot \f}
\]
\end{proof}
We conclude this section by discussing how the definition of ${\mathcal H}(\T^\infty_\sigma, X)$  (or equivalently  ${\mathcal H}(\T^\infty_\sigma\times\T_\s,  X)$) depends on the coordinates on $\T^\infty_\s$.
\begin{definition}
Recall $\ell^\infty:= \ell^\infty(\N,\C)$.	We say that a  function $a \in {\mathcal H}(\T^\infty_{\sigma + \rho})$ is {\it real on real} if $a(\f)\in\R$ for all $\f\in \T^\infty$. Similarly, 
	$\alpha \in {\mathcal H}(\T^\infty_{\sigma + \rho},\ell^\infty)$ is {\it real on real} if $\alpha_j(\f)\in\R$, for all $\f\in \T^\infty, j\in \N$.
\end{definition}
\begin{proposition}[\bf Torus diffeomorphism]\label{lemma diffeo inverso}
	Let $\alpha \in {\mathcal H}(\T^\infty_{\sigma + \rho}, \ell^\infty)$ be real on real. Then there exists $\e = \e(\rho) $ such that if $\| \alpha \|_{\sigma + \rho} \leq \e$, then the map $ \f \mapsto \f+ \alpha(\f)$ is an invertible diffeomorphism of the infinite dimensional torus $\T^\infty_\s$ (w.r. to the $\ell^\infty$-topology)  and its inverse is given by the map $\vartheta \mapsto \vartheta + \widetilde \alpha(\vartheta)$, where $\widetilde \alpha \in {\mathcal H}(\T^\infty_{\sigma+\frac\rho 2}, \ell^\infty)$ is real on real and satisfies the estimate $\| \widetilde \alpha \|_{\sigma+\frac{\rho}{2}} \lesssim  \| \alpha \|_{\sigma + \rho}$. Furthermore if $\alpha(\cdot; \omega) \in {\mathcal H}(\T_{\sigma + \rho}^\infty, \ell^\infty)$ is Lipschitz w.r. to $\omega \in \Oo \subseteq \Ro$, then $\| \widetilde \alpha \|_{\sigma +\frac{\rho}{2}}^\Lipg \lesssim  \| \alpha \|_{\sigma + \rho}^\Lipg$. 
\end{proposition}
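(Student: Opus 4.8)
\noindent\emph{Plan of proof.} Writing the sought inverse as $\vartheta\mapsto\vartheta+\widetilde\alpha(\vartheta)$, the defining relation $\vartheta+\widetilde\alpha(\vartheta)+\alpha\big(\vartheta+\widetilde\alpha(\vartheta)\big)=\vartheta$ shows that $\widetilde\alpha$ must be a fixed point of the nonlinear operator
\[
\mathcal{T}(\beta):=-\,\alpha\circ(\id+\beta)\,.
\]
The plan is to solve $\widetilde\alpha=\mathcal{T}(\widetilde\alpha)$ by a contraction argument in the closed ball $B:=\{\beta\in{\mathcal H}(\T^\infty_{\sigma+\rho/2},\ell^\infty):\ \beta\text{ real on real},\ \|\beta\|_{\sigma+\rho/2}\le\rho/4\}$, and then to check that the fixed point does invert $\id+\alpha$ and depends Lipschitz–continuously on the parameter $\omega$.

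The essential tool is a \emph{composition estimate}: if $g\in{\mathcal H}(\T^\infty_{a+b},X)$ and $\beta\in{\mathcal H}(\T^\infty_a,\ell^\infty)$ is real on real with $\|\beta\|_a\le b$, then $g\circ(\id+\beta)\in{\mathcal H}(\T^\infty_a,X)$ and $\|g\circ(\id+\beta)\|_a\le\|g\|_{a+b}$. Indeed $g\circ(\id+\beta)(\vartheta)=\sum_{\ell\in\Z^\infty_*}\widehat g(\ell)\,e^{\ii\ell\cdot\vartheta}\,e^{\ii\ell\cdot\beta(\vartheta)}$; since every component obeys $\|\beta_j\|_a\le\|\beta\|_a$ and $|\ell|_\zia=\sum_j j^\zia|\ell_j|\ge\|\ell\|_1$ (as $j\ge1$), one gets $\|\ell\cdot\beta\|_a\le\|\ell\|_1\|\beta\|_a\le|\ell|_\zia b$; hence, expanding the exponential in power series and using the Banach–algebra property of ${\mathcal H}(\T^\infty_a,\C)$ from Lemma \ref{Lemma prodotto}, $\|e^{\ii\ell\cdot\beta}\|_a\le e^{|\ell|_\zia b}$, while multiplying by the pure harmonic $e^{\ii\ell\cdot\vartheta}$ costs a further factor $e^{a|\ell|_\zia}$ by the triangle inequality for $|\cdot|_\zia$. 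Summing over $\ell$ gives $\sum_\ell\|\widehat g(\ell)\|_X e^{(a+b)|\ell|_\zia}=\|g\|_{a+b}$. Applied with $g=\alpha$, $a=\sigma+\rho/2$, $b=\rho/2$ this yields $\mathcal{T}(B)\subseteq B$ as soon as $\|\alpha\|_{\sigma+\rho}\le\rho/4$; in particular any fixed point automatically satisfies $\|\widetilde\alpha\|_{\sigma+\rho/2}=\|\mathcal{T}(\widetilde\alpha)\|_{\sigma+\rho/2}\le\|\alpha\|_{\sigma+\rho}$, which is the asserted bound.

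For the contraction one writes, for $\beta_1,\beta_2\in B$ and $\beta_s:=(1-s)\beta_2+s\beta_1\in B$,
\[
\mathcal{T}(\beta_1)-\mathcal{T}(\beta_2)=-\int_0^1 \big[(d_\f\alpha)\circ(\id+\beta_s)\big][\beta_1-\beta_2]\,d s\,,
\]
and estimates $\|d_\f\alpha\|_{{\mathcal H}(\T^\infty_{\sigma+3\rho/4},{\mathcal M}_1)}\lesssim\rho^{-1}\|\alpha\|_{\sigma+\rho}$ by the Cauchy estimate (Lemma \ref{stima di cauchy}); the composition estimate (now with $a=\sigma+\rho/2$, $b=\rho/4$) together with the bilinear version of Lemma \ref{Lemma prodotto} for the pairing ${\mathcal M}_1\times\ell^\infty\to\ell^\infty$ then gives $\|\mathcal{T}(\beta_1)-\mathcal{T}(\beta_2)\|_{\sigma+\rho/2}\lesssim\rho^{-1}\|\alpha\|_{\sigma+\rho}\|\beta_1-\beta_2\|_{\sigma+\rho/2}$. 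Thus for $\|\alpha\|_{\sigma+\rho}\le\e=\e(\rho)$, with $\e(\rho)$ a suitably small multiple of $\rho$, $\mathcal{T}$ is a contraction of $B$ and has a unique fixed point $\widetilde\alpha\in B$, real on real since $\mathcal{T}$ preserves this property and it is stable under $\|\cdot\|_{\sigma+\rho/2}$–limits (Lemma \ref{embedding L infty}). The identity $(\id+\alpha)\circ(\id+\widetilde\alpha)=\id$ is immediate from $\widetilde\alpha=\mathcal{T}(\widetilde\alpha)$; running the same construction with $\widetilde\alpha$ in place of $\alpha$ produces a right inverse $\id+\widehat\alpha$ of $\id+\widetilde\alpha$, and associativity of composition forces $\id+\widehat\alpha=\id+\alpha$, whence $(\id+\widetilde\alpha)\circ(\id+\alpha)=\id$ as well — so $\f\mapsto\f+\alpha(\f)$ is the claimed diffeomorphism (understood between the appropriate nested strips, since $\id+\alpha$ is a self-map of $\T^\infty_\sigma$ only up to an $O(\e)$ enlargement of each coordinate strip). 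The Lipschitz bound is the standard parameter–dependent contraction estimate: subtracting the fixed-point identities at $\omega_1,\omega_2$ and splitting
\[
\widetilde\alpha(\omega_1)-\widetilde\alpha(\omega_2)=-\big[\alpha(\cdot;\omega_1)-\alpha(\cdot;\omega_2)\big]\circ(\id+\widetilde\alpha(\omega_1))-\big[\alpha(\cdot;\omega_2)\circ(\id+\widetilde\alpha(\omega_1))-\alpha(\cdot;\omega_2)\circ(\id+\widetilde\alpha(\omega_2))\big]\,,
\]
one bounds the first bracket via the composition estimate and absorbs the second via the contraction estimate, obtaining together with the already proven $\sup$–bound that $\|\widetilde\alpha\|^{\Lipg}_{\sigma+\rho/2}\lesssim\|\alpha\|^{\Lipg}_{\sigma+\rho}$, consistently with Lemma \ref{stime lip}.

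\medskip
\noindent\emph{Main obstacle.} The only genuinely delicate step is the composition estimate on the thickened infinite-dimensional torus: one must control $\|e^{\ii\ell\cdot\beta}\|_a$ \emph{uniformly over all} $\ell\in\Z^\infty_*$, and this is precisely where the inequality $|\ell|_\zia\ge\|\ell\|_1$ together with sacrificing a fixed fraction of the analyticity strip is used. Once this is available, the remaining work is the bookkeeping of the nested strips $\sigma+\rho\supset\sigma+3\rho/4\supset\sigma+\rho/2$ so that $\mathcal{T}$ is simultaneously a self-map of $B$ and a contraction, plus the routine check that $\id+\alpha$ sends $\T^\infty_\sigma$ into a slightly larger strip.
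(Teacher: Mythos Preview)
Your proof is correct and follows essentially the same route as the paper: the paper first isolates the composition estimate as Lemma~\ref{composizione funzioni analitiche T infty} (proved by expanding $e^{\ii\ell\cdot\alpha}$ in power series and computing Fourier coefficients explicitly, whereas you phrase the same bound via the Banach-algebra inequality $\|e^{\ii\ell\cdot\beta}\|_a\le e^{|\ell|_\zia b}$), then runs the contraction argument for $\Psi_\alpha(u)=-\alpha\circ(\id+u)$ in Lemma~\ref{punto fisso diffeo del toro 1} with the same Cauchy-estimate bound on the Lipschitz constant, and finally deduces the two-sided inverse and the Lipschitz-in-$\omega$ estimate exactly as you do. The only cosmetic difference is that you argue the two-sidedness via associativity after constructing a right inverse of $\id+\widetilde\alpha$, while the paper simply notes that the fixed-point identity holds symmetrically on the smaller strip.
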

\begin{corollary}
Given $\alpha \in {\mathcal H}(\T^\infty_{\sigma + \rho}, \ell^\infty)$  as in Theorem \ref{lemma diffeo inverso}, the operators
\begin{align}
\label{diffeofun}
&\Phi_\alpha : {\mathcal H}(\T^\infty_{\sigma + \rho}, X) \to {\mathcal H}(\T^\infty_\sigma, X), \quad u(\f) \mapsto u(\f + \alpha(\f))\,,\\
& \Phi_{\tilde\alpha} : {\mathcal H}(\T^\infty_{\sigma + \frac\rho 2}, X) \to {\mathcal H}(\T^\infty_\sigma, X), \quad u(\vartheta) \mapsto u(\vartheta + {\tilde\alpha}(\vartheta)\nonumber
\end{align}
are bounded, satisfy \[\| \Phi_\alpha \|_{{\mathcal B}\Big({\mathcal H}(\T^\infty_{\sigma + \rho}, X), {\mathcal H}(\T^\infty_\sigma, X) \Big)}, \| \Phi_{\tilde\alpha} \|_{{\mathcal B}\Big({\mathcal H}(\T^\infty_{\sigma + \rho}, X), {\mathcal H}(\T^\infty_\sigma, X) \Big)} \leq 1\,.\]
and for any $\f\in \T^\infty_\s$, $u\in {\mathcal H}(\T^\infty_{\sigma + \rho}, X), v\in {\mathcal H}(\T^\infty_{\sigma + \frac\rho 2}, X)$ one has 
\[
\Phi_{\tilde\alpha}\circ \Phi_{\alpha} u (\f)= u(\f) \,,\quad \Phi_\alpha\circ \Phi_{\tilde\alpha} v (\f)= u(\f)\,.
\]
\end{corollary}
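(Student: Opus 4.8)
The plan is to derive both claims from the Fourier characterisation of ${\mathcal H}(\T^\infty_\sigma,X)$ in Definition \ref{funzioni analitiche T infty}, using the multiplicative estimate of Lemma \ref{Lemma prodotto} for the boundedness and the inverse-diffeomorphism statement of Proposition \ref{lemma diffeo inverso} for the composition identities. The only quantitative step is an exponential bound on $\|e^{\ii\ell\cdot\alpha}\|_\sigma$.

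First I would show that $\Phi_\alpha$ is a contraction. Take $u=\sum_{\ell\in\Z^\infty_*}\widehat u(\ell)e^{\ii\ell\cdot\f}\in{\mathcal H}(\T^\infty_{\sigma+\rho},X)$. Since $\langle j\rangle^\zia\ge1$ and, by Lemma \ref{embedding L infty}, $\sup_\f\|\alpha(\f)\|_{\ell^\infty}\le\|\alpha\|_{\sigma+\rho}\le\e$, for $\e\le\rho$ the point $\f+\alpha(\f)$ stays in $\T^\infty_{\sigma+\rho}$ for every $\f\in\T^\infty_\sigma$; hence $u(\f+\alpha(\f))$ is defined and expands, pointwise on $\T^\infty_\sigma$, as $\Phi_\alpha u(\f)=\sum_\ell\widehat u(\ell)\,e^{\ii\ell\cdot\f}\,e^{\ii\ell\cdot\alpha(\f)}$. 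Because $\ell\cdot\alpha=\sum_j\ell_j\alpha_j$ is a \emph{finite} sum, $\|\alpha_j\|_\sigma\le\|\alpha\|_\sigma$ for every $j$, and $\|\ell\|_1\le|\ell|_\zia$, one gets $\|\ell\cdot\alpha\|_\sigma\le|\ell|_\zia\|\alpha\|_\sigma$; expanding the exponential and iterating Lemma \ref{Lemma prodotto} with $X=\C$ yields $\|e^{\ii\ell\cdot\alpha}\|_\sigma\le e^{|\ell|_\zia\|\alpha\|_\sigma}$. Multiplying by the constant $\widehat u(\ell)\in X$ and by $e^{\ii\ell\cdot\f}$ (of $\|\cdot\|_\sigma$-norm $e^{\sigma|\ell|_\zia}$) and summing over $\ell$ gives $\|\Phi_\alpha u\|_\sigma\le\sum_\ell\|\widehat u(\ell)\|_X e^{(\sigma+\|\alpha\|_\sigma)|\ell|_\zia}\le\|u\|_{\sigma+\rho}$ as soon as $\e=\e(\rho)$ is chosen with $\|\alpha\|_\sigma\le\|\alpha\|_{\sigma+\rho}\le\e\le\rho$. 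This also shows that the series for $\Phi_\alpha u$ converges absolutely in the Banach space ${\mathcal H}(\T^\infty_\sigma,X)$, so $\Phi_\alpha u$ really lies there and $\|\Phi_\alpha\|\le1$. Running the same argument with $\rho$ replaced by $\rho/2$, and using $\|\widetilde\alpha\|_{\sigma+\rho/2}\lesssim\|\alpha\|_{\sigma+\rho}\le\e$ from Proposition \ref{lemma diffeo inverso} (after shrinking $\e$), gives $\|\Phi_{\widetilde\alpha}\|\le1$.

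For the composition identities I would argue pointwise. Writing $g(\f):=\f+\alpha(\f)$ and $h(\vartheta):=\vartheta+\widetilde\alpha(\vartheta)$, Proposition \ref{lemma diffeo inverso} says precisely $g\circ h=h\circ g=\id$. Hence for $u\in{\mathcal H}(\T^\infty_{\sigma+\rho},X)$ and $\f\in\T^\infty_\sigma$ one has $(\Phi_{\widetilde\alpha}\circ\Phi_\alpha u)(\f)=(\Phi_\alpha u)(h(\f))=u(g(h(\f)))=u(\f)$, and symmetrically $(\Phi_\alpha\circ\Phi_{\widetilde\alpha}v)(\f)=v(h(g(\f)))=v(\f)$ for $v\in{\mathcal H}(\T^\infty_{\sigma+\rho/2},X)$.

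The only point that needs care — and the closest thing to an obstacle — is the bookkeeping of analyticity radii in the compositions: to apply $\Phi_{\widetilde\alpha}$ after $\Phi_\alpha$ one needs $\Phi_\alpha$ to land in ${\mathcal H}(\T^\infty_{\sigma+\rho/2},X)$ rather than only in ${\mathcal H}(\T^\infty_\sigma,X)$, and likewise every intermediate point occurring in the two compositions must stay inside the relevant strip. This is harmless: because $\langle j\rangle^\zia\ge1$, a uniform $\e$-displacement removes at most $\e$ (not $\e\langle j\rangle^\zia$) from each coordinate of the strip, and the hypothesis $\|\alpha\|_{\sigma+\rho}\le\e$ leaves a fixed fraction of the width to spare, so the contraction estimate of the previous paragraph goes through verbatim with the shifted radii. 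Thus the whole corollary is essentially a repackaging of Proposition \ref{lemma diffeo inverso} together with the product estimate of Lemma \ref{Lemma prodotto}.
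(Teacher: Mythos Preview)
Your proposal is correct and follows essentially the same route as the paper. The paper packages the boundedness claim as Lemma~\ref{composizione funzioni analitiche T infty}, where it expands the Fourier coefficients of $u(\f+\alpha(\f))$ explicitly (writing out $\widehat f(k)$ as a sum over $\ell+\ell_1+\cdots+\ell_n=k$) and then estimates $\|f\|_\sigma$ directly; you instead bound $\|e^{\ii\ell\cdot\alpha}\|_\sigma\le e^{|\ell|_\zia\|\alpha\|_\sigma}$ via the Banach-algebra estimate of Lemma~\ref{Lemma prodotto} and sum termwise in $\mathcal H(\T^\infty_\sigma,X)$. Both computations are the same in substance and yield the identical contraction bound, and the composition identities in both cases reduce to the pointwise inverse relation $g\circ h=h\circ g=\mathrm{id}$ supplied by Proposition~\ref{lemma diffeo inverso}, with the same bookkeeping of intermediate radii that you flag in your last paragraph.
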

 In order to prove our result we shall proceed in steps, proving a series of technical lemmata.
\begin{lemma}\label{composizione funzioni analitiche T infty}
For $\s,\rho>0$, let $u \in {\mathcal H} (\T^\infty_{\sigma + \rho}, X)$ and $\alpha \in {\mathcal H}(\T^\infty_\sigma, \ell^\infty)$ with $\| \alpha \|_{\sigma} \leq \rho$. Then the function $f(\f ) := u(\f + \alpha(\f)))$ belongs to the space ${\mathcal H}(\T^\infty_\sigma, X)$ and $\| f \|_\sigma \leq \| u \|_{\sigma + \rho}$. As a consequence, the linear operator 
$$
\Phi_\alpha : {\mathcal H}(\T^\infty_{\sigma + \rho}, X) \to {\mathcal H}(\T^\infty_\sigma, X), \quad u(\f) \mapsto u(\f + \alpha(\f))
$$
is  bounded and satisfies $\| \Phi_\alpha \|_{{\mathcal B}\Big({\mathcal H}(\T^\infty_{\sigma + \rho}, X), {\mathcal H}(\T^\infty_\sigma, X) \Big)} \leq 1$. 
\end{lemma}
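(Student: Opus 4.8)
The plan is to expand $u$ in its Fourier series, factor the substitution $\f \mapsto \f + \alpha(\f)$ into scalar multipliers $e^{\im\ell\cdot\alpha(\f)}$, and control these multipliers through the Banach-algebra estimate of Lemma \ref{Lemma prodotto}.

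First I would check that the composition is well posed. For $\f\in\T^\infty_\sigma$ and $j\in\N$, the $j$-th component function $\alpha_j$ satisfies $\|\alpha_j\|_\sigma\le\|\alpha\|_\sigma$ (a single component of an $\ell^\infty$-valued Fourier series has scalar norm at most the $\ell^\infty$-norm of the series), so Lemma \ref{embedding L infty} gives $|\alpha_j(\f)|\le\|\alpha\|_\sigma\le\rho\le\rho\jap{j}^\zia$; hence $|{\rm Im}(\f_j+\alpha_j(\f))|\le\sigma\jap{j}^\zia+\rho\jap{j}^\zia=(\sigma+\rho)\jap{j}^\zia$, that is $\f+\alpha(\f)\in\T^\infty_{\sigma+\rho}$. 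Since the Fourier series of $u$ is totally convergent on $\T^\infty_{\sigma+\rho}$, for every $\f\in\T^\infty_\sigma$
$$
f(\f)=u(\f+\alpha(\f))=\sum_{\ell\in\Z^\infty_*}\widehat u(\ell)\,e^{\im\ell\cdot\f}e^{\im\ell\cdot\alpha(\f)}=\sum_{\ell\in\Z^\infty_*}\widehat u(\ell)\,g_\ell(\f),\qquad g_\ell(\f):=e^{\im\ell\cdot(\f+\alpha(\f))},
$$
the series converging absolutely pointwise.

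Next I would bound $\|g_\ell\|_\sigma$. Since $\ell$ has finite support, $\ell\cdot\alpha=\sum_j\ell_j\alpha_j$ is a finite sum in ${\mathcal H}(\T^\infty_\sigma)$ with $\|\ell\cdot\alpha\|_\sigma\le\big(\sum_j|\ell_j|\big)\|\alpha\|_\sigma\le|\ell|_\zia\,\|\alpha\|_\sigma\le\rho|\ell|_\zia$, where I used $\sum_j|\ell_j|\le\sum_j j^\zia|\ell_j|=|\ell|_\zia$. Expanding $e^{\im\ell\cdot\alpha}=\sum_{k\ge0}\tfrac{\im^k}{k!}(\ell\cdot\alpha)^k$ and applying Lemma \ref{Lemma prodotto} term by term shows $e^{\im\ell\cdot\alpha}\in{\mathcal H}(\T^\infty_\sigma)$ with $\|e^{\im\ell\cdot\alpha}\|_\sigma\le e^{\|\ell\cdot\alpha\|_\sigma}\le e^{\rho|\ell|_\zia}$. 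Multiplying by $e^{\im\ell\cdot\f}$, whose norm is exactly $e^{\sigma|\ell|_\zia}$, and invoking Lemma \ref{Lemma prodotto} once more yields $\|g_\ell\|_\sigma\le e^{(\sigma+\rho)|\ell|_\zia}$.

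Finally, $\sum_\ell\|\widehat u(\ell)\|_X\|g_\ell\|_\sigma\le\sum_\ell\|\widehat u(\ell)\|_X e^{(\sigma+\rho)|\ell|_\zia}=\|u\|_{\sigma+\rho}<\infty$, so the series $\sum_\ell\widehat u(\ell)g_\ell$ converges absolutely in the Banach space ${\mathcal H}(\T^\infty_\sigma,X)$; since $\|\cdot\|_\sigma$-convergence implies uniform convergence on $\T^\infty_\sigma$ (Lemma \ref{embedding L infty}), its sum coincides with $f$, whence $f\in{\mathcal H}(\T^\infty_\sigma,X)$ and $\|f\|_\sigma\le\sum_\ell\|\widehat u(\ell)\|_X\|g_\ell\|_\sigma\le\|u\|_{\sigma+\rho}$. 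The assertion on $\Phi_\alpha$ is then immediate. I expect no serious obstacle: the only point requiring care is the multiplier bound $\|e^{\im\ell\cdot\alpha}\|_\sigma\le e^{\rho|\ell|_\zia}$, where it is precisely the growth of the weights $j^\zia\ge1$ entering $|\cdot|_\zia$ that lets one trade the size $\rho$ of $\alpha$ against a loss $\rho$ of analyticity radius, consistently with the role these weights play throughout the paper.
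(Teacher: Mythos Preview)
Your proof is correct and follows essentially the same route as the paper: expand the exponential $e^{\im\ell\cdot\alpha}$ as a power series and exploit the bound $\|\ell\cdot\alpha\|_\sigma\le|\ell|_\zia\|\alpha\|_\sigma\le\rho|\ell|_\zia$ (via $\|\ell\|_1\le|\ell|_\zia$) to obtain $\|e^{\im\ell\cdot\alpha}\|_\sigma\le e^{\rho|\ell|_\zia}$. The only cosmetic difference is that the paper writes out the Fourier coefficients $\widehat f(k)$ explicitly and bounds $\|f\|_\sigma$ directly from that formula, whereas you package the same computation through the Banach-algebra Lemma~\ref{Lemma prodotto} and a triangle inequality in ${\mathcal H}(\T^\infty_\sigma,X)$; you also add the well-posedness check $\f+\alpha(\f)\in\T^\infty_{\sigma+\rho}$, which the paper leaves implicit.
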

\begin{proof}
One has that 
\begin{equation}\label{detersivo 1}
f (\f) = \sum_{ \ell \in \Z^\infty_*} \widehat u(\ell) e^{\ii \ell \cdot \f} e^{\ii \ell \cdot \alpha(\f)}\,.
\end{equation}
Moreover for any $\ell \in \Z^\infty_*$, one has 
\begin{equation}\label{detersivo 2}
\begin{aligned}
e^{\ii \ell \cdot \alpha(\f)} & = \sum_{n \in \N} \frac{\ii^n}{n!} (\ell \cdot \alpha(\f))^n = \sum_{n \in \N} \sum_{\ell_1, \ldots, \ell_n \in \Z^\infty_*} \frac{\ii^n}{n!} (\ell \cdot \widehat \alpha(\ell_1)) \ldots (\ell \cdot \widehat \alpha(\ell_n)) e^{\ii (\ell_1 + \ldots + \ell_n) \cdot \f} \,. 
\end{aligned}
\end{equation}
By the formulae \eqref{detersivo 1}, \eqref{detersivo 2} one then gets that 
\begin{equation}\label{espansione fourier composizione}
\begin{aligned}
& f(\f) = \sum_{k \in \Z^\infty_*} \widehat f(k) e^{\ii k \cdot \f}, \\
& \widehat f(k) :=\sum_{n \in \N} \frac{\ii^n}{n!} \sum_{\ell + \ell_1 + \ldots + \ell_n = k} (\ell \cdot \widehat \alpha(\ell_1)) \ldots (\ell \cdot \widehat \alpha(\ell_n)) \widehat u(\ell)\,.
\end{aligned}
\end{equation}
Using that for $k = \ell + \ell_1 + \ldots + \ell_n$, one has that $e^{\sigma |k|_\zia} \leq e^{\sigma |\ell|_\zia} e^{\sigma |\ell_1|_\zia} \ldots e^{\sigma |\ell_n|_\zia}$, and $|(\ell \cdot \widehat \alpha(\ell_i))| \le \|\ell\|_{1} \|\widehat \alpha(\ell_i)\|_\infty$ one gets that 
\begin{equation}\label{detersivo 3}
\begin{aligned}
\| f \|_\sigma & = \sum_{k \in \Z^\infty_*} e^{\sigma |k|_\zia} \| \widehat f(k)\|_X \\
& \leq \sum_{n \in \N} \frac{1}{n!} \sum_{\ell, \ell_1, \ldots, \ell_n \in \Z^\infty_*}  (\|\ell \|_1)^n e^{\sigma |\ell|_\zia} \| \widehat u(\ell)\|_X e^{\sigma |\ell_1|_\zia} \| \widehat \alpha(\ell_1)\|_{\infty} \ldots e^{\sigma |\ell_n|_\zia} \|\widehat \alpha(\ell_n) \|_{\infty} \\
& \stackrel{\| \ell \|_1 \leq |\ell|_\zia}{\leq} \sum_{\ell \in \Z^\infty_*} e^{\sigma |\ell|_\zia} \| \widehat u(\ell)\|_X \sum_{n \in \N} \frac{|\ell|_\zia^n}{n!} \prod_{j = 0}^n \sum_{\ell_j \in \Z^\infty_* }e^{\sigma |\ell_j|_\zia} \| \widehat \alpha(\ell_j)\|_{\infty}  \\
& \leq \sum_{\ell \in \Z^\infty_*} e^{\sigma |\ell|_\zia} \| \widehat u(\ell)\|_X \sum_{n \in \N} \frac{|\ell|_\zia^n \| \alpha\|^n_{\sigma}}{n!} \\
& \leq \sum_{\ell \in \Z^\infty_*} e^{\sigma |\ell|_\zia} \| \widehat u(\ell)\|_X {\rm exp}\Big(|\ell|_\zia \| \alpha\|_{\sigma} \Big) \\
& \stackrel{\| \alpha\|_{\sigma} \leq \rho}{\leq} \sum_{\ell \in \Z^\infty_*} e^{(\sigma + \rho) |\ell|_\zia} \| \widehat u(\ell)\|_X = \| u \|_{\sigma + \rho}\,. 
\end{aligned}
\end{equation}
\end{proof}
For $\alpha\in {\mathcal H}(\T^\infty_{\sigma + \rho}, \ell^{\infty})$ we now consider the map 
\begin{equation}\label{mappa punto fisso diffeo del toro}
\Psi_\alpha(u) (\f) := - \alpha(\f + u(\f))
\end{equation}
which, by Lemma \ref{composizione funzioni analitiche T infty} (with $\s\rightsquigarrow \s+\frac\rho 2$ and $\rho \rightsquigarrow \frac\rho 2$ ) is well defined  $ {\mathcal B}_{\sigma +\frac\rho 2}(0, R) \to {\mathcal H}(\T^\infty_{\sigma +\frac\rho 2}, \ell^{\infty})$, where 
\[
u\in {\mathcal B}_\sigma(0, R) := \Big\{ u \in {\mathcal H}(\T^\infty_\sigma, \ell^{\infty}) : \| u \|_{\sigma} \leq R \Big\}\,. 
\]
provided $R< \frac\rho 2$.
\begin{lemma}\label{punto fisso diffeo del toro 1}
Let $\alpha \in {\mathcal H}(\T^\infty_{\sigma + \rho}, \ell^{\infty})$. Then there exists $\e = \e(\rho)$ such that if $\| \alpha \|_{\sigma + \rho} \leq \e$, there exists a unique solution $u \in {\mathcal H}(\T^\infty_{\sigma +\frac\rho 2}, \ell^\infty)$ of the fixed point equation $u = \Psi_\alpha(u)$ satisfying the estimate $\| u \|_{\sigma +\frac\rho 2} \leq  \| \alpha \|_{\sigma + \rho}$. If $\alpha(\cdot; \omega) \in {\mathcal H}(\T^\infty_{\sigma + \rho}, \ell^\infty)$, $\omega \in \Oo \subseteq \Ro = [1, 2]^\N$ is Lipschitz, then
$\| u \|_{\sigma}^\Lipg \lesssim \| \alpha \|_{\sigma + \rho}^\Lipg$. 
\end{lemma}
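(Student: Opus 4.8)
The plan is to solve the fixed point equation $u = \Psi_\alpha(u)$ via the contraction mapping principle on the closed ball ${\mathcal B}_{\sigma + \frac\rho 2}(0, R)$ with $R := \|\alpha\|_{\sigma + \rho}$, chosen small enough (i.e.\ $\e = \e(\rho)$ small) that $R < \rho/2$. First I would check that $\Psi_\alpha$ maps this ball into itself: by Lemma \ref{composizione funzioni analitiche T infty} applied with $\sigma \rightsquigarrow \sigma + \frac\rho 2$ and $\rho \rightsquigarrow \frac\rho 2$, for $u \in {\mathcal B}_{\sigma + \frac\rho 2}(0, R)$ with $R < \rho/2$ one has $\|\Psi_\alpha(u)\|_{\sigma + \frac\rho 2} = \|\alpha(\cdot + u(\cdot))\|_{\sigma + \frac\rho 2} \leq \|\alpha\|_{\sigma + \rho} = R$, so the ball is invariant.

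Next I would establish the contraction estimate. Writing $\Psi_\alpha(u_1) - \Psi_\alpha(u_2) = -\big(\alpha(\f + u_1(\f)) - \alpha(\f + u_2(\f))\big)$, I would use the integral form of the mean value inequality, $\alpha(\f + u_1) - \alpha(\f + u_2) = \int_0^1 d_\f\alpha\big(\f + u_2 + t(u_1 - u_2)\big)[u_1 - u_2]\,dt$, together with the Cauchy estimate Lemma \ref{stima di cauchy} for $d_\f\alpha$ (which costs a factor $\lesssim \rho^{-1}\|\alpha\|_{\sigma+\rho}$ when passing from analyticity strip $\sigma + \rho$ down to $\sigma + \frac\rho 2$) and the composition Lemma \ref{composizione funzioni analitiche T infty} to control the inner substitution. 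This yields $\|\Psi_\alpha(u_1) - \Psi_\alpha(u_2)\|_{\sigma + \frac\rho 2} \leq C\rho^{-1}\|\alpha\|_{\sigma + \rho}\,\|u_1 - u_2\|_{\sigma + \frac\rho 2}$, which is a contraction with ratio $\leq 1/2$ provided $\|\alpha\|_{\sigma+\rho} \leq \e(\rho)$ with $\e(\rho)$ small relative to $\rho$. Banach's fixed point theorem then gives a unique $u \in {\mathcal B}_{\sigma + \frac\rho 2}(0, R)$ with $u = \Psi_\alpha(u)$, and $\|u\|_{\sigma + \frac\rho 2} = \|\Psi_\alpha(u)\|_{\sigma + \frac\rho 2} \leq \|\alpha\|_{\sigma + \rho}$ from the self-map bound, giving the claimed estimate.

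For the Lipschitz dependence on $\omega \in \Oo$, I would run the standard argument: for $\omega_1, \omega_2 \in \Oo$ let $u_i := u(\cdot;\omega_i)$ be the corresponding fixed points, and write $u_1 - u_2 = \Psi_{\alpha(\cdot;\omega_1)}(u_1) - \Psi_{\alpha(\cdot;\omega_2)}(u_2) = \big(\Psi_{\alpha(\cdot;\omega_1)}(u_1) - \Psi_{\alpha(\cdot;\omega_1)}(u_2)\big) + \big(\Psi_{\alpha(\cdot;\omega_1)}(u_2) - \Psi_{\alpha(\cdot;\omega_2)}(u_2)\big)$. The first bracket is bounded by $\frac12 \|u_1 - u_2\|_{\sigma + \frac\rho 2}$ by the contraction estimate; the second is $\alpha(\cdot + u_2;\omega_2) - \alpha(\cdot + u_2;\omega_1)$, bounded by $\|\alpha(\cdot;\omega_1) - \alpha(\cdot;\omega_2)\|_{\sigma + \rho} \leq \|\alpha\|^{\rm lip}_{\sigma+\rho}\|\omega_1 - \omega_2\|_\infty$ via Lemma \ref{composizione funzioni analitiche T infty}. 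Absorbing the first term on the left gives $\|u\|^{\rm lip}_{\sigma+\frac\rho 2} \lesssim \|\alpha\|^{\rm lip}_{\sigma+\rho}$, and combining with the sup bound yields $\|u\|^{\Lipg}_{\sigma + \frac\rho 2} \lesssim \|\alpha\|^{\Lipg}_{\sigma + \rho}$ (the statement's slight loss of strip, writing $\sigma$ instead of $\sigma + \frac\rho 2$, is harmless since the $\|\cdot\|_\sigma$ norm is monotone in the strip). The only mildly delicate point is bookkeeping the strip losses so that all three invocations — self-map, contraction, Lipschitz — fit inside the single budget $\rho$, i.e.\ choosing $\e(\rho)$ so that $C\rho^{-1}\e(\rho) \leq \tfrac12$; I expect this to be the main (though routine) obstacle, everything else being a direct application of the already-established Lemmas \ref{composizione funzioni analitiche T infty} and \ref{stima di cauchy}.
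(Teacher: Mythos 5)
Your proposal is correct and follows essentially the same route as the paper's proof: Banach contraction on the ball ${\mathcal B}_{\sigma+\rho/2}(0,R)$, with Lemma~\ref{composizione funzioni analitiche T infty} giving the self-map bound, the mean-value formula plus the Cauchy estimate of Lemma~\ref{stima di cauchy} giving the contraction ratio $\lesssim\rho^{-1}\|\alpha\|_{\sigma+\rho}$, and the standard telescope-and-absorb argument for the Lipschitz dependence in $\omega$. Your strip bookkeeping (splitting the budget $\rho$ between Cauchy and composition) is in fact slightly cleaner than the paper's, which contains a small typo writing ${\mathcal H}(\T^\infty_{\sigma+3\rho/2},{\mathcal M}_1)$ where a strip below $\sigma+\rho$ is meant.
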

\begin{proof}
To start with we show the following claim.
\begin{itemize}
\item{\sc Claim.} There exist $ \e = \e(\rho), R = R(\rho) > 0$ such that if $\| \alpha \|_{\sigma + \rho} \leq \e$, then the map \ref{mappa punto fisso diffeo del toro} 
is a contraction on  
$$
{\mathcal B}_\sigma(0, R) := \Big\{ u \in {\mathcal H}(\T^\infty_\sigma, \ell^{\infty}) : \| u \|_{\sigma} \leq R \Big\}\,. 
$$
\end{itemize}
{\sc Proof of the claim.}  By taking $R = R(\rho)$ sufficiently small, by applying Lemma \ref{composizione funzioni analitiche T infty}, one gets that for any $u \in {\mathcal B}_{\sigma +\frac\rho 2}(0, R)$, $\Psi_\alpha(u) \in {\mathcal H}(\T^\infty_{\sigma +\frac\rho 2}, \ell^\infty)$ and $\| \Psi_\alpha(u) \|_{\sigma +\frac\rho 2} \leq \| \alpha\|_{\sigma + \rho}$. Then, if $\| \alpha \|_{\sigma + \rho} \leq \e \leq R$, one has that $\Psi_\alpha : {\mathcal B}_{\sigma +\frac\rho 2}(0, R) \to {\mathcal B}_{\sigma +\frac\rho 2}(0, R)$. Now, given $u_1, u_2 \in {\mathcal B}_{\sigma +\frac\rho 2}(0, R)$, we want to bound $\| \Psi_\alpha(u_1) - \Psi_\alpha(u_2)\|_{\sigma}$. By the mean value thoerem, one has 
\begin{equation}\label{nutella 10}
\begin{aligned}
\Psi_\alpha(u_1) - \Psi_\alpha(u_2) = \int_0^1 d_\f \alpha\Big(\f +  t u_1(\f) + (1 - t) u_2(\f)  \Big)[u_2 - u_1]\, d t \,.
\end{aligned}
\end{equation}
Since $\| u_1 \|_{\sigma +\frac\rho 2}, \| u_2\|_{\sigma +\frac\rho 2} \leq R$, by taking $R \leq \frac{\rho}{4}$, by Lemmata \ref{stima di cauchy} and \ref{composizione funzioni analitiche T infty} one has the estimate 
\begin{equation}\label{nutella 11}
\begin{aligned}
\| \Psi_\alpha(u_1) - \Psi_\alpha(u_2) \|_{\sigma +\frac\rho 2} & \leq \| d_\f \alpha\|_{{\mathcal H}(\T^\infty_{\s+\frac{3\rho}{2}}, \cM_{1})} \| u_1 - u_2\|_{\sigma +\frac\rho 2}   \\
& \lesssim \rho^{- 1} \| \alpha\|_{\sigma + \rho} \| u_1 - u_2\|_{\sigma +\frac\rho 2}
\end{aligned}
\end{equation}
Hence by taking $ \| \alpha\|_{{\sigma + \rho}} \leq  \e(\rho) $ small enough, one gets that the map $\Psi_\alpha$ is a contraction and by recalling Lemma \ref{composizione funzioni analitiche T infty} the unique solution of the fixed point equation satisfies $\| u \|_{\sigma +\frac\rho 2} \leq \| \alpha\|_{\sigma + \rho}$. Now assume that $\alpha(\cdot; \omega)$, $\omega \in \Oo$ is Lipschitz w.r. to $\omega$. Recalling the definition \eqref{mappa punto fisso diffeo del toro} and using the fixed point equation $u = \Psi_\alpha(u)$, one computes for any $\omega_1, \omega_2 \in \Oo$
$$
\begin{aligned}
\Delta_{\omega_1 \omega_2} u (\f) & = \alpha(\f + u(\f; \omega_1); \omega_1) - \alpha(\f + u(\f; \omega_2); \omega_2) \\
& = \alpha(\f + u(\f; \omega_1); \omega_1) - \alpha(\f + u(\f; \omega_1); \omega_2) \\
& \quad + \alpha(\f + u(\f; \omega_1); \omega_2) - \alpha(\f + u(\f; \omega_2); \omega_2)\,. 
\end{aligned}
$$
By taking $R = R(\rho)$ small enough, using the mean value Theorem, the Cauchy estimates of Lemma \ref{stima di cauchy} and the composition Lemma \ref{composizione funzioni analitiche T infty}, one gets $$\| \Delta_{\omega_1 \omega_2} u \|_{\sigma +\frac\rho 2} \leq \| \Delta_{\omega_1 \omega_2} \alpha\|_{\sigma + \rho} + C(\rho) \sup_{\omega \in \Oo}\| \alpha(\cdot; \omega)\|_{\sigma + \rho} \| \Delta_{\omega_1 \omega_2} u \|_{\sigma +\frac\rho 2}\,.$$
Hence, by taking $C(\rho) \sup_{\omega \in \Oo}\| \alpha(\cdot; \omega)\|_{\sigma + \rho} \leq \frac12$, one gets that $\| \Delta_{\omega_1 \omega_2} u \|_{\sigma +\frac\rho 2} \leq 2 \| \Delta_{\omega_1 \omega_2} \alpha\|_{\sigma + \rho}$ and the claimed Lipschitz estimate follows. 
\end{proof}

\begin{proof}[Proof of Proposition \ref{lemma diffeo inverso}]
Clearly the map $\f  \mapsto \f + \alpha(\f)$ is invertible by taking $ \| \alpha \|_{\sigma + \rho}  \leq \e$ small enough. By applying Lemma \ref{punto fisso diffeo del toro 1} there exists a unique $\widetilde \alpha \in {\mathcal H}(\T^\infty_{\sigma +\frac\rho 2}, \ell^{\infty})$ with $\| \widetilde \alpha \|_{\sigma +\frac\rho 2} \lesssim \| \alpha\|_{\sigma + \rho}$ satisfying the equation
$$
\widetilde \alpha(\vartheta) + \alpha(\vartheta + \widetilde \alpha(\vartheta)) = 0
$$
for $\vartheta\in \T^\infty_{\sigma +\frac\rho 2}$. The same holds exchanging $\vartheta\rightsquigarrow \f$ and $\alpha\rightsquigarrow\widetilde{\alpha}$ for $\f\in \T^\infty_\s$. Hence $\vartheta \mapsto \vartheta + \widetilde \alpha(\vartheta)$ is the inverse of $\f \mapsto \f + \alpha(\f)$ and viceversa and the proof  is concluded. 
\end{proof}
\section{Linear operators}\label{sezione astratta op lineari}
Given a linear operator ${\mathcal R} : L^2(\T) \to L^2(\T)$, we identify it with its matrix representation $({\mathcal R}_k^{k'})_{k, k' \in \Z}$ with respect to the exponential basis  where 
$$
{\mathcal R}_k^{k'} := \frac{1}{2 \pi} \int_\T {\mathcal R}[e^{\im k' x} ] e^{- \im k x}\, d x\,. 
$$
Clearly given ${\mathcal R}$ as above, the adjoint w.r.t the standard hermitian product in $ L^2(\C)$ is given by \begin{equation}
\label{aggiunto}
{(\mathcal R^*)}_k^{k'} = \overline{\mathcal R}_{ k'}^{ k}\,.
\end{equation}

We may also give a block-matrix  decomposition by grouping together the matrix-Fourier indices with the same absolute values. More precisely, we define for any $j \in \N_0$ the space ${\bf E}_j$  as 
\begin{equation}\label{bf E alpha}
\begin{aligned}
& {\bf E}_0 := {\rm span}\{ 1 \}\,,  \\
& {\bf E}_j := {\rm span} \{ e^{\im j x}, e^{- \im j x}\}, \quad \forall j \in \N
\end{aligned}
\end{equation}

and we define the corresponding projection operator $\Pi_j$ as 
\begin{equation}\label{def Pi j}
\begin{aligned}
& \Pi_0 : L^2(\T) \to L^2(\T), \quad u(x) = \sum_{j \in \Z} \widehat u(j) e^{\ii j x} \mapsto \Pi_0 u(x) := \widehat u(0)\,, \\
& \Pi_j : L^2(\T) \to L^2(\T), \quad u(x) = \sum_{j \in \Z} \widehat u(j) e^{\ii j x} \mapsto \Pi_j u(x) := \widehat u(j) e^{\ii j x} + \widehat u(- j) e^{- \ii j x}\,, \quad j \in \N\,. 
\end{aligned}
\end{equation}
The following properties follow directly from the definitions \eqref{bf E alpha}, \eqref{def Pi j}:
\begin{equation}\label{proprieta Pi j E j}
\begin{aligned}
& \Pi_j^2 = \Pi_j, \quad \forall j \in \N_0, \quad \Pi_j \Pi_{j'} = 0, \quad \forall j, j' \in \N_0, \quad j \neq j'\,, \\
& \sum_{j \in \N_0} \Pi_j = {\rm Id}, \quad L^2(\T) = \oplus_{j \in \N_0} {\bf E}_j\,. 
\end{aligned}
\end{equation}
Hence, any linear operator ${\mathcal R} : L^2(\T) \to L^2(\T)$ can be written in $2 \times 2$ {\it block}-decomposition
\begin{equation}\label{notazione a blocchi}
{\mathcal R} = \sum_{j, j' \in \N_0} \Pi_j {\mathcal R} \Pi_{j'} \,.
\end{equation}
 where $j, j' \in \N_0$ the operator $\Pi_j {\mathcal R} \Pi_{j'}$ is a linear operator in ${\mathcal B}({\bf E}_{j'}, {\bf E}_j)$. If $j, j' \in \N$, the operator $\Pi_j {\mathcal R} \Pi_{j'}$ can be identified with the $2 \times 2$ matrix defined by  
\begin{equation}\label{definizione blocco operatore}
 \begin{pmatrix}
{\mathcal R}_j^{j'} & {\mathcal R}_j^{- j'} \\
{\mathcal R}_{- j}^{j'}& {\mathcal R}_{- j}^{- j'} 
\end{pmatrix}\,.
\end{equation}
The action of any linear operator $M \in {\mathcal B}({\bf E}_{j'}, {\bf E}_j)$, $j, j' \in \N$ is given by  
\begin{equation}\label{azione blocco finito dimensionale su funzioni}
M u (x)= \sum_{\begin{subarray}{c}
k = \pm j \\
k' = \pm j'
\end{subarray}} M_k^{k'} \widehat u(k') e^{\im k  x}\,, \quad \forall u \in {\bf E}_{j'}\,, \quad u(x) = \widehat u(j') e^{\im j' x} + \widehat u( - j') e^{- \im j' x}\,. 
\end{equation}
The operator $\Pi_0 {\mathcal R} \Pi_0 \in {\mathcal B}({\bf E}_0, {\bf E}_0)$ is identified with the multiplication operator by the matrix element ${\mathcal R}_0^0$ and if $j, j' \in \N$, the operators $\Pi_j {\mathcal R} \Pi_0$, $\Pi_0 {\mathcal R} \Pi_j$ are identified with the vectors
$$
\begin{pmatrix}
{\mathcal R}_j^0 \\
{\mathcal R}_{- j}^0 
\end{pmatrix} \quad \text{and} \quad \big( {\mathcal R}_0^{j'}, {\mathcal R}_0^{- j'} \big)\,. 
$$
 We denote by $[{\mathcal R}]$ the block-diagonal part of the operator ${\mathcal R}$, namely
\begin{equation}\label{notazione operatore diagonale a blocchi}
[{\mathcal R}] : = {\sum}_{j \in \N_0} \Pi_j {\mathcal R} \Pi_j\,. 
\end{equation}
If $\Pi_j {\mathcal R} \Pi_{j'} = 0$, for any $j \neq j'$, we have ${\mathcal R} = [{\mathcal R}]$ and we refer to such operators as $2 \times 2$ block-diagonal operators. 
Note that for any $j, j' \in \N_0$, the adjoint operator $M^* \in {\mathcal B}({\bf E}_j, {\bf E}_{j'})$ is thus defined as \footnote{If $j, j' \in \N$, $A \in {\mathcal B}({\bf E}_0, {\bf E}_0)$, $B \in {\mathcal B}({\bf E}_{j'}, {\bf E}_0) $, $C \in {\mathcal B}({\bf E}_0, {\bf E}_j)$, then 
$$
(A^*)_0^0 :=  \overline{A_0^0}\,, \quad (B^*)_k^0 = \overline{B_0^k}, \quad k = \pm j', \quad (C^*)_0^k = \overline{C_k^0}, \quad k = \pm j\,. 
$$}
\begin{equation}\label{definizione matrice aggiunta}
(M^*)_k^{k'} :=  \overline{M_{ k'}^{  k}}\,.
\end{equation}

We denote by ${\mathcal S}({\bf E}_j)$ the space of self-adjoint matrices in ${\mathcal B}({\bf E}_j, {\bf E}_{j})$.
\\
For any $j, j' \in \N_0$, we endow ${\mathcal B}({\bf E}_{j'}, {\bf E}_{j})$ with the {\it Hilbert-Schmidt} norm
\begin{equation}\label{norma L2 blocco}
\| X \|_{\mathtt{HS}} := \sqrt{{\rm Tr}(X X^*)} = \Big( \sum_{\begin{subarray}{c}
	|k| = j \\
	|k'| = j'
	\end{subarray}} |X_k^{k'}|^2 \Big)^{\frac12}\,.
\end{equation}
For any $\sigma > 0$, $m \in \R$ we define the class of linear operators of order $m$ (densely defined on $L^2(\T)$) ${\mathcal B}^{\sigma, m}$ as 
\begin{equation}\label{definizione classe cal B sigma}
\begin{aligned}
& {\mathcal B}^{\sigma, m} := \Big\{ {\mathcal R} : L^2(\T) \to L^2(\T) : \| {\mathcal R}\|_{{\mathcal B}^{\sigma, m}} < \infty \Big\} \quad \text{where} \\
& \quad \| {\mathcal R}\|_{{\mathcal B}^{\sigma, m}} := \sup_{j' \in \N_0} \sum_{j \in \N_0} e^{\sigma|j - j'| } \| \Pi_j {\mathcal R} \Pi_{j'}\|_{\HS} \langle j' \rangle^{- m} \,. 
\end{aligned}
\end{equation}
The following monotonicity properties hold: 
\begin{equation}\label{prop monotonia norma}
\begin{aligned}
& \| {\mathcal R} \|_{{\mathcal B}^{\sigma, m}} \leq \| {\mathcal R} \|_{{\mathcal B}^{\sigma', m}}, \quad \sigma < \sigma'\,, 
\quad \| {\mathcal R} \|_{{\mathcal B}^{\sigma, m}} \leq \| {\mathcal R} \|_{{\mathcal B}^{\sigma,  m'}}, \quad m' \leq m\,. 
\end{aligned}
\end{equation}
As a notation, if $m = 0$, we write ${\mathcal B}^\sigma$ instaead of ${\mathcal B}^{\sigma, 0}$. Note that a direct consequence of the definition is that if ${\mathcal R} \in {\mathcal B}^{\sigma, m}$ then  (recall that $D=-\im \partial_x$)
\begin{equation}\label{norma m norma 0 matrici}
\|{\mathcal R}\|_{{\mathcal B}^{\sigma, m}} = \| {\mathcal R} \langle D \rangle^{- m} \|_{{\mathcal B}^\sigma}\,. 
\end{equation}
Note that ${\mathcal B}^\sigma$ is contained in the set of bounded linear operators ${\mathcal B}({\mathcal H}(\T_\sigma), {\mathcal H}(\T_\sigma))$ as shown in the following.
\begin{lemma}\label{lemma azione nostri pseudo}
	Let $\sigma > 0$ and $\Phi \in {\mathcal B}^\sigma$. Then 
	
	\noindent
	$(i)$ $ \| \Phi\|_{{\mathcal B}({\mathcal H}(\T_\sigma), {\mathcal H}(\T_\sigma))} \leq \|\Phi \|_{{\mathcal B}^\sigma}$
	
	\noindent
	$(ii)$ For any $s \geq 0$, $\| \Phi\|_{{\mathcal B}(H^s(\T), H^s(\T))} \lesssim_s \sigma^{- s} \|\Phi \|_{{\mathcal B}^\sigma}$. 
\end{lemma}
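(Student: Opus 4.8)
The plan is to prove the two bounds directly from the definition of $\|\cdot\|_{{\mathcal B}^\sigma}$ in \eqref{definizione classe cal B sigma}, by estimating the Fourier coefficients of $\Phi u$ in terms of those of $u$. Write $\Phi = \sum_{j,j'\in\N_0}\Pi_j\Phi\Pi_{j'}$ as in \eqref{notazione a blocchi}, and for $u\in\mathcal H(\T_\sigma)$ decompose $u=\sum_{j'\in\N_0}\Pi_{j'}u$. Then $\Pi_j(\Phi u)=\sum_{j'\in\N_0}(\Pi_j\Phi\Pi_{j'})\Pi_{j'}u$, so the block of $\Phi u$ at level $j$ is a sum over $j'$ of the action of the $2\times2$ block $\Pi_j\Phi\Pi_{j'}$ on the block $\Pi_{j'}u$.

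For part $(i)$: I would work with the norm $\|u\|_{\mathcal H(\T_\sigma)}=\sum_{n\in\Z}|\hat u_n|e^{\sigma|n|}$. A convenient equivalent quantity (up to a factor $2$) is $\sum_{j\in\N_0}e^{\sigma j}\|\Pi_j u\|_{\ell^1}$, where $\|\Pi_j u\|_{\ell^1}=|\hat u_j|+|\hat u_{-j}|$ for $j\in\N$. Using that the Hilbert–Schmidt norm controls the operator norm on the $2$-dimensional spaces ${\bf E}_j$ (so that $\|(\Pi_j\Phi\Pi_{j'})v\|\le \|\Pi_j\Phi\Pi_{j'}\|_{\HS}\|v\|$ in any reasonable norm on these finite-dimensional spaces, with absolute constants), one estimates
\begin{equation*}
\sum_{j\in\N_0}e^{\sigma j}\|\Pi_j(\Phi u)\| \lesssim \sum_{j\in\N_0}\sum_{j'\in\N_0}e^{\sigma j}\|\Pi_j\Phi\Pi_{j'}\|_{\HS}\|\Pi_{j'}u\| = \sum_{j'\in\N_0}\Big(\sum_{j\in\N_0}e^{\sigma(j-j')}\|\Pi_j\Phi\Pi_{j'}\|_{\HS}\Big)e^{\sigma j'}\|\Pi_{j'}u\|.
\end{equation*}
Since $e^{\sigma(j-j')}\le e^{\sigma|j-j'|}$, the inner sum is bounded by $\|\Phi\|_{{\mathcal B}^\sigma}$ uniformly in $j'$, and summing over $j'$ gives $\|\Phi u\|_{\mathcal H(\T_\sigma)}\lesssim \|\Phi\|_{{\mathcal B}^\sigma}\|u\|_{\mathcal H(\T_\sigma)}$; a careful bookkeeping of the constants coming from the two-dimensional blocks and from comparing $\ell^1$-norms with the $e^{\sigma|n|}$-weighted sum should in fact yield the clean constant $1$ as stated. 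The point to be slightly careful about is the indexing at $j=0$ and the relation between $\|\Pi_j\Phi\Pi_{j'}\|_{\HS}$ and the action on individual Fourier modes $e^{\pm\im j x}$, but this is routine.

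For part $(ii)$: the Sobolev norm is $\|u\|_{H^s}^2\simeq\sum_{j\in\N_0}\langle j\rangle^{2s}\|\Pi_j u\|^2$. Here the $e^{\sigma|j-j'|}$ decay in the definition of $\|\Phi\|_{{\mathcal B}^\sigma}$ is used to absorb the ratio $\langle j\rangle^s/\langle j'\rangle^s$: one has $\langle j\rangle^s\lesssim_s \langle j'\rangle^s\langle j-j'\rangle^s$, and $\langle j-j'\rangle^s e^{-\sigma|j-j'|}\lesssim_s \sigma^{-s}$ (the max of $x^s e^{-\sigma x}$, as in the Cauchy-type estimates used earlier, e.g.\ Lemma \ref{stima di cauchy}). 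Then one writes $\Pi_j(\Phi u)=\sum_{j'}(\Pi_j\Phi\Pi_{j'})\Pi_{j'}u$, inserts the weights $\langle j\rangle^s$, splits $e^{\sigma|j-j'|}=e^{\sigma|j-j'|/2}e^{\sigma|j-j'|/2}$, uses one half to kill the polynomial factor and keeps the other half as an $\ell^1$-summable kernel in $j-j'$, and applies Schur's test (or Young's inequality for convolutions on $\Z$) to conclude $\|\Phi u\|_{H^s}\lesssim_s \sigma^{-s}\|\Phi\|_{{\mathcal B}^\sigma}\|u\|_{H^s}$.

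The main obstacle is essentially bookkeeping: keeping track of the constants in the two-dimensional blocks ${\bf E}_j$ (so that $(i)$ comes out with constant exactly $1$) and correctly setting up the Schur-test weights in $(ii)$ so that the half-exponential both dominates $\langle j-j'\rangle^s$ with the advertised $\sigma^{-s}$ factor and remains summable. No genuinely hard analytic input is needed beyond the elementary inequality $\sup_{x\ge0}x^s e^{-\sigma x}=(s/e)^s\sigma^{-s}$, already used in the paper, and the comparison between the Hilbert–Schmidt norm and the operator norm on the finite-dimensional blocks.
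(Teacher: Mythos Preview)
Your proposal is correct and follows essentially the same route as the paper. For $(i)$ the paper works with the $L^2$ norm on each block $\Pi_j u$ rather than your $\ell^1$ norm, which (via $\|\cdot\|_{\mathrm{op}}\le\|\cdot\|_{\HS}$) yields the constant $1$ cleanly for the equivalent norm $\sum_{j\in\N_0}e^{\sigma j}\|\Pi_j u\|_{L^2}$, but the argument $e^{\sigma j}\le e^{\sigma|j-j'|}e^{\sigma j'}$ and swap of sums is identical to yours. For $(ii)$ the paper applies Cauchy--Schwarz in the inner sum over $j'$ instead of your Schur/Young argument, but the two key ingredients you identify---$\langle j\rangle^s\lesssim_s\langle j'\rangle^s\langle j-j'\rangle^s$ and $\langle k\rangle^{m}e^{-\sigma|k|}\lesssim_m\sigma^{-m}$---are exactly what the paper uses as well.
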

\begin{proof}
	{\sc Proof of $(i)$} Let $\Phi \in {\mathcal B}^\sigma$. According to \eqref{def Pi j}, \eqref{notazione a blocchi}, given $u \in {\mathcal H}(\T_\sigma)$, we write $\Phi u(x) = \sum_{j, j' \in \N_0} \Pi_j \Phi \Pi_{j'}[\Pi_{j'} u]$. Then, using that for any $j, j' \in \N_0$, $e^{\sigma |j|} \leq e^{\sigma |j - j'|} e^{\sigma |j'|}$, one gets the chain of inequalities 
	$$
	\begin{aligned}
	\| \Phi u \|_\sigma & = \sum_{j \in \N_0} e^{\sigma |j|} \Big\| \sum_{j' \in \N_0}  \Pi_j \Phi \Pi_{j'}[\Pi_{j'} u] \Big\|_{L^2} \\
	& \leq \sum_{ j' \in \N_0} e^{\sigma|j'|} \| \Pi_{j'} u \|_{L^2} \Big( \sum_{j \in \N_0} e^{\sigma |j - j'|} \| \Pi_j \Phi \Pi_{j'}\|_\HS \Big) \\
	& \leq \sup_{j' \in \N_0} \Big( \sum_{j \in \N_0} e^{\sigma |j - j'|} \| \Pi_j \Phi \Pi_{j'}\|_\HS \Big) \| u \|_\sigma \stackrel{\eqref{definizione classe cal B sigma}}{\leq} \| \Phi\|_{{\mathcal B}^\sigma} \| u \|_\sigma\,. 
	\end{aligned}
	$$
	{\sc Proof of $(ii)$.} Let $s \geq 0$ and $u \in H^s(\T)$. Then, using that for any $j, j' \in \N_0$, $\langle j \rangle \lesssim \langle j' \rangle + \langle j - j' \rangle \lesssim \langle j' \rangle \langle j - j' \rangle$, one gets that 
	$$
	\begin{aligned}
	\| \Phi u \|_{H^s}^2 & = \sum_{j \in \N_0} \langle j \rangle^{2 s} \Big\| \sum_{j' \in \N_0} \Pi_j \Phi \Pi_{j'}[\Pi_{j'} u] \Big\|_{L^2}^2 \leq \sum_{j \in \N_0}  \Big\| \sum_{j' \in \N_0} \langle j \rangle^{ s}\Pi_j \Phi \Pi_{j'}[\Pi_{j'} u] \Big\|_{L^2}^2 \\
	& \lesssim_s \sum_{j \in \N_0} \Big( \sum_{j' \in \N_0} \langle j' \rangle^s \langle j - j' \rangle^s \| \Pi_j \Phi \Pi_{j'}\|_\HS \| \Pi_{j'} u\|_{L^2} \Big)^2
	\end{aligned}
	$$
	Moreover, by using the Cauchy-Schwartz inequality, one gets 
	$$
	\begin{aligned}
	\| \Phi u \|_{H^s}^2 & \lesssim_s \sum_{j' \in \N_0} \langle j' \rangle^{2 s} \| \Pi_{j'} u\|_{L^2}^2 \sum_{j \in \N_0} \langle j - j' \rangle^{2(s + 1)} \| \Pi_j \Phi \Pi_{j'}\|_\HS^2 \\
	& \stackrel{\eqref{definizione classe cal B sigma}}{\lesssim_s} \sup_{k \in \N_0} \langle k \rangle^{2 (s + 1)} e^{- \sigma |k|} \| \Phi\|_{{\mathcal B}^\sigma}  \| u \|_{H^s} \lesssim_s \sigma^{- s} \| \Phi\|_{{\mathcal B}^\sigma} \| u \|_{H^s}
	\end{aligned}
	$$
	which proves the claimed estimate. 
\end{proof}
Further properties of $\cB^{\s,m}$ can be found in the appendix \ref{linop}.
\subsection{T\"oplitz in time linear operators}\label{astratto operatori Toplitz}
We now consider $\f$-dependent families of linear operators on $L_2(\T)$ i.e. absolutely convergent Fourier series  $\T^\infty_\s \to L^2_0(\T)$.
\begin{definition}
	For $\sigma > 0$, $m \in \R$ we consider  ${\mathcal R} \in {\mathcal H}(\T^\infty_\sigma, {\mathcal B}^{\sigma, m})$. We define the decay norm 
\begin{equation}\label{definizione norma del decay}
|{\mathcal R}|_{\sigma, m} := \sum_{\ell \in \Z^\infty_*} e^{\sigma |\ell|_\zia} \| \widehat{\mathcal R}(\ell)\|_{{\mathcal B}^{\sigma, m}}\,. 
\end{equation}
Moreover, given $\gamma \in (0, 1)$ and if ${\mathcal R} = {\mathcal R}(\f; \omega)$ depends on the parameter $\omega \in \Oo$, we define 
\begin{equation}\label{definizione norma lip gamma matrici}
\begin{aligned}
& |{\mathcal R}|_{\sigma, m}^\Lipg := \sup_{\omega \in \Oo}|{\mathcal R}(\omega)|_{\sigma, m} + \gamma |{\mathcal R}|^{\rm lip}_{\sigma, m+ 2}\,, \\
& |{\mathcal R}|^{\rm lip}_{\sigma, m+ 2} := \sup_{\begin{subarray}{c}
\omega_1, \omega_2 \in \Oo \\
\omega_1 \neq \omega_2
\end{subarray}} \frac{|{\mathcal R}(\omega_1) - {\mathcal R}(\omega_2)|_{\sigma , m + 2}}{\| \omega_1 - \omega_2 \|_{\infty}}\,.
\end{aligned}
\end{equation}
\end{definition}
If $m = 0$ we write $| \cdot |_\sigma$ instead of $| \cdot |_{\sigma, m}$. By recalling \eqref{prop monotonia norma}, one can easily see that the following properties hold: 
\begin{equation}\label{prop monotonia norma vphi x}
\begin{aligned}
& |\cdot |_{\sigma, m} \leq |\cdot |_{\sigma', m}, \quad |\cdot |_{\sigma, m}^\Lipg \leq |\cdot |_{\sigma', m}^\Lipg \quad \forall \sigma \leq \sigma'\,, \\
& |\cdot |_{\sigma, m} \leq |\cdot |_{\sigma, m'}, \quad |\cdot |_{\sigma, m}^\Lipg \leq |\cdot |_{\sigma, m'}^\Lipg \quad \forall m' \leq m\,. 
\end{aligned}
\end{equation}
\begin{definition}
	We say that ${\mathcal R} \in {\mathcal H}(\T^\infty_\sigma, {\mathcal B}^{\sigma, m})$ is self-adjoint (resp. skew self-adjoint or unitary) if for all $\f\in\T^\infty$, the operato $\cR(\f)\in {\mathcal B}^{\sigma, m}$ is self-adjoint (resp. skew self-adjoint or unitary ).
\end{definition}
\begin{lemma}\label{proprieta norma sigma riducibilita vphi x}
	 Let $N,\sigma, \rho > 0$, $m, m' \in \R$ ${\mathcal R} \in {\mathcal H}(\T^\infty_\sigma, {\mathcal B}^{\sigma, m})$, ${\mathcal Q} \in {\mathcal H}(\T^\infty_{\sigma + \rho}, {\mathcal B}^{\sigma + \rho, m'})$.
	 \\
$(i)$ The product operator ${\mathcal R} {\mathcal Q} \in {\mathcal H}(\T^\infty_\sigma, {\mathcal B}^{\sigma, m + m'})$ with $|{\mathcal R} {\mathcal Q}|_{\sigma, m + m'} \lesssim_m  \rho^{- |m|} |{\mathcal R}|_{\sigma, m} |{\mathcal Q}|_{\sigma + \rho, m'}$. If ${\mathcal R}(\omega), {\mathcal Q}(\omega)$ depend on a parameter $\omega \in \Oo \subseteq \Ro$, then $|{\mathcal R}{\mathcal Q}|_{\sigma, m + m'}^\Lipg \lesssim_m \rho^{- (|m| + 2)} |{\mathcal R}|_{ \sigma, m}^\Lipg |{\mathcal Q}|_{\sigma + \rho, m'}^\Lipg$.

\smallskip

\noindent
$(ii)$ The projected operator $|\Pi_N^\bot {\mathcal R}|_{\sigma, m} \leq e^{- \rho N} |{\mathcal R}|_{\sigma + \rho, m}$. If ${\mathcal R}(\omega)$ depends on a parameter $\omega \in \Oo \subseteq  \Ro$, then
the same statement holds by replacing $|\cdot |_{\sigma, m}$ with $|\cdot |_{\sigma, m}^\Lipg$.

\smallskip

\noindent
$(iii)$ The mean value  $|[\widehat{\mathcal R}(0)]|_{\sigma, m} \leq |{\mathcal R}|_{\sigma, m}$. Moreover, if ${\mathcal R}= {\mathcal R}(\omega)$ depends on a parameter $\omega \in \Oo \subseteq \Ro$,  then
the same statement holds by replacing $|\cdot |_{\sigma, m}$ with $|\cdot |_{\sigma, m}^\Lipg$.
\end{lemma}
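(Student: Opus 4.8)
The plan is to reduce each of the three statements to the corresponding $\f$-independent statement about the operator norm $\|\cdot\|_{\mathcal{B}^{\sigma,m}}$ by working on the Fourier side in $\ell\in\Z^\infty_*$, and then to apply the lemmata already established: Lemma \ref{Lemma prodotto} (Banach-algebra-type bound for the $\f$-convolution), Lemma \ref{lemma astratto proiettore} (ultraviolet cut-off), and the trivial bound for the mean value, together with a product/cut-off/mean estimate at the level of $\mathcal{B}^{\sigma,m}$. For item $(i)$, first I would record the matrix-convolution identity $\widehat{\mathcal{RQ}}(\ell)=\sum_{k\in\Z^\infty_*}\widehat{\mathcal{R}}(\ell-k)\widehat{\mathcal{Q}}(k)$ and the submultiplicativity-type estimate for the space norm: if $A\in\mathcal{B}^{\sigma,m}$ and $B\in\mathcal{B}^{\sigma,m'}$ then $AB\in\mathcal{B}^{\sigma,m+m'}$ with $\|AB\|_{\mathcal{B}^{\sigma,m+m'}}\lesssim_m\rho^{-|m|}\|A\|_{\mathcal{B}^{\sigma,m}}\|B\|_{\mathcal{B}^{\sigma,m'}}$ (this is the block version of $\|AB\langle D\rangle^{-m-m'}\|_{\mathcal{B}^\sigma}=\|A\langle D\rangle^{-m}\cdot\langle D\rangle^{m}B\langle D\rangle^{-m-m'}\|_{\mathcal{B}^\sigma}$; the loss $\rho^{-|m|}$ appears only when $m>0$, to move the weight $\langle j\rangle^m$ past the exponential decay $e^{\sigma|j-j'|}$ in \eqref{definizione classe cal B sigma}, using $\langle j\rangle^m\lesssim_m\langle j'\rangle^m\langle j-j'\rangle^m$ and $\langle j-j'\rangle^m e^{-\rho|j-j'|}\lesssim_m\rho^{-m}$). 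Granting that, summing $e^{\sigma|\ell|_\zia}\|\widehat{\mathcal R}(\ell-k)\|_{\mathcal{B}^{\sigma,m}}\|\widehat{\mathcal Q}(k)\|_{\mathcal{B}^{\sigma+\rho,m'}}$ over $\ell,k$ and splitting $e^{\sigma|\ell|_\zia}\le e^{\sigma|\ell-k|_\zia}e^{\sigma|k|_\zia}\le e^{\sigma|\ell-k|_\zia}e^{(\sigma+\rho)|k|_\zia}$ exactly as in the proof of Lemma \ref{Lemma prodotto} yields $|\mathcal{RQ}|_{\sigma,m+m'}\lesssim_m\rho^{-|m|}|\mathcal{R}|_{\sigma,m}|\mathcal{Q}|_{\sigma+\rho,m'}$; the extra room $\rho$ in the strip of $\mathcal{Q}$ is what absorbs the $\langle D\rangle$-weight shift.

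For the Lipschitz bound in $(i)$ I would use the standard telescoping $\Delta_{\omega_1\omega_2}(\mathcal{RQ})=(\Delta_{\omega_1\omega_2}\mathcal{R})\mathcal{Q}(\omega_2)+\mathcal{R}(\omega_1)(\Delta_{\omega_1\omega_2}\mathcal{Q})$, estimate each term by the product bound just proved, and keep track of the order shift by $2$ built into the definition \eqref{definizione norma lip gamma matrici} of $|\cdot|^{\rm lip}_{\sigma,m+2}$: the Lipschitz seminorm is measured with two extra derivatives, so applying the product estimate with $m\rightsquigarrow m+2$ costs $\rho^{-(|m|+2)}$ rather than $\rho^{-|m|}$; multiplying the two factors and collecting the $\sup$ and $\gamma\,\mathrm{lip}$ parts gives the asserted $|\mathcal{RQ}|^\Lipg_{\sigma,m+m'}\lesssim_m\rho^{-(|m|+2)}|\mathcal{R}|^\Lipg_{\sigma,m}|\mathcal{Q}|^\Lipg_{\sigma+\rho,m'}$. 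Item $(ii)$ is immediate: $\widehat{\Pi_N^\bot\mathcal R}(\ell)=\widehat{\mathcal R}(\ell)$ for $|\ell|_\zia>N$ and $0$ otherwise, so $|\Pi_N^\bot\mathcal R|_{\sigma,m}=\sum_{|\ell|_\zia>N}e^{\sigma|\ell|_\zia}\|\widehat{\mathcal R}(\ell)\|_{\mathcal{B}^{\sigma,m}}\le e^{-\rho N}\sum_\ell e^{(\sigma+\rho)|\ell|_\zia}\|\widehat{\mathcal R}(\ell)\|_{\mathcal{B}^{\sigma,m}}=e^{-\rho N}|\mathcal R|_{\sigma+\rho,m}$, exactly mimicking Lemma \ref{lemma astratto proiettore}; the Lipschitz version follows by applying this to both the $\sup$-norm (with $m$) and the $\mathrm{lip}$-seminorm (with $m+2$) termwise. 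Item $(iii)$ is the bound $|[\widehat{\mathcal R}(0)]|_{\sigma,m}\le|\widehat{\mathcal R}(0)|_{\sigma,m}=\|\widehat{\mathcal R}(0)\|_{\mathcal{B}^{\sigma,m}}\le\sum_\ell e^{\sigma|\ell|_\zia}\|\widehat{\mathcal R}(\ell)\|_{\mathcal{B}^{\sigma,m}}=|\mathcal R|_{\sigma,m}$, where the first inequality uses that taking the block-diagonal part $[\cdot]$ (see \eqref{notazione operatore diagonale a blocchi}) does not increase $\|\cdot\|_{\mathcal{B}^{\sigma,m}}$, since it only deletes off-diagonal blocks $\Pi_j\mathcal R\Pi_{j'}$, $j\ne j'$, from the supremum in \eqref{definizione classe cal B sigma}; again one applies this to both components of the $\Lipg$ norm.

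The only genuinely non-routine point is the product estimate at the level of the fixed-$\f$ norm $\|\cdot\|_{\mathcal{B}^{\sigma,m}}$, i.e. establishing $\|AB\|_{\mathcal{B}^{\sigma,m+m'}}\lesssim_m\rho^{-|m|}\|A\|_{\mathcal{B}^{\sigma,m}}\|B\|_{\mathcal{B}^{\sigma+\rho,m'}}$ with the correct dependence on $\rho$ and the correct (loss-free when $m\le 0$) behaviour in the orders; I expect this to be the main obstacle, and it is presumably one of the properties of $\mathcal{B}^{\sigma,m}$ collected in the appendix referenced right before this subsection. Once that block-level estimate is in hand, everything above is a mechanical transcription of the arguments in Lemmata \ref{lemma astratto proiettore}, \ref{Lemma prodotto} to the operator-valued setting, with the bookkeeping of the order index $m$ and the $\rho$-room being the only thing to watch.
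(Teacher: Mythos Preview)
Your proposal is correct and follows essentially the same approach as the paper. The paper's proof of $(i)$ writes the Fourier convolution, invokes the appendix result (Lemma \ref{B algebra cal B sigma}) for the fixed-$\f$ product estimate $\|AB\|_{{\mathcal B}^{\sigma,m+m'}}\lesssim_m\rho^{-|m|}\|A\|_{{\mathcal B}^{\sigma,m}}\|B\|_{{\mathcal B}^{\sigma+\rho,m'}}$, and splits $e^{\sigma|\ell|_\zia}\le e^{\sigma|\ell-k|_\zia}e^{\sigma|k|_\zia}$ exactly as you describe; the Lipschitz part is the same telescoping and order-shift bookkeeping, and $(ii)$, $(iii)$ are handled just as you outline.
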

\begin{proof}
{\sc Proof of $(i)$.} We write 
$$
{\mathcal R}(\f){\mathcal Q}(\f) = \sum_{\ell, k \in \Z^\infty_*} \widehat{\mathcal R}(\ell - k) \widehat{\mathcal Q}(k) e^{\ii \ell \cdot \f}\,. 
$$
Using that by triangular inequality, for any $\ell, k \in \Z^\infty_*$, $e^{\sigma|\ell|_\zia} \leq e^{\sigma |\ell - k|_\zia} e^{\sigma |k|_\zia}$ 
$$
\begin{aligned}
|{\mathcal R} {\mathcal Q} |_{\sigma , m + m'} & \leq \sum_{\ell, k \in \Z^\infty_*}  e^{\sigma |\ell - k|_\zia} e^{\sigma |k|_\zia}\| \widehat{\mathcal R}(\ell - k) \widehat{\mathcal Q}(k)\|_{{\mathcal B}^{\sigma, m + m'}}  \\
& \stackrel{Lemma \ref{B algebra cal B sigma}-(i)}{\lesssim} \rho^{- |m|} \sum_{\ell, k \in \Z^\infty_*} e^{\sigma |\ell - k|_\zia} e^{\sigma |k|_\zia} \| \widehat{\mathcal R}(\ell - k)\|_{{\mathcal B}^{\sigma, m}} \|\widehat{\mathcal Q}(k) \|_{{\mathcal B}^{\sigma + \rho, m'}} \\
& \lesssim \rho^{- |m|} |{\mathcal R}|_{\sigma, m} |{\mathcal Q}|_{\sigma + \rho, m'}\,. 
\end{aligned}
$$
 
Now we prove the Lipschitz estimate. Given $\omega_1, \omega_2 \in \Ro$, we use the notation $\Delta_{\omega_1 \omega_2} f := f(\omega_1) - f(\omega_2)$. One has that 
$$
\Delta_{\omega_1 \omega_2} ({\mathcal R}{\mathcal Q}) = (\Delta_{\omega_1 \omega_2} {\mathcal R}){\mathcal Q}(\omega_1) + {\mathcal R}(\omega_2)(\Delta_{\omega_1 \omega_2} {\mathcal Q})\,.
$$
Hence by the previous estimate one gets 
$$
\begin{aligned}
|\Delta_{\omega_1 \omega_2} ({\mathcal R}{\mathcal Q})|_{\sigma, m + m' + 2} & \lesssim_{m} \rho^{- |m| - 2} |\Delta_{\omega_1\omega_2} {\mathcal R}|_{\sigma , m + 2} |{\mathcal Q}(\omega_1)|_{m', \sigma + \rho} + \rho^{- |m|} |{\mathcal R}|_{\sigma, m} |\Delta_{\omega_1 \omega_2} {\mathcal Q}|_{m' + 2, \sigma + \rho} \\
& \stackrel{\eqref{definizione norma lip gamma matrici}}{\lesssim_m} \rho^{- (|m| + 2)} |{\mathcal R}|_{\sigma, m}^\Lipg |{\mathcal Q}|_{\sigma + \rho, m'}^\Lipg \| \omega_1 - \omega_2\|_\infty\,.
\end{aligned}
$$
The claimed statement then follows. 

\noindent
{\sc Proof of $(ii)$.} The proof is the same as the one of Lemma \ref{lemma astratto proiettore}. 

\noindent
{\sc Proof of $(iii)$.} By recalling the definitions \eqref{notazione operatore diagonale a blocchi}, \eqref{definizione norma del decay}, \eqref{definizione norma lip gamma matrici}, one obtains that
$$
\begin{aligned}
& |[\widehat{\mathcal R}(0)]|_{\sigma, m} = \sup_{j \in \N_0} \| \Pi_j \widehat{\mathcal R}(0) \Pi_j\| \langle j \rangle^{- m}\,, \\
& |[\widehat{\mathcal R}(0)]|_{\sigma, m + 2}^{\rm lip} = {\rm sup}_{\begin{subarray}{c}
\omega_1, \omega_2 \in \Ro \\
\omega_1 \neq \omega_2
\end{subarray}} \frac{1}{\| \omega_1 - \omega_2 \|_\infty} \sup_{j \in \N_0} \| \Pi_j \big(\Delta_{\omega_1\omega_2} \widehat{\mathcal R}(0) \big) \Pi_j\| \langle j \rangle^{- m - 2}\,. 
\end{aligned}
$$ 
Hence, one has that $|[\widehat{\mathcal R}(0)]|_{\sigma, m} \leq |{\mathcal R}|_{\sigma, m}$ and $|[\widehat{\mathcal R}(0)]|_{\sigma, m + 2}^{\rm lip} \leq |{\mathcal R}|_{\sigma, m + 2}^{\rm lip}$ which implies the claimed statement. 
\end{proof}
Iterating the estimate of Lemma \ref{proprieta norma sigma riducibilita vphi x}-$(i)$, one has that if ${\mathcal R} \in {\mathcal H}^{\sigma + \rho}({\mathcal B}^{\sigma + \rho, m})$, then there exists a constant $C_0(m) > 0$ such that for any $N \geq 1$, ${\mathcal R}^N \in {\mathcal H}^\sigma({\mathcal B}^{\sigma, m N})$ and 
\begin{equation}\label{stima comp iterata norma dec}
\begin{aligned}
& |{\mathcal R}^N|_{\sigma, m N} \leq  \Big(C_0(m) \rho^{- |m|} |{\mathcal R}|_{\sigma + \rho, m} \Big)^{N - 1} |{\mathcal R}|_{\sigma, m}\,, \\
& |{\mathcal R}^N|_{\sigma, m N}^\Lipg \leq \Big( C_0(m)^{N - 1} \rho^{- (|m| + 2)} |{\mathcal R}|_{\sigma + \rho, m}^\Lipg \Big)^{N - 1} |{\mathcal R}|_{\sigma, m}^\Lipg\,. 
\end{aligned}
\end{equation}
\begin{lemma}\label{lemma moltiplicazione vphi x}
Let $\T_\s\times\T^\infty_\s \to \C$, $(x,\f) \mapsto a(x, \f)$ be in ${\mathcal H}(\T_{\s+\rho}\times\T^\infty_{\s+\rho})$. Then the multplication operator ${\mathcal M}_a : u \mapsto a u$ satisfies $|{\mathcal M}_a|_{\sigma} \lesssim \rho^{- 1} \| a \|_{\sigma + \rho}$. If $a(x, \f; \omega)$, $\omega \in \Oo \subseteq \Ro$ is Lipschitz w.r. to $\omega$, then $|{\mathcal M}_a|_{\sigma}^\Lipg \lesssim \rho^{- 1} \| a \|_{\sigma + \rho}^\Lipg$.  
\end{lemma}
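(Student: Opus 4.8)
The plan is to expand $\cM_a$ into its Fourier series in $\f$, to bound the $\cB^\sigma$-norm of each Fourier coefficient by a block-by-block computation, and then to sum over the modes. Writing $a(x,\f)=\sum_{(\ell,n)\in\Z^\infty_*\times\Z}\widehat a_n(\ell)\,e^{\ii\ell\cdot\f+\ii n x}$, one has $\cM_a(\f)=\sum_{\ell\in\Z^\infty_*}\widehat{\cM}_a(\ell)\,e^{\ii\ell\cdot\f}$, where $\widehat{\cM}_a(\ell)$ is the operator of multiplication by $x\mapsto\sum_n\widehat a_n(\ell)e^{\ii n x}$; equivalently, in the matrix representation of Section \ref{sezione astratta op lineari}, $\big(\widehat{\cM}_a(\ell)\big)_k^{k'}=\widehat a_{k-k'}(\ell)$ for all $k,k'\in\Z$. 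By the definition \eqref{definizione norma del decay} of the decay norm it then suffices to estimate $\|\widehat{\cM}_a(\ell)\|_{\cB^\sigma}$ for each $\ell$ and to sum the bounds against $e^{\sigma|\ell|_\zia}$.

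First I would estimate a single Hilbert--Schmidt block. For $j,j'\in\N$ the block $\Pi_j\widehat{\cM}_a(\ell)\Pi_{j'}$ is the $2\times2$ matrix with entries $\widehat a_{k-k'}(\ell)$, $k\in\{j,-j\}$, $k'\in\{j',-j'\}$, so, bounding the Hilbert--Schmidt norm by the sum of the moduli of its entries,
\[
\|\Pi_j\widehat{\cM}_a(\ell)\Pi_{j'}\|_{\HS}\leq\sum_{k=\pm j,\ k'=\pm j'}\big|\widehat a_{k-k'}(\ell)\big|\,,
\]
and similarly, with fewer terms, when $j=0$ or $j'=0$. The two elementary facts I would use are: (a) $|j-j'|\leq|k-k'|$ whenever $k=\pm j$ and $k'=\pm j'$, since $|k-k'|$ is either $|j-j'|$ or $j+j'\geq|j-j'|$; and (b) for fixed $j'\in\N_0$, as $j$ runs over $\N_0$ with $k\in\{j,-j\}$, $k'\in\{j',-j'\}$, each value of the difference $n=k-k'$ is reached by only boundedly many pairs $(k,k')$, while the weight $e^{\sigma|n|}$ is even in $n$. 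Combining (a), (b) and $e^{\sigma|j-j'|}\leq e^{\sigma|k-k'|}=e^{\sigma|n|}$ gives, uniformly in $j'\in\N_0$,
\[
\sum_{j\in\N_0}e^{\sigma|j-j'|}\,\|\Pi_j\widehat{\cM}_a(\ell)\Pi_{j'}\|_{\HS}\leq C\sum_{n\in\Z}e^{\sigma|n|}\,\big|\widehat a_n(\ell)\big|
\]
for a universal constant $C$, hence $\|\widehat{\cM}_a(\ell)\|_{\cB^\sigma}\leq C\sum_{n\in\Z}e^{\sigma|n|}|\widehat a_n(\ell)|$; in particular each $\widehat{\cM}_a(\ell)$ lies in $\cB^\sigma$.

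Next I would sum over $\ell$ against $e^{\sigma|\ell|_\zia}$ and recall (see the Remark following Definition \ref{funzioni analitiche T infty}) that under the identification $\cH(\T^\infty_\sigma,\cH(\T_\sigma))\equiv\cH(\T_\sigma\times\T^\infty_\sigma)$ the weight $|k|_\zia$ of the index $k=(n,\ell)$ equals $|n|+|\ell|_\zia$, so that
\[
|\cM_a|_\sigma=\sum_{\ell\in\Z^\infty_*}e^{\sigma|\ell|_\zia}\,\|\widehat{\cM}_a(\ell)\|_{\cB^\sigma}\leq C\sum_{\ell,n}e^{\sigma(|n|+|\ell|_\zia)}\,\big|\widehat a_n(\ell)\big|=C\,\|a\|_\sigma\leq C\,\|a\|_{\sigma+\rho}\,,
\]
which, $\rho$ being bounded in the applications, yields the claimed $|\cM_a|_\sigma\lesssim\rho^{-1}\|a\|_{\sigma+\rho}$. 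For the parameter-dependent statement I would use that $a\mapsto\cM_a$ is linear, so $\Delta_{\omega_1\omega_2}\cM_a=\cM_{\Delta_{\omega_1\omega_2}a}$, and that the computation above also bounds $\|\widehat{\cM}_a(\ell)\|_{\cB^{\sigma,2}}$, because $\langle j'\rangle^{-2}\leq1$ gives $\|\cdot\|_{\cB^{\sigma,2}}\leq\|\cdot\|_{\cB^\sigma}$; hence the $|\cdot|_{\sigma,2}^{\rm lip}$-component of $|\cM_a|_\sigma^\Lipg$ is controlled by $\|a\|_{\sigma+\rho}^{\rm lip}$ and the $\sup$-component by $\|a\|_{\sigma+\rho}^{\rm sup}$, giving $|\cM_a|_\sigma^\Lipg\leq C\|a\|_{\sigma+\rho}^\Lipg\lesssim\rho^{-1}\|a\|_{\sigma+\rho}^\Lipg$.

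I do not expect a genuine obstacle here: the only point deserving care is the combinatorial bookkeeping in the second step, namely checking that passing from the operator-block indices $j,j'\in\N_0$ (which group $\pm j$ together) to the scalar Fourier indices $k,k'\in\Z$ costs only a universal constant, and handling the degenerate blocks with $j=0$ or $j'=0$ separately. Everything else is a rearrangement of absolutely convergent sums and the triangle inequality for $|\cdot|_\zia$, exactly as in the proofs of Lemma \ref{Lemma prodotto} and Lemma \ref{proprieta norma sigma riducibilita vphi x}.
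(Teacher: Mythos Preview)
Your argument is correct, and in fact it yields the sharper bound $|\cM_a|_\sigma\leq C\|a\|_\sigma$ without any loss of analyticity in~$x$; the claimed estimate then follows by monotonicity $\|a\|_\sigma\leq\|a\|_{\sigma+\rho}$ (and $1\lesssim\rho^{-1}$ for $\rho$ bounded). The paper proceeds slightly differently: it quotes an auxiliary lemma (Lemma~\ref{stima moltiplicazione} in the appendix) that bounds $\|\cM_{\widehat a(\ell,\cdot)}\|_{\cB^\sigma}$ via the crude Fourier-coefficient estimate $|\widehat a_n(\ell)|\leq e^{-(\sigma+\rho)|n|}\|\widehat a(\ell,\cdot)\|_{\sigma+\rho}$ and then sums the resulting geometric tail $\sum_j e^{-\rho|j-j'|}\lesssim\rho^{-1}$, producing the extra factor $\rho^{-1}$. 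Your block-by-block identification of the sum with the $\ell^1$-Fourier norm of $\widehat a(\ell,\cdot)$ avoids this detour and gives the cleaner constant; the paper's route has the minor advantage of packaging the $x$-variable estimate once and for all in a separate lemma that can be reused elsewhere.
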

\begin{proof}
We write 
$$
a(\f, \cdot) = \sum_{\ell \in \Z^\infty_*} \widehat a(\ell, \cdot) e^{\ii \ell \cdot \f}
$$
and consequently
$$
{\mathcal M}_a (\f) = \sum_{\ell \in \Z^\infty_*} \widehat{\mathcal M}_a(\ell) e^{\ii \ell \cdot \f} \quad \text{where} \quad \widehat{\mathcal M}_a(\ell) := {\mathcal M}_{\widehat a(\ell, \cdot)}\,. 
$$
Therefore 
$$
\begin{aligned}
|{\mathcal M}_a|_\sigma & = \sum_{\ell \in \Z^\infty_*} e^{\sigma |\ell|_\zia} \| \widehat{\mathcal M}_a(\ell)\|_{{\mathcal B}^\sigma} \stackrel{Lemma\,\ref{stima moltiplicazione}}{\lesssim} \rho^{- 1} \sum_{\ell \in \Z^\infty_*} e^{\sigma |\ell|_\zia} \| \widehat a(\ell, \cdot)\|_{\sigma + \rho} \lesssim \rho^{- 1} \| a \|_{\sigma + \rho}\,. 
\end{aligned}
$$
Given $\omega_1, \omega_2 \in \Ro$, arguing as above, one can estimate $\Delta_{\omega_1 \omega_2}{\mathcal M}_a = {\mathcal M}_{\Delta_{\omega_1 \omega_2} a}$ in terms of $\Delta_{\omega_1 \omega_2} a$, therefore the Lipschitz estimate follows. 
\end{proof}
Let $m \in \Z$. We recall that the operator $\partial_x^m$ is defined by setting 
\begin{equation}\label{definizione partial x m}
\partial_x^m [1] = 0, \quad \partial_x^m[e^{\im j x}] = \ii^m j^m e^{\ii j x} \quad j \neq 0\,. 
\end{equation}
\begin{lemma}\label{lemma norma an simboli omogenei} Let $\sigma, \rho > 0$, $m, m'  \in \Z$, $a \in {\mathcal H}(\T_{\s+\rho}\times\T^\infty_{\s+\rho})$.
	\\
$(i)$  We have $\partial_x^m a \partial_x^{m'} \in {\mathcal H}(\T^\infty_\sigma, {\mathcal B}^{\sigma, m + m'})$ and $| \partial_x^m a \partial_x^{m'} |_{\sigma, m + m'} \lesssim \rho^{- |m|} \| a \|_{\sigma + \rho}$. If $a(\cdot; \omega)$, $\omega \in \Oo$ is Lipschitz w.r. to $\omega$, then $| \partial_x^m a \partial_x^{m'} |_{\sigma, m + m'}^\Lipg \lesssim \rho^{- |m|} \| a \|_{\sigma + \rho}^\Lipg$. 

\noindent
$(ii)$ For any $N \in \N$
\begin{equation}\label{espansione partial x m m' a}
\partial_x^m a \partial_x^{m'} = \sum_{i = 0}^{N - 1} c_{i, m}(\partial_x^i a) \partial_x^{m + m' - i} + {\mathcal R}_N(a)
\end{equation}
where the remainder ${\mathcal R}_N(a)$ satisfies the estimate 
\begin{equation}\label{stima resti simboli omogenei astratto}
| {\mathcal R}_N(a)|_{\sigma, m + m' - N} \lesssim_{m, N} \rho^{- (2 N + |m| + 1)} \| a \|_{\sigma + \rho} \,. 
\end{equation}
Moreover, one has $c_{0, m} = 1$, $c_{1, m} = m$. If $a(\cdot; \omega)$, $\omega \in \Oo$ is Lipschitz w.r. to $\omega$, then 
\begin{equation}\label{stima resti simboli omogenei astratto Lipg}
| {\mathcal R}_N(a)|_{\sigma, m + m' - N}^\Lipg \lesssim_{m, N} \rho^{- (2 N + |m| + 1)} \| a \|_{\sigma + \rho}^\Lipg \,. 
\end{equation}

\noindent
$(iii)$ Let $ b(\cdot; \omega) \in {\mathcal H}(\T_{\s+\rho}\times\T^\infty_{\s+\rho})$, $\omega \in \Oo$ and set ${\mathcal A} =  a \partial_x^{m}$, ${\mathcal B} := b \partial_x^{m'}$. Then ${\mathcal A} {\mathcal B} \in {\mathcal H}(\T^\infty_\sigma,{\mathcal B}^{\sigma , m + m'})$ satisfies, for any $N \geq 1$,  the expansion 
\begin{equation}\label{espansione simboli cal A cal B}
{\mathcal A} {\mathcal B} = a b \partial_x^{m + m'  } + m a b_x \partial_x^{m + m' - 1}+ \sum_{i = 2}^{N - 1} c_{i, m} a (\partial_x^i b) \partial_x^{m + m'  - i} + {\mathcal R}_N(a, b)\,,
\end{equation}
where $c_{m, i} \in \R$ for any $i = 2, \ldots, N - 1$, the remainder ${\mathcal R}_N(a, b) $ satisfies the estimate
\begin{equation}\label{stime espansione simboli omogenei}
\begin{aligned}
| {\mathcal R}_N(a, b) |_{\sigma, m + m' - N}^\Lipg & \lesssim_{m, m', N} \rho^{- \kappa} \| a \|_{\sigma + \rho}^\Lipg \| b \|_{\sigma + \rho}^\Lipg
\end{aligned}
\end{equation}
for some constant $\kappa = \kappa(m, m', N) > 0$. As a consequence for any $N \geq 1$, the commutator $[{\mathcal A}, {\mathcal B}]$, admits the expansion 
$$
[{\mathcal A}, {\mathcal B}] = (m a b_x - m' a_x b) \partial_x^{m + m' - 1} + \sum_{i = 2}^{N - 1} \big( c_{m , i} a (\partial_x^i b) - c_{m', i} (\partial_x^i a) b \big) \partial_x^{m + m' - i} + {\mathcal R}_N(a, b) - {\mathcal R}_N(b, a)\,. 
$$

\end{lemma}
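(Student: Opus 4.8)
The plan rests on the explicit matrix representation. Expanding $a(x,\f)=\sum_{\ell\in\Z^\infty_*}\widehat a(\ell,\cdot)e^{\im\ell\cdot\f}$ with $\widehat a(\ell,x)=\sum_{k\in\Z}\widehat a(\ell)_k e^{\im kx}$, formula \eqref{definizione partial x m} shows that the $\ell$-th $\f$-Fourier coefficient of $\partial_x^m a\partial_x^{m'}$ is the operator $\partial_x^m\widehat a(\ell,\cdot)\partial_x^{m'}$ with matrix entries $(\im k)^m\widehat a(\ell)_{k-k'}(\im k')^{m'}$ for $k,k'\neq 0$ (entries touching the mode $0$ vanish when the corresponding exponent is nonzero, which only helps). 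Hence for $j,j'\in\N$ the Hilbert--Schmidt norm of the block $\Pi_j(\partial_x^m\widehat a(\ell)\partial_x^{m'})\Pi_{j'}$ equals $|j|^m|j'|^{m'}$ times $\big(\sum_{\pm}|\widehat a(\ell)_{\pm j\mp j'}|^2\big)^{1/2}$, and I will use throughout that $|j+j'|\ge|j-j'|$ and $|\,|j\mp j'|-2j'\,|\le|j\mp j'|$, so that all four sign combinations, carrying the weight $e^{\sigma|j-j'|}$, are dominated by $\sum_k e^{(\sigma+\rho)|k|}|\widehat a(\ell)_k|$. For part $(i)$: in the $\mathcal B^{\sigma,m+m'}$ norm the factor $|j'|^{m'}\langle j'\rangle^{-(m+m')}$ leaves only $|j|^m\langle j'\rangle^{-m}$ after $\langle t\rangle/|t|\le\sqrt2$, and $|j|^m\langle j'\rangle^{-m}\lesssim\langle j-j'\rangle^{|m|}$ for both signs of $m$ via $\langle j\rangle\lesssim\langle j'\rangle\langle j-j'\rangle$ and $\langle j'\rangle\lesssim\langle j\rangle\langle j-j'\rangle$; finally $\langle j-j'\rangle^{|m|}e^{\sigma|j-j'|}\lesssim_m\rho^{-|m|}e^{(\sigma+\rho)|j-j'|}$ (from $x^{|m|}e^{-\rho x}\le(|m|/\rho)^{|m|}e^{-|m|}$), and summing over $j\in\N_0$ and then over $\ell$ with weight $e^{\sigma|\ell|_\zia}$ gives $|\partial_x^m a\partial_x^{m'}|_{\sigma,m+m'}\lesssim\rho^{-|m|}\|a\|_{\sigma+\rho}$. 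Since $\partial_x^m(\cdot)\partial_x^{m'}$ is independent of $\omega$, the Lipschitz bound follows by applying the same estimate to $\Delta_{\omega_1\omega_2}a$.

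Part $(ii)$ is obtained by expanding $(\im j)^m=(\im(j'+k))^m$, $k:=j-j'$, via Taylor's formula with integral remainder for $t\mapsto t^m$ centred at $t=j'$:
\[
(\im j)^m=\sum_{i=0}^{N-1}\binom{m}{i}(\im k)^i(\im j')^{m-i}+\im^m\frac{m(m-1)\cdots(m-N+1)}{(N-1)!}\,k^N\!\!\int_0^1(1-s)^{N-1}(j'+sk)^{m-N}\,ds .
\]
Since $(\im k)^i\widehat a(\ell)_k$ is the $k$-th $x$-Fourier coefficient of $\partial_x^i a$, multiplying by $(\im j')^{m'}\widehat a(\ell)_k$ and summing over $k$ identifies the coefficients as $c_{i,m}=\binom{m}{i}$, in particular $c_{0,m}=1$, $c_{1,m}=m$, and yields \eqref{espansione partial x m m' a} with $\mathcal R_N(a)$ the operator built from the integral remainder; when $m$ is a nonnegative integer the sum terminates at $i=m$ and $\mathcal R_N(a)$ is a finite sum of the $i\ge N$ terms, directly estimated by part $(i)$. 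To bound $\mathcal R_N(a)$ in general I split according to whether $|j-j'|\le j'/2$ or not: in the first regime $|j'+sk|\sim|j'|$ for all $s$, so the entry of $\mathcal R_N(a)$ is $\lesssim_{m,N}|k|^N|j'|^{m+m'-N}|\widehat a(\ell)_k|$, and $|k|^Ne^{\sigma|k|}\lesssim\rho^{-N}e^{(\sigma+\rho)|k|}$ as in part $(i)$; in the second regime $\langle j'\rangle\sim\langle k\rangle$, so every power of $\langle j'\rangle$ occurring in the entry of $\mathcal R_N(a)$ (and, separately, of each excluded term $c_{i,m}(\partial_x^i a)\partial_x^{m+m'-i}$, bounded via part $(i)$) is traded for a power of $\langle k\rangle$ and absorbed by the exponential decay at the cost of extra negative powers of $\rho$. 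Tracking the worst exponent gives \eqref{stima resti simboli omogenei astratto}, and \eqref{stima resti simboli omogenei astratto Lipg} follows as before since the differentiations do not depend on $\omega$.

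For part $(iii)$ I write $\mathcal A\mathcal B=\mathcal M_a\circ(\partial_x^m b\,\partial_x^{m'})$ and apply part $(ii)$ to $\partial_x^m b\,\partial_x^{m'}$ with $b$ in the role of $a$: this immediately produces \eqref{espansione simboli cal A cal B} with $\mathcal R_N(a,b)=\mathcal M_a\,\mathcal R_N(b)$ and first two coefficients $c_{0,m}=1$, $c_{1,m}=m$, hence leading terms $ab\,\partial_x^{m+m'}+mab_x\partial_x^{m+m'-1}$. The estimate \eqref{stime espansione simboli omogenei} then follows by composing Lemma \ref{proprieta norma sigma riducibilita vphi x}-$(i)$ for the product of an operator of order $0$ with one of order $m+m'-N$, Lemma \ref{lemma moltiplicazione vphi x} to bound $|\mathcal M_a|^\Lipg_\sigma\lesssim\rho^{-1}\|a\|^\Lipg_{\sigma+\rho}$, and part $(ii)$ to bound $|\mathcal R_N(b)|^\Lipg_{\sigma+\rho,m+m'-N}$, after replacing $\rho$ by $\rho/2$; this yields $\rho^{-\kappa}\|a\|^\Lipg_{\sigma+\rho}\|b\|^\Lipg_{\sigma+\rho}$ with $\kappa=\kappa(m,m',N)$. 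The commutator expansion is read off by subtracting the expansion of $\mathcal B\mathcal A$ from that of $\mathcal A\mathcal B$: the order-$(m+m')$ terms $ab\,\partial_x^{m+m'}$ cancel, leaving $(mab_x-m'a_xb)\partial_x^{m+m'-1}$ plus the lower-order symbols and $\mathcal R_N(a,b)-\mathcal R_N(b,a)$.

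The step I expect to be the main obstacle is the remainder estimate in part $(ii)$ (and, through it, in part $(iii)$) when $m$ or $m'$ is negative: then $\partial_x^m$ is nonlocal and the naive geometric/binomial expansion of $(1+k/j')^m$ diverges as soon as $|k|\gtrsim|j'|$, so one cannot expand globally in Fourier space; the remedy is precisely the two-region argument above, the high-frequency part being handled by a lossy-but-elementary bound exploiting $\langle j'\rangle\sim\langle k\rangle$, and the care needed to keep the weight $e^{\sigma|j-j'|}$ compatible with the $\langle j'\rangle^{-(m+m')}$ normalisation of $\mathcal B^{\sigma,m+m'}$ there is what forces the somewhat generous power $\rho^{-(2N+|m|+1)}$.
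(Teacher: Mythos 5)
Your proof is correct and follows essentially the same route as the paper: Taylor-expand $j^m$ around $j'$ with remainder, identify $c_{i,m}=\binom{m}{i}$, absorb the polynomial growth in $|j-j'|$ into the analytic decay of $\widehat a(\ell,\cdot)$, and reduce (iii) to (ii) by writing ${\mathcal A}{\mathcal B}={\mathcal M}_a\circ(\partial_x^m b\,\partial_x^{m'})$. The one methodological difference --- your two-region split keyed to $|j-j'|\le j'/2$ in place of the paper's one-line appeal to Peetre's inequality $\langle j'+\tau(j-j')\rangle^{m-N}\lesssim_{m,N}\langle j'\rangle^{m-N}\langle j-j'\rangle^{N+|m|}$ --- is immaterial, and both presentations glide over the same point: for $m<0$ the anti-diagonal entries of a $2\times2$ block (matrix indices $(j,-j')$ with $j,j'>0$, Fourier mode $j+j'$) have Taylor segment from $-j'$ to $j$ crossing the origin, so the integral representation of the remainder is unavailable there; but since $|j+j'|\ge\max(j,j')$, the crude bound $|r_N(j,-j')|\lesssim_{N}\langle j+j'\rangle^{N-1}$ together with the stronger decay $|\widehat a(\ell)_{j+j'}|\le\|\widehat a(\ell,\cdot)\|_{\sigma+\rho}e^{-(\sigma+\rho)(j+j')}$ already gives \eqref{stima resti simboli omogenei astratto}, so your ``second regime'' (where $\langle j'\rangle\sim\langle k\rangle$) in fact covers this case once the split is read on the Fourier mode $k$ rather than on the block-index difference.
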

\begin{proof}
{\sc Proof of $(i)$.} It follows by Lemmata \ref{proprieta norma sigma riducibilita vphi x}, \ref{lemma moltiplicazione vphi x} and using that for any $p \in \Z$, $\sigma > 0$, $|\partial_x^p|_{\sigma, p} = | \partial_x^p |_{\sigma, p}^\Lipg \leq 1$. 

\noindent
{\sc Proof of $(ii)$.} Let ${\mathcal R} :=\partial_x^m a \partial_x^{m'}$. Then ${\mathcal R}(\f) = \sum_{\ell \in \Z^\infty_*} \widehat{\mathcal R}(\ell) e^{\ii \ell \cdot \f}$, where for any $\ell \in \Z^\infty_*$, the operator $\widehat{\mathcal R}(\ell)$ admits the matrix representation $(\widehat{\mathcal R}_j^{j'}(\ell))_{j, j' \in \Z}$ 
\begin{equation}\label{def xm a xm'}
\widehat{\mathcal R}_{j }^{j'}(\ell) =  \ii^{m + m'} j^m \widehat a(\ell, j - j') j'^{m'}, \quad \forall j, j' \in \Z \setminus \{ 0 \}\,. 
\end{equation}
We write the Taylor expansion 
\begin{equation}\label{expansion jm}
j^m = j'^{m} + m j'^{m - 1} (j - j') + \sum_{k  = 2}^{N - 1} c_{m, k} j'^{m - k} (j - j')^k + r_N(j, j')
\end{equation}
where the remainder $r_N(j, j')$ is given by 
\begin{equation}\label{resto r_N j j'}
r_N(j, j') := c_{N, m} \int_0^1 (1 - \tau)^{N - 1} (j' + \tau(j - j'))^{m - N}\, d \tau (j - j')^N\,. 
\end{equation}
By using the Petree inequality, one has that 
$$
\frac{(j' + \tau(j - j'))^{m - N}}{j'^{m - N}} \lesssim_{m, N} \langle  j - j' \rangle^{N + |m|}\,.
$$
This latter inequality, implies that 
\begin{equation}\label{stima r N j j'}
|r_N(j, j')| \lesssim_{m, N} \langle j' \rangle^{m - N} \langle j - j' \rangle^{2 N + |m|}\,. 
\end{equation}
By the definition \eqref{def xm a xm'} and using the expansion \eqref{def xm a xm'}, we get the the operator ${\mathcal R}$ can be expanded as 
$$
{\mathcal R}(\f ) = a \partial_x^{m + m'} + m (\partial_x a) \partial_x^{m + m' - 1} + \sum_{i = 2}^{N - 1} c_{m, i} (\partial_x^i a) \partial_x^{m + m' - i} + {\mathcal R}_N(\f)
$$
where the operator ${\mathcal R}_N(\f) = \sum_{\ell \in \Z^\infty_*} \widehat{\mathcal R}_N(\ell) e^{\ii \ell \cdot \f}$ and for any $\ell \in \Z^\infty_*$, the operator $\widehat{\mathcal R}_N(\ell)$ admits the matrix representation 
\begin{equation}\label{cal R N rapp mat}
(\widehat{\mathcal R}_N(\ell))_j^{j'} := \ii^{m + m'}  \widehat a(\ell, j - j') r_N(j, j') j'^{m'}, \quad j, j' \in \Z \setminus \{ 0 \}\,. 
\end{equation}
By \eqref{stima r N j j'}, using that $\widehat a(\ell, \cdot) \in {\mathcal H}(\T_{\sigma + \rho})$, one gets the estimate 
\begin{equation}\label{stima coeff Fourier RN}
|\widehat{\mathcal R}_j^{j'}(\ell)| \lesssim \langle j - j' \rangle^{2 N + |m|}e^{- (\sigma + \rho)|j - j'|} \langle j' \rangle^{m + m' - N}  \| \widehat a(\ell, \cdot) \|_{\sigma + \rho}\,. 
\end{equation}
Furthermore, using that 
$$
\langle j - j' \rangle^{2 N + |m|} e^{- \frac{\rho}{2}|j - j'|} \lesssim_{N, m} \rho^{- (2 N + |m|)}, 
$$
one gets the estimate 
\begin{equation}\label{stima coeff Fourier RN 2}
|\widehat{\mathcal R}_j^{j'}(\ell)| \lesssim \rho^{- (2 N + |m|)}e^{- (\sigma + \frac{\rho}{2})|j - j'|} \langle j' \rangle^{m + m' - N}  \| \widehat a(\ell, \cdot) \|_{\sigma + \rho}\,. 
\end{equation}
Now if $j, j' \in \N_0$, using the for any $\delta > 0$, $e^{- \delta |j + j'|} \leq e^{- \delta |j - j'|}$, the latter estimate implies also the estimate on the $2 \times 2$ block $\Pi_j \widehat{\mathcal R}_N(\ell) \Pi_{j'}$ of the form 
\begin{equation}\label{stima coeff Fourier RN 3}
\| \Pi_j \widehat{\mathcal R}_N(\ell) \Pi_{j'} \| \lesssim_{m, N} \rho^{- (2 N + |m|)} e^{- (\sigma + \frac{\rho}{2})|j - j'|} \langle j' \rangle^{m + m' - N}  \| \widehat a(\ell, \cdot) \|_{\sigma + \rho}, \quad \forall j, j' \in \N_0\,. 
\end{equation}
Then for any $j' \in \N_0$, one has that 
$$
\begin{aligned}
\sum_{j \in \N_0} e^{\sigma|j - j'|} \| \Pi_j \widehat{\mathcal R}_N(\ell) \Pi_{j'}\| \langle j' \rangle^{N - (m + m')} & \lesssim_{m, N} \rho^{- (2 N + |m|)} \| \widehat a(\ell, \cdot) \|_{\sigma + \rho} \sum_{j \in \N_0} e^{- \frac{\rho}{2}|j - j'|} \\
& \lesssim_{m, N} \rho^{- (2 N + |m| + 1)} \| \widehat a(\ell, \cdot) \|_{\sigma + \rho}  
\end{aligned}
$$
which implies that 
$$
\| \widehat{\mathcal R}_N(\ell)\|_{{\mathcal B}^{\sigma, m + m' - N}} \lesssim_{m, N} \rho^{- (2 N + |m| + 1)} \| \widehat a(\ell, \cdot) \|_{\sigma + \rho}\,. 
$$
By using this latter estimate one gets that 
$$
|{\mathcal R}_N|_{\sigma, m + m' - N} \lesssim_{m, N} \rho^{- (2 N + |m| + 1)} \sum_{\ell \in \Z^\infty_*} e^{\sigma |\ell|_\zia} \| \widehat a(\ell, \cdot)\|_{\sigma + \rho} \lesssim_{m, N} \rho^{- (2 N + |m| + 1)} \| a \|_{\sigma + \rho}
$$
which is exactly the claimed estimate \eqref{stima resti simboli omogenei astratto}. 

\noindent
If $a$ depends on the parameter $\omega \in \Oo\subseteq \Ro$, given $\omega_1, \omega_2 \in \Oo$, one expands the operator $\partial_x^m (\Delta_{\omega_1 \omega_2} a) \partial_x^{m'}$ as in \eqref{espansione partial x m m' a} where $a$ is replaced by $\Delta_{\omega_1 \omega_2} a$ and the remainder ${\mathcal R}_N(\Delta_{\omega_1 \omega_2} a)$ is estimated in term of $\Delta_{\omega_1 \omega_2} a$. The Lipschitz estimate then follows. 

\noindent
{\sc Proof of $(iii)$.} The claimed expansion \eqref{espansione simboli cal A cal B} follows by a repeated application of the item $(i)$.  The estimates of the remainder ${\mathcal R}_N(a, b)$ follows by using the estimates of the items $(i)$ and $(ii)$ and by using the composition Lemma \ref{proprieta norma sigma riducibilita vphi x}. The expansion of the commutator follows easily by expanding ${\mathcal A} {\mathcal B}$ and ${\mathcal B}{\mathcal A}$. 
\end{proof}
\begin{lemma}[Exponential map]\label{lemma mappa esponenziale}
Let $\sigma > 0$, $\rho \in (0, 1)$, $m \geq 0$ and ${\mathcal R}(\omega) \in {\mathcal H}(\T^\infty_{\sigma + \rho}, {\mathcal B}^{{\sigma + \rho}, - m})$, $\omega \in \Omega \subseteq \mathtt R_0$ and assume that 
\begin{equation}\label{smallness lemma exp}
\rho^{- 2} |{\mathcal R}|_{\sigma + \rho}^\Lipg \leq \delta 
\end{equation}
for some $\delta \in (0, 1)$ small enough. Then, for any $N \geq 1$, the map $\Phi_N := {\rm exp}({\mathcal R})- \sum_{n = 0}^{N - 1} \frac{{\mathcal R}^n}{n!} \in {\mathcal H}(\T^\infty_\sigma, {\mathcal B}^{\sigma, - N m})$ with 
\begin{equation}\label{smoothing estimate exponential map}
\begin{aligned}
& | \Phi_N |_{\sigma , - N m}^\Lipg \lesssim  \Big( C_0 \rho^{- (|m| + 2)} |{\mathcal R}|_{\sigma + \rho, - m} \Big)^{N } 
\end{aligned}
\end{equation}
As a consequence ${\rm exp}({\mathcal R}) \in {\mathcal H}(\T^\infty_\sigma, {\mathcal B}^\sigma)$ and 
\begin{equation}\label{estimate exponential map}
|{\rm exp}({\mathcal R})|_\sigma^\Lipg \leq 1 + C \rho^{- (|m| + 2)} |{\mathcal R}|_{\sigma + \rho, - m}^\Lipg
\end{equation}
for some constant $C > 0$. 
\end{lemma}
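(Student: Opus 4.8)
The plan is to treat $\exp({\mathcal R})$ through its defining power series $\sum_{n\ge 0}{\mathcal R}^n/n!$, so that $\Phi_N=\sum_{n\ge N}{\mathcal R}^n/n!$, and to estimate the series termwise. First I would record the bookkeeping: by Lemma~\ref{proprieta norma sigma riducibilita vphi x}$(i)$ each power ${\mathcal R}^n$ lies in ${\mathcal H}(\T^\infty_\sigma,{\mathcal B}^{\sigma,-nm})$, and since $m\ge 0$, for $n\ge N$ we have $-nm\le -Nm$, so by the monotonicity \eqref{prop monotonia norma vphi x} we may view ${\mathcal R}^n$ in ${\mathcal B}^{\sigma,-Nm}$ with $|{\mathcal R}^n|_{\sigma,-Nm}^\Lipg\le |{\mathcal R}^n|_{\sigma,-nm}^\Lipg$ (and for $N=0$ no order shift is needed). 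Hence it suffices to bound $\sum_{n\ge N}\frac1{n!}|{\mathcal R}^n|^\Lipg_{\sigma,-nm}$ and to check that the series converges with the stated size.

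The key to making the series summable --- in particular to avoid the $n$-dependent constants that would appear from iterating the composition estimate $n$ times --- is to split, for $n\ge N$, ${\mathcal R}^n={\mathcal R}^{\,n-N}\cdot{\mathcal R}^{\,N}$ and to treat the two blocks differently. The block ${\mathcal R}^N$, of \emph{fixed} length, is estimated by the iterated composition bound \eqref{stima comp iterata norma dec} with a fixed fraction of the analyticity loss (say $\rho/2$), producing a factor $\big(C_0(m)\,\rho^{-(|m|+2)}|{\mathcal R}|^\Lipg_{\sigma+\rho,-m}\big)^N$ while carrying the smoothing order $-Nm$; the ``bulk'' ${\mathcal R}^{\,n-N}$ is composed in the order-zero class ${\mathcal B}^\sigma$, which is a Banach algebra with no loss of analyticity (immediate from \eqref{definizione classe cal B sigma} and $e^{\sigma|j-j'|}\le e^{\sigma|j-j''|}e^{\sigma|j''-j'|}$, cf.\ Lemma~\ref{Lemma prodotto}), so that $|{\mathcal R}^{\,n-N}|_{\sigma}\le |{\mathcal R}|_{\sigma+\rho}^{\,n-N}$ and, for the Lipschitz seminorm, a single telescoping $\Delta_{\omega_1\omega_2}({\mathcal R}^{n-N})=\sum_{p}{\mathcal R}(\omega_1)^p(\Delta_{\omega_1\omega_2}{\mathcal R}){\mathcal R}(\omega_2)^{n-N-1-p}$ costs exactly one factor $\rho^{-2}$ (the shift $m\rightsquigarrow m+2$ in \eqref{definizione norma lip gamma matrici}), giving $|{\mathcal R}^{\,n-N}|^\Lipg_{\sigma}\lesssim (n-N+1)\,(\rho^{-2}|{\mathcal R}|^\Lipg_{\sigma+\rho})^{\,n-N}$. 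Placing the bulk as the \emph{left} factor of the product makes Lemma~\ref{proprieta norma sigma riducibilita vphi x}$(i)$ cost nothing further, and combining the two estimates yields
\[
\Big|\frac{{\mathcal R}^n}{n!}\Big|^\Lipg_{\sigma,-Nm}\ \lesssim\ \frac{n-N+1}{n!}\,\big(\rho^{-2}|{\mathcal R}|^\Lipg_{\sigma+\rho}\big)^{\,n-N}\big(C_0(m)\,\rho^{-(|m|+2)}|{\mathcal R}|^\Lipg_{\sigma+\rho,-m}\big)^{N}.
\]
By the smallness \eqref{smallness lemma exp} one has $\rho^{-2}|{\mathcal R}|^\Lipg_{\sigma+\rho}\le\delta<1$, so $\sum_{n\ge N}\frac{n-N+1}{n!}\delta^{\,n-N}$ is bounded by an absolute constant, which gives \eqref{smoothing estimate exponential map}. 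The bound \eqref{estimate exponential map} then follows by writing $\exp({\mathcal R})={\rm Id}+\Phi_1$, applying the case $N=1$, using the embedding ${\mathcal B}^{\sigma,-m}\hookrightarrow{\mathcal B}^\sigma$ (valid since $m\ge0$) via \eqref{prop monotonia norma vphi x}, and $|{\rm Id}|^\Lipg_\sigma=1$.

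The individual estimates are all direct applications of Lemmas~\ref{Lemma prodotto} and \ref{proprieta norma sigma riducibilita vphi x} and of \eqref{stima comp iterata norma dec}; the only delicate point --- and the one I expect to be the real obstacle --- is the allocation of the finite loss of analyticity $\rho$. One must arrange that the accumulation of smoothing orders, which forces the derivative losses $\rho^{-|m|}$ in Lemma~\ref{proprieta norma sigma riducibilita vphi x}$(i)$, stays confined to the block ${\mathcal R}^N$ of \emph{fixed} length (where the constants in \eqref{stima comp iterata norma dec} do not depend on $n$), while the arbitrarily long bulk ${\mathcal R}^{\,n-N}$ is handled in the loss-free order-zero algebra, exploiting the cancellation between the $\rho^{-2}$ produced by the Lipschitz seminorm and the $\rho^2$ furnished by the smallness hypothesis \eqref{smallness lemma exp}. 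Once this allocation is fixed, the summation of the series against the factorials is routine.
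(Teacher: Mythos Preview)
Your proposal is correct and follows essentially the same line as the paper: both factor the tail as $\Phi_N=\big(\sum_{k\ge0}\tfrac{{\mathcal R}^k}{(k+N)!}\big)\,{\mathcal R}^N$, bound the ``bulk'' factor in the order--zero norm $|\cdot|_\sigma$ (using the smallness \eqref{smallness lemma exp} to sum the series), estimate ${\mathcal R}^N$ via \eqref{stima comp iterata norma dec} with a fixed fraction of the loss, and then apply one instance of Lemma~\ref{proprieta norma sigma riducibilita vphi x}$(i)$ to join the two factors; the only difference is the order of operations (the paper sums first and composes once, you compose termwise and then sum). Your explicit observation that $|\cdot|_\sigma$ is a loss--free Banach algebra for the sup part, together with the single telescoping for the Lipschitz seminorm, yields a slightly sharper control of $|{\mathcal R}^{n-N}|_\sigma^\Lipg$ (one factor $\rho^{-2}$ instead of $\rho^{-2(n-N)}$), but the paper's weaker bound from iterating \eqref{stima comp iterata norma dec} directly in the Lipschitz norm is already sufficient under \eqref{smallness lemma exp}; so the two arguments are equivalent in substance.
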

\begin{proof}
In order to simplify notations for any $n \in \R$, we write $|\cdot |_{\sigma, n}$ instaed of $|\cdot |_{\sigma, n}^\Lipg$. Let $\Phi := {\rm exp}({\mathcal R})$. Then $\Phi - {\rm Id} = \sum_{n \geq 1} \frac{{\mathcal R}^n}{n!}$. By \eqref{prop monotonia norma vphi x}, one has that since ${\mathcal R} \in {\mathcal H}(\T^\infty_\sigma, {\mathcal B}^{\sigma, - m})$, then ${\mathcal R} \in {\mathcal H}(\T^\infty_\sigma, {\mathcal B}^\sigma)$ and $|{\mathcal R}|_{\sigma} \leq |{\mathcal R}|_{\sigma, - m}\,. $
By using the estimate \eqref{stima comp iterata norma dec}, one obtains that for any integer $n \geq 1$, ${\mathcal R}^n \in {\mathcal H}(\T^\infty_\sigma, {\mathcal B}^{\sigma})$ and 
\begin{equation}\label{stima potenza nel lemma exp}
|{\mathcal R}^n|_{\sigma} \leq \Big( C_0 \rho^{- 2} |{\mathcal R}|_{\sigma + \rho} \Big)^{n - 1} |{\mathcal R}|_\sigma 
\end{equation}
for some constant $C_0 > 0$. Now, we write 
$$
\Phi_N = \sum_{n \geq N} \frac{{\mathcal R}^n}{n !} =  \sum_{k \geq 0} \frac{{\mathcal R}^{k}}{(k + N) !}{\mathcal R}^N\,.
$$
By using the estimate \eqref{stima potenza nel lemma exp}, one gets that $\sum_{k \geq 0} \frac{{\mathcal R}^{k}}{(k + N) !} \in {\mathcal H}(\T^\infty_\sigma, {\mathcal B}^\sigma)$ and 
\begin{equation}\label{mafia romana 0}
\begin{aligned}
\Big| \sum_{k \geq 0} \frac{{\mathcal R}^{k}}{(k + N) !} \Big|_\sigma  \leq 1 + \sum_{k \geq 1} \frac{1}{k!} \Big( C_0 \rho^{- 2} |{\mathcal R}|_{\sigma + \rho} \Big)^{k - 1} |{\mathcal R}|_\sigma  \stackrel{\eqref{smallness lemma exp}}{\leq} C_1
\end{aligned}
\end{equation}
for some constant $C_1 > 0$. 
By applying Lemma \ref{proprieta norma sigma riducibilita vphi x}, one has that ${\mathcal R}^N \in {\mathcal H}(\T^\infty_\sigma, {\mathcal B}^{\sigma, - Nm})$ and $\Phi_N = \sum_{k \geq 0} \frac{{\mathcal R}^{k}}{(k + N) !} {\mathcal R}^N \in {\mathcal H}(\T^\infty_\sigma, {\mathcal B}^{\sigma, - Nm})$ and using also the estimate \eqref{mafia romana 0}, one obtains that 
\begin{equation}\label{mafia romana 2}
|\Phi_N|_{\sigma, - N m} \lesssim \rho^{- 2}  |{\mathcal R}^N|_{\sigma + \frac{\rho}{2}, - N m}\,.
\end{equation}
The claimed estimate \eqref{smoothing estimate exponential map} then follows by applying \eqref{stima comp iterata norma dec}. 
 The estimate \eqref{estimate exponential map} follows by triangular inequality and by applying the estimate \eqref{smoothing estimate exponential map} for $N = 1$. 
\end{proof}
\section{Normal form}\label{sezione riduzione ordine}
As we said in the introduction we want to conjugate to constant coefficients the Sch\"odinger equation $\partial_t u = {\mathcal L}(\omega t) u$ where 
$$
{\mathcal L}(\f) :=  \ii (1 + \e \cV_2(x, \f)) \partial_{xx} + \e \ii \cV_1(x, \f) \partial_x + \e \ii \cV_0(x, \f)\,.
$$
We assume that the functions $\cV_0, \cV_1, \cV_2 \in {\mathcal H}(\T^\infty_{\bar \sigma} \times \T_{\bar \sigma})$, for some $\bar \sigma > 0$  satisfy the condition \eqref{condizioni V0 V1 V2}, so that ${\mathcal L}(\f)$ is an $L^2$ skew selfadjoint linear operator.
\subsection{Normalization of the $x$-dependence of the highest order term}
We consider an operator induced by an analytic diffeomorphism of the torus 
$$
(x, \f) \mapsto (x+ \beta(x, \f),\f)
$$
where $\beta$ is a real on real analytic function on the infinite dimensional torus that will be determined later. We make tha ansatz that 
\begin{equation}\label{ansatz beta}
\beta \in {\mathcal H}(\T_{\s_1}\times\T^\infty_{\s_1}), \quad \| \beta \|_\sigma \lesssim_{\sigma_1, \bar \sigma} \delta, \quad \forall 0 < \sigma_1 < \bar \sigma\,. 
\end{equation}
By Proposition \ref{lemma diffeo inverso}, for any $0 < \sigma_1 < \bar \sigma$ there exists $\delta_0(\sigma_1, \bar \sigma) $ such that for any $\delta \leq \delta_0$, the map $(x, \f) \mapsto ( x + \beta(x, \f),\f)$ is invertible, with inverse given by $(y, \f) \mapsto (y+ \widetilde \beta(y, \f),\f)$ and 
\begin{equation}\label{ansatz beta tilde}
\widetilde \beta \in {\mathcal H}(\T_{\s_2}\times\T^\infty_{\s_2}), \quad \| \widetilde \beta \|_{\s_2} \lesssim_{\sigma_1, \sigma_2 } \| \beta\|_{\sigma_1}, \quad \forall \sigma_2 < \sigma_1 < \bar \sigma\,. 
\end{equation} We now define the operator 
\begin{equation}\label{prima trasformazione}
\Phi^{(1)}(\f)[u] := \sqrt{1 + \beta_x(x, \f)} u(x + \beta(x, \f))\,. 
\end{equation}
A direct calculation shows that this map is unitary and, if $\beta$ is appropriately small, invertible  with inverse  given by
\begin{equation}\label{inverso prima trasformazione}
\Phi^{(1)}(\f)^{- 1}[u] := \sqrt{1 + \widetilde \beta_y(y, \f)} u(y + \widetilde \beta(y, \f))\,.
\end{equation}
for $\f\in \T^\infty_\s$ with $\s<\s_2$.
Note that one has the relation 
\begin{equation}\label{formula beta beta tilde}
 1 + \widetilde \beta_y(y, \f) = \frac{1}{1 + \beta_x(y + \widetilde \beta(y, \f),\f)}, \quad   1 + \beta_x(x, \f) = \frac{1}{1 + \widetilde \beta_y(x +  \beta(x, \f),\f)}\,. 
\end{equation}
The following lemma holds. 
\begin{lemma}\label{lemma stime step 1}
For any $\sigma < \s'<\bar \sigma$, there exists $\delta \equiv \delta (\sigma, \sigma', \overline \sigma) \in (0, 1)$ such that if  $\e \in (0, \delta)$  the following holds. Define
\begin{equation}\label{soluzione equazione omologica grado 1}
\begin{aligned}
& m_2 (\f) := \Big( \frac{1}{2 \pi} \int_\T \frac{d x}{\sqrt{1 + \e \cV_2(x, \f)}}\, dx  \Big)^{- 2}\\
& \beta(x, \f) := \partial_x^{- 1}\Big[ \frac{\sqrt{m_2(\f)}}{\sqrt{1 + \e \cV_2(x, \f)}} - 1 \Big] \,. 
\end{aligned}
\end{equation}
\noindent
$(i)$  the map $\T^\infty_\sigma \to {\mathcal B}({\mathcal H}\big(\T_{\sigma'}), {\mathcal H}(\T_{\sigma}) \big)$, $\f \mapsto \Phi^{(1)}(\f)^{\pm 1}$ is bounded. 

\noindent
$(ii)$ For any $s \geq 0$, the map $\T^\infty \to {\mathcal B}(H^s(\T), H^s(\T) \big)$, $\f \mapsto \Phi^{(1)}(\f)^{\pm 1}$ is bounded.

\noindent
$(iii)$  $\Phi^{(1)}(\f)$ transforms the operator ${\mathcal L}(\f)$ into 
\begin{equation}\label{forma finale cal L (1)}
{\mathcal L}^{(1)}(\f) := (\Phi^{(1)}_{\omega*}){\mathcal L}(\f)= \ii m_2(\f) \partial_x^2 + a_1(x, \f) \partial_x + a_0(x, \f)\,. 
\end{equation}
where the functions $m_2 \in {\mathcal H}(\T^\infty_\sigma), \beta, \widetilde \beta, a_1, a_0 \in {\mathcal H}(\T_\s\times\T^\infty_\s)$ are independent of the parameter $\omega$ and  satisfy the estimates
\begin{equation}\label{stima m2 beta tilde beta}
\| m_2 - 1 \|_{\sigma}, \| \beta \|_{\sigma}, \| \widetilde \beta\|_{\sigma}\,, \| a_1\|_{\sigma}, \| a_0\|_{\sigma} \lesssim_{\sigma, \bar \sigma} \e\,. 
\end{equation}
Finally ${\mathcal L}^{(1)}$ is kew self-adjoint, hence $m_2(\f), a_1(x,\f)$ are real on real while 
$
 a_0 = - \overline{a}_0+ {\partial_x a_1}
$.
\end{lemma}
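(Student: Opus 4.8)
The plan is to read off the transformation from the requirement that conjugation kill the $x$-dependence of the leading coefficient, and then to verify, in turn, the analyticity and smallness of the data, the mapping properties of $\Phi^{(1)}$, and the form of the conjugated operator. First I would determine $\beta$ and $m_2$. Writing $\Phi^{(1)}(\f)$ as the composition of the map $T_\beta:u\mapsto u(\cdot+\beta(\cdot,\f))$ with multiplication by $\sqrt{1+\beta_x}$, the chain rule gives $\partial_x^2(T_\beta v)(x)=\beta_{xx}(x,\f)\,v'(x+\beta)+(1+\beta_x(x,\f))^2\,v''(x+\beta)$, so the order-$2$ coefficient of $\Phi^{(1)}(\f)^{-1}\,\ii(1+\e\cV_2)\partial_x^2\,\Phi^{(1)}(\f)$ equals, after the substitution $x=y+\widetilde\beta(y,\f)$ and the cancellation of the square-root factors coming from \eqref{formula beta beta tilde}, exactly $\ii\,\big[(1+\e\cV_2)(1+\beta_x)^2\big]_{x=y+\widetilde\beta(y,\f)}$. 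Demanding that this be independent of $y$ forces $(1+\beta_x)^2(1+\e\cV_2)$ to be $x$-independent; extracting square roots gives $1+\beta_x=\sqrt{m_2(\f)}\,/\sqrt{1+\e\cV_2(x,\f)}$, and the periodicity condition $\int_\T\beta_x\,dx=0$ pins $m_2(\f)$ down to the value in \eqref{soluzione equazione omologica grado 1}, after which $\beta=\partial_x^{-1}\big[\sqrt{m_2}/\sqrt{1+\e\cV_2}-1\big]$ is well defined since the bracket has zero $x$-average.

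For the analyticity and smallness of $m_2,\beta,\widetilde\beta$ I would argue as follows. Since $\cH(\T_{\overline\sigma}\times\T^\infty_{\overline\sigma})$ is a Banach algebra (Lemma \ref{Lemma prodotto} with $X=\cH(\T_{\overline\sigma})$) and $\|\e\cV_2\|_{\overline\sigma}\le\e\|\cV_2\|_{\overline\sigma}<\tfrac12$ for $\e$ small by (H1), the binomial series $(1+\e\cV_2)^{-1/2}=1+\sum_{k\ge1}\binom{-1/2}{k}(\e\cV_2)^k$ converges there with $\|(1+\e\cV_2)^{-1/2}-1\|_{\overline\sigma}\lesssim\e$; taking the $x$-average (which does not increase the $\|\cdot\|_\sigma$ norm) and expanding $(\cdot)^{-2}$ in the same way gives $\|m_2-1\|_{\overline\sigma}\lesssim\e$, hence $\|\beta_x\|_{\overline\sigma}\lesssim\e$ and, using $\|\partial_x^{-1}g\|_\sigma\le\|g\|_\sigma$ for $g$ of zero $x$-average, $\|\beta\|_{\overline\sigma}\lesssim\e$. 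For $\e$ small enough, Proposition \ref{lemma diffeo inverso}, applied in the $x$-variable with $\f$ a parameter, then produces the inverse diffeomorphism $y\mapsto y+\widetilde\beta(y,\f)$ with $\widetilde\beta$ real on real and $\|\widetilde\beta\|_{\sigma_2}\lesssim\|\beta\|_{\sigma_1}\lesssim\e$, which gives the bounds in \eqref{stima m2 beta tilde beta} for $m_2,\beta,\widetilde\beta$.

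For items $(i)$ and $(ii)$: unitarity on $L^2(\T)$ is the change of variables $\int_\T(1+\beta_x)\,|u(x+\beta)|^2\,dx=\int_\T|u|^2\,dx$, and the inverse has the form \eqref{inverso prima trasformazione} by \eqref{formula beta beta tilde}. For $(i)$, writing $u(x+\beta(x,\f))=\sum_n\widehat u_n\,e^{\ii nx}\,e^{\ii n\beta(x,\f)}$ and expanding $e^{\ii n\beta}$ exactly as in the proof of Lemma \ref{composizione funzioni analitiche T infty}, now carried out in the $x$-variable with $\f$ an additional analytic variable, shows that $u\mapsto u(\cdot+\beta(\cdot,\f))$ is bounded $\cH(\T_{\sigma'})\to\cH(\T_\sigma)$ as soon as $\|\beta\|$ is small relative to $\sigma'-\sigma$, which holds for $\e$ small; multiplication by $\sqrt{1+\beta_x}$ is bounded by the algebra property, and the same applies to $\Phi^{(1)}(\f)^{-1}$ through $\widetilde\beta$. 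For $(ii)$, analyticity of $\beta$ yields, via the Cauchy estimates of Lemma \ref{stima di cauchy}, uniform bounds on all $x$-derivatives of $\beta$ over $\T\times\T^\infty$, so $x\mapsto x+\beta(x,\f)$ is a $C^\infty$ diffeomorphism of $\T$ close to the identity with norms bounded uniformly in $\f$; the classical bounds for composition with such a diffeomorphism and for multiplication by $\sqrt{1+\beta_x}$ then give boundedness on $H^s(\T)$ for all $s\ge0$, uniformly in $\f$.

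Finally, for $(iii)$ I would compute the pushforward \eqref{push forward}: its order-$2$ term is $\ii m_2(\f)\partial_x^2$ by the defining equation for $\beta$ obtained above, while the remaining contributions --- the terms produced when $\partial_x^2$ falls on $\beta$ or on $\sqrt{1+\beta_x}$, the conjugation of $\e\ii\cV_1\partial_x+\e\ii\cV_0$, and the transport term $-\Phi^{(1)}(\f)^{-1}\,\omega\cdot\partial_\f\Phi^{(1)}(\f)$, which is of order $\le1$ --- assemble into $a_1(x,\f)\partial_x+a_0(x,\f)$. The coefficients $a_1,a_0$ are built from $\cV_0,\cV_1,\cV_2,m_2,\beta,\widetilde\beta$ by finitely many products (Lemma \ref{Lemma prodotto}), differentiations $\partial_x$ and $\omega\cdot\partial_\f$ (Lemmata \ref{stima di cauchy}, \ref{lemma om dot partial vphi}), and compositions with $y\mapsto y+\widetilde\beta(y,\f)$ (Lemma \ref{composizione funzioni analitiche T infty}), each costing an arbitrarily small part of the analyticity strip; since every non-leading input is $O(\e)$, this yields $\|a_1\|_\sigma,\|a_0\|_\sigma\lesssim_{\sigma,\overline\sigma}\e$, at the price of requiring $\e<\delta(\sigma,\sigma',\overline\sigma)$. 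Skew-self-adjointness of $\cL^{(1)}$ follows from that of $\cL$ (a consequence of \eqref{condizioni V0 V1 V2}) and the unitarity of $\Phi^{(1)}$ by the remark following \eqref{push forward}; matching $\cL^{(1)}$ with $-(\cL^{(1)})^*$ order by order, using that $m_2$ does not depend on $x$, then forces $m_2$ and $a_1$ real on real and $a_0=-\overline a_0+\partial_x a_1$ (and $m_2$ is in any case manifestly real, since $\cV_2$ is real on real). The step I expect to be the main obstacle is not the algebra, which is routine, but the bookkeeping: controlling the successive losses of analyticity radius along this chain of compositions, products and derivatives while keeping every constant of size $O(\e)$, together with the composition-with-diffeomorphism bounds in the $x$-variable underlying $(i)$ and $(ii)$, which must hold uniformly in $\f\in\T^\infty_\sigma$.
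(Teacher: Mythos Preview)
Your proposal is correct and follows essentially the same route as the paper: derive $\beta$ and $m_2$ from the homological equation $(1+\e\cV_2)(1+\beta_x)^2=m_2(\f)$, control their analytic norms via the algebra property and the Moser-type composition (your binomial-series argument is exactly the content of the paper's Lemma~\ref{moser type lemma}), invoke Proposition~\ref{lemma diffeo inverso} for $\widetilde\beta$, and then read off $(i)$--$(iii)$ from the explicit conjugation formulas and the composition/product/Cauchy lemmata. Your ordering (estimates first, then $(i)$--$(iii)$) is in fact slightly cleaner than the paper's, which states $(i)$ and $(ii)$ first but tacitly relies on the bounds \eqref{stima m2 beta tilde beta} established only in the course of proving $(iii)$.
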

\begin{proof}
	The proof of the item $(i)$ follows by the definitions \eqref{prima trasformazione}, \eqref{inverso prima trasformazione}, by using the estimates on $\beta, \widetilde \beta$ \eqref{stima m2 beta tilde beta} and by applying Lemmata \ref{Lemma prodotto}, \ref{composizione funzioni analitiche T infty}. 
	\\
	To prove the item $(ii)$ we argue as follows. Since $\beta$ and $\widetilde \beta$ are analytic, then for any $\f \in \T^\infty$ one has $\beta(\f, \cdot), \widetilde \beta(\f, \cdot) \in {\mathcal C}^\infty(\T)$ and 
	$\sup_{\f \in \T^\infty} \| \beta(\f, \cdot)\|_{{\mathcal C}^s(\T)}\,,\, \sup_{\f \in \T^\infty} \| \widetilde \beta(\f, \cdot)\|_{{\mathcal C}^s(\T)} < \infty$ for any $s \geq 0$. A direct calculation then shows that $\sup_{\f \in \T^\infty}\| \Phi(\f)\|_{{\mathcal B}(H^s(\T), H^s(\T))} \leq C\Big( \sup_{\f \in \T^\infty} \| \beta(\f, \cdot)\|_{{\mathcal C}^s(\T)} \Big)$ and 
	$$
	\sup_{\f \in \T^\infty}\| \Phi(\f)^{- 1}\|_{{\mathcal B}(H^s(\T), H^s(\T))} \leq C\Big( \sup_{\f \in \T^\infty} \| \widetilde \beta(\f, \cdot)\|_{{\mathcal C}^s(\T)} \Big)
	$$ 
		and the result follows. 
		\\ 
In order to prove $(iii)$ we remark that the map $\Phi^{(1)}(\f)$ satisfies the following conjugation rules: 
\begin{equation}\label{regole di coniugazione}
\begin{aligned}
& \Phi^{(1)}(\f)^{- 1} \circ  a(x, \f) \circ \Phi^{(1)}(\f) = a(y+ \widetilde \beta(y, \f),\f)\,, \\
& \Phi^{(1)}(\f)^{- 1} \circ \partial_x \circ \Phi^{(1)}(\f) = (1 + \beta_x(y + \widetilde \beta(y, \f),\f) ) \partial_y + \frac12(1 + \widetilde \beta_y(y, \f)) \beta_{xx}(y+ \widetilde \beta(y, \f),\f)  \,, \\
& \Phi^{(1)}(\f)^{- 1} \omega \cdot \partial_\f \Phi^{(1)}(\f) = \omega \cdot \partial_\f \beta(y+ \widetilde \beta(y, \f),\f) \partial_y + \frac12 (1 + \widetilde \beta_y(y, \f))\omega \cdot \partial_\f \beta_x(y+ \widetilde \beta(y, \f),\f)\,.
\end{aligned}
\end{equation}
Then, recalling \eqref{push forward}, the transformed operator is
\begin{equation}\label{cal L (1)}
\begin{aligned}
 {\mathcal L}^{(1)}(\f) & = \ii a_2 (y, \f) \partial_y^2 + a_1(y, \f) \partial_y + a_0 (y, \f)\,, \\
 a_2 & := \Big( (1 + \e \cV_2) (1 + \beta_x)^2 \Big)_{x = y + \widetilde \beta(y, \f)}\,, \\
 a_1 & := \Big( 2\ii (1 + \e \cV_2) \beta_{xx} + \e \ii \cV_1 (1 + \beta_x) - \omega \cdot \partial_\f \beta\Big)_{x = y + \widetilde \beta(y, \f)} \,, \\
 a_0 & := \ii \sqrt{1 + \widetilde \beta_y}\Big( (1 + \e \cV_2) \partial_{xx} \sqrt{1 + \beta_x} \Big)\Big|_{x = y + \widetilde \beta(y, \f)}  \\
 & \qquad + \frac12  \ii (1 + \widetilde \beta_y) \Big( \e \cV_1 \beta_{xx} + \omega \cdot \partial_\f \beta_x  \Big)\Big|_{x = y + \widetilde \beta(y, \f)} + \e \cV_0(y, \f + \widetilde \beta(y, \f))\,. 
\end{aligned}
\end{equation}
By the definitions of the functions $\beta(x, \f)$ and $m_2(\f)$ given in \eqref{soluzione equazione omologica grado 1} one gets  
\begin{equation}\label{omologica grado max}
a_2(x, \f) = m_2(\f), \quad \text{namely} \quad (1 + \e \cV_2) (1 + \beta_x)^2= m_2(\f) 
\end{equation}
hence the operator ${\mathcal L}^{(1)}(\f)$ in \eqref{cal L (1)} takes the form \eqref{forma finale cal L (1)}. Since $\Phi^{(1)}$ is unitary, by construction ${\mathcal L}^{(1)}$ is skew self-adjoint.

Since $\cV_2 \in {\mathcal H}^{\bar \sigma}_{x, \f}$, by applying Lemma \ref{moser type lemma}, (applied to the analytic function  $f(u) = \frac{1}{\sqrt{1 + u}}$, $|u| \leq \frac12$) and by the definition \eqref{soluzione equazione omologica grado 1}, one gets that for $\e$ small enough, $\beta \in {\mathcal H}(\T_{\s_1}\times\T^\infty_{\s_1})$, $m_2 \in {\mathcal H}(\T^\infty_{\s_1})$ for any $0 < \sigma_1 < \bar \sigma$. Using tha mean value theorem, one gets the estimate, $\| \beta \|_{\sigma_1}, \| m_2 - 1\|_{\sigma_1} \lesssim_{\sigma_1, \bar \sigma} \e$. The ansatz \eqref{ansatz beta} is then proved. The ansatz \eqref{ansatz beta tilde}, follows by Proposition \ref{lemma diffeo inverso}. Finally, by applying Lemmata \ref{moser type lemma}, \ref{composizione funzioni analitiche T infty}, \ref{stima di cauchy}, and using that $\cV_2, \cV_1, \cV_0 \in {\mathcal H}(\T_{\overline \sigma} \times \T^{\infty}_{\overline \sigma})$, one deduces the claimed properties on the functions $a_0$ and $a_1$. 
\end{proof}
\subsection{Reduction to constant coefficients of the highest order term}\label{sezione riparametrizzazione tempo}
Our next purpose is to eliminate the $\f$-dependence from the highest order coefficient $m_2(\f) \partial_{xx}$ of the operator ${\mathcal L}^{(1)}(\f)$ in \eqref{forma finale cal L (1)}. To achieve this we conjugate the equation $\partial_t u = \ii {\mathcal L}^{(1)}(\omega t) u$ by means of a reparametrization of time $t \mapsto t +  \alpha(\omega t)$, where $\alpha$ is a suitable analytic function which has to be determined. More precisely we consider the change of varialbles 
\begin{equation}\label{def Phi (2)}
u(t, x) = \Phi^{(2)} v(t, x) := v(x,t + \alpha(\omega t)), \quad (x, t) \in \T \times \R
\end{equation} 
We assume that $\alpha(\f)$ is real on real and  satisfies the ansatz  
\begin{equation}\label{ansatz alpha riparametrizzazione}
\alpha \in {\mathcal H}(\T^\infty_{\sigma_1}), \quad \| \alpha\|_{\sigma_1} \lesssim_{\sigma_1, \bar \sigma} \delta, \quad \forall 0 < \sigma_1 < \bar \sigma\,. 
\end{equation}
By applying Proposition \ref{lemma diffeo inverso}, for any $\sigma_2 < \bar \sigma$ there exists $\delta_0 = \delta_0(\sigma_2,\s_1, \bar \sigma)$ small enough such that if $\delta \leq \delta_0$, the map $\f \mapsto \f + \omega \alpha(\f)$ is invertible with inverse given by $\vartheta \mapsto \vartheta + \omega \widetilde \alpha(\vartheta)$ and 
\begin{equation}\label{proprieta tilde alpha}
\widetilde \alpha \in {\mathcal H}(\T^\infty_{\sigma_2}), \quad \| \widetilde \alpha\|_{\sigma_2} \lesssim_{\sigma_1, \sigma_2} \| \alpha\|_{\sigma_1}, \quad \forall \sigma_2 < \sigma_1 < \bar \sigma\,. 
\end{equation}
The inverse of the map $\Phi^{(2)}$ in \eqref{def Phi (2)} is then given by 
\begin{equation}\label{def Phi (2) inv}
(\Phi^{(2)})^{- 1} u ( x, \tau) := u(x,\tau +  \widetilde \alpha(\omega \tau))\,.
\end{equation}

\begin{remark}\label{remark riparametrizzazione tempo}
If $u(x)$ is a function independent of the $\f$, then $(\Phi^{(2)})^{\pm 1} u = u$. 
\end{remark}

The following lemma holds. 

\begin{lemma}\label{lemma stime step 2}
Let $\omega \in \Dc$. For any $\sigma < \bar \sigma$ there exists $\delta(\sigma, \bar \sigma) > 0$ such that if $\e \gamma^{- 1} \leq \delta$, then, setting
\begin{equation}\label{definizione alpha lambda 2}
\begin{aligned}
\lambda_2 := \widehat m_2(0) = \int_{\T^\infty} m_2(\f)\, d \f\,, \quad \alpha  := (\omega \cdot \partial_\f)^{- 1}\Big[ \frac{m_2}{\lambda_2} - 1 \Big]\,,
\end{aligned}
\end{equation}
then $\Phi^{(2)}$ transforms the operator ${\mathcal L}^{(1)}(\f)$ in 
\begin{equation}\label{terza espressione cal L (2)}
{\mathcal L}^{(2)}(\vartheta) = \ii \lambda_2 \partial_x^2 + b_1(\vartheta, x) \partial_x + b_0(\vartheta, x) 
\end{equation}
The constant $\lambda_2 \in \R$ is independent of $\omega$.  For all $\omega\in\Dc$ the functions $ \alpha(\cdot; \omega), \widetilde \alpha(\cdot; \omega) \in {\mathcal H}(\T^\infty_\sigma ), b_1(\cdot; \omega),\im  b_0(\cdot; \omega) \in {\mathcal H}(\T_\s\times\T^\infty_\s)$ are  well defined and real on real. Furthermore, for any $\Oo\subseteq \Dc$ the following estimates hold:
$$
\begin{aligned}
& |\lambda_2 - 1|, \| b_0\|_{\sigma}^\Lipg, \| b_1\|_{\sigma}^\Lipg \lesssim \e\,, \quad  \| \alpha\|_{\sigma}^\Lipg\,,\, \| \widetilde \alpha\|_{\sigma}^\Lipg \lesssim \e \gamma^{- 1}\,. 
\end{aligned}
$$
\end{lemma}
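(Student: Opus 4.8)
The plan is to realise $\Phi^{(2)}$ as a pure reparametrization of the time variable and to flatten the highest-order coefficient by solving a scalar homological equation. First I would derive the transformation rule. Writing the equation as $\partial_t u = {\mathcal L}^{(1)}(\omega t) u$ and setting $u(x, t) = v(x, \tau(t))$ with $\tau(t) := t + \alpha(\omega t)$, the chain rule gives $\partial_t u = \big(1 + \omega\cdot\partial_\f\alpha(\omega t)\big)\,(\partial_\tau v)(x, \tau(t))$, so that $v$ solves $\partial_\tau v = {\mathcal L}^{(2)}(\omega\tau) v$ with
\[
{\mathcal L}^{(2)}(\vartheta) := \Big( \big(1 + \omega\cdot\partial_\f\alpha(\f)\big)^{- 1} {\mathcal L}^{(1)}(\f) \Big)\Big|_{\f = \vartheta + \omega\widetilde\alpha(\vartheta)}\,,
\]
where $\vartheta\mapsto\vartheta+\omega\widetilde\alpha(\vartheta)$ is the inverse of $\f\mapsto\f+\omega\alpha(\f)$, furnished by Proposition \ref{lemma diffeo inverso} once the relevant smallness is checked. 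Since $1 + \omega\cdot\partial_\f\alpha$ is a scalar independent of $x$, the highest-order coefficient of ${\mathcal L}^{(2)}$ is $\ii\, m_2(\f)\big(1 + \omega\cdot\partial_\f\alpha(\f)\big)^{- 1}$ evaluated at $\f = \vartheta + \omega\widetilde\alpha(\vartheta)$; imposing that it be a constant $\ii\lambda_2$ is precisely the homological equation $\omega\cdot\partial_\f\alpha = m_2/\lambda_2 - 1$, which, by Lemma \ref{stima diofantea D omega inv}, is solvable if and only if its right-hand side has zero average, forcing $\lambda_2 = \widehat m_2(0)$ and $\alpha = (\omega\cdot\partial_\f)^{- 1}[m_2/\lambda_2 - 1]$ as in \eqref{definizione alpha lambda 2}. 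With this choice the remaining coefficients in \eqref{terza espressione cal L (2)} are $b_i(\vartheta, x) := \big(\lambda_2\, m_2(\f)^{- 1} a_i(x, \f)\big)\big|_{\f = \vartheta + \omega\widetilde\alpha(\vartheta)}$, $i = 0, 1$.

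Next I would collect the estimates. By Lemma \ref{lemma stime step 1} one has $\| m_2 - 1 \|_{\sigma'}, \| a_0 \|_{\sigma'}, \| a_1 \|_{\sigma'} \lesssim \e$ for every $\sigma' < \bar\sigma$, with $m_2, a_0, a_1$ independent of $\omega$; hence $|\lambda_2 - 1| = |\widehat m_2(0) - 1| \le \| m_2 - 1 \|_{\sigma'} \lesssim \e$, and for $\e$ small enough (so that $\lambda_2$ stays bounded away from $0$) Lemma \ref{Lemma prodotto} gives $\| m_2/\lambda_2 - 1 \|_{\sigma'} \lesssim \| m_2 - 1 \|_{\sigma'} + |\lambda_2 - 1| \lesssim \e$. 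The function $m_2/\lambda_2 - 1$ is real on real with vanishing average, so $\alpha$ is a well-defined real-on-real function for every $\omega \in \Dc$; its sole $\omega$-dependence is through $(\omega\cdot\partial_\f)^{- 1}$, and Lemma \ref{stima diofantea D omega inv}, applied with a fixed analyticity loss $\rho \sim \bar\sigma - \sigma$ (so that its exponential prefactor is a constant depending only on $\sigma, \bar\sigma$), yields $\| \alpha \|_\sigma^\Lipg \lesssim \gamma^{- 1} \| m_2/\lambda_2 - 1 \|_{\sigma + \rho}^\Lipg \lesssim \e\gamma^{- 1}$. In particular, if $\e\gamma^{- 1} \le \delta$ with $\delta = \delta(\sigma, \bar\sigma)$ small then the ansatz \eqref{ansatz alpha riparametrizzazione} holds, and, since $\| \omega\alpha \|_\sigma \le 2\| \alpha \|_\sigma$, Proposition \ref{lemma diffeo inverso} produces $\widetilde\alpha$ real on real with $\| \widetilde\alpha \|_\sigma^\Lipg \lesssim \| \alpha \|_\sigma^\Lipg \lesssim \e\gamma^{- 1}$, which is \eqref{proprieta tilde alpha}.

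For $b_0$ and $b_1$ I would note that $1 + \omega\cdot\partial_\f\alpha = m_2/\lambda_2 = 1 + O(\e)$ in the norm $\| \cdot \|_{\sigma'}^\Lipg$, so the Moser-type Lemma \ref{moser type lemma} applied to $f(u) = (1 + u)^{- 1}$ gives $\| \lambda_2 m_2^{- 1} - 1 \|_{\sigma'}^\Lipg \lesssim \e$; multiplying by $a_i$ (Lemma \ref{Lemma prodotto}), then composing with the torus diffeomorphism $\vartheta\mapsto\vartheta+\omega\widetilde\alpha(\vartheta)$ (Lemma \ref{composizione funzioni analitiche T infty}, whose smallness hypothesis holds because $\| \omega\widetilde\alpha \|_\sigma \lesssim \e\gamma^{- 1} \le \delta$) and using $\| a_i \|_{\sigma'} \lesssim \e$ yields $\| b_0 \|_\sigma^\Lipg, \| b_1 \|_\sigma^\Lipg \lesssim \e$. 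Finally, the reality assertions are structural: for each $\vartheta \in \T^\infty$ the operator ${\mathcal L}^{(1)}(\f)$ is $L^2$-skew-self-adjoint and $1 + \omega\cdot\partial_\f\alpha(\f)$ is a real scalar not depending on $x$, hence ${\mathcal L}^{(2)}(\vartheta)$ is $L^2$-skew-self-adjoint as well, which together with the structural identities for $a_0, a_1$ of Lemma \ref{lemma stime step 1} gives the stated reality of $b_1$ and $\ii b_0$.

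I expect the only genuinely delicate point to be in the first step: recognising that $\Phi^{(2)}$ acts by division by the scalar $1 + \omega\cdot\partial_\f\alpha$ precomposed with the inverse torus diffeomorphism — not by the conjugation rule \eqref{push forward}, which governs linear changes of coordinates — and that the solvability obstruction of the ensuing homological equation is exactly what determines $\lambda_2$. Everything after that is bookkeeping of analyticity losses; since there is no iteration here, all radii $\sigma < \sigma' < \bar\sigma$ are fixed once and for all, so the exponential prefactor in Lemma \ref{stima diofantea D omega inv} is merely a $(\sigma, \bar\sigma)$-dependent constant that does not interact with the small parameter $\e\gamma^{- 1}$.
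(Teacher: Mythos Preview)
Your proposal is correct and follows essentially the same route as the paper: derive the transformation rule for the time reparametrization, solve the scalar homological equation $\omega\cdot\partial_\f\alpha = m_2/\lambda_2 - 1$ (forcing $\lambda_2 = \widehat m_2(0)$), and then estimate $\alpha,\widetilde\alpha,b_0,b_1$ via Lemmata \ref{stima diofantea D omega inv}, \ref{lemma diffeo inverso}, \ref{moser type lemma}, \ref{composizione funzioni analitiche T infty}. One small remark: your parenthetical that the action is ``not by the conjugation rule \eqref{push forward}'' is slightly misleading --- the paper frames this computation precisely as an instance of \eqref{push forward}, which for a pure time reparametrization collapses to division by $1+\omega\cdot\partial_\f\alpha$ composed with the inverse diffeomorphism; your rewriting of that factor as $\lambda_2/m_2$ is a neat way to see directly that the $b_i$ inherit the $O(\e)$ bound without passing through an estimate of $\omega\cdot\partial_\f\alpha$.
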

\begin{proof}
A direct calculation shows that formula \eqref{push forward} reads
\begin{equation}\label{prima definizione cal L (2)}
{\mathcal L}^{(2)}(\vartheta) := (\Phi^{(2)}_{\omega*}){\mathcal L^{(1)}}(\f)=\frac{1}{\rho(\vartheta)} {\mathcal L}^{(1)}(\vartheta + \omega \widetilde \alpha(\vartheta))\,, \quad \rho(\vartheta) := 1 + \omega \cdot \partial_\f \alpha(\vartheta + \omega \widetilde \alpha(\vartheta))\,. 
\end{equation}
Note that, since ${\mathcal L}^{(1)}(\omega t)$ is skew self-adjoint then also ${\mathcal L}^{(2)}(\omega t)$ is skew self-adjoint. By \eqref{prima definizione cal L (2)}, one has 
\begin{equation}\label{seconda espressione cal L (2)}
\begin{aligned}
{\mathcal L}^{(2)}(\vartheta) & = \ii b_2(\vartheta) \partial_x^2 + b_1(\vartheta, x) \partial_x + b_0(\vartheta, x)\,, \\
b_2(\vartheta) & := \Big[\dfrac{m_2}{1 + \omega \cdot \partial_\f \alpha} \Big]\Big|_{\f = \vartheta + \omega \widetilde \alpha(\vartheta)}\,, \\
b_1(\vartheta, x) & := \Big[\dfrac{a_1}{1 + \omega \cdot \partial_\f \alpha} \Big]\Big|_{\f = \vartheta + \omega \widetilde \alpha(\vartheta)}\,, \\
b_0(\vartheta, x) & := \Big[\dfrac{a_0}{1 + \omega \cdot \partial_\f \alpha} \Big]\Big|_{\f = \vartheta + \omega \widetilde \alpha(\vartheta)}\,. 
\end{aligned}
\end{equation}
By the definitions of $\alpha(\f)$ and $\lambda_2 \in \R$ given in \eqref{definizione alpha lambda 2}, one obtains that  
\begin{equation}\label{equazione omologica riparametrizzazione tempo}
b_2(\vartheta) = \lambda_2, \quad \text{namely} \quad \dfrac{m_2(\f)}{1 + \omega \cdot \partial_\f \alpha(\f)} = \lambda_2 
\end{equation}
 and therefore the linear operator ${\mathcal L}^{(2)}(\f)$ defined in \eqref{seconda espressione cal L (2)} takes the form given in \eqref{terza espressione cal L (2)}. 
 Note that the function $m_2(\f)$ defined in \eqref{soluzione equazione omologica grado 1} is independent of $\omega$ and therefore also $\lambda_2$ does not depend on $\omega$. By applying Lemma \ref{lemma stime step 1}, by the definition \eqref{definizione alpha lambda 2} and by Lemmata  \ref{stima diofantea D omega inv}-$(ii)$, \ref{lemma diffeo inverso}, one gets that $|\lambda_2 - 1| \lesssim \e$ and that for any $0 < \sigma < \bar \sigma$, for $\e \gamma^{- 1} \leq \delta$, for some $\delta = \delta(\sigma, \bar \sigma)$ small enough, $\alpha, \widetilde \alpha \in {\mathcal H}(T_\sigma^\infty)$ with $\| \alpha \|_\sigma^\Lipg\,, \| \widetilde \alpha\|_\sigma^\Lipg \lesssim_{\sigma, \bar \sigma} \e \gamma^{- 1}$. Finally, recalling the definitions \eqref{seconda espressione cal L (2)}, using the properties on $a_0$ and $a_1$ stated in Lemma \ref{lemma stime step 1} and by applying Lemmata \ref{moser type lemma} (with $f(u) = \frac{1}{1 + u}$, $|u| \leq \frac12$), \ref{composizione funzioni analitiche T infty}, \ref{lemma om dot partial vphi}-$(ii)$, we can deduce the claimed properties on $b_0$ and $b_1$.  

\end{proof}
\subsection{Elimination of the $x$-dependence from the first order term}
The next aim is to eliminate the dependence on $x$ from the first order term in \eqref{terza espressione cal L (2)}. To this aim, we conjugate the vector field ${\mathcal L}^{(2)}(\f)$ by means of a multiplication operator 
\begin{equation}\label{definizione Phi (3)}
\Phi^{(3)}(\f) : u \mapsto e^{\ii p(x, \f)} u 
\end{equation}
where $p$ is an analytic real on real function which has to be determined. The following lemma holds. 
\begin{lemma}\label{lemma stime step 3}
Let $\omega \in \Dc$. For any $0 < \sigma < \bar \sigma$ there exists $\delta(\sigma, \bar \sigma) > 0$ such that if $\e \gamma^{- 1} \leq \delta$, the following holds. Define
\begin{equation}\label{soluzione omologica grado 3}
m_1(\f) := \frac{1}{2 \pi} \int_\T b_1(x, \f)\, d x\,, \quad p(x, \f) := \frac{\partial_x^{- 1}[ b_1(x, \f) - m_1(\f)]}{2 \lambda_2}\,. 
\end{equation}
$(i)$  the map $\T^\infty_\sigma \to {\mathcal B}({\mathcal H}\big(\T_{\sigma}), {\mathcal H}(\T_{\sigma}) \big)$, $\f \mapsto \Phi^{(3)}(\f)^{\pm 1}$ is bounded. 
\\
$(ii)$ For any $s \geq 0$, the map $\T^\infty \to {\mathcal B}(H^s(\T), H^s(\T) \big)$, $\f \mapsto \Phi^{(3)}(\f)^{\pm 1}$ is bounded. 
\\
$(iii)$ the operator $\Phi^{(3)}(\f)$ transforms  ${\mathcal L}^{(2)}(\f)$ in
\begin{equation}\label{versione finale cal L (3) nel lemma}
{\mathcal L}^{(3)}(\f) = \ii \lambda_2 \partial_{xx} + m_1(\f) \partial_x + c_0(x, \f)
\end{equation} 
where the functions $ p(\cdot; \omega), \im  c_0(\cdot; \omega) \in {\mathcal H}(\T_\s\times\T^\infty_\s), \,m_1(\cdot; \omega) \in {\mathcal H}(\T^\infty_\sigma)$ are real on real, well defined for $\omega \in \Dc$ and satisfy  for $\Oo\subseteq \Dc$ the estimates
\begin{equation}\label{stima p c0 m1}
\begin{aligned}
&  \| p \|_{\sigma}^\Lipg\,,\, \| c_0\|_{\sigma}^\Lipg, \| m_1\|_{\sigma}^\Lipg \lesssim_{\sigma, \bar \sigma} \e \,. 
\end{aligned}
\end{equation}
\end{lemma}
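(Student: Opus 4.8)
The plan is to follow the same three-part template as Lemmata \ref{lemma stime step 1} and \ref{lemma stime step 2}: first record the conjugation rules for the multiplication operator $\Phi^{(3)}(\f)$ of \eqref{definizione Phi (3)}, then read off the homological equation that the phase $p$ must solve in order to kill the $x$-dependence of the first order coefficient of \eqref{terza espressione cal L (2)}, and finally collect the estimates from the product/Cauchy lemmata of Section \ref{sez funzioni analitiche generali}. The key simplification with respect to Lemma \ref{lemma stime step 2} is that the conjugation is purely algebraic and \emph{no small divisors occur}: the homological equation is solved by applying $\partial_x^{-1}$ to a function of zero $x$-mean, and on the circle $\partial_x^{-1}$ is the bounded diagonal operator $\widehat u(n)\mapsto \widehat u(n)/(\ii n)$ for $n\neq 0$.

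For items $(i)$ and $(ii)$ I would first note that $\Phi^{(3)}(\f)^{-1}$ is simply multiplication by $e^{-\ii p(x,\f)}$. Writing $e^{\pm\ii p}=\sum_{n\ge0}(\pm\ii p)^n/n!$ and using that $\mathcal H(\T_\sigma\times\T^\infty_\sigma)$ is a Banach algebra (Lemma \ref{Lemma prodotto}), one gets $e^{\pm\ii p}\in\mathcal H(\T_\sigma\times\T^\infty_\sigma)$ with $\|e^{\pm\ii p}\|_\sigma\le e^{\|p\|_\sigma}$; since multiplication by a function in $\mathcal H(\T_\sigma)$ is a bounded operator on $\mathcal H(\T_\sigma)$ with norm at most that of the function, item $(i)$ follows, uniformly in $\f$. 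For item $(ii)$, analyticity of $p$ implies $p(\f,\cdot)\in\mathcal C^\infty(\T)$ with all $\mathcal C^s$-norms bounded uniformly in $\f\in\T^\infty$ (Cauchy estimates), hence the same for $e^{\pm\ii p(\f,\cdot)}$, and multiplication by a $\mathcal C^s$ function is bounded on $H^s(\T)$ — exactly as in Lemma \ref{lemma stime step 1}$(ii)$.

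For item $(iii)$ I would use the conjugation identities
\[
\Phi^{(3)}(\f)^{-1}\,b_0\,\Phi^{(3)}(\f)=b_0\,,\qquad
\Phi^{(3)}(\f)^{-1}\partial_x\Phi^{(3)}(\f)=\partial_x+\ii p_x\,,\qquad
\Phi^{(3)}(\f)^{-1}\omega\cdot\partial_\f\Phi^{(3)}(\f)=\ii\,\omega\cdot\partial_\f p\,,
\]
together with $\Phi^{(3)}(\f)^{-1}(\ii\lambda_2\partial_x^2)\Phi^{(3)}(\f)=\ii\lambda_2\partial_x^2-2\lambda_2 p_x\partial_x-\lambda_2 p_{xx}-\ii\lambda_2 p_x^2$. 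Plugging into \eqref{push forward} and using \eqref{terza espressione cal L (2)} gives
\[
\mathcal L^{(3)}(\f)=\ii\lambda_2\partial_x^2+\big(b_1-2\lambda_2 p_x\big)\partial_x+\big(b_0-\lambda_2 p_{xx}-\ii\lambda_2 p_x^2+\ii b_1 p_x-\ii\,\omega\cdot\partial_\f p\big)\,.
\]
Imposing $b_1-2\lambda_2 p_x\equiv m_1(\f)$ with $m_1$ the $x$-average of $b_1$ forces $p_x=(b_1-m_1)/(2\lambda_2)$, that is \eqref{soluzione omologica grado 3} (here $\lambda_2\neq0$ since $|\lambda_2-1|\lesssim\e$ by Lemma \ref{lemma stime step 2}, and $b_1-m_1$ has zero $x$-mean so $\partial_x^{-1}$ is well defined); then $\mathcal L^{(3)}$ has the form \eqref{versione finale cal L (3) nel lemma} with $c_0$ the bracketed zero order term. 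Skew-self-adjointness of $\mathcal L^{(3)}$ is inherited from $\mathcal L^{(2)}$ because $\Phi^{(3)}(\f)$ is unitary ($|e^{\ii p}|=1$ as $p$ is real on real); this forces $m_1$ real on real and, since $m_1$ does not depend on $x$, $\im c_0$ real on real.

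Finally, the estimates \eqref{stima p c0 m1} are routine. I would apply Lemma \ref{lemma stime step 2} at a strip $\sigma+\rho<\bar\sigma$, so that $b_0,b_1\in\mathcal H(\T_{\sigma+\rho}\times\T^\infty_{\sigma+\rho})$ with $\|b_0\|_{\sigma+\rho}^\Lipg,\|b_1\|_{\sigma+\rho}^\Lipg\lesssim\e$. Then $\|m_1\|_\sigma^\Lipg\le\|b_1\|_{\sigma+\rho}^\Lipg\lesssim\e$, and since $\partial_x^{-1}$ is bounded with no loss of analyticity, $\|p\|_\sigma^\Lipg\lesssim\|b_1\|_{\sigma+\rho}^\Lipg\lesssim\e$; each summand of $c_0$ is then bounded by Lemma \ref{Lemma prodotto} for the products, Cauchy estimates in $x$ for $p_{xx}=(\partial_x b_1)/(2\lambda_2)$, and Lemma \ref{lemma om dot partial vphi} for $\omega\cdot\partial_\f p$, each step costing only a fixed fraction of $\rho$, giving $\|c_0\|_\sigma^\Lipg\lesssim_{\sigma,\bar\sigma}\e$. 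The Lipschitz-in-$\omega$ bounds follow from Lemma \ref{stime lip} together with the product rule $\Delta_{\omega_1 \omega_2}(fg)=(\Delta_{\omega_1 \omega_2}f)g+f(\Delta_{\omega_1 \omega_2}g)$. I expect no genuine obstacle here: unlike the time reparametrization of Lemma \ref{lemma stime step 2}, no Diophantine hypothesis is used and $\partial_x^{-1}$ is bounded, so this is the most elementary of the regularization steps; the only mild care is the bookkeeping of the finitely many small analyticity losses entering $c_0$, which is absorbed by taking $\e\gamma^{-1}\le\delta(\sigma,\bar\sigma)$ small.
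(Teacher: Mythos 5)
Your proposal is correct and follows essentially the same route as the paper's own proof: derive the conjugation identities for the multiplication operator $e^{\ii p}$, read off the first-order coefficient $c_1=b_1-2\lambda_2 p_x$, set it equal to its $x$-average $m_1$ to determine $p$ via $\partial_x^{-1}$ (no small divisors), and then bound $m_1,p,c_0$ using the Banach-algebra and Cauchy-type lemmata of Section \ref{sez funzioni analitiche generali} together with the estimates on $b_0,b_1$ from Lemma \ref{lemma stime step 2}; items $(i)$–$(ii)$ are handled exactly as in the paper (analyticity of $e^{\pm\ii p}$ via the power series/Lemma \ref{moser type lemma}, and $\mathcal C^s$ bounds for the Sobolev statement). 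Your computed formula for $c_0$ and the homological equation match the paper's \eqref{definizione c0 c1 c2}–\eqref{equazione omologica grado 3} exactly, so there is no gap.
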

\begin{proof}
	Item $(i)$ follows by the definition \eqref{definizione Phi (3)}, by Lemmata \ref{Lemma prodotto}, \ref{moser type lemma} and by the estimates \eqref{stima p c0 m1} on $p$, which are a straightforward computation.
		\\
	$(ii)$ Since $p$ is analytic, then $p(\f, \cdot) \in {\mathcal C}^\infty(\T)$ for any $\f \in \T^\infty$ and $M(s) := \sup_{\f \in \T^\infty} \| p(\f, \cdot)\|_{{\mathcal C}^s(\T)} < \infty$ for any $s \geq 0$. 
	A direct calculation shows that 
	
	$\sup_{\f \in \T^\infty} \| \Phi^{(3)}(\f)^{\pm 1} \|_{{\mathcal B}(H^s(\T))} \leq \sup_{\f \in \T^\infty} \| {\rm exp}(\ii p) \|_{{\mathcal C}^s(\T)} \leq {\rm exp}(M(s))$. The latter estimate proves item $(ii)$. 
	\\
$(iii)$ A direct calculation shows that 
\begin{equation}\label{prima def cal L (3)}
\begin{aligned}
{\mathcal L}^{(3)}(\f) & := (\Phi^{(3)}_{\omega*}){\mathcal L}^{(2)}(\f) = \Phi^{(3)}(\f)^{- 1} {\mathcal L}^{(2)}(\f) \Phi^{(3)}(\f) - \Phi^{(3)}(\f)^{- 1} \omega \cdot \partial_\f \Phi^{(3)}(\f) \\
& = \ii \lambda_2 \partial_{xx} + c_1(x, \f) \partial_x + c_0(x, \f)
\end{aligned}
\end{equation}
where 
\begin{equation}\label{definizione c0 c1 c2}
\begin{aligned}
c_0 & :=  - \ii \lambda_2 p_x^2 -  \lambda_2 p_{xx}  + \ii b_1 p_x - \ii \omega \cdot \partial_\f p + b_0\,, \\
c_1 & := - 2  \lambda_2 p_x  + b_1\,.
\end{aligned}
\end{equation}
The definions of $p$ and $m_1$ given in \eqref{soluzione omologica grado 3} allow to solve the equation  
\begin{equation}\label{equazione omologica grado 3}
- 2  \lambda_2 p_x(x, \f)  + b_1(x, \f) = m_1(\f)\,. 
\end{equation}
Therefore, the operator ${\mathcal L}^{(3)}(\f)$ in \eqref{prima def cal L (3)}takes the form \eqref{versione finale cal L (3) nel lemma}.  

Note that the skew self-adjoint structure guarantees that $\ii m_1(\f)$ is a real function (meaning that it is real on real). 
The claimed properties on the functions $p$ and $m_1$ follow by their definitions \eqref{soluzione omologica grado 3} and by applying Lemma \ref{lemma stime step 2}. The claimed properties on the function $c_0$ defined in \eqref{definizione c0 c1 c2} follow by Lemma \ref{lemma stime step 2} and by applying Lemmata \ref{stima di cauchy}-$(ii)$, \ref{lemma om dot partial vphi}-$(ii)$. 
\end{proof}
\subsection{Reduction to constant coefficients of the first order term}
In order to reduce to constant coefficients the first order term in \eqref{versione finale cal L (3) nel lemma}, we consider the transformation
\begin{equation}\label{def trasformazione step 4}
\Phi^{(4)}(\f) : u(x) \mapsto u(x + q(\f))
\end{equation}
where $q$ is an analytic function on $\T^\infty_\sigma$ to be determined. Clearly, the inverse of $\Phi^{(4)}(\f)$ is given by 
$$
\Phi^{(4)}(\f)^{- 1} : u(x) \mapsto u(x - q(\f))\,. 
$$
\begin{lemma}\label{lemma stime step 4}
Let $\omega \in \Dc$. For any $\sigma < \bar \sigma$ there exists $\delta(\sigma, \bar \sigma) > 0$ such that if $\e \gamma^{- 1} \leq \delta$, and define 
\begin{equation}\label{soluzione eq omologica step 4}
\lambda_1 := \int_{\T^\infty} m_1(\f)\, d \f = \widehat m_1(0), \quad q(\f) :=  (\omega \cdot \partial_\f)^{- 1} [m_1(\f) - \lambda_1]\,. 
\end{equation}
$(i)$  the map $\T^\infty_\sigma \to {\mathcal B}({\mathcal H}\big(\T_{\sigma}), {\mathcal H}(\T_{\sigma}) \big)$, $\f \mapsto \Phi^{(4)}(\f)^{\pm 1}$ is bounded. 
\\
$(ii)$ For any $s \geq 0$, the map $\T^\infty \to {\mathcal B}(H^s(\T), H^s(\T) \big)$, $\f \mapsto \Phi^{(4)}(\f)^{\pm 1}$ is bounded. 
\\
$(iii)$ The map $\Phi^{(4)}(\f)$ transforms the operator ${\mathcal L}^{(3)}(\f)$ as 
 \begin{equation}\label{cal L (4) nel lemma finale}
 {\mathcal L}^{(4)}(\f) = \ii \lambda_2 \partial_{xx} + \lambda_1 \partial_x + d_0(x, \f)
 \end{equation}
 where the constant $\lambda_1 \in \R$ does not depend on $\omega$ and  $ q(\cdot; \omega) \in {\mathcal H}(\T^\infty_\sigma),  \im d_0(\cdot; \omega) \in {\mathcal H}(\T_\s\times\T^\infty_\s)$ are  real on real functions defined for $\omega \in \Dc$. Furthermore, the following bounds hold for any $\Oo\subseteq \Dc$
 \begin{equation}\label{stime q d0}
\begin{aligned}
&  \| q \|_{\sigma}^\Lipg\,,\, \| d_0\|_{\sigma}^\Lipg \lesssim_{\sigma, \bar \sigma} \e \,, \quad |\lambda_1| \lesssim \e\,.  
\end{aligned}
\end{equation}
\end{lemma}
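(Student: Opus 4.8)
The proof follows the template of Lemmas~\ref{lemma stime step 1}--\ref{lemma stime step 3}: first record the properties of $q$ and $\lambda_1$, then establish the boundedness claims (i)--(ii), and finally compute the conjugated operator in (iii).

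\emph{Setup and estimates.} By Lemma~\ref{lemma stime step 3}, for every $\sigma<\sigma'<\bar\sigma$ the function $m_1\in{\mathcal H}(\T^\infty_{\sigma'})$ is real on real with $\|m_1\|_{\sigma'}^\Lipg\lesssim_{\sigma',\bar\sigma}\e$. Since $m_1$ is real on real its mean $\lambda_1=\widehat m_1(0)$ is real, and by Lemma~\ref{embedding L infty} $|\lambda_1|\le\|m_1\|_{\sigma'}^{\rm sup}\lesssim\e$. The function $m_1-\lambda_1$ has vanishing $0$-th Fourier coefficient, so for $\omega\in\Dc$ Lemma~\ref{stima diofantea D omega inv} produces the unique zero-mean solution $q=(\omega\cdot\partial_\f)^{-1}[m_1-\lambda_1]\in{\mathcal H}(\T^\infty_\sigma)$ of $\omega\cdot\partial_\f q=m_1-\lambda_1$, with $\|q\|_\sigma^\Lipg\lesssim_{\sigma,\bar\sigma}\e\gamma^{-1}$ (the exponential factor of Lemma~\ref{stima diofantea D omega inv} depends only on $\sigma,\sigma'$, hence is absorbed in $\lesssim_{\sigma,\bar\sigma}$; as in Lemma~\ref{lemma stime step 2} the factor $\gamma^{-1}$ is harmless since we work under $\e\gamma^{-1}\le\delta$). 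Because $(\omega\cdot\partial_\f)^{-1}$ sends real-on-real functions to real-on-real functions, $q$ is real on real. Shrinking $\delta$ we may assume $\|q\|_{\sigma'}\le(\sigma'-\sigma)/2$, so $\Phi^{(4)}$ is invertible and the compositions below are controlled.

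\emph{Boundedness (i)--(ii).} In the exponential basis $\Phi^{(4)}(\f)^{\pm1}$ is the Fourier multiplier $e^{\ii kx}\mapsto e^{\pm\ii kq(\f)}e^{\ii kx}$. For real $\f$ one has $q(\f)\in\R$, so $|e^{\pm\ii kq(\f)}|=1$ and $\Phi^{(4)}(\f)^{\pm1}$ is an isometry of every $H^s(\T)$; this is (ii). For $\f\in\T^\infty_{\sigma'}$, $|{\rm Im}\,q(\f)|\le\|q\|_{\sigma'}$ by Lemma~\ref{embedding L infty}, hence $|e^{\pm\ii kq(\f)}|\le e^{|k|\|q\|_{\sigma'}}$ and, for $u\in{\mathcal H}(\T_{\sigma'})$, $\|\Phi^{(4)}(\f)^{\pm1}u\|_\sigma\le\sum_k|\widehat u_k|e^{(\sigma+\|q\|_{\sigma'})|k|}\le\|u\|_{\sigma'}$; thus $\Phi^{(4)}(\f)^{\pm1}\in{\mathcal B}({\mathcal H}(\T_{\sigma'}),{\mathcal H}(\T_\sigma))$ with norm $\le1$, uniformly in $\f$, which gives (i) (with the usual infinitesimal loss $\sigma<\sigma'$ of analyticity radius, exactly as in Lemma~\ref{lemma stime step 1}(i)); analyticity of $\f\mapsto\Phi^{(4)}(\f)^{\pm1}$ is immediate, since it enters only through the analytic function $q$.

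\emph{The conjugated operator (iii).} Translation in $x$ commutes with $\partial_x$, so $\Phi^{(4)}(\f)^{-1}\partial_x^j\Phi^{(4)}(\f)=\partial_x^j$; moreover $\Phi^{(4)}(\f)^{-1}{\mathcal M}_{c_0(x,\f)}\Phi^{(4)}(\f)={\mathcal M}_{c_0(x-q(\f),\f)}$, and since $\omega\cdot\partial_\f\Phi^{(4)}(\f)=(\omega\cdot\partial_\f q)(\f)\,\partial_x\,\Phi^{(4)}(\f)$ we get $\Phi^{(4)}(\f)^{-1}\omega\cdot\partial_\f\Phi^{(4)}(\f)=(\omega\cdot\partial_\f q)(\f)\,\partial_x$. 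Substituting into \eqref{push forward},
\[
{\mathcal L}^{(4)}(\f)=\ii\lambda_2\partial_{xx}+\big(m_1(\f)-(\omega\cdot\partial_\f q)(\f)\big)\partial_x+c_0(x-q(\f),\f),
\]
and by the homological equation $\omega\cdot\partial_\f q=m_1-\lambda_1$ the first-order coefficient collapses to the constant $\lambda_1$; setting $d_0(x,\f):=c_0(x-q(\f),\f)$ gives \eqref{cal L (4) nel lemma finale}. The bound $\|d_0\|_\sigma^\Lipg\lesssim\|c_0\|_{\sigma'}^\Lipg\lesssim_{\sigma',\bar\sigma}\e$ follows from the composition Lemma~\ref{composizione funzioni analitiche T infty} (and its Lipschitz counterpart) applied to the shift by $q$, using $\|q\|_{\sigma'}\le\sigma'-\sigma$, while $\im d_0$ inherits real-on-realness from $\im c_0$ (Lemma~\ref{lemma stime step 3}) and the realness of $q$; unitarity of $\Phi^{(4)}(\omega t)$ and skew-self-adjointness of ${\mathcal L}^{(3)}(\omega t)$ give the skew-self-adjointness of ${\mathcal L}^{(4)}(\omega t)$. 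Finally, $\lambda_1$ is $\omega$-independent: unwinding the definitions, $\lambda_1=\frac1{2\pi}\int_\T\int_{\T^\infty}b_1(\f,x)\,d\f\,dx$ with $b_1(\vartheta,x)=a_1(\vartheta+\omega\widetilde\alpha(\vartheta),x)/\rho(\vartheta)$, $\rho(\vartheta)=1+(\omega\cdot\partial_\f\alpha)(\vartheta+\omega\widetilde\alpha(\vartheta))$ (Lemma~\ref{lemma stime step 2}); since $\vartheta\mapsto\vartheta+\omega\widetilde\alpha(\vartheta)$ is the inverse of $\f\mapsto\f+\omega\alpha(\f)$ its Jacobian determinant equals $1/\rho(\vartheta)$, so the change of variables formula on $\T^\infty$ yields $\int_{\T^\infty}b_1(\vartheta,x)\,d\vartheta=\int_{\T^\infty}a_1(\f,x)\,d\f$, whence $\lambda_1=\frac1{2\pi}\int_\T\int_{\T^\infty}a_1(\f,x)\,d\f\,dx$ is $\omega$-independent because $a_1$ is (Lemma~\ref{lemma stime step 1}).

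\emph{Main difficulty.} Being the last of four parallel normalization steps, the statement is essentially routine once Lemmas~\ref{stima diofantea D omega inv} and \ref{composizione funzioni analitiche T infty} are available. The only points requiring care are: (a) the analyticity bookkeeping --- $\Phi^{(4)}(\f)$ is a translation by a complex amount and is bounded only between analytic spaces of strictly decreasing radius, and producing $q$ from the homological equation costs a sliver of radius together with a factor $\gamma^{-1}$, absorbed through $\e\gamma^{-1}\le\delta$; and (b) the verification that $\lambda_1$ does not depend on $\omega$, which rests on the fact that the time reparametrization of Lemma~\ref{lemma stime step 2} has Jacobian $1/\rho$ and hence preserves space--time averages.
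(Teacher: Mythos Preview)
Your proof is correct and follows essentially the same route as the paper's: solve the homological equation for $q$ via Lemma~\ref{stima diofantea D omega inv}, compute the push-forward directly (translation in $x$ commutes with $\partial_x$), and identify $d_0$ by composition.

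The one place where you and the paper differ in substance is the $\omega$-independence of $\lambda_1$. You write that ``the change of variables formula on $\T^\infty$'' yields $\int_{\T^\infty}b_1(\vartheta,x)\,d\vartheta=\int_{\T^\infty}a_1(\f,x)\,d\f$, invoking that the Jacobian determinant of $\vartheta\mapsto\vartheta+\omega\widetilde\alpha(\vartheta)$ equals $1+\omega\cdot\partial_\vartheta\widetilde\alpha(\vartheta)=1/\rho(\vartheta)$. The determinant identity is correct (rank-one perturbation of the identity), but on an \emph{infinite}-dimensional torus there is no off-the-shelf change-of-variables formula; integration on $\T^\infty$ is defined as a limit of finite-dimensional averages, and the identity you are using is precisely the content of Lemma~\ref{lemma per media lambda 1}. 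The paper therefore argues via Fourier: it writes $b_1(\vartheta,x)=a_1(\vartheta+\omega\widetilde\alpha(\vartheta),x)\,(1+\omega\cdot\partial_\vartheta\widetilde\alpha(\vartheta))$ using \eqref{formule omega alpha alpha tilde}, expands $a_1$, and applies Lemma~\ref{lemma per media lambda 1} term by term to conclude $\lambda_1=\widehat a_1(0,0)$. Your argument becomes rigorous once you replace the appeal to ``change of variables on $\T^\infty$'' by a reference to Lemma~\ref{lemma per media lambda 1}; conceptually the two are the same step.
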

\begin{proof}
Items $(i)$-$(ii)$ follow as the corresponding ones of Lemma \ref{lemma stime step 1}, by using the estimate \eqref{stime q d0} on the function $q(\f)$, which is a direct computation.

$(iii)$ A direct calculation shows that 
\begin{equation}\label{prima def cal L (4)}
\begin{aligned}
{\mathcal L}^{(4)}(\f) & := (\Phi^{(4)}_{\omega*} ){\mathcal L}^{(3)}(\f) = \ii \lambda_2 \partial_{xx} + \big( -  \omega \cdot \partial_\f q(\f) + m_1(\f) \big) \partial_x + d_0(x, \f)\,. \\ 
d_0(x, \f) & := c_0(x, \f - q(\f))\,. 
\end{aligned}
\end{equation}
By the definition \eqref{soluzione eq omologica step 4}, we solve the equation 
\begin{equation}\label{eq omologica step 4}
-  \omega \cdot \partial_\f q(\f) + m_1(\f) = \lambda_1\,.
\end{equation}
Then, the operator ${\mathcal L}^{(4)}$ defined in \eqref{prima def cal L (4)} takes the form given in \eqref{cal L (4) nel lemma finale}. We now show that $\lambda_1$ is independent of $\omega$. By \eqref{soluzione omologica grado 3}, \eqref{soluzione eq omologica step 4}, one has that 
$$
\lambda_1 = \frac{1}{2 \pi} \int_{\T^\infty} \int_\T b_1(\vartheta, x)\, d x\, d \vartheta
$$
where by \eqref{seconda espressione cal L (2)} and using the properties \eqref{formule omega alpha alpha tilde}, one has that 
$$
\begin{aligned}
b_1(\vartheta, x) & = \Big[\dfrac{a_1}{1 + \omega \cdot \partial_\f \alpha} \Big]\Big|_{\f = \vartheta + \omega \widetilde \alpha(\vartheta)}  \\
& = a_1(\vartheta + \omega \widetilde \alpha(\vartheta), x) \Big(1 + \omega \cdot \partial_\vartheta \widetilde \alpha(\vartheta) \Big)\,.
\end{aligned} 
$$
By expanding $a_1(x, \f)$ in Fourier series, i.e. $a_1(x, \f) = \sum_{j \in \Z} \sum_{\ell \in \Z^\infty_*} \widehat a_1(\ell, j) e^{\ii \ell \cdot \f} e^{\ii j x}$ one has that 
$$
\begin{aligned}
\lambda_1 & = \frac{1}{2 \pi} \int_{\T^\infty} \int_\T b_1(\vartheta, x)\, d x\, d \vartheta \\
& = \sum_{j \in \Z} \sum_{\ell \in \Z^\infty_*} \widehat a_1(\ell, j)  \int_\T e^{\ii j x}\, d x \int_{\T^\infty} e^{\ii \ell \cdot (\vartheta + \omega \widetilde \alpha(\vartheta))} \Big(1 + \omega \cdot \partial_\vartheta \widetilde \alpha(\vartheta) \Big)\, d \vartheta \\
& = \sum_{\ell \in \Z^\infty_*} \widehat a_1(\ell, 0) \int_{\T^\infty} e^{\ii \ell \cdot (\vartheta + \omega \widetilde \alpha(\vartheta))} \Big(1 + \omega \cdot \partial_\vartheta \widetilde \alpha(\vartheta) \Big)\, d \vartheta \\
& \stackrel{Lemma\, \ref{lemma per media lambda 1}}{=} \widehat a_1(0, 0) = \frac{1}{2 \pi}\int_\T \int_{\T^\infty} a_1(x, \f)\, d\f \, d x
\end{aligned}
$$
By Lemma \ref{lemma stime step 1}, the function $a_1$ does not depend on $\omega$ and therefore also $\lambda_1$ is independent of $\omega$. 

\noindent
The estimates on $\lambda_1, q, d_0$ given in \eqref{prima def cal L (4)}, \eqref{soluzione eq omologica step 4} follow by applying Lemmata \ref{lemma stime step 3}, \ref{composizione funzioni analitiche T infty}, \ref{stima diofantea D omega inv}-$(ii)$.  
\end{proof}

\subsection{Elimination of the $x$-dependence from the zero-th order term}
In order to eliminate the $x$-dependence from the zero-th order term in the operator ${\mathcal L}^{(4)}(\f)$ in \eqref{prima def cal L (4)}, we conjugate using \eqref{push forward}, by  a transformation 
\begin{equation}\label{definizione Phi (5)}
\Phi^{(5)}(\f) := {\rm exp}({\mathcal V}(\f)) \quad \text{where} \quad {\mathcal V}(\f) := \frac{1}{2}\big( v(x, \f) \circ \partial_x^{- 1} + \partial_x^{- 1} \circ v(x, \f) \big).
\end{equation}
where $v(x, \f)$ is a real on real function to be determined. Note that for real values of the angle $\f \in \T^\infty$, one has that ${\mathcal V}(\f) = - {\mathcal V}(\f)^*$, implying that $\Phi^{(5)}(\f)$ is a unitary operator. 
\begin{lemma}\label{lemma stime step 5}
Let $\omega \in \Dc$. For any $0 <\sigma < \bar \sigma$ there exists $\delta(\sigma, \bar \sigma) > 0$ such that if $\e \gamma^{- 1} \leq \delta$, the following holds. Define 
\begin{equation}\label{def v step 5}
v := \frac{1}{2 \ii \lambda_2} \partial_x^{- 1} \Big( \langle d_0 \rangle_x - d_0 \Big)\,.
\end{equation}
$(i)$ the map $\T^\infty_\sigma \to {\mathcal B}({\mathcal H}\big(\T_{\sigma}), {\mathcal H}(\T_{\sigma}) \big)$, $\f \mapsto \Phi^{(5)}(\f)^{\pm 1}$ is bounded. 
\\
$(ii)$ For any $s \geq 0$, the map $\T^\infty \to {\mathcal B}(H^s(\T), H^s(\T) \big)$, $\f \mapsto \Phi^{(5)}(\f)^{\pm 1}$ is bounded. 
\\
$(iii)$ The map $\Phi^{(5)}(\f)$ transforms the operator ${\mathcal L}^{(4)}(\f)$ in 
\begin{equation}\label{cal L (5) definitivo}
{\mathcal L}^{(5)}(\f) : = (\Phi^{(5)}_{\omega*}){\mathcal L}^{(4)}(\f) = \ii \lambda_2 \partial_{xx} + \lambda_1 \partial_x + \langle d_0 \rangle_x(\f) + e_{- 1}(x, \f) \partial_x^{- 1} + {\mathcal R}^{(5)}(\f)
\end{equation}
and the functions $v(\cdot; \omega) \in {\mathcal H}(\T_\s\times\T^\infty_\s)$ and the operator ${\mathcal R}^{(5)}(\omega) \in {\mathcal H}\big(\T^\infty_\sigma, {\mathcal B}^{\sigma, - 2} \big)$ defined for $\omega \in \Dc$
satisfy the estimates
\begin{equation}\label{stima v step 5}
\begin{aligned}
&  \| v \|_{\sigma}^\Lipg\,,\, \| e_{- 1}\|_{\sigma}^\Lipg, |{\mathcal R}^{(5)}|_{\sigma, - 2}^\Lipg \lesssim_{\sigma, \bar \sigma} \e \,.   
\end{aligned}
\end{equation}
\end{lemma}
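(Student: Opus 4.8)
\textbf{Proof proposal for Lemma \ref{lemma stime step 5}.}

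The plan is to follow the same pattern as Lemmata \ref{lemma stime step 1}--\ref{lemma stime step 4}: choose $v$ so that the conjugated operator loses its $x$-dependence at order zero, then read off the remainder and bound everything with the abstract pseudo-differential calculus developed in Section \ref{sezione astratta op lineari}. First I would establish items $(i)$ and $(ii)$. For $(i)$, note that by Lemma \ref{lemma norma an simboli omogenei}-$(i)$ (with $m=0$, $m'=-1$) the operator ${\mathcal V}(\f)$ lies in ${\mathcal H}(\T^\infty_\sigma,{\mathcal B}^{\sigma,-1})$ with $|{\mathcal V}|_{\sigma,-1}^\Lipg \lesssim_{\sigma,\bar\sigma} \| v \|_{\sigma+\rho}^\Lipg \lesssim \e\gamma^{-1}$ (using \eqref{stime q d0} on $d_0$); since $\e\gamma^{-1}\le\delta$ the smallness hypothesis \eqref{smallness lemma exp} of Lemma \ref{lemma mappa esponenziale} is met, so $\Phi^{(5)}(\f)^{\pm1} = \exp(\pm{\mathcal V}(\f)) \in {\mathcal H}(\T^\infty_\sigma,{\mathcal B}^\sigma)$ with $|\Phi^{(5)\,\pm1}|_\sigma^\Lipg \le 1 + C\e\gamma^{-1}$, and Lemma \ref{lemma azione nostri pseudo}-$(i)$ then gives boundedness on ${\mathcal H}(\T_\sigma)$. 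Item $(ii)$ follows from Lemma \ref{lemma azione nostri pseudo}-$(ii)$ applied to the same bound (alternatively, as in the earlier lemmata, directly from the fact that $v$ is analytic hence smooth in $x$ uniformly in $\f$, so $\exp(\pm{\mathcal V})$ is a bounded pseudo-differential operator of order $0$ on every $H^s$).

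For item $(iii)$, the conjugation formula \eqref{push forward} gives
\[
{\mathcal L}^{(5)}(\f) = e^{-{\mathcal V}(\f)}\,{\mathcal L}^{(4)}(\f)\,e^{{\mathcal V}(\f)} - e^{-{\mathcal V}(\f)}\,\omega\cdot\partial_\f e^{{\mathcal V}(\f)}.
\]
Expanding with the Lie series, $e^{-{\mathcal V}}{\mathcal L}^{(4)}e^{{\mathcal V}} = {\mathcal L}^{(4)} + [{\mathcal L}^{(4)},{\mathcal V}] + \tfrac12[[{\mathcal L}^{(4)},{\mathcal V}],{\mathcal V}] + \dots$, and $e^{-{\mathcal V}}\omega\cdot\partial_\f e^{{\mathcal V}} = \omega\cdot\partial_\f{\mathcal V} + \tfrac12[\,\cdot\,,\,\cdot\,]+\dots$. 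The leading commutator with $\ii\lambda_2\partial_{xx}$ is computed by Lemma \ref{lemma norma an simboli omogenei}-$(iii)$: since ${\mathcal V}$ has principal symbol $v(x,\f)(\ii\xi)^{-1}$, one gets $[\ii\lambda_2\partial_{xx},{\mathcal V}] = 2\lambda_2 v_x(x,\f) + (\text{order }-1)$; the point is that $2\lambda_2 v_x = \ii(\langle d_0\rangle_x - d_0)/\ii = $ precisely $-$ wait, with the definition \eqref{def v step 5}, $v = \tfrac{1}{2\ii\lambda_2}\partial_x^{-1}(\langle d_0\rangle_x - d_0)$, so $2\ii\lambda_2 v_x = \langle d_0\rangle_x - d_0$, hence $\ii[\ii\lambda_2\partial_{xx},{\mathcal V}]$ contributes $-(\langle d_0\rangle_x - d_0) = d_0 - \langle d_0\rangle_x$ at order zero, which exactly cancels the $x$-dependent part of $d_0(x,\f)$ in ${\mathcal L}^{(4)}$, leaving the constant-in-$x$ term $\langle d_0\rangle_x(\f)$. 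All the remaining contributions --- the order $-1$ part of $[\ii\lambda_2\partial_{xx},{\mathcal V}]$, the commutators $[\lambda_1\partial_x,{\mathcal V}]$, $[d_0,{\mathcal V}]$, the term $-\omega\cdot\partial_\f{\mathcal V}$, and all higher-order iterated brackets --- are operators of order $\le -1$. I collect the order $-1$ piece into $e_{-1}(x,\f)\partial_x^{-1}$ by extracting the principal symbol (Lemma \ref{lemma norma an simboli omogenei}-$(ii)$), and everything of order $\le -2$ into ${\mathcal R}^{(5)}(\f)$.

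The estimates \eqref{stima v step 5} are then routine bookkeeping: $\| v\|_\sigma^\Lipg \lesssim \e$ follows from \eqref{stime q d0} and the boundedness of $\partial_x^{-1}$ on ${\mathcal H}(\T_\sigma\times\T^\infty_\sigma)$; $\|e_{-1}\|_\sigma^\Lipg\lesssim\e$ and $|{\mathcal R}^{(5)}|_{\sigma,-2}^\Lipg\lesssim\e$ follow by applying Lemma \ref{proprieta norma sigma riducibilita vphi x}-$(i)$ (product estimate), Lemma \ref{lemma norma an simboli omogenei}-$(ii)$--$(iii)$ (symbol expansions and remainder bounds), Lemma \ref{lemma om dot partial vphi}-$(ii)$ (for $\omega\cdot\partial_\f{\mathcal V}$, which costs a factor $\rho^{-1}$ absorbed into the $\lesssim_{\sigma,\bar\sigma}$), and the exponential-map estimate \eqref{smoothing estimate exponential map} of Lemma \ref{lemma mappa esponenziale} to control the tail of the Lie series (each iterated bracket gains an extra factor $\e\gamma^{-1}\le\delta$, so the series converges geometrically). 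The self-adjointness structure is preserved because ${\mathcal V}(\f)$ is skew-adjoint for real $\f$ by construction, so $\Phi^{(5)}(\f)$ is unitary and ${\mathcal L}^{(5)}$ stays skew-self-adjoint; in particular $\langle d_0\rangle_x(\f)$ is real on real and $e_{-1}$ satisfies the appropriate reality constraint. The only mildly delicate point is to verify that the principal symbol of $[\ii\lambda_2\partial_{xx},{\mathcal V}]$ is indeed $2\lambda_2 v_x$ with no spurious contribution from the symmetrization $\tfrac12(v\circ\partial_x^{-1} + \partial_x^{-1}\circ v)$ --- this is handled by a direct application of the commutator expansion in Lemma \ref{lemma norma an simboli omogenei}-$(iii)$ to each of the two summands, the lower-order corrections being harmless; no genuine obstacle arises here, the work is entirely in organizing the order count.
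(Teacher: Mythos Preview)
Your proposal is essentially correct and follows the same route as the paper: bound $\mathcal V$ via Lemma~\ref{lemma norma an simboli omogenei}, invoke Lemma~\ref{lemma mappa esponenziale} for $(i)$--$(ii)$, expand $e^{-\mathcal V}\mathcal L^{(4)}e^{\mathcal V}$ by the Lie series, compute the leading commutator $[\ii\lambda_2\partial_{xx},\mathcal V]=2\ii\lambda_2 v_x+(\text{order }{-1})$, and observe that the choice \eqref{def v step 5} forces $d_0+2\ii\lambda_2 v_x=\langle d_0\rangle_x$. Two small inaccuracies to clean up: first, $\|v\|_\sigma^\Lipg\lesssim\e$, not $\e\gamma^{-1}$, since \eqref{stime q d0} gives $\|d_0\|_\sigma^\Lipg\lesssim\e$ and $\partial_x^{-1}$ costs nothing---this only strengthens your smallness check; second, your commutator paragraph drifts on the factor of $\ii$ (the leading term is $2\ii\lambda_2 v_x$, and no further $\ii$ is multiplied in), though you land on the right cancellation.
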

\begin{proof}
 By the definition \eqref{def v step 5}, using the estimates on $d_0$ given in Lemma \ref{lemma stime step 4}, one gets that $v$ satisfies the estimate \eqref{stima v step 5}. By Lemma \ref{lemma norma an simboli omogenei}, one has that the operator ${\mathcal V}(\f)$ admits an expansion of the form 
\begin{equation}\label{espansione cal A f}
{\mathcal V}(\f) = v(x, \f) \partial_x^{- 1} - \frac12 v_x(x, \f) \partial_x^{- 2} + c_{- 3} v_{xx} \partial_x^{- 3}   + {\mathcal R}_{\mathcal V}(\f)
\end{equation}
where $c_{- 3} \in \R$ is a constant and for any $0 < \sigma < \bar \sigma$, ${\mathcal R}_{{\mathcal V}} \in {\mathcal H}\big(\T^\infty_\sigma, {\mathcal B}^{\sigma, - 4} \big)$ and 
\begin{equation}\label{stima cal R cal A}
|{\mathcal V}|_{\sigma, - 1}^\Lipg\,,\, |{\mathcal R}_{{\mathcal V}}|_{\sigma , - 4}^\Lipg \lesssim_{\sigma, \bar \sigma } \e \,. 
\end{equation} 
	By \eqref{definizione Phi (5)}, \eqref{stima cal R cal A}, Lemma \ref{lemma norma an simboli omogenei}-$(i)$ and the estimate \eqref{estimate exponential map},  there exists $\delta = \delta(\sigma, \overline \sigma) \in (0, 1)$ such that if $\e \gamma^{- 1} \leq \delta$, $|(\Phi^{(5)})^{\pm 1}|_\sigma \lesssim_{\sigma, \overline \sigma} 1$. Items $(i)$-$(ii)$ then follow by applying Lemmata \ref{embedding L infty}, \ref{lemma azione nostri pseudo}. 
	\\
$(iii)$	A direct calculation shows that 
\begin{equation}\label{prima definizione cal L5}
\begin{aligned}
{\mathcal L}^{(5)}(\f) & := (\Phi^{(5)}_{\omega*}) {\mathcal L}^{(4)}(\f)  = \Phi^{(5)}(\f)^{- 1} {\mathcal L}^{(4)}(\f) \Phi^{(5)}(\f) -  \Phi^{(5)}(\f)^{- 1} \omega \cdot \partial_\f \Phi^{(5)}(\f) \\
& = \ii \lambda_2 \partial_{xx} + \lambda_1 \partial_x + d_0(x, \f) +  [\ii \lambda_2 \partial_{xx} + \lambda_1 \partial_x , {\mathcal V}(\f)]  \\
& \quad - \Phi^{(5)}(\f)^{- 1} \omega \cdot \partial_\f \Phi^{(5)}(\f) + {\mathcal R}^{(I)}(\f)
\end{aligned}
\end{equation}
where the remainder ${\mathcal R}^{(I)}(\f)$ is given by 
\begin{equation}
\begin{aligned}
{\mathcal R}^{(I)}(\f) & :=  \int_0^1 (1 - t) {\rm exp}(-\tau {\mathcal V}(\f)) \, [[\ii \lambda_2 \partial_{xx} + \lambda_1 \partial_x, {\mathcal V}(\f)], {\mathcal V}(\f)]\, {\rm exp}(\tau {\mathcal V}(\f))\, d \tau \\
& \quad + \int_0^1 e^{- \tau {\mathcal V}(\f)} [d_0, {\mathcal V}(\f)] e^{  \tau {\mathcal V}(\f)}\, d \tau \,. 
\end{aligned}
\end{equation}
By recalling \eqref{espansione cal A f}, \eqref{stima cal R cal A}, by applying Lemma \ref{lemma norma an simboli omogenei} and using that $\lambda_2 = 1 + O(\e)$ and $\lambda_1 = O(\e)$, one obtains that 
$$
[\ii \lambda_2 \partial_{xx} + \lambda_1 \partial_x , {\mathcal A}(\f)] =  2\ii  \lambda_2 v_x(x, \f)  + a_v^{(- 1)}(x, \f) \partial_x^{- 1} + {\mathcal R}^{(II)}(\f)   
$$
where for any $0 < \sigma < \bar \sigma$, $a_v^{(1)} \in {\mathcal H}(\T_\s\times\T^\infty_\s)$, ${\mathcal R}^{(II)} \in {\mathcal H}\big( \T^\infty_\sigma,  {\mathcal B}^{\sigma, - 2}\big)$ with 
\begin{equation}\label{mafia romana 10}
\| a_v^{(- 1)}\|_{\sigma}^\Lipg\,,\, |{\mathcal R}^{(II)}|_{\sigma, - 2}^\Lipg \lesssim_{\sigma} \e \, 
\end{equation}
and 
\begin{equation}\label{doppio commutatore cal A vphi}
\begin{aligned}
& [[\ii \lambda_2 \partial_{xx} + \lambda_1 \partial_x , {\mathcal V}], {\mathcal V} ] \in {\mathcal H}(\T^\infty_\sigma, {\mathcal B}^{\sigma, - 2})\,,  \\
& \Big| [[\ii \lambda_2 \partial_{xx} + \lambda_1 \partial_x , {\mathcal V}], {\mathcal V} ] \Big|_{\sigma, - 2}^\Lipg \lesssim_{\sigma, \bar \sigma} \e\,. 
\end{aligned}
\end{equation}
Moreover, using the estimate on $d_0$ provided in Lemma \ref{lemma stime step 4} and by applying again Lemma \ref{lemma norma an simboli omogenei}, one gets that 
\begin{equation}\label{stima commutatore d0 cal A}
[d_0, {\mathcal V} ] \in {\mathcal H}(\T^\infty_\sigma, {\mathcal B}^{\sigma, - 2}), \quad |[d_0, {\mathcal V}]|_{\sigma, - 2}^\Lipg \lesssim_{\sigma, \bar \sigma} \e\,. 
\end{equation}
By applying Lemma \ref{proprieta norma sigma riducibilita vphi x}, using Lemma \ref{lemma mappa esponenziale} and the estimate \eqref{stima cal R cal A} to bound ${\rm exp}(\pm \tau {\mathcal V}(\f))$ and by applying the estimates \eqref{doppio commutatore cal A vphi}, \eqref{stima commutatore d0 cal A}, one obtains that 
\begin{equation}\label{stima cal R (I) step 5}
{\mathcal R}^{(I)} \in {\mathcal H}\big(\T^\infty_\sigma, {\mathcal B}^{\sigma, - 2} \big), \quad |{\mathcal R}^{(I)}|_{\sigma, - 2}^\Lipg \lesssim_{\sigma, \bar \sigma} \e\,. 
\end{equation} 
Moreover, recalling the definition of the operator $\Phi^{(5)}$ given in \eqref{definizione Phi (5)}, using \eqref{espansione cal A f}, \eqref{stima cal R cal A} and by applying Lemmata \ref{lemma norma an simboli omogenei}, \ref{lemma mappa esponenziale}, one obtains that 
\begin{equation}\label{Phi (5) om vphi Phi (5)}
\begin{aligned}
& - \Phi^{(5)}(\f)^{- 1} \omega \cdot \partial_\f \Phi^{(5)}(\f) = - \omega \cdot \partial_\f  v(x, \f) \partial_x^{- 1} + {\mathcal R}^{(III)}(\f), \\
&{\mathcal R}^{(III)}(\f) \in {\mathcal H}\big(\T^\infty_\sigma, {\mathcal B}^{\sigma, - 2} \big), \quad  |{\mathcal R}^{(III)}|_{\sigma, - 2}^\Lipg \lesssim_{\sigma, \bar \sigma} \e\,, \quad \forall 0 < \sigma < \bar \sigma\,.  
\end{aligned}
\end{equation}

 and therefore by \eqref{prima definizione cal L5} one gets 
 \begin{equation}\label{seconda definizione cal L 5}
 \begin{aligned}
 {\mathcal L}^{(5)}(\f) & = \lambda_2 \partial_{xx} + \lambda_1 \partial_x + d_0+ 2\lambda_2 v_x +  e_{- 1}(x, \f) \partial_x^{- 1} + {\mathcal R}^{(5)}(\f)\,, \\
 e_{- 1} (x, \f) & :=  a_v^{(- 1)}(x, \f)  - \omega \cdot \partial_\f  v(x, \f)\,, \quad {\mathcal R}^{(5)}(\f)  := {\mathcal R}^{(I)}(\f) + {\mathcal R}^{(II)}(\f) + {\mathcal R}^{(III)}(\f)\,.
 \end{aligned}
 \end{equation}
 The claimed statement then follows since $d_0+ 2\ii \lambda_2 v_x  = \langle d_0 \rangle_x$ (see \eqref{def v step 5}), by the estimate \eqref{stima v step 5} on $v$, the estimate \eqref{mafia romana 10} on $a_v^{(- 1)}$ and the estimates \eqref{mafia romana 10}, \eqref{stima cal R (I) step 5}, \eqref{Phi (5) om vphi Phi (5)} on ${\mathcal R}^{(I)}, {\mathcal R}^{(II)}, {\mathcal R}^{(III)}$. 
 \end{proof}
 
 \subsection{Elimination of the $x$ dependence from the order $- 1$.}
In order to eliminate the $x$-dependence from the term of order $- 1$ in the operator ${\mathcal L}^{(5)}$ given in \eqref{cal L (5) definitivo}, We conjugate such an operator by means of a transformation 
\begin{equation}\label{definizione Phi (5)a}
\Phi^{(6)}(\f) := {\rm exp}( {\mathcal G}(\f)) \quad \text{where} \quad {\mathcal G}(\f) := \frac{\ii}{2}\big( g(x, \f) \circ \partial_x^{- 2} + \partial_x^{- 2} \circ g(x, \f) \big).
\end{equation}
and $g(x, \f)$ is a real on real function to be determined. Note that for real values of the angle $\f \in \T^\infty$, one has that ${\mathcal G}(\f) = - {\mathcal G}(\f)^*$, implying that $\Phi^{(6)}(\f)$ is unitary. 
\begin{lemma}\label{lemma stime step 5a}
Let $\omega \in \Dc$. For any $\sigma < \bar \sigma$ there exists $\delta(\sigma, \bar \sigma) > 0$ such that if $\e \gamma^{- 1} \leq \delta$, the following holds. Define 
\begin{equation}\label{def v step 5a}
g (x, \f):= \frac{1}{2  \lambda_2} \partial_x^{- 1}\big[e_{- 1}(x, \f) - \langle e_{- 1} \rangle_x(\f)   \big]\,.
\end{equation}
$(i)$  the map $\T^\infty_\sigma \to {\mathcal B}({\mathcal H}\big(\T_{\sigma}), {\mathcal H}(\T_{\sigma}) \big)$, $\f \mapsto \Phi^{(6)}(\f)^{\pm 1}$ is bounded. 
\\
$(ii)$ For any $s \geq 0$, the map $\T^\infty \to {\mathcal B}(H^s(\T), H^s(\T) \big)$, $\f \mapsto \Phi^{(6)}(\f)^{\pm 1}$ is bounded. 
\\
$(iii)$ The map $\Phi^{(6)}(\f)$ transform the operator ${\mathcal L}^{(5)}(\f)$ as 
\begin{equation}\label{cal L (5) definitivoa}
{\mathcal L}^{(6)}(\f) = (\Phi^{(6)}_{\omega*}){\mathcal L}^{(5)}(\f) = \lambda_2 \partial_{xx} + \lambda_1 \partial_x + \langle d_0 \rangle_x(\f) + \langle e_{- 1} \rangle_x(\f) \partial_x^{- 1} + {\mathcal R}^{(6)}(\f)
\end{equation}
where the function $g(\cdot; \omega)  \in {\mathcal H}(\T_\s\times\T^\infty_\s)$ is real on real and the operator ${\mathcal R}^{(6)}(\omega) \in {\mathcal H}\big(\T^\infty_\sigma, {\mathcal B}^{\sigma, - 2} \big)$ is skew self-adjoint. Moreover they are defined $\omega \in \Dc$ and
satisfy  for all $\Oo\subseteq \Dc$, the estimates
\begin{equation}\label{stima v step 5a}
\begin{aligned}
&  \| g \|_{\sigma}^\Lipg\,,\, |{\mathcal R}^{(6)}|_{\sigma, - 2}^\Lipg \lesssim_{\sigma, \bar \sigma} \e \,.   
\end{aligned}
\end{equation}
\end{lemma}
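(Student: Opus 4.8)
The plan is to mimic closely the structure of the previous steps, in particular Lemma \ref{lemma stime step 5}, since the transformation $\Phi^{(6)}(\f) = {\rm exp}({\mathcal G}(\f))$ is of the same exponential type but one order more smoothing (order $-2$ instead of $-1$). First I would record the basic estimates on ${\mathcal G}(\f)$: by Lemma \ref{lemma norma an simboli omogenei} the operator ${\mathcal G}(\f)$ admits an expansion ${\mathcal G}(\f) = \ii g(x,\f)\partial_x^{-2} + \ldots + {\mathcal R}_{\mathcal G}(\f)$ with $|{\mathcal G}|_{\sigma, -2}^\Lipg \lesssim_{\sigma,\bar\sigma} \e$ (using the estimate \eqref{stima v step 5} on $e_{-1}$ and the definition \eqref{def v step 5a} of $g$). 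Then I would invoke Lemma \ref{lemma mappa esponenziale} (with $m = 2$, $\rho$ a fraction of $\bar\sigma - \sigma$, and the smallness $\e\gamma^{-1}\leq\delta$) to get $|(\Phi^{(6)})^{\pm 1}|_\sigma^\Lipg \lesssim_{\sigma,\bar\sigma} 1$; items $(i)$ and $(ii)$ then follow from Lemmata \ref{embedding L infty}, \ref{lemma azione nostri pseudo} exactly as in the proof of Lemma \ref{lemma stime step 5}.

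For item $(iii)$, I would expand via \eqref{push forward}:
\[
{\mathcal L}^{(6)}(\f) = {\mathcal L}^{(5)}(\f) + [\lambda_2\partial_{xx} + \lambda_1\partial_x, {\mathcal G}(\f)] - \Phi^{(6)}(\f)^{-1}\omega\cdot\partial_\f\Phi^{(6)}(\f) + {\mathcal R}^{(I)}(\f),
\]
where ${\mathcal R}^{(I)}(\f)$ collects the double-commutator tail of the Lie series (from $[\lambda_2\partial_{xx}+\lambda_1\partial_x,\cdot]$) together with $\int_0^1 e^{-\tau{\mathcal G}}[{\mathcal L}^{(5)} - \lambda_2\partial_{xx} - \lambda_1\partial_x, {\mathcal G}]e^{\tau{\mathcal G}}\,d\tau$. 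The key computation is the leading term of $[\lambda_2\partial_{xx} + \lambda_1\partial_x, {\mathcal G}(\f)]$: by Lemma \ref{lemma norma an simboli omogenei}-$(iii)$ the commutator of a second-order operator with an order $-2$ operator is order $-1$, and the leading symbol is $2\lambda_2 g_x(x,\f)\partial_x^{-1}$ (up to the factor coming from the precise definition of ${\mathcal G}$ in \eqref{definizione Phi (5)a}); the remaining pieces are order $-2$ with $\Lipg$-norm $\lesssim_{\sigma,\bar\sigma}\e$. Similarly $-\Phi^{(6)}(\f)^{-1}\omega\cdot\partial_\f\Phi^{(6)}(\f) = -\omega\cdot\partial_\f g(x,\f)\partial_x^{-1} + {\mathcal R}^{(III)}(\f)$ with ${\mathcal R}^{(III)}$ of order $-2$. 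Collecting the order $-1$ terms gives $\big(e_{-1}(x,\f) + 2\lambda_2 g_x(x,\f) - \omega\cdot\partial_\f g(x,\f)\big)\partial_x^{-1}$; but the $\omega\cdot\partial_\f g$ term is itself of order $-1$ in $x$ and its $x$-average is $0$, and more to the point the choice \eqref{def v step 5a} is precisely designed so that $e_{-1} + 2\lambda_2 g_x = \langle e_{-1}\rangle_x$, solving the homological equation $2\lambda_2 g_x = \langle e_{-1}\rangle_x - e_{-1}$. (If the $\omega\cdot\partial_\f g$ contribution were not to be absorbed into the $x$-dependent remainder, one would instead solve the full homological equation $-\omega\cdot\partial_\f g + 2\lambda_2 g_x + e_{-1} = \langle e_{-1}\rangle_x$ using Lemma \ref{stima diofantea D omega inv}; I would follow whichever convention matches \eqref{def v step 5a} — as written, $g$ solves only the $\partial_x$-homological equation and the $\omega\cdot\partial_\f g$ piece goes into ${\mathcal R}^{(6)}$.) Thus the order $-1$ term becomes $\langle e_{-1}\rangle_x(\f)\partial_x^{-1}$ as claimed, and ${\mathcal R}^{(6)}(\f) := {\mathcal R}^{(I)}(\f) + ({\rm order}\text{-}2\text{ pieces of the commutator}) + {\mathcal R}^{(III)}(\f)$, which is of order $-2$ with $|{\mathcal R}^{(6)}|_{\sigma,-2}^\Lipg \lesssim_{\sigma,\bar\sigma}\e$ by Lemmata \ref{proprieta norma sigma riducibilita vphi x}, \ref{lemma norma an simboli omogenei}, \ref{lemma mappa esponenziale}.

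Finally, the skew-self-adjointness of ${\mathcal R}^{(6)}$ follows from a structural, not computational, argument: $\Phi^{(6)}(\f)$ is unitary for real $\f$ (since ${\mathcal G}(\f)^* = -{\mathcal G}(\f)$ by construction), ${\mathcal L}^{(5)}(\f)$ is skew-self-adjoint, hence by the remark following \eqref{push forward} so is ${\mathcal L}^{(6)}(\f)$; since $\lambda_2\partial_{xx} + \lambda_1\partial_x + \langle d_0\rangle_x(\f) + \langle e_{-1}\rangle_x(\f)\partial_x^{-1}$ is skew-self-adjoint (this uses the reality-on-real properties of $\lambda_1$, $\langle d_0\rangle_x$, $\langle e_{-1}\rangle_x$ inherited from the previous lemmata, together with $\partial_x^* = -\partial_x$, $(\partial_x^{-1})^* = -\partial_x^{-1}$), the difference ${\mathcal R}^{(6)}(\f)$ is skew-self-adjoint too. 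The main obstacle I anticipate is purely bookkeeping: carefully tracking which commutator pieces land in which order (order $-1$ versus order $-2$) so that the leading-order homological equation closes exactly, and making sure every remainder gets an $O(\e)$ bound with the right power of the analyticity-loss parameter — but no genuinely new estimate beyond those in Sections \ref{sezione astratta op lineari} and in the earlier steps of Section \ref{sezione riduzione ordine} is needed.
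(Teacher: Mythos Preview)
Your approach is essentially the paper's, but there is one technical slip worth correcting. You write that $-\Phi^{(6)}(\f)^{-1}\omega\cdot\partial_\f\Phi^{(6)}(\f) = -\omega\cdot\partial_\f g(x,\f)\partial_x^{-1} + {\mathcal R}^{(III)}$, carrying over the formula from step~5 verbatim. But here ${\mathcal G}(\f) = \frac{\ii}{2}(g\partial_x^{-2} + \partial_x^{-2}g)$ is of order $-2$, not $-1$; hence $\omega\cdot\partial_\f{\mathcal G}$ is already of order $-2$ and the entire term $-\Phi^{(6)}(\f)^{-1}\omega\cdot\partial_\f\Phi^{(6)}(\f)$ goes directly into the remainder ${\mathcal R}^{(I)}$ (this is exactly what the paper does, cf.\ \eqref{stima Phi (6) nel lemma} and \eqref{cal R (I) step 6}). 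So your parenthetical worry about whether $\omega\cdot\partial_\f g$ enters the homological equation is moot: the only order $-1$ contributions are $e_{-1}\partial_x^{-1}$ from ${\mathcal L}^{(5)}$ and the leading term of $[\ii\lambda_2\partial_{xx}+\lambda_1\partial_x,{\mathcal G}]$, which the paper computes as $-2\lambda_2 g_x\partial_x^{-1}$ (note the sign; with the definition \eqref{def v step 5a} one has $2\lambda_2 g_x = e_{-1}-\langle e_{-1}\rangle_x$, so $e_{-1}-2\lambda_2 g_x = \langle e_{-1}\rangle_x$, matching \eqref{seconda definizione cal L 5a}). Once this is straightened out the proof is precisely that of the paper, and the remaining points (items $(i)$--$(ii)$ via Lemmata \ref{embedding L infty}, \ref{lemma azione nostri pseudo}, \ref{lemma mappa esponenziale}; skew-self-adjointness by unitarity of $\Phi^{(6)}$) are handled exactly as you describe.
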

\begin{proof}
 By the definition \eqref{def v step 5a}, using the estimates on $e_{- 1}$ given in Lemma \ref{lemma stime step 5}, one gets that $g$ satisfies the estimate \eqref{stima v step 5a}. By Lemma \ref{lemma norma an simboli omogenei} and by the estimate on $g$ one has that for any $0 < \sigma < \bar \sigma$, \begin{equation}\label{stima cal R cal Aa}
{\mathcal G} \in {\mathcal H}\big(\T^\infty_\sigma, {\mathcal B}^{\sigma, - 2} \big)\,, \quad |{\mathcal G}|_{\sigma, - 2}^\Lipg \lesssim_{\sigma} \e \,. 
\end{equation}
The above estimate and Lemma \ref{lemma mappa esponenziale}, using that $\omega \cdot \partial_\f \Phi^{(6)} = \omega \cdot \partial_\f (\Phi^{(6)} - {\rm Id})$, imply that for any $0 < \sigma < \bar \sigma$
\begin{equation}\label{stima Phi (6) nel lemma}
\begin{aligned}
\sup_{\tau \in [0, 1]}| {\rm exp}(\pm \tau {\mathcal G})|_\sigma^\Lipg \lesssim_{\sigma, \bar \sigma} 1, \quad |\omega \cdot \partial_\f (\Phi^{(6)})|_{\sigma, - 2 }^\Lipg \lesssim_{\sigma, \bar \sigma} \e\,.
\end{aligned}
\end{equation}
Items $(i)$-$(ii)$ follow by the estimate \eqref{stima Phi (6) nel lemma} and by applying Lemmata \ref{embedding L infty}, \ref{lemma azione nostri pseudo}. 
\\
$(iii)$ A direct calculation shows that 
\begin{equation}\label{prima definizione cal L5a}
\begin{aligned}
{\mathcal L}^{(6)}(\f) & := (\Phi^{(6)}_{\omega*}) {\mathcal L}^{(5)}(\f)  = \Phi^{(6)}(\f)^{- 1} {\mathcal L}^{(5)}(\f) \Phi^{(6)}(\f) -  \Phi^{(6)}(\f)^{- 1} \omega \cdot \partial_\f \Phi^{(6)}(\f) \\
& = \ii \lambda_2 \partial_{xx} + \lambda_1 \partial_x + \langle d_0 \rangle(\f) + e_{- 1}(x, \f) \partial_x^{- 1} +  [\ii \lambda_2 \partial_{xx} + \lambda_1 \partial_x , {\mathcal G}(\f)]  \\
& \quad +  {\mathcal R}^{(I)}(\f)
\end{aligned}
\end{equation}
where the remainder ${\mathcal R}(\f)$ is given by 
\begin{equation}\label{cal R (I) step 6}
\begin{aligned}
{\mathcal R}^{(I)}(\f) & :=  \int_0^1 (1 - t) {\rm exp}(-\tau {\mathcal G}(\f)) \, [[\ii \lambda_2 \partial_{xx} + \lambda_1 \partial_x, {\mathcal G}(\f)], {\mathcal G}(\f)]\, {\rm exp}(\tau {\mathcal G}(\f))\, d \tau \\
& \quad + \int_0^1 e^{-  \tau {\mathcal G}(\f)} \Big([\langle d_0 \rangle_x + e_{- 1} \partial_x^{- 1}, {\mathcal G}(\f)] \Big) e^{ \tau {\mathcal G}(\f)}\, d \tau -   \Phi^{(6)}(\f)^{- 1} \omega \cdot \partial_\f \Phi^{(6)}(\f)  \,. 
\end{aligned}
\end{equation}
By recalling  the estimate of Lemma \ref{lemma stime step 4} on $d_0$, the estimate of Lemma \ref{lemma stime step 5} on $e_{- 1}$, the estimate \eqref{stima cal R cal Aa} on ${\mathcal G}$, by applying Lemmata \ref{lemma norma an simboli omogenei}, \ref{proprieta norma sigma riducibilita vphi x} and using that $\lambda_2 = 1 + O(\e)$ and $\lambda_1 = O(\e)$, one obtains that for any $0 < \sigma < \bar \sigma$ 
\begin{equation}\label{mafia romana 10a}
\begin{aligned}
& [[\lambda_2 \partial_{xx} + \lambda_1 \partial_x, {\mathcal G}(\f)], {\mathcal G}(\f)]\,,\, [\langle d_0 \rangle_x + e_{- 1} \partial_x^{- 1}, {\mathcal G}(\f)] \in {\mathcal H}\big(\T^\infty_\sigma, {\mathcal B}^{\sigma, - 2} \big)\,, \\
& \Big| [[\lambda_2 \partial_{xx} + \lambda_1 \partial_x, {\mathcal G}(\f)], {\mathcal G}(\f)] \Big|_{\sigma, - 2}^\Lipg\,,\, \Big| [\langle d_0 \rangle_x + e_{- 1} \partial_x^{- 1}, {\mathcal G}(\f)]\Big|_{\sigma, - 2}^\Lipg \lesssim_{\sigma, \bar \sigma} \e\,.  
\end{aligned}
\end{equation}
Therefore, the estimates \eqref{mafia romana 10a}, \eqref{stima Phi (6) nel lemma} and Lemma \ref{proprieta norma sigma riducibilita vphi x} imply that the remainder ${\mathcal R}^{(I)}$ defined in \eqref{cal R (I) step 6} satisfies 
\begin{equation}\label{cal R (I) stima step 6}
{\mathcal R}^{(I)} \in {\mathcal H}\big(\T^\infty_\sigma, {\mathcal B}^{\sigma, - 2} \big), \quad |{\mathcal R}^{(I)}|_{\sigma, - 2}^\Lipg \lesssim_{\sigma, \bar \sigma} \e, \quad \forall 0 < \sigma < \bar \sigma\,. 
\end{equation}
Recalling the definition of ${\mathcal G}$, using the estimate \eqref{stima v step 5a} on $g$, by applying Lemma \ref{lemma norma an simboli omogenei} and using that $\lambda_2 = 1 + O(\e)$, $\lambda_1 = O(\e)$, one gets that 
\begin{equation}\label{espansione commutatore cal G}
[\ii \lambda_2 \partial_{xx} + \lambda_1 \partial_x , {\mathcal G}(\f)] = - 2 \lambda_2  g_x\partial_x^{- 1} + {\mathcal R}^{(II)}(\f)
\end{equation}
where for any $0 < \sigma < \bar \sigma$, 
\begin{equation}\label{stima cal R (II) step 6}
{\mathcal R}^{(II)} \in {\mathcal H}\big(\T^\infty_\sigma, {\mathcal B}^{\sigma, - 2} \big), \quad |{\mathcal R}^{(II)}|_{\sigma, - 2}^\Lipg \lesssim_{\sigma, \bar \sigma} \e\,. 
\end{equation}
Therefore by \eqref{prima definizione cal L5a}, one gets 
 \begin{equation}\label{seconda definizione cal L 5a}
 \begin{aligned}
 {\mathcal L}^{(6)}(\f) & = \lambda_2 \partial_{xx} + \lambda_1 \partial_x + \langle d_0 \rangle_x + \big(- 2\lambda_2 g_x + e_{- 1} \big)\partial_x^{- 1} + {\mathcal R}^{(6)}(\f)\,, \\
 {\mathcal R}^{(6)}(\f)  &:= {\mathcal R}^{(I)}(\f) + {\mathcal R}^{(II)}(\f)\,.
 \end{aligned}
 \end{equation}
 The claimed statement then follows since $e_{- 1}- 2\lambda_2 g_x  = \langle e_{- 1} \rangle_x$ (see \eqref{def v step 5a}) and by recalling \eqref{cal R (I) stima step 6}, \eqref{stima cal R (II) step 6}.
 \end{proof}
\subsection{Reduction to constant coefficients up to order $- 2$.}
In the last step of our regularization procedure, we eliminate the $\f$-dependence from the term $\langle d_0 \rangle_x(\f) + \langle e_{- 1} \rangle(\f) \partial_x^{- 1}$. To achieve this purpose, we consider the map 
\begin{equation}\label{definizione Phi (7)}
\Phi^{(7)}(\f) := {\rm exp}({\mathcal F}(\f)), \quad {\mathcal F}(\f) :=  {\rm diag}_{j \in \Z} f_j(\f)
\end{equation}
where for any $j \in \Z$, $f_j$ are analytic functions to be determined which are purely imaginary for any real value of the angle $\f$. We prove the following lemma. 
\begin{lemma}\label{lemma stime step 7}
Let $\omega \in \Dc$. For any $0 < \sigma < \bar \sigma$ there exists $\delta(\sigma, \bar \sigma) > 0$ such that if $\e \gamma^{- 1} \leq \delta$, the following holds. Define 
\begin{equation}\label{soluzione eq omologica step 7}
\begin{aligned}
& \lambda_0 := \frac{1}{\ii}\langle d_0 \rangle_{x, \f}, \quad \lambda_{- 1} := \langle e_{- 1} \rangle_{x, \f}\,, \\
& {\mathcal F}(\f) := (\omega \cdot \partial_\f)^{- 1}[\langle d_0 \rangle_x -\ii \lambda_0] + (\omega \cdot \partial_\f)^{- 1}[e_{- 1} - \lambda_{- 1}] \partial_x^{- 1}\,. 
\end{aligned}
\end{equation}
$(i)$  the map $\T^\infty_\sigma \to {\mathcal B}({\mathcal H}\big(\T_{\sigma}), {\mathcal H}(\T_{\sigma}) \big)$, $\f \mapsto \Phi^{(7)}(\f)^{\pm 1}$ is bounded. 
\\
$(ii)$ For any $s \geq 0$, the map $\T^\infty \to {\mathcal B}(H^s(\T), H^s(\T) \big)$, $\f \mapsto \Phi^{(7)}(\f)^{\pm 1}$ is bounded. 
\\
$(iii)$ The map $\Phi^{(7)}(\f)$ transform the operator ${\mathcal L}^{(6)}(\f)$ in 
\begin{equation}\label{cal L (7)}
{\mathcal L}^{(7)}(\f) : = (\Phi^{(7)}_{\omega*}){\mathcal L}^{(6)}(\f) = \ii \lambda_2 \partial_{xx} + \lambda_1 \partial_x + \ii \lambda_0 + \lambda_{- 1} \partial_x^{- 1} + {\mathcal R}^{(7)}(\f)
\end{equation}
where $\lambda_0, \lambda_{- 1} \in \R$ and the operator ${\mathcal R}^{(7)} \in {\mathcal H}\big(\T^\infty_\sigma, {\mathcal B}^{\sigma, - 2} \big)$
satisfy the estimates
\begin{equation}\label{stima lambda 0 - 1}
\begin{aligned}
&  |\lambda_0|^\Lipg, |\lambda_{- 1}|^\Lipg \lesssim \e\,,\, |{\mathcal R}^{(7)}|_{\sigma, - 2}^\Lipg \lesssim_{\sigma, \bar \sigma} \e \,.   
\end{aligned}
\end{equation}
\end{lemma}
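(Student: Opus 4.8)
The plan is to exploit that, in contrast with the earlier steps, the term to be normalized is already \emph{diagonal in the exponential basis}: $\langle d_0 \rangle_x(\f)$ is multiplication by a function of $\f$ alone, and $\langle e_{-1}\rangle_x(\f)\partial_x^{-1}$ is such a function times the Fourier-diagonal operator $\partial_x^{-1}$. Hence the conjugating map $\Phi^{(7)}(\f) = {\rm exp}(\mathcal F(\f))$ of \eqref{definizione Phi (7)}, being diagonal, commutes with $\ii\lambda_2\partial_{xx}+\lambda_1\partial_x$, with $\langle d_0\rangle_x(\f)+\langle e_{-1}\rangle_x(\f)\partial_x^{-1}$, and with its own $\f$-derivatives; no ultraviolet truncation is needed and the homological equation is solved exactly.

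First I would define $\lambda_0,\lambda_{-1}$ and $\mathcal F$ as in \eqref{soluzione eq omologica step 7}. Applying the homological equation Lemma \ref{stima diofantea D omega inv} componentwise -- to $f_0$, which solves $\omega\cdot\partial_\f f_0 = \langle d_0\rangle_x - \ii\lambda_0$, and to $f_j$ for $j\neq 0$, which solves $\omega\cdot\partial_\f f_j = \langle d_0\rangle_x - \ii\lambda_0 + (\ii j)^{-1}(\langle e_{-1}\rangle_x - \lambda_{-1})$ -- the solution $\mathcal F(\cdot;\omega)$ is well defined for $\omega\in\Dc$, splits as a multiplication operator of order $0$ plus an operator of order $-1$, and, thanks to the estimates on $d_0$ (Lemma \ref{lemma stime step 4}) and $e_{-1}$ (Lemma \ref{lemma stime step 5}), satisfies $|\lambda_0|^\Lipg, |\lambda_{-1}|^\Lipg \lesssim \e$ and $|\mathcal F|_{\sigma,0}^\Lipg \lesssim_{\sigma,\bar\sigma} \e\g^{-1}$ once $\e\g^{-1}\leq\delta(\sigma,\bar\sigma)$ is small enough. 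Since each $f_j(\f)$ is purely imaginary for real $\f$, the operator $\Phi^{(7)}(\f) = {\rm diag}_{j\in\Z}{\rm exp}(f_j(\f))$ is unitary; items $(i)$ and $(ii)$ then follow from the exponential map Lemma \ref{lemma mappa esponenziale} (with smoothing order $m=0$) together with Lemmata \ref{embedding L infty}, \ref{lemma azione nostri pseudo}, exactly as in Lemmata \ref{lemma stime step 5}, \ref{lemma stime step 5a}.

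For item $(iii)$ I would compute $\mathcal L^{(7)} = (\Phi^{(7)}_{\omega *})\mathcal L^{(6)}$ via \eqref{push forward}. By the commutation remarks above, $\Phi^{(7)}(\f)^{-1}(\ii\lambda_2\partial_{xx}+\lambda_1\partial_x)\Phi^{(7)}(\f) = \ii\lambda_2\partial_{xx}+\lambda_1\partial_x$, $\Phi^{(7)}(\f)^{-1}\big(\langle d_0\rangle_x(\f)+\langle e_{-1}\rangle_x(\f)\partial_x^{-1}\big)\Phi^{(7)}(\f) = \langle d_0\rangle_x(\f)+\langle e_{-1}\rangle_x(\f)\partial_x^{-1}$, and $\Phi^{(7)}(\f)^{-1}\omega\cdot\partial_\f\Phi^{(7)}(\f) = \omega\cdot\partial_\f\mathcal F(\f)$, so that
\[
\mathcal L^{(7)}(\f) = \ii\lambda_2\partial_{xx}+\lambda_1\partial_x+\langle d_0\rangle_x(\f)+\langle e_{-1}\rangle_x(\f)\partial_x^{-1}-\omega\cdot\partial_\f\mathcal F(\f)+\Phi^{(7)}(\f)^{-1}\mathcal R^{(6)}(\f)\Phi^{(7)}(\f).
\]
The homological equations \eqref{soluzione eq omologica step 7} give $\langle d_0\rangle_x(\f)+\langle e_{-1}\rangle_x(\f)\partial_x^{-1}-\omega\cdot\partial_\f\mathcal F(\f) = \ii\lambda_0+\lambda_{-1}\partial_x^{-1}$, whence \eqref{cal L (7)} with $\mathcal R^{(7)}(\f) := \Phi^{(7)}(\f)^{-1}\mathcal R^{(6)}(\f)\Phi^{(7)}(\f)$. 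Since $\mathcal R^{(6)}\in\mathcal H(\T^\infty_\sigma,\mathcal B^{\sigma,-2})$ with $|\mathcal R^{(6)}|_{\sigma,-2}^\Lipg\lesssim\e$ (Lemma \ref{lemma stime step 5a}) and $(\Phi^{(7)})^{\pm 1}$ are of order $0$ with norm $\lesssim 1$, iterating the product estimate of Lemma \ref{proprieta norma sigma riducibilita vphi x}-$(i)$ (conjugation maps $\mathcal B^{\sigma,-2}$ into itself up to a small loss of analyticity) yields $\mathcal R^{(7)}\in\mathcal H(\T^\infty_\sigma,\mathcal B^{\sigma,-2})$ with $|\mathcal R^{(7)}|_{\sigma,-2}^\Lipg\lesssim_{\sigma,\bar\sigma}\e$, which together with the bounds on $\lambda_0,\lambda_{-1}$ proves \eqref{stima lambda 0 - 1}. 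Finally, $\mathcal L^{(6)}$ is skew self-adjoint by construction and $\Phi^{(7)}(\f)$ is unitary for real $\f$, so $\mathcal L^{(7)}$ is skew self-adjoint; in particular $\lambda_0,\lambda_{-1}\in\R$ and $\mathcal R^{(7)}$ is skew self-adjoint.

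The computation is routine once the diagonal structure is observed. The only points requiring care are the bookkeeping of the finitely many analyticity losses, so that the smallness hypothesis $\rho^{-2}|\mathcal F|_{\sigma+\rho,0}^\Lipg\leq\delta$ of Lemma \ref{lemma mappa esponenziale} holds -- this is what forces $\delta = \delta(\sigma,\bar\sigma)$ -- and checking that the solutions $f_j$ of the homological equations obey estimates uniform in $j$, which is immediate since the factor $(\ii j)^{-1}$ only improves the bound. I do not expect a genuine obstacle at this step: it is the mildest of the regularization steps, precisely because the term being removed is already Fourier-diagonal.
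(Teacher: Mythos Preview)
Your proposal is correct and follows essentially the same approach as the paper: exploit that $\mathcal F(\f)$ is diagonal so it commutes with all powers of $\partial_x$, reduce $\Phi^{(7)}(\f)^{-1}\omega\cdot\partial_\f\Phi^{(7)}(\f)$ to $\omega\cdot\partial_\f\mathcal F(\f)$, solve the resulting homological equation via Lemma~\ref{stima diofantea D omega inv}, bound $(\Phi^{(7)})^{\pm 1}$ by Lemma~\ref{lemma mappa esponenziale}, and estimate $\mathcal R^{(7)}=\Phi^{(7)}{}^{-1}\mathcal R^{(6)}\Phi^{(7)}$ with the composition Lemma~\ref{proprieta norma sigma riducibilita vphi x}. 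Your componentwise description of the $f_j$ and the remark on skew self-adjointness are minor elaborations of the same argument.
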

\begin{proof}
Since the operator ${\mathcal F}(\f)$ is a diagonal operator, one has that $[{\mathcal F}(\f), \partial_x^k] = 0$ for any $k \in \Z$ and a direct calculation shows that 
\begin{equation}\label{pippino 0}
 \Phi^{(7)}(\f)^{- 1} \omega \cdot \partial_\f \Phi^{(7)}(\f) =  \omega \cdot \partial_\f {\mathcal F}(\f)\,. 
\end{equation}
Therefore, by the definition \eqref{soluzione eq omologica step 7}, we solve the homological equation 
\begin{equation}\label{eq omologica step 7}
- \omega \cdot \partial_\f {\mathcal F}(\f)+ \langle d_0 \rangle_x + \langle e_{- 1} \rangle_x \partial_x^{- 1} = \ii \lambda_0 + \lambda_{- 1} \partial_x^{- 1}\,. 
\end{equation}
By the estimates \eqref{stime q d0} on $d_0$ and \eqref{stima v step 5} on $e_{- 1}$ one gets that $|\lambda_0|^\Lipg, |\lambda_{- 1}|^\Lipg \lesssim \e$ and 
by applying Lemmata \ref{stima diofantea D omega inv}, \ref{lemma norma an simboli omogenei} one obtains that for any $0 < \sigma < \bar \sigma$, 
\begin{equation}\label{stima cal F step 7}
{\mathcal F} \in {\mathcal H}(\T^\infty_\sigma, {\mathcal B}^\sigma), \quad |{\mathcal F}|_\sigma^\Lipg \lesssim_{\sigma, \bar \sigma} \e \gamma^{- 1}\,. 
\end{equation}
The latter estimate, together with Lemma \ref{lemma mappa esponenziale} imply that 
\begin{equation}\label{Phi (7) stima nel lemma}
(\Phi^{(7)})^{\pm 1} \in {\mathcal H}(\T^\infty_\sigma, {\mathcal B}^\sigma), \quad |(\Phi^{(7)})^{\pm 1}|_\sigma^\Lipg \leq 1 + C(\sigma, \bar \sigma) \e \gamma^{- 1}
\end{equation}
for some constant $C(\sigma, \bar \sigma) > 0$. 
Hence, one obtains that 
\begin{equation}\label{cal L (7) a}
\begin{aligned}
{\mathcal L}^{(7)}(\f) & = (\Phi^{(7)}_{\omega*}){\mathcal L}^{(6)}(\f) = \ii \lambda_2 \partial_{xx} + \lambda_1 \partial_x - \omega \cdot \partial_\f {\mathcal F}(\f)+ \langle d_0 \rangle_x + \langle e_{- 1} \rangle_x \partial_x^{- 1} + {\mathcal R}^{(7)}(\f)\,, \\
{\mathcal R}^{(7)}(\f) & := \Phi^{(7)}(\f)^{- 1} {\mathcal R}^{(6)}(\f) \Phi^{(7)}(\f)\,. 
\end{aligned}
\end{equation}
The estimate \eqref{stima lambda 0 - 1} on the operator ${\mathcal R}^{(7)}$, defined in \eqref{cal L (7) a}, follows by the composition Lemma \ref{proprieta norma sigma riducibilita vphi x}, by the estimate \eqref{stima v step 5a} on ${\mathcal R}^{(6)}$ and by the estimate \eqref{Phi (7) stima nel lemma} on $(\Phi^{(7)})^{\pm 1}$. 
\end{proof}

\section{The KAM reducibility scheme}\label{sez KAM redu}
In this section we carry out the reducibility of the equation $\partial_t u = {\mathcal L}_0(\omega t) u$ where the operator ${\mathcal L}_0 \equiv {\mathcal L}^{(7)}$ is given in Lemma \ref{lemma stime step 7}. We fix
\begin{equation}\label{costante iniziale rid}
\sigma_0 := \frac{\bar \sigma}{2}\,. 
\end{equation}
The operator ${\mathcal L}_0(\f) \equiv {\mathcal L}_0(\f; \omega)$ defined for $\omega \in \Dc$, has the form
\begin{equation}\label{def primo op rid}
{\mathcal L}_0(\f) = \ii {\mathcal D}_0 + {\mathcal P}_0(\f)
\end{equation}
where  for all $\Oo\in \Dc$
\begin{equation}\label{cal D0 cal P0 inizio KAM}
\begin{aligned}
& {\mathcal D}_0 :=  \lambda_2 \partial_{xx} + \frac{1}{\ii}\lambda_1 \partial_x +  \lambda_0 + \frac{1}{\ii}\lambda_{- 1} \partial_x^{- 1}\,, \\
& \lambda_2, \lambda_1, \lambda_0, \lambda_{- 1} \in \R, \quad |\lambda_2 - 1|, |\lambda_1|, |\lambda_0|^\Lipg, |\lambda_{- 1}|^\Lipg \lesssim \e\,, \\
& |{\mathcal P}_0|_{\sigma_0, - 2}^\Lipg \lesssim_{\sigma_0} \e\,. 
\end{aligned}
\end{equation}
Note that, as we pointed out in the previous section, the real constants $\lambda_2, \lambda_1$ do not depend on the parameter $\omega$. The linear operator ${\mathcal D}_0$ is a $2 \times 2$ block diagonal operator ${\mathcal D}_0 = {\rm diag}_{j \in \N_0} {\mathcal D}_0(j)$ where for any $j \in \N_0$, the $2 \times 2$ block ${\mathcal D}_0(j)$ is given by 
\begin{equation}\label{blocco iniziale riducibilita}
\begin{aligned}
& {\mathcal D}_0(j) :=  \begin{pmatrix}
\mu_j^{(0)} & 0 \\
0 & \mu_{- j}^{(0)}
\end{pmatrix}\,, \\
&
 \mu_{j}^{(0)} := - \lambda_2 j^2 + \lambda_1 j + \lambda_0 - \lambda_{- 1} j^{- 1}\,, \quad  \mu_{- j}^{(0)} :=  -\lambda_2 j^2 - \lambda_1 j + \lambda_0 +  \lambda_{- 1} j^{- 1}\,.
\end{aligned}
\end{equation}
In order to state our reducibility Theorem, we fix some other constants.
For $n\ge 1$, we set
\begin{equation}\label{costanti convergenza riducibilita}
\chi \in (1, 2), \quad \s_n = \s_0\Big( 1- \frac{1}{4\pi} \sum_{j=1}^{n} \frac{1}{j^2} \Big)\,, \quad  N_n =  \langle n \rangle^3 \chi^n  N_0
\end{equation}
and to shorten notation, we set  
\begin{equation}\label{def eta (ell)}
{\divisor}(\ell) := \prod_{n\in \N}(1+|\ell_n|^{4} \jap{n}^{4}), \quad \forall \ell \in \Z^\infty_*\,. 
\end{equation}
\begin{theorem}[{\bf Reducibility}]\label{teorema di riducibilita}
Let $\gamma \in (0, 1)$. Then there exists $\delta \in (0, 1)$ small enough such that if $\e \gamma^{- 1} \leq \delta$, for any $n \geq 0$, the following holds. 

\noindent
 $\bf{(S1)_n}$ There exists a linear skew self-adjoint vector field 
 \begin{equation}\label{cal L n rid}
 {\mathcal L}_n(\f) = \ii {\mathcal D}_n + {\mathcal P}_n(\f)
 \end{equation}
 where ${\mathcal D}_n$ is a $2 \times 2$ self-adjoint block diagonal operator ${\mathcal D}_n = {\rm diag}_{j \in \N_0} {\mathcal D}_n(j)$,
  ${\mathcal P}_n \in {\mathcal H}\big(\T^\infty_{\sigma_n}, {\mathcal B}^{\sigma_n, - 2}\big)$ is  skew self-adjoint, moreover both are defined for  $\omega \in \Omega_n(\gamma)$, where $\Omega_0(\gamma) := \Dc$ and for any $n \geq 1$
 \begin{equation}\label{definizione Omega n (gamma) rid}
 \begin{aligned}
 \Omega_n(\gamma) & := \Big\{ \omega \in \Omega_{n - 1}(\gamma) : \| {\mathcal O}_{n - 1}(\ell, j, j')^{- 1}\|_\Op \leq \frac{{\divisor}(\ell)}{\gamma}, \quad \forall (\ell, j, j') \in \Z^\infty_* \times \N_0 \times \N_0, \\
 & j \neq j' \quad\text{and} \quad \| {\mathcal O}_{n - 1}(\ell, j, j)^{- 1}\|_\Op \leq \frac{{\divisor}(\ell) j^2}{\gamma} \quad \forall (\ell, j) \in (\Z^\infty_* \setminus \{ 0 \}) \times \N_0, \quad |\ell|_\zia \leq N_{n - 1} \Big\}\,.
 \end{aligned}
 \end{equation}
 For any $(\ell, j, j') \in \Z^\infty_* \times \N_0 \times \N_0$, the operators ${\mathcal O}_{n - 1}(\ell, j, j') : {\mathcal B}({\bf E}_{j'}, {\bf E}_j) \to {\mathcal B}({\bf E}_{j'}, {\bf E}_j)$ are defined by 
 \begin{equation}\label{def cal O n ell j j'}
 {\mathcal O}_{n - 1}(\ell, j, j'):= \omega \cdot \ell \,{\rm Id} + M_L({\mathcal D}_{n - 1}(j)) - M_R({\mathcal D}_{n - 1}(j'))\,. 
 \end{equation}
 For any $j \in \N_0$, 
 \begin{equation}\label{D0 j - Dn j}
 \| {\mathcal D}_n(j) - {\mathcal D}_0(j) \|^\Lipn{n}_{\HS} \lesssim \e \,.
 \end{equation}
 and 
 \begin{equation}\label{stima cal P sigma n}
 |{\mathcal P}_n|_{\sigma_n, - 2}^\Lipn{n} \leq C_* \e e^{- \chi^n}  
 \end{equation}
 for some constant $C_* > 0$. 
 \\
 For $n \geq 1$, there exists a map $\Phi_n(\f) := {\rm exp}( {\mathcal F}_n(\f))$, where ${\mathcal F}_n \in {\mathcal H}\Big( \T^\infty_{\frac{\sigma_{n - 1} + \sigma_n}{2}},  {\mathcal B}^{\frac{\sigma_{n - 1} + \sigma_n}{2}}\Big)$  is skew self adjoint and defined for  $\omega \in \Omega_n(\gamma)$, which satisfies 
 \begin{equation}\label{coniugazione passo n KAM}
 {\mathcal L}_n(\f) = (\Phi_n)_{\omega*} {\mathcal L}_{n - 1}(\f)\,.
 \end{equation}
 The operator ${\mathcal F}_{n }$ satisfies the estimate 
 \begin{equation}\label{stima trasformazione cal F n - 1}
 |{\mathcal F}_{n }|_{\frac{\sigma_{n - 1} + \sigma_n}{2}}^\Lipn{n} \lesssim  \e \g^{-1} e^{- \frac{\chi^{n - 1}}{2}}
 \end{equation}
 
 \noindent
 ${\bf (S2)_n}$ For any $j \in \N_0$ there exists a Lipschitz extension of the function ${\mathcal D}_n(j ; \cdot ) : \Omega_n(\gamma) \to {\mathcal S}({\bf E}_j)$ to the set $\Dc$, denoted by $\widetilde{\mathcal D}_n(j; \cdot ) : \Dc \to {\mathcal S}({\bf E}_j)$ that, for any $n\ge 1$, satisfies the estimate 
 \begin{equation}\label{stima differenze blocchi estesi}
 \begin{aligned}
 & \sup_{\omega \in \Dc} \| \widetilde{\mathcal D}_n(j; \omega) - \widetilde{\mathcal D}_{n - 1}(j; \omega)\|_{\HS}  \lesssim \langle j \rangle^{- 2}\e e^{- \chi^{n - 1}}\,, \\
 & \| \widetilde{\mathcal D}_n(j) - \widetilde{\mathcal D}_{n - 1}(j)\|^{\rm lip}_{{\HS}}  \lesssim \e \gamma^{- 1} e^{- \chi^{n - 1}}
 \end{aligned}
 \end{equation}
\end{theorem}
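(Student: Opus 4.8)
The plan is to prove Theorem \ref{teorema di riducibilita} by induction on $n$, following the classical KAM scheme adapted to the T\"oplitz-in-time setting of the previous sections. The base case $n=0$ is essentially the content of Lemma \ref{lemma stime step 7}: one sets ${\mathcal D}_0$ and ${\mathcal P}_0$ as in \eqref{cal D0 cal P0 inizio KAM}, with the block structure \eqref{blocco iniziale riducibilita}, and $\Omega_0(\gamma)=\Dc$; the estimate \eqref{stima cal P sigma n} for $n=0$ holds provided $C_*$ is chosen large enough compared with the implicit constant in $|{\mathcal P}_0|_{\sigma_0,-2}^\Lipg\lesssim_{\sigma_0}\e$, and one may take $N_0$ large enough so that $e^{-\chi^0}=e^{-1}$ absorbs the constants. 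For ${\bf(S2)_0}$ there is nothing to prove since ${\mathcal D}_0(j;\cdot)$ is already defined on all of $\Dc$ (it depends on $\omega$ only through $\lambda_0,\lambda_{-1}$, which are Lipschitz on $\Dc$).

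For the inductive step, assuming ${\bf(S1)_n}$ and ${\bf(S2)_n}$, I would proceed as in the "Strategy of the Proof" paragraph of the introduction. First introduce the ultraviolet cut-off: write ${\mathcal P}_n=\Pi_{N_n}{\mathcal P}_n+\Pi_{N_n}^\bot{\mathcal P}_n$, where by Lemma \ref{proprieta norma sigma riducibilita vphi x}-$(ii)$ the tail $\Pi_{N_n}^\bot{\mathcal P}_n$ is super-exponentially small. Then solve the homological equation $-\omega\cdot\partial_\f{\mathcal F}_{n+1}+[\ii{\mathcal D}_n,{\mathcal F}_{n+1}]+\Pi_{N_n}{\mathcal P}_n=[\widehat{\mathcal P}_n(0)]$ block by block: for each Fourier index $\ell$ with $|\ell|_\zia\le N_n$ and each pair $(j,j')$, the equation for the block $\Pi_j\widehat{\mathcal F}_{n+1}(\ell)\Pi_{j'}$ is $\mathcal O_n(\ell,j,j')[\cdot]=\Pi_j\widehat{\mathcal P}_n(\ell)\Pi_{j'}$ (off the diagonal $\ell=0,j=j'$), which is invertible on $\Omega_{n+1}(\gamma)$ by definition \eqref{definizione Omega n (gamma) rid}, with the quantitative bound on $\|\mathcal O_n^{-1}\|_\Op$ built into that set. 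Combining the operator-norm bound on the blocks with the Diophantine loss from dividing by ${\divisor}(\ell)$ — controlled exactly as in Lemma \ref{stima diofantea D omega inv} via Lemma \ref{bound per stima di Cauchy} — gives ${\mathcal F}_{n+1}\in{\mathcal H}(\T^\infty_{(\sigma_n+\sigma_{n+1})/2},{\mathcal B}^{(\sigma_n+\sigma_{n+1})/2})$ with the estimate \eqref{stima trasformazione cal F n - 1}. Here one loses a factor $\s_n-\s_{n+1}\sim \s_0 n^{-2}$ of analyticity, which is summable by \eqref{costanti convergenza riducibilita}; this is where the super-exponential smallness of ${\mathcal P}_n$ is crucial, since the homological loss $\exp(\tau\rho^{-1/\zia}\ln(\tau/\rho))$ is sub-super-exponential and hence absorbed. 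Then set $\Phi_{n+1}=\exp({\mathcal F}_{n+1})$, use Lemma \ref{lemma mappa esponenziale} to control it, and compute ${\mathcal L}_{n+1}=(\Phi_{n+1})_{\omega*}{\mathcal L}_n$; a Taylor-with-integral-remainder expansion of the conjugation shows ${\mathcal L}_{n+1}=\ii{\mathcal D}_{n+1}+{\mathcal P}_{n+1}$ with ${\mathcal D}_{n+1}={\mathcal D}_n+[\widehat{\mathcal P}_n(0)]$ (still $2\times2$ block diagonal and self-adjoint because ${\mathcal P}_n$ is skew-self-adjoint and ${\mathcal F}_{n+1}$ commutes with $\partial_{xx}$ blockwise), and ${\mathcal P}_{n+1}$ collecting the quadratic terms, the commutator with the tail, and $\Pi_{N_n}^\bot{\mathcal P}_n$. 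The new remainder satisfies $|{\mathcal P}_{n+1}|_{\sigma_{n+1},-2}^{\Lipn{n+1}}\lesssim (\e\gamma^{-1}e^{-\chi^n/2})^2+e^{-\rho N_n}\lesssim \e e^{-\chi^{n+1}}$ by the choice $\chi\in(1,2)$ and $N_n=\langle n\rangle^3\chi^n N_0$, which closes \eqref{stima cal P sigma n}; estimate \eqref{D0 j - Dn j} follows by telescoping $\|{\mathcal D}_{n+1}(j)-{\mathcal D}_n(j)\|_\HS=\|\Pi_j\widehat{\mathcal P}_n(0)\Pi_j\|_\HS\lesssim\langle j\rangle^{-2}\e e^{-\chi^n}$.

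For ${\bf(S2)_{n+1}}$, the correction $\Pi_j\widehat{\mathcal P}_n(0)\Pi_j$ to the $j$-th block is a priori only defined on $\Omega_{n+1}(\gamma)\subseteq\Omega_n(\gamma)$, but since its sup- and Lipschitz-norms on that set are bounded by $\langle j\rangle^{-2}\e e^{-\chi^n}$ and $\e\gamma^{-1}e^{-\chi^n}$ respectively (the latter using the $+2$-smoothing in the Lipschitz seminorm \eqref{definizione norma lip gamma matrici}, which exactly compensates the $\langle j\rangle^2$), one applies the Kirszbraun/McShane Lipschitz extension theorem componentwise on the finite-dimensional space ${\mathcal S}({\bf E}_j)$ to extend $\widetilde{\mathcal D}_{n+1}(j;\cdot):=\widetilde{\mathcal D}_n(j;\cdot)+\text{(extension of }\Pi_j\widehat{\mathcal P}_n(0)\Pi_j)$ to all of $\Dc$ with the same bounds; this gives \eqref{stima differenze blocchi estesi}. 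Iterating, $\widetilde{\mathcal D}_n(j)$ converges to some $\widetilde{\mathcal D}_\infty(j)$ on $\Dc$, and this extended limit is what is used to define the final Cantor set $\Omega_\infty(\gamma)$ in the Characterization Corollary.

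The main obstacle is the solution of the homological equation with quantitative control: one must verify that the Diophantine-type loss ${\divisor}(\ell)/\gamma$ incurred in inverting $\mathcal O_n(\ell,j,j')$, when summed against $e^{\sigma|\ell|_\zia}$ over the finitely many $\ell$ with $|\ell|_\zia\le N_n$, produces only a loss of the form $\exp(C\rho^{-1/\zia}\ln(C/\rho))$ in the analyticity radius (cf. \eqref{stima om partial f inv}), which grows slower than any exponential and is therefore swallowed by the super-exponential smallness $e^{-\chi^n}$ of ${\mathcal P}_n$ — this is precisely the point flagged in the introduction where the special choice $|{\rm Im}(\f_j)|\le\sigma\langle j\rangle^\zia$ is needed. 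A secondary but genuine difficulty is bookkeeping the various norm losses ($\rho^{-|m|-2}$ from products, $\rho^{-1}$ from $\omega\cdot\partial_\f$, the homological loss) so that the quadratic gain in $|{\mathcal P}_{n+1}|$ survives; this is a routine but delicate iteration of the estimates in Lemmata \ref{proprieta norma sigma riducibilita vphi x}, \ref{lemma norma an simboli omogenei}, \ref{lemma mappa esponenziale} and the choice of $\s_n, N_n, \chi$.
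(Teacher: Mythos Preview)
Your outline is correct and follows the same route as the paper: induction with base case from \eqref{def primo op rid}--\eqref{blocco iniziale riducibilita}, homological equation solved blockwise on $\Omega_{n+1}(\gamma)$ with the small-divisor loss controlled via Lemma \ref{bound per stima di Cauchy} (this is packaged in the paper as Lemma \ref{homolu}), Lie expansion of the conjugation giving \eqref{formula finale cal L n + 1}, and Kirszbraun extension for ${\bf(S2)}_{n+1}$.

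One slip to fix: your quadratic term $(\e\gamma^{-1}e^{-\chi^n/2})^2=\e^2\gamma^{-2}e^{-\chi^n}$ does \emph{not} close against $\e e^{-\chi^{n+1}}$, since $\chi>1$ forces $e^{(\chi-1)\chi^n}\to\infty$. The point is that the remainder in \eqref{formula finale cal L n + 1} is \emph{bilinear} in $({\mathcal P}_n,{\mathcal F}_{n+1})$, not quadratic in ${\mathcal F}_{n+1}$: the relevant bound is
\[
|{\mathcal P}_n|\cdot|{\mathcal F}_{n+1}|\ \lesssim\ \gamma^{-1}\,(\text{homological loss})\cdot|{\mathcal P}_n|^2\ \lesssim\ \e^2\gamma^{-1}e^{-2\chi^n}\cdot(\text{loss}),
\]
and $e^{-2\chi^n}\le e^{-\chi^{n+1}}$ precisely because $\chi<2$; this is exactly the second condition in \eqref{condizioni costanti passo induttivo}. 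Also, the diagonal update is ${\mathcal D}_{n+1}={\mathcal D}_n+\tfrac{1}{\ii}[\widehat{\mathcal P}_n(0)]$ (the $1/\ii$ is what makes ${\mathcal D}_{n+1}$ self-adjoint given that ${\mathcal P}_n$ is skew-self-adjoint).
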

\subsection{Proof of Theorem \ref{teorema di riducibilita}}
{\sc Proof  of ${\bf ({S}i)}_{0}$, $i=1, 2$.} 
The claims hold by recalling the properties of the operator ${\mathcal L}_0$ listed in \eqref{def primo op rid}-\eqref{blocco iniziale riducibilita}. 

${\bf({S}2)}_0 $ holds, since the constants $\lambda_2$ and $\lambda_1$ are independent of $\omega$ and $\lambda_0, \lambda_{- 1}$ are already defined on ${\mathtt D}_\gamma$.  
\subsubsection{The reducibility step}
{\sc Proof  of ${\bf ({S}1)}_{n + 1}$.} We now describe the inductive step, showing how to define a symplectic transformation 
$ \Phi_{n + 1} := \exp( {\mathcal F}_{n + 1} ) $ 
so that the transformed vector field $ {\mathcal L}_{n +1 }(\f) = (\Phi_{n + 1})_{\omega*} {\mathcal L}_n(\f) $ has the desired properties. We write $\Pi_n$ instead of $\Pi_{N_n}$ to denote the projector on the Fourier modes $|\ell|_\zia \leq N_n$, where $N_n$ is defined in \eqref{costanti convergenza riducibilita}. A direct calculation shows that 
\begin{equation}\label{prima espansione cal L n + 1}
\begin{aligned}
{\mathcal L}_{n + 1}(\f) & = (\Phi_{n + 1})_{\omega*}{\mathcal L}_n(\f) = \Phi_{n + 1}(\f)^{- 1} {\mathcal L}_n(\f )\Phi_{n + 1}(\f) - \Phi_{n + 1}(\f)^{- 1} \omega \cdot \partial_\f \Phi_{n + 1}(\f) \\
& = \ii {\mathcal D}_n  -  \omega \cdot \partial_\f {\mathcal F}_{n + 1} + [\ii {\mathcal D}_n, {\mathcal F}_{n + 1}] +  \Pi_n {\mathcal P}_n + \Pi_n^\bot {\mathcal P}_n \\
& \quad + \int_0^1 (1 - \tau) e^{- \tau {\mathcal F}_{n + 1}} [[\ii {\mathcal D}_n, {\mathcal F}_{n + 1}], {\mathcal F}_{n + 1}] e^{ \tau {\mathcal F}_{n + 1}}\, d \tau \\
& \quad +  \int_0^1 e^{-  \tau {\mathcal F}_{n + 1}}[{\mathcal P}_n, {\mathcal F}_{n + 1}] e^{ \tau {\mathcal F}_{n + 1}}\, d \tau \\
& \quad  -  \int_0^1 (1 - \tau) e^{-  \tau {\mathcal F}_{n + 1}} [ \omega \cdot \partial_\f {\mathcal F}_{n + 1},  {\mathcal F}_{n + 1}] e^{ \tau {\mathcal F}_{n + 1}}\, d \tau
\end{aligned}
\end{equation}
Our next aim is to solve the Homological equation 
\begin{equation}\label{equazione omologica riducibilita}
-  \omega \cdot \partial_\f {\mathcal F}_{n + 1} + [\ii {\mathcal D}_n, {\mathcal F}_{n + 1}] +  \Pi_n {\mathcal P}_n = [\widehat{\mathcal P}_n(0)]
\end{equation}
where the diagonal part of the operator $\widehat{\mathcal P}_n(0)$ is defined according to \eqref{notazione operatore diagonale a blocchi}. 
\begin{lemma}\label{homolu}
For all $\omega\in \Omega_{n + 1}(\gamma)$ (see \eqref{definizione Omega n (gamma) rid}), there exists a unique solution $F_{n + 1} \in {\mathcal H}\big(\T^\infty_{\sigma_n - \rho},  {\mathcal B}^{\sigma_n - \rho}\big)$ with $\rho > 0$, $\sigma_n - \rho > 0$ of the Homological equation \eqref{equazione omologica riducibilita} satisfying the bound 
	\begin{equation}\label{stima F}
	|{\mathcal F}_{n + 1} |_{\s_n-\rho}^{\Lipn{n+1}} \lesssim  \g^{-1}{\rm exp}\Big(\frac{\tau}{\rho^{\frac{1}{\zia}}} \ln\Big(\frac{\tau}{\rho} \Big) \Big)|{\mathcal P}_n|_{\s_n,-2}^{\Lipn{n}}
\end{equation}
for some appropriate constant $\tau>1$.
\end{lemma}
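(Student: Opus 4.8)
The plan is to solve the Homological equation \eqref{equazione omologica riducibilita} block by block in the Fourier variable $\ell$, reducing it to an algebraic equation on each $2\times 2$ block. Writing $\mathcal{F}_{n+1}(\f) = \sum_{\ell} \widehat{\mathcal{F}}_{n+1}(\ell) e^{\ii \ell\cdot\f}$ and projecting onto $\Pi_j \cdot \Pi_{j'}$, the equation $-\omega\cdot\partial_\f \mathcal{F}_{n+1} + [\ii\mathcal{D}_n,\mathcal{F}_{n+1}] + \Pi_n \mathcal{P}_n = [\widehat{\mathcal{P}}_n(0)]$ becomes, for each $(\ell,j,j')$ with $|\ell|_\zia \le N_n$ and $(\ell,j,j')\neq(0,j,j)$,
\[
\big( \ii\, \omega\cdot\ell\,\mathrm{Id} + M_L(\ii\mathcal{D}_n(j)) - M_R(\ii\mathcal{D}_n(j')) \big) \big[ \Pi_j \widehat{\mathcal{F}}_{n+1}(\ell) \Pi_{j'} \big] = - \Pi_j \widehat{\mathcal{P}}_n(\ell) \Pi_{j'},
\]
while the $(0,j,j)$ block is set to zero (this is exactly why the right-hand side carries $[\widehat{\mathcal{P}}_n(0)]$). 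Up to the factor $\ii$, the operator acting on the block is $\ii\,\mathcal{O}_{n-1}(\ell,j,j')$ in the notation \eqref{def cal O n ell j j'} — more precisely $\mathcal{O}_n$, since $\mathcal{D}_n$ appears, but the definition of $\Omega_{n+1}(\gamma)$ in \eqref{definizione Omega n (gamma) rid} is precisely the set on which $\mathcal{O}_n(\ell,j,j')$ is invertible with the stated operator-norm bounds. Hence on $\Omega_{n+1}(\gamma)$ each block is uniquely solvable and
\[
\big\| \Pi_j \widehat{\mathcal{F}}_{n+1}(\ell)\Pi_{j'} \big\|_{\HS} \le \| \mathcal{O}_n(\ell,j,j')^{-1}\|_{\Op}\, \big\| \Pi_j\widehat{\mathcal{P}}_n(\ell)\Pi_{j'}\big\|_{\HS} \le \frac{{\divisor}(\ell)\langle j'\rangle^2}{\gamma}\, \big\| \Pi_j\widehat{\mathcal{P}}_n(\ell)\Pi_{j'}\big\|_{\HS},
\]
using the worst of the two bounds (the $\langle j'\rangle^2$ loss occurs only on the diagonal $j=j'$, where it is harmless since $\mathcal{P}_n$ is of order $-2$).

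Next I would convert these blockwise estimates into a bound on the decay norm $|\cdot|_{\sigma_n-\rho}^{\Lipn{n+1}}$. Summing over $j$ against the weights $e^{(\sigma_n-\rho)|j-j'|}\langle j'\rangle^{0}$ and recalling \eqref{norma m norma 0 matrici} (so that the order $-2$ of $\mathcal{P}_n$ exactly absorbs the $\langle j'\rangle^2$), one gets $\| \widehat{\mathcal{F}}_{n+1}(\ell)\|_{\mathcal{B}^{\sigma_n-\rho}} \lesssim \gamma^{-1}{\divisor}(\ell)\,\|\widehat{\mathcal{P}}_n(\ell)\|_{\mathcal{B}^{\sigma_n,-2}}$; then summing over $\ell$ with the exponential weight $e^{(\sigma_n-\rho)|\ell|_\zia}$, the factor ${\divisor}(\ell) = \prod_n (1+|\ell_n|^4\langle n\rangle^4)$ is controlled by $e^{\rho|\ell|_\zia/2}$ times a supremum of the type handled in Lemma \ref{bound per stima di Cauchy} / Lemma \ref{stima diofantea D omega inv}, which is exactly where the factor $\exp\big(\tfrac{\tau}{\rho^{1/\zia}}\ln(\tfrac\tau\rho)\big)$ is produced. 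This yields \eqref{stima F} for the sup norm. The Lipschitz part follows the standard telescoping: differencing the blockwise identity in $\omega_1,\omega_2$ gives $\Delta_{\omega_1\omega_2}\widehat{\mathcal{F}}_{n+1}(\ell) = \mathcal{O}_n(\ell)^{-1}(\omega_1)\big[ -\Delta\widehat{\mathcal{P}}_n(\ell) - (\Delta\mathcal{O}_n(\ell))\,\widehat{\mathcal{F}}_{n+1}(\ell)\big]$ evaluated at $\omega_2$, and $\Delta\mathcal{O}_n$ is Lipschitz in $\omega$ with constant $\lesssim |\ell|_\zia + \|\Delta\mathcal{D}_n\|$, the $|\ell|_\zia$ being reabsorbed into a slightly larger $\tau$ and into the loss of two derivatives built into the definition \eqref{definizione norma lip gamma matrici} of $|\cdot|^\Lipg_{\sigma,m}$. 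Here one uses \eqref{D0 j - Dn j} from $\bf{(S1)}_n$ to keep $\|\Delta\mathcal{D}_n\|$ under control, and the measure-theoretic bound on $\Omega_{n+1}$ is not needed for this lemma — only the defining inequalities.

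Finally I would check the structural claim that $\mathcal{F}_{n+1}$ is skew self-adjoint and that $\mathcal{F}_{n+1}\in\mathcal{H}(\T^\infty_{\sigma_n-\rho},\mathcal{B}^{\sigma_n-\rho})$ rather than merely $\mathcal{B}^{\sigma_n-\rho,-2}$: skew-adjointness follows because $\mathcal{D}_n$ is self-adjoint and $\mathcal{P}_n$ skew self-adjoint, so $[\widehat{\mathcal{P}}_n(0)]$ is skew self-adjoint and the homological operator in \eqref{melnikov} preserves skew-adjointness (concretely, $\mathcal{O}_n(\ell,j,j')^{-1}$ applied to $-\Pi_j\widehat{\mathcal{P}}_n(\ell)\Pi_{j'}$ and to $\ell\rightsquigarrow -\ell$ are related by adjunction); and the gain to order $0$ rather than $-2$ is precisely the point of the $\langle j'\rangle^2$-loss in the diagonal Melnikov condition — off-diagonal blocks are already order $-\infty$ in the relevant sense because of the exponential $e^{\sigma|j-j'|}$ decay, and on the diagonal the order $-2$ of $\mathcal{P}_n$ cancels the order $+2$ loss. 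The main obstacle, and the only genuinely delicate point, is the summability of ${\divisor}(\ell)$ against $e^{-\rho|\ell|_\zia}$ with explicit control of how the bound blows up as $\rho\to 0$: this is what forces the choice $\zia$ with the growing radii of analyticity and produces the non-standard $\exp(\tau\rho^{-1/\zia}\ln(\tau/\rho))$ factor instead of the usual polynomial $\rho^{-\tau}$; everything else is bookkeeping with the algebra properties of $|\cdot|_{\sigma,m}$ from Lemma \ref{proprieta norma sigma riducibilita vphi x}.
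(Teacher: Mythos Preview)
The proposal is correct and follows essentially the same approach as the paper: Fourier-expand in $\ell$, project onto the $2\times 2$ blocks $\Pi_j\cdot\Pi_{j'}$, invert the finite-dimensional operator $\mathcal{O}_n(\ell,j,j')$ using the bounds defining $\Omega_{n+1}(\gamma)$, absorb the diagonal $\langle j\rangle^2$ loss into the order $-2$ of $\mathcal{P}_n$, and then control $\sup_\ell \divisor(\ell)e^{-\rho|\ell|_\zia}$ via Lemma~\ref{bound per stima di Cauchy} to produce the factor $\exp\big(\tfrac{\tau}{\rho^{1/\zia}}\ln(\tfrac{\tau}{\rho})\big)$; the Lipschitz estimate is obtained by the same differencing argument, using \eqref{D0 j - Dn j} to bound $\Delta_{\omega_1\omega_2}\mathcal{O}_n$. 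Your additional remarks on skew self-adjointness and on why $\mathcal{F}_{n+1}$ lands in $\mathcal{B}^{\sigma_n-\rho}$ (order $0$) rather than $\mathcal{B}^{\sigma_n-\rho,-2}$ are correct and useful context, though strictly they belong to ${\bf (S1)}_{n+1}$ rather than to the lemma itself.
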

\begin{proof}
In order to simplify notations in this proof, we drop the index $n$ and we write $+$ instead of $n + 1$. Passing to the $2 \times 2$ block representation of operators and taking the Fourier transform w.r. to $\f$, one gets that the equation \eqref{equazione omologica riducibilita} is equivalent to 	
\begin{equation}\label{decomposizione a blocchi eq omologica}
\begin{aligned}
& \ii \Big( -  \omega \cdot \ell \Pi_j \widehat{\mathcal F}_+(\ell) \Pi_{j'}  + {\mathcal D}(j) \Pi_j \widehat{\mathcal F}_+(\ell) \Pi_{j'} -  \Pi_j \widehat{\mathcal F}_+(\ell) \Pi_{j'} {\mathcal D}(j') \Big)  + \Pi_j \widehat{\mathcal P}(\ell) \Pi_{j'}  = 0 \\
& \forall (\ell, j, j') \in \Z^\infty_* \times \N_0 \times \N_0, \quad (\ell, j, j') \neq (0, j, j), \quad |\ell|_\zia \leq N, \\
& \quad \text{and} \quad \Pi_j \widehat{\mathcal F}_+(0) \Pi_j = 0, \quad \forall j \in \N_0\,. 
\end{aligned}
\end{equation}
According to the definition given in \eqref{def cal O n ell j j'}, for any $\omega \in \Omega_+(\gamma) \equiv \Omega_{n + 1}(\gamma)$, since the operator
\begin{equation}
{\mathcal O}(\ell, j, j') := \omega \cdot \ell \, {\rm Id} - M_L({\mathcal D}(j)) + M_R({\mathcal D}(j'))
\end{equation}
is invertible, one defines ${\mathcal F}_+$ as 
\begin{equation}\label{definizione sol eq omologica nel lemma}
\Pi_j \widehat{\mathcal F}_+(\ell) \Pi_{j'} := \begin{cases}
- \ii {\mathcal O}(\ell, j, j')^{- 1} \Pi_j \widehat{\mathcal P}(\ell) \Pi_{j'}, \quad \forall (\ell, j, j') \neq (0, j, j) \\
0 \quad \forall (\ell, j, j') = (0, j, j)\,. 
\end{cases}
\end{equation}
For any $(\ell, j, j') \neq (0, j, j)$, $j \neq j'$, $|\ell| \leq N$ one obtains that 
\begin{equation}\label{prima stima norma sup eq omologica}
\| \Pi_j \widehat{\mathcal F}_+(\ell) \Pi_{j'} \|_\HS \leq \frac{{\divisor}(\ell)}{\gamma} \| \Pi_j \widehat{\mathcal P}(\ell) \Pi_{j'}\|_\HS
\end{equation}
and for $\ell \neq 0$, $|\ell|_\zia \leq N$, 
\begin{equation}\label{seconda stima norma sup eq omologica}
\| \Pi_j \widehat{\mathcal F}_+(\ell) \Pi_{j} \|_\HS \leq \frac{{\divisor}(\ell) \langle j \rangle^2}{\gamma} \| \Pi_j \widehat{\mathcal P}(\ell) \Pi_j \|_\HS\,. 
\end{equation}
Let $\sigma \equiv \sigma_n$. By recalling the definition \eqref{definizione classe cal B sigma}, the estimates \eqref{prima stima norma sup eq omologica}, \eqref{seconda stima norma sup eq omologica} imply that  for any $\ell \in \Z^\infty$, $|\ell|_\zia \leq N$
\begin{equation}
\| \widehat{\mathcal F}_+(\ell) \|_{{\mathcal B}^{\sigma - \rho}} \leq {\divisor}(\ell) \gamma^{- 1} \| \widehat{\mathcal P}(\ell) \|_{{\mathcal B}^{\sigma, - 2}}\,. 
\end{equation}
Hence in view of the definition \eqref{definizione norma del decay}, one obtains that 
\begin{equation}\label{stima sup cal F+}
\begin{aligned}
|{\mathcal F}_+|_{\sigma - \rho} & \leq \gamma^{- 1}\sum_{\ell \in \Z^\infty_*} {\divisor}(\ell) e^{(\sigma - \rho) |\ell|_\zia} \| \widehat{\mathcal P}(\ell)\|_{{\mathcal B}^{\sigma, - 2}} \leq \gamma^{- 1} \Big(\sup_{\ell \in \Z^\infty_*} {\divisor}(\ell) e^{- \rho |\ell|_\zia} \Big)  |{\mathcal P}|_{\sigma, - 2} \\
& \stackrel{Lemma \,\ref{bound per stima di Cauchy}}{\leq} \gamma^{- 1} {\rm exp}\Big(\frac{\tau}{\rho^{\frac{1}{\zia}}} \ln\Big(\frac{\tau}{\rho} \Big) \Big) |{\mathcal P}|_{\sigma, - 2}\,. 
\end{aligned}
\end{equation}
Now we show the Lipschitz estimate. Let $\omega_1, \omega_2 \in \Omega_+(\gamma)$. Then for any $(\ell, j, j') \neq (0, j, j')$, $|\ell|_\zia \leq N$, 
\begin{equation}\label{formula delta 12 blocchi eq omologica}
\begin{aligned}
	\Delta_{\omega_1 \omega_2}\big( \Pi_j \widehat{\mathcal F}_+(\ell) \Pi_{j'}\big)  & = - \ii {\mathcal O}(\ell, j, j'; \omega_1)^{- 1}\Delta_{\omega_1\omega_2}\big( \Pi_j \widehat{\mathcal P}(\ell) \Pi_{j'}\big) \\
	& \quad  + \ii {\mathcal O}(\ell, j, j'; \omega_1)^{- 1} \big(\Delta_{\omega_1 \omega_2} {\mathcal O}(\ell, j, j') \big) {\mathcal O}(\ell, j, j'; \omega_2)^{- 1}   \Pi_j \widehat{\mathcal P}(\ell; \omega_2) \Pi_{j'}\,.
	\end{aligned}
\end{equation}
By \eqref{norma operatoriale ML MR}, \eqref{cal D0 cal P0 inizio KAM}, \eqref{blocco iniziale riducibilita}, \eqref{D0 j - Dn j}, one obtains that 
\begin{equation}\label{stima lip op sec mel}
\begin{aligned}
\| \Delta_{\omega_1 \omega_2} {\mathcal O}(\ell, j, j')\|_\Op & \leq \|\omega_1 - \omega_2 \|_\infty |\ell|_\zia + 2 \sup_{j \in \N_0} \|\Delta_{\omega_1\omega_2} {\mathcal D}(j) \|_\HS \\
& \lesssim (1 + |\ell|_\zia) \| \omega_1 - \omega_2\|_\infty \,. 
\end{aligned}
\end{equation}
Hence since $\omega_1, \omega_2 \in \Omega_+(\gamma)$, the formula \eqref{formula delta 12 blocchi eq omologica} and the estimate \eqref{stima lip op sec mel} imply that for any $\ell \in \Z^\infty_*$, $j \neq j'$, $|\ell|_\zia \leq N$
\begin{equation}\label{prima stima delta 12 cal F+}
\begin{aligned}
\| \Delta_{\omega_1 \omega_2}\big( \Pi_j \widehat{\mathcal F}_+(\ell) \Pi_{j'}\big) \|_\HS&  \lesssim \frac{{\divisor}(\ell)^2}{\gamma^2} (1 + |\ell|_\zia) \| \Pi_j \widehat{\mathcal P}(\ell; \omega_2) \Pi_{j'}\|_\HS \\
& \quad + \frac{{\divisor}(\ell)}{\gamma} \| \Delta_{\omega_1 \omega_2}\big( \Pi_j \widehat{\mathcal P}(\ell) \Pi_{j'} \big) \|_\HS
\end{aligned}
\end{equation}
and for any $\ell \in \Z^\infty_* \setminus \{ 0 \}$, $j \in \N_0$, $|\ell|_\zia \leq N$, 
\begin{equation}\label{seconda stima delta 12 cal F+}
\begin{aligned}
\| \Delta_{\omega_1 \omega_2}\big( \Pi_j \widehat{\mathcal F}_+(\ell) \Pi_{j}\big) \|_\HS&  \lesssim \frac{{\divisor}(\ell)^2 \langle j \rangle^4}{\gamma^2} (1 + |\ell|_\zia) \| \Pi_j \widehat{\mathcal P}(\ell; \omega_2) \Pi_{j}\|_\HS \|\omega_1 - \omega_2 \|_\infty  \\
& \quad + \frac{{\divisor}(\ell) \langle j \rangle^2 }{\gamma} \| \Delta_{\omega_1 \omega_2}\big( \Pi_j \widehat{\mathcal P}(\ell) \Pi_{j} \big) \|_\HS\,. 
\end{aligned}
\end{equation}
Recalling the definition \eqref{definizione classe cal B sigma} and using the estimates \eqref{prima stima delta 12 cal F+}, \eqref{seconda stima delta 12 cal F+}, one obtains that  
\begin{equation}\label{stima delta 12 F+ ell}
\begin{aligned}
\| \Delta_{\omega_1 \omega_2} \widehat{\mathcal F}_+(\ell) \|_{{\mathcal B}^{\sigma - \rho, 2}} & \lesssim \frac{{\divisor}(\ell)^2 }{\gamma^2} (1 + |\ell|_\zia) \| \widehat{\mathcal P}(\ell; \omega_2) \|_{{\mathcal B}^{\sigma, - 2}} \|\omega_1 - \omega_2 \|_\infty \\
& \quad + \frac{{\divisor}(\ell)}{\gamma} \| \Delta_{\omega_1 \omega_2}\widehat{\mathcal P}(\ell) \|_{{\mathcal B}^{\sigma}}\,. 
\end{aligned}
\end{equation}
Hence, recalling the definition \eqref{definizione norma del decay}, one gets 
\begin{equation}\label{stima delta 12 cal F+}
\begin{aligned}
& |\Delta_{\omega_1\omega_2} {\mathcal F}_+|_{\sigma - \rho, 2}  \lesssim \gamma^{- 2}\Big( \sup_{\ell \in \Z^\infty_*} {\divisor}(\ell)^2 e^{- \rho |\ell|_\zia} ( 1 + |\ell|_\zia) \Big)\| \omega_1 - \omega_2 \|_\infty \sup_{\omega \in \Omega} | {\mathcal P}(\omega)|_{\sigma, - 2}  \\
& \quad + \gamma^{- 1} \Big( \sup_{\ell \in \Z^\infty_*} {\divisor}(\ell) e^{- \rho |\ell|_\zia} \Big) | \Delta_{\omega_1 \omega_2}{\mathcal P} |_{\sigma} \\
& \stackrel{Lemma \,\ref{bound per stima di Cauchy}}{\lesssim} \gamma^{- 2} {\rm exp}\Big( \frac{\tau}{\rho^{\frac{1}{\zia}}} \ln\Big(\frac{\tau}{\rho} \Big) \Big) \Big(\| \omega_1 - \omega_2 \|_\infty \sup_{\omega \in \Omega} | {\mathcal P}(\omega)|_{\sigma, - 2}  + \gamma | \Delta_{\omega_1 \omega_2}{\mathcal P} |_{\sigma}\Big)
\end{aligned}
\end{equation}
for some $\tau > 0$. The bounds \eqref{stima sup cal F+}, \eqref{stima delta 12 cal F+}, together with the definition \eqref{definizione norma lip gamma matrici} imply the claimed bound. 
\end{proof}
By the formula \eqref{prima espansione cal L n + 1} and using that the operator ${\mathcal F}_{n + 1}$ solves the homological equation \eqref{equazione omologica riducibilita}, one obtains that 
\begin{equation}\label{formula finale cal L n + 1}
\begin{aligned}
 {\mathcal L}_{n + 1}(\f) & := \ii {\mathcal D}_{n + 1} + {\mathcal P}_{n + 1}(\f)\,, \\
 {\mathcal D}_{n + 1} & := {\mathcal D}_n + {\mathcal Z}_n, \quad {\mathcal Z}_n : = \frac{1}{\ii} [\widehat{\mathcal P}_n(0)]\,, \\
{\mathcal P}_{n + 1} & := \Pi_n^\bot {\mathcal P}_n  + \int_0^1 (1 - \tau) e^{- \tau {\mathcal F}_{n + 1}} [[\widehat{\mathcal P}_n(0)] - \Pi_n {\mathcal P}_n, {\mathcal F}_{n + 1}] e^{ \tau {\mathcal F}_{n + 1}}\, d \tau \\
& \quad +  \int_0^1 e^{-  \tau {\mathcal F}_{n + 1}}[{\mathcal P}_n, {\mathcal F}_{n + 1}] e^{ \tau {\mathcal F}_{n + 1}}\, d \tau \,.
\end{aligned}
\end{equation}
{\bf The new block-diagonal part ${\mathcal D}_{n + 1}$.} Since by the inductive hypothesis the operator ${\mathcal P}_{n}(\f)$ is skew self-adjoint, then also the $2 \times 2$ block-diagonal operator $[\widehat{\mathcal P}_n(0)] = {\rm diag}_{j \in \N_0} \Pi_j \widehat{\mathcal P}_n(0) \Pi_j$ is skew self-adjoint, therefore the $2 \times 2$ block diagonal operator ${\mathcal Z}_n := \frac{1}{\ii} [\widehat{\mathcal P}_n(0)]$ is self-adjoint. Hence using the induction hypothesis, one gets that ${\mathcal D}_{n + 1}$ is a $2 \times 2$ self-adjoint block diagonal operator. 
We then set 
\begin{equation}\label{blocchi cal D n+1 (j)}
{\mathcal D}_{n + 1}(j) := \Pi_j {\mathcal D}_{n + 1} \Pi_j := {\mathcal D}_n(j) + {\mathcal Z}_n(j), \quad {\mathcal Z}_n(j) := \Pi_j {\mathcal Z}_n \Pi_j, \quad \forall j \in \N_0\,.
\end{equation}
By the inductive estimate \eqref{stima cal P sigma n}, one gets that for any $\sigma \leq \sigma_n$
\begin{equation}\label{stima nuova parte diagonale a}
\begin{aligned}
& |{\mathcal Z}_n|_{\sigma, - 2}^\Lipn{n} = |{\mathcal D}_{n + 1} - {\mathcal D}_n|_{\sigma, - 2}^\Lipn{n} \leq |{\mathcal P}_n|_{\sigma_n, - 2}^\Lipn{n} \lesssim \e e^{- \chi^n}\,. 
\end{aligned}
\end{equation}
The latter estimate, implies that
\begin{equation}\label{stima nuova parte diagonale b}
\begin{aligned}
& \sup_{\omega \in \Omega_n(\gamma)}  \|{\mathcal Z}_n(j; \omega) \|_\HS \lesssim \e e^{- \chi^n} \langle j \rangle^{- 2}\,, \\
& \sup_{\begin{subarray}{c}
\omega_1, \omega_2 \in \Omega_n(\gamma) \\
\omega_1 \neq \omega_2
\end{subarray}} \frac{\|{\mathcal Z}_n(j; \omega_1) - {\mathcal Z}_n(j; \omega_2) \|_\HS}{\| \omega_1 - \omega_2 \|_\infty} \lesssim \e \gamma^{- 1} e^{- \chi^n}
\end{aligned}
\end{equation} 
uniformly w.r. to $j \in \N_0$. The estimate \eqref{def cal O n ell j j'} at the step $n + 1$ then follows by applying \eqref{stima nuova parte diagonale a}, using a telescoping argument. 

\medskip

\noindent
{\bf The new remainder ${\mathcal P}_{n + 1}$.} By applying Lemma \ref{proprieta norma sigma riducibilita vphi x}-$(ii)$, one obtains the estimates 
\begin{equation}\label{stima P n + 1 induttiva a}
\begin{aligned}
& |\Pi_n^\bot {\mathcal P}_n|_{\sigma_{n + 1}, - 2}^\Lipn{n} \leq e^{- N_n (\sigma_n - \sigma_{n + 1})} |{\mathcal P}_n|_{\sigma_n, - 2}^\Lipn{n}\,.
\end{aligned}
\end{equation}
Furthermore, by applying iteratively Lemma \ref{proprieta norma sigma riducibilita vphi x}-$(i)$, $(iii)$  one obtains that if $\rho > 0$ satisfies $\sigma_{n + 1} + 3 \rho < \sigma_n$
\begin{equation}\label{stima P n + 1 induttiva b}
\begin{aligned}
& \Big| e^{-  \tau {\mathcal F}_{n + 1}}[{\mathcal P}_n, {\mathcal F}_{n + 1}] e^{ \tau {\mathcal F}_{n + 1}} \Big|_{\sigma_{n + 1}, - 2}^\Lipn{n+1} + \Big| e^{- \tau {\mathcal F}_{n + 1}} [[\widehat{\mathcal P}_n(0)] - \Pi_n {\mathcal P}_n, {\mathcal F}_{n + 1}] e^{ \tau {\mathcal F}_{n + 1}} \Big|_{\sigma_{n + 1}, - 2}^\Lipn{n+1} \\
& \quad  \lesssim \rho^{- \mathtt a} \Big( \sup_{\tau \in [0, 1]} |e^{\pm \tau {\mathcal F}_{n + 1}}|_{\sigma_{n + 1} + 3 \rho}^\Lipn{n+1} \Big) |{\mathcal P}_n|_{\sigma_n, - 2}^\Lipn{n} |{\mathcal F}_{n + 1}|_{\sigma_{n + 1} + 2 \rho}^\Lipn{n+1} \,.  
\end{aligned}
\end{equation}
for some constant $\mathtt a > 0$. 

\noindent
Now we want to use Lemma \ref{lemma mappa esponenziale} in order to estimate $ \sup_{\tau \in [0, 1]} |e^{\pm \tau {\mathcal F}_{n + 1}}|_{\sigma_{n + 1} + 3 \rho}^\Lipn{n+1}$. We fix $\rho := \frac{\sigma_n - \sigma_{n + 1}}{8}$ so that $\sigma_{n + 1} + 4 \rho = \sigma_{n + 1} + \frac{\sigma_n - \sigma_{n + 1}}{2} = \frac{\sigma_n + \sigma_{n + 1}}{2} < \sigma_n $. With this choice of $\rho$, by applying Lemma \ref{homolu} and the inductive estimate \eqref{stima cal P sigma n} on ${\mathcal P}_n$, one obtains that 
\begin{equation}\label{stima cal F n + 1 iterazione}
\begin{aligned}
|{\mathcal F}_{n + 1}|_{\frac{\sigma_n + \sigma_{n + 1}}{2}}^{\Lipn{n+1}} & = |{\mathcal F}_{n + 1}|_{\sigma_{n + 1} + 4 \rho}^{\Lipn{n+1}}  \lesssim \g^{-1}{\rm exp}\Big(\frac{\tau}{(\sigma_n - \sigma_{n + 1})^{\frac{1}{\zia}}} \ln\Big(\frac{\tau}{\sigma_n - \sigma_{n + 1}} \Big) \Big)|{\mathcal P}_n|_{\s_n,-2}^{\Lipn{n}} \\
& \lesssim \e \g^{-1}{\rm exp}\Big(\frac{\tau}{(\sigma_n - \sigma_{n + 1})^{\frac{1}{\zia}}} \ln\Big(\frac{\tau}{\sigma_n - \sigma_{n + 1}} \Big) - \chi^n \Big)  \\
& \lesssim \e \gamma^{- 1} e^{- \frac{\chi^n}{2}}
\end{aligned}
\end{equation}
using that, by \eqref{costanti convergenza riducibilita}, one has 
$$
\sup_{n \in \N}\Big\{ {\rm exp}\Big(\frac{\tau}{(\sigma_n - \sigma_{n + 1})^{\frac{1}{\zia}}} \ln\Big(\frac{\tau}{\sigma_n - \sigma_{n + 1}} \Big) - \frac{\chi^n}{2} \Big) \Big\} < \infty\,. 
$$
The estimate \eqref{stima cal F n + 1 iterazione} proves the estimate \eqref{stima trasformazione cal F n - 1} at the step $n + 1$. Furthermore, \begin{equation}\label{piccolezza cal F n + 1}
\frac{1}{(\sigma_n - \sigma_{n + 1})^2} |{\mathcal F}_{n + 1}|_{\frac{\sigma_n + \sigma_{n + 1}}{2}}^\Lipn{n+1} \leq \delta
\end{equation}
for some $\delta \in (0, 1)$ small enough by taking $\e \gamma^{- 1}$ small enough and using that by \eqref{costanti convergenza riducibilita}
$$
\lim_{n \to \infty}  \frac{1}{(\sigma_n - \sigma_{n + 1})^2} e^{- \frac{\chi^n}{2}} = 0\,. 
$$
The smallness condition \eqref{smallness lemma exp} of Lemma \ref{lemma mappa esponenziale} is verified and therefore we get the estimate 
\begin{equation}\label{stima exp F n + 1 tau iterazione}
\sup_{\tau \in [0, 1]} |e^{\pm \tau {\mathcal F}_{n + 1}}|_{\sigma_n + 3 \rho}^\Lipn{n+1} \lesssim 1\,. 
\end{equation}
The estimates \eqref{stima P n + 1 induttiva a}, \eqref{stima P n + 1 induttiva b}, \eqref{stima cal F n + 1 iterazione}, \eqref{stima exp F n + 1 tau iterazione} (recalling the definition of the remainder ${\mathcal P}_{n + 1}$ given in \eqref{formula finale cal L n + 1}) lead to the inductive estimate
\begin{equation}\label{stima induttiva cal P n + 1 cal Pn}
\begin{aligned}
& |{\mathcal P}_{n + 1}|_{\sigma_{n + 1}, - 2}^\Lipn{n+1} \leq e^{- N_n (\sigma_n - \sigma_{n + 1})} |{\mathcal P}_n|_{\sigma_n, - 2}^\Lipn{n}  \\
& \quad + C \g^{-1} \frac{1}{(\sigma_n - \sigma_{n + 1})^{\frak a}}{\rm exp}\Big(\frac{\tau}{(\sigma_n - \sigma_{n + 1})^{\frac{1}{\zia}}} \ln\Big(\frac{\tau}{\sigma_n - \sigma_{n + 1}} \Big) \Big) (|{\mathcal P}_n|_{\s_n,-2}^{\Lipn{n}})^2\, 
\end{aligned}
\end{equation}
where $C > 0$ is a positive constant and $\frak a > 0$ is the constant appearing in the estimate \eqref{stima P n + 1 induttiva b}. The latter estuimate, together with the inductive estimate \eqref{stima cal P sigma n} on $|{\mathcal P}_n|_{\sigma_n, - 2}^\Lipn{n}$ imply that 
\begin{equation}\label{stima induttiva cal P n + 1 cal Pn a}
\begin{aligned}
& |{\mathcal P}_{n + 1}|_{\sigma_{n + 1}, - 2}^\Lipn{n+1} \leq e^{- N_n (\sigma_n - \sigma_{n + 1})} C_* \e e^{- \chi^n } \\
& \quad + C \g^{-1} \frac{1}{(\sigma_n - \sigma_{n + 1})^{\frak a}}{\rm exp}\Big(\frac{\tau}{(\sigma_n - \sigma_{n + 1})^{\frac{1}{\zia}}} \ln\Big(\frac{\tau}{\sigma_n - \sigma_{n + 1}} \Big) \Big) C_*^2 \e^2 e^{- 2 \chi^n}\,  \\
& \leq C_* \e e^{- \chi^{n + 1}}
\end{aligned}
\end{equation}
provided 
\begin{equation}\label{condizioni costanti passo induttivo}
\begin{aligned}
& \sup_{n \in \N} \Big\{{\rm exp}\Big( \chi^n (\chi - 1) - N_n (\sigma_n - \sigma_{n + 1}) \Big) \Big\} \leq \frac12 \,, \\
& C C_* \e \gamma^{- 1} \sup_{n \in \N} \Big\{  \frac{1}{(\sigma_n - \sigma_{n + 1})^{\frak a}}{\rm exp}\Big(\frac{\tau}{(\sigma_n - \sigma_{n + 1})^{\frac{1}{\zia}}} \ln\Big(\frac{\tau}{\sigma_n - \sigma_{n + 1}} \Big) - (2 - \chi) \chi^n \Big)  \Big\} \leq \frac12\,. 
\end{aligned}
\end{equation}
The first condition in \eqref{condizioni costanti passo induttivo} holds by recalling \eqref{costanti convergenza riducibilita} and by taking $N_0>0$ large enough. The second condition in \eqref{condizioni costanti passo induttivo} holds by recalling \eqref{costanti convergenza riducibilita} and by taking $\e \gamma^{- 1}$ small enough. 

\medskip

\noindent
{\sc Proof  of ${\bf ({S}2)}_{n + 1}$.} By recalling the estimate \eqref{stima nuova parte diagonale b}, for any $j \in \N_0$, the function $\Omega_{n + 1}(\gamma) \to {\mathcal S}({\bf E}_j)$, $\omega \mapsto {\mathcal Z}_n(j; \omega ) = {\mathcal D}_{n + 1}(j; \omega) - {\mathcal D}_n(j; \omega)$ is a Lipschitz function. Hence by using the Kirszbraun Theorem there exists a Lipschitz extension $\widetilde{\mathcal Z}_n(j; \cdot) : \Dc\to {\mathcal S}({\bf E}_j)$ of ${\mathcal Z}_n(j)$ preserving the sup norm and the Lipschitz seminorm, namely $\sup_{\omega \in \Dc}\| \widetilde{\mathcal Z}_n(j; \omega) \|_\HS \lesssim \sup_{\omega \in \Omega_{n + 1}(\gamma)} \| {\mathcal Z}_n(j; \omega)\|_\HS$, $\| \widetilde{\mathcal Z}_n(j)\|^{\rm lip}_\HS \lesssim \| {\mathcal Z}_n(j)\|^{\rm lip}_\HS$. Therefore, using the bounds \eqref{stima nuova parte diagonale b} and defining $\widetilde{\mathcal D}_{n + 1}(j) := \widetilde{\mathcal D}_n(j) + \widetilde{\mathcal Z}_n(j)$, the claimed statement follows. 

\subsection{Convergence}
{\bf Final blocks.} By applying Theorem \ref{teorema di riducibilita}-$\bf (S2)_n$ the sequence of the Lipschitz functions $\widetilde{\mathcal D}_n(j; \cdot) : \Dc \to {\mathcal S}({\bf E}_j)$, $j \in \N_0$ is a Cauchy sequence w.r. to the norm $\| \cdot \|^\Lipn{0}$ and therefore, we can define the {\it final blocks} 
\begin{equation}\label{definizione blocchi finali}
{\mathcal D}_\infty(j) := \lim_{n \to \infty} \widetilde{\mathcal D}_n(j), \quad \forall j \in \N_0\,. 
\end{equation}
By using a telescoping argument one obtains that for any $j \in \N_0$, for any $n \in \N$, the following estimates hold
\begin{equation}\label{stima blocco parziale blocco finale}
\begin{aligned}
&\sup_{\omega \in \Dc}\| {\mathcal D}_\infty(j; \omega) - \widetilde{\mathcal D}_n(j; \omega) \|_\HS \lesssim \langle j \rangle^{- 2}\e e^{- \chi^{n}}  \,, \\
& \| {\mathcal D}_\infty(j) - \widetilde{\mathcal D}_n(j) \|^{\rm lip}_\HS  \lesssim  \e \gamma^{- 1} e^{- \chi^{n}}
\end{aligned}
\end{equation}
Then, recalling the definition of the norm $|\cdot|^\Lipg_{\sigma, m}$ given in \eqref{definizione norma lip gamma matrici}, if we define the $2 \times 2$ block diagonal operators 
\begin{equation}\label{def operatore cal D infty}
\begin{aligned}
\widetilde{\mathcal D}_n := {\rm diag}_{j \in \N_0} \widetilde{\mathcal D}_n(j), \quad \forall n \in \N\,, \quad {\mathcal D}_\infty :=  {\rm diag}_{j \in \N_0} {\mathcal D}_\infty(j)
\end{aligned}
\end{equation}
one gets that for any $\sigma > 0$, $n \in \N$ and $\Omega\in\Dc$
\begin{equation}\label{stima D infty - Dn}
|{\mathcal D}_\infty - \widetilde{\mathcal D}_n|_{\sigma, - 2}^\Lipg \lesssim \e e^{- \chi^n}\,. 
\end{equation}
{\bf Final Cantor set.} For any $\ell \in \Z^\infty_*$, $j, j' \in \N_0$, we define the linear operator ${\mathcal O}_\infty(\ell, j, j') : {\mathcal B}({\bf E}_{j'}, {\bf E}_j) \to {\mathcal B}({\bf E}_{j'}, {\bf E}_j)$
\begin{equation}
{\mathcal O}_\infty(\ell, j, j') := \omega \cdot \ell \, {\rm Id} - M_L({\mathcal D}_\infty(j)) + M_R({\mathcal D}_\infty(j'))
\end{equation}
and we define the set 
\begin{equation}\label{definizione Omega infinito}
\begin{aligned}
\Omega_\infty(\gamma) &:= \Big\{ \omega \in \Dc : \| {\mathcal O}_{\infty}(\ell, j, j')^{- 1}\|_\Op \leq \frac{{\divisor}(\ell)}{2 \gamma}, \quad \forall (\ell, j, j') \in \Z^\infty_* \times \N_0 \times \N_0, \\
 & j \neq j' \quad\text{and} \quad \| {\mathcal O}_{\infty}(\ell, j, j)^{- 1}\|_{\rm Op} \leq \frac{{\divisor}(\ell) j^2}{2 \gamma} \quad \forall (\ell, j) \in (\Z^\infty_* \setminus \{ 0 \}) \times \N_0 \Big\}\,.
\end{aligned}
\end{equation}
The following lemma holds
\begin{lemma}\label{lemma inclusione cantor}
	One has that 
	\[
	\Omega_\infty(\gamma) \subseteq \cap_{n\in\N_0}\Omega_n(\gamma) \,.
	\]
	\end{lemma}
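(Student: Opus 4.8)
The statement to prove is $\Omega_\infty(\gamma) \subseteq \cap_{n \in \N_0} \Omega_n(\gamma)$, so the natural approach is induction on $n$. The base case $n = 0$ is trivial since $\Omega_0(\gamma) = \Dc$ and $\Omega_\infty(\gamma) \subseteq \Dc$ by definition. For the inductive step, I would fix $\omega \in \Omega_\infty(\gamma)$, assume $\omega \in \Omega_n(\gamma)$, and show $\omega \in \Omega_{n+1}(\gamma)$; by the definition \eqref{definizione Omega n (gamma) rid} this amounts to verifying the two lower bounds $\| {\mathcal O}_n(\ell, j, j')^{-1}\|_\Op \leq {\divisor}(\ell)/\gamma$ for $j \neq j'$ and $\| {\mathcal O}_n(\ell, j, j)^{-1}\|_\Op \leq {\divisor}(\ell) j^2 / \gamma$ for $\ell \neq 0$, in the range $|\ell|_\zia \leq N_n$.

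The key idea is a perturbative comparison between ${\mathcal O}_n(\ell, j, j')$ and ${\mathcal O}_\infty(\ell, j, j')$. First I would record that, by the definition \eqref{def cal O n ell j j'} and the analogous definition of ${\mathcal O}_\infty$, one has
\[
{\mathcal O}_n(\ell, j, j') - {\mathcal O}_\infty(\ell, j, j') = M_L({\mathcal D}_n(j) - {\mathcal D}_\infty(j)) - M_R({\mathcal D}_n(j') - {\mathcal D}_\infty(j'))\,,
\]
so using the operator-norm bound on $M_L, M_R$ together with the telescoping estimate \eqref{stima blocco parziale blocco finale} (and the fact that $\widetilde{\mathcal D}_n(j) = {\mathcal D}_n(j)$ for $\omega \in \Omega_n(\gamma)$), one gets
\[
\| {\mathcal O}_n(\ell, j, j') - {\mathcal O}_\infty(\ell, j, j')\|_\Op \lesssim \e\, e^{- \chi^n}\,.
\]
Since $\omega \in \Omega_\infty(\gamma)$ gives $\| {\mathcal O}_\infty(\ell, j, j')^{-1}\|_\Op \leq {\divisor}(\ell)/(2\gamma)$ for $j \neq j'$, a standard Neumann-series (resolvent perturbation) argument shows ${\mathcal O}_n(\ell, j, j')$ is invertible with
\[
\| {\mathcal O}_n(\ell, j, j')^{-1}\|_\Op \leq \frac{\| {\mathcal O}_\infty(\ell, j, j')^{-1}\|_\Op}{1 - \| {\mathcal O}_\infty(\ell, j, j')^{-1}\|_\Op \,\| {\mathcal O}_n - {\mathcal O}_\infty\|_\Op} \leq \frac{{\divisor}(\ell)}{\gamma}\,,
\]
provided $\| {\mathcal O}_\infty^{-1}\|_\Op \, \| {\mathcal O}_n - {\mathcal O}_\infty\|_\Op \leq 1/2$. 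The diagonal case $j = j'$, $\ell \neq 0$ is handled identically, replacing ${\divisor}(\ell)/(2\gamma)$ by ${\divisor}(\ell) j^2/(2\gamma)$ and ${\divisor}(\ell)/\gamma$ by ${\divisor}(\ell) j^2/\gamma$.

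The one point requiring care—and the main obstacle—is verifying the smallness condition $\| {\mathcal O}_\infty^{-1}\|_\Op \, \| {\mathcal O}_n - {\mathcal O}_\infty\|_\Op \leq 1/2$ uniformly in $\ell$ with $|\ell|_\zia \leq N_n$: here $\| {\mathcal O}_\infty^{-1}\|_\Op$ is bounded by ${\divisor}(\ell) j^2/(2\gamma)$, which grows with $\ell$, while $\| {\mathcal O}_n - {\mathcal O}_\infty\|_\Op \lesssim \e e^{-\chi^n}$ decays in $n$ but not in $\ell$. The product is thus $\lesssim \e \gamma^{-1} {\divisor}(\ell) j^2 e^{- \chi^n}$, and one must absorb the factor ${\divisor}(\ell) j^2$ using the constraint $|\ell|_\zia \leq N_n$ together with the growth rates fixed in \eqref{costanti convergenza riducibilita} and the super-exponential smallness $e^{-\chi^n}$. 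Concretely, ${\divisor}(\ell) \leq \exp\!\big(C |\ell|_\zia \ln |\ell|_\zia\big)$-type bounds (cf.\ Lemma \ref{bound per stima di Cauchy}) give ${\divisor}(\ell) \lesssim \exp(C N_n \ln N_n)$, which is dominated by $e^{\chi^n}$ after taking $\e \gamma^{-1}$ small — but one has to check this carefully against the precise choice $N_n = \langle n\rangle^3 \chi^n N_0$. This is exactly the same kind of balance already used in \eqref{stima cal F n + 1 iterazione} and \eqref{condizioni costanti passo induttivo}, so it should go through, but it is the step where the quantitative details matter.
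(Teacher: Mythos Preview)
Your overall strategy is exactly the paper's: induction on $n$, perturbative comparison ${\mathcal O}_n = {\mathcal O}_\infty({\rm Id} + {\mathcal O}_\infty^{-1}({\mathcal O}_n - {\mathcal O}_\infty))$, and Neumann series. However, your treatment of the smallness condition has two quantitative gaps.

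\textbf{The $j^2$ factor does not need to be absorbed.} You write that the product to control is $\lesssim \e\gamma^{-1}{\divisor}(\ell)\, j^2\, e^{-\chi^n}$, and then propose to absorb the $j^2$ via the constraint $|\ell|_\zia \leq N_n$. But $j$ is unconstrained by $\ell$, so this cannot work. The fix is already contained in the estimate you cite: \eqref{stima blocco parziale blocco finale} gives $\|{\mathcal D}_\infty(j) - {\mathcal D}_n(j)\|_\HS \lesssim \langle j\rangle^{-2}\e\, e^{-\chi^n}$, so in the diagonal case $j=j'$ one has
\[
\|{\mathcal O}_n(\ell,j,j) - {\mathcal O}_\infty(\ell,j,j)\|_{\Op} \lesssim \langle j\rangle^{-2}\e\, e^{-\chi^n}\,,
\]
and this $\langle j\rangle^{-2}$ exactly cancels the $j^2$ coming from $\|{\mathcal O}_\infty(\ell,j,j)^{-1}\|_{\Op}\leq {\divisor}(\ell) j^2/(2\gamma)$. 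The paper records this as the separate estimate \eqref{ML MR D infty Dn a}. After the cancellation, the quantity to bound is $\e\gamma^{-1}{\divisor}(\ell)\, e^{-\chi^n}$, uniformly in $j$.

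\textbf{Your stated bound on ${\divisor}(\ell)$ is too weak.} You invoke ${\divisor}(\ell)\lesssim \exp(C|\ell|_\zia\ln|\ell|_\zia)$, hence ${\divisor}(\ell)\lesssim \exp(C N_n\ln N_n)$. With $N_n=\langle n\rangle^3\chi^n N_0$ this gives an exponent of order $n^4\chi^n$, which is \emph{not} dominated by $\chi^n$, so the balance fails. The correct tool is Lemma~\ref{small divisor con taglio}, which gives the sharper
\[
\sup_{|\ell|_\zia\leq N_n}{\divisor}(\ell) \leq (1+N_n)^{C N_n^{1/(1+\zia)}} = \exp\!\Big(C\, N_n^{\frac{1}{1+\zia}}\ln(1+N_n)\Big)\,.
\]
It is precisely the sublinear exponent $1/(1+\zia)<1$ that makes $N_n^{1/(1+\zia)}\ln(1+N_n)\sim n^{{\rm poly}}\chi^{n/(1+\zia)}\ll \chi^n$, so that $\sup_n \exp\big(-\chi^n + C N_n^{1/(1+\zia)}\ln(1+N_n)\big)<\infty$ and the Neumann-series smallness $\|{\mathcal O}_\infty^{-1}({\mathcal O}_n-{\mathcal O}_\infty)\|_{\Op}\leq 1/2$ follows for $\e\gamma^{-1}$ small enough. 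Once this holds, the standard resolvent bound yields $\|{\mathcal O}_n^{-1}\|_{\Op}\leq 2\|{\mathcal O}_\infty^{-1}\|_{\Op}\leq {\divisor}(\ell)/\gamma$ (resp.\ ${\divisor}(\ell) j^2/\gamma$), which is exactly the membership condition for $\Omega_{n+1}(\gamma)$.
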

\begin{proof}
We proceed by induction. By definition $	\Omega_\infty(\gamma) \subseteq \Dc$. Now assume that $\Omega_\infty(\gamma) \subseteq \Omega_n(\gamma)$ for some $n \geq 0$ and let us show that $\Omega_\infty(\gamma) \subseteq \Omega_{n + 1}(\gamma)$. Let $\omega\in \Omega_\infty(\gamma)$. Since by the induction hypothesis $\omega \in \Omega_n(\gamma)$, the $2 \times 2$ blocks ${\mathcal D}_n(j; \omega)$, $j \in \N_0$, are well defined and ${\mathcal D}_n(j; \omega)= \widetilde{\mathcal D}_n(j; \omega)$ on such set. By the estimates \eqref{stima blocco parziale blocco finale}, recalling the property \eqref{norma operatoriale ML MR}, one obtains that 
$$
\|M_L\big({\mathcal D}_\infty(j) - {\mathcal D}_n(j) \big))   \|_{{\rm Op}}\,,\,\|M_R\big({\mathcal D}_\infty(j) - {\mathcal D}_n(j) \big))   \|_{{\rm Op}} \lesssim \e \langle j \rangle^{- 2} e^{- \chi^n}
$$
and using that 
$$
{\mathcal O}_n(\ell, j, j') - {\mathcal O}_\infty(\ell , j, j')  = - M_L({\mathcal D}_n(j) - {\mathcal D}_\infty(j)) + M_R({\mathcal D}_n(j') - {\mathcal D}_\infty(j')),
$$
the latter estimate implies that for any $\ell \in \Z^\infty_*$, $|\ell|_\zia \leq N_n$, $j, j' \in \N_0$, $j \neq j'$
\begin{equation}\label{ML MR D infty Dn}
\| {\mathcal O}_n(\ell, j, j') - {\mathcal O}_\infty(\ell , j, j') \|_{{\rm Op}} \lesssim \e e^{- \chi^n} 
\end{equation}
and for any $\ell \in \Z^\infty_* \setminus \{ 0 \}$, $|\ell|_\zia \leq N_n$, $j \in \N_0$
\begin{equation}\label{ML MR D infty Dn a}
\| {\mathcal O}_n(\ell, j, j) - {\mathcal O}_\infty(\ell , j, j) \|_{{\rm Op}} \lesssim \e e^{- \chi^n} \langle j \rangle^{- 2}\,.  
\end{equation}
Since $\omega \in \Omega_\infty(\gamma) \subseteq \Omega_n(\gamma)$, we can write 
$$
\begin{aligned}
{\mathcal O}_n(\ell, j, j') & = {\mathcal O}_\infty(\ell , j, j') + {\mathcal O}_n(\ell, j, j') - {\mathcal O}_\infty(\ell , j, j')  \\
& = {\mathcal O}_\infty(\ell , j, j')\Big({\rm Id} + {\mathcal O}_\infty(\ell , j, j')^{- 1}\Big[ {\mathcal O}_n(\ell, j, j') - {\mathcal O}_\infty(\ell , j, j') \Big] \Big)
\end{aligned}
$$
and using the estimates \eqref{ML MR D infty Dn}, \eqref{ML MR D infty Dn a}, we get for any $(\ell, j, j') \neq (0, j, j)$, $|\ell|_\zia \leq N_n$, the bound
\begin{equation}\label{bound inclusione cantor neumann}
\begin{aligned}
& \|{\mathcal O}_\infty(\ell , j, j')^{- 1}\Big[ {\mathcal O}_n(\ell, j, j') - {\mathcal O}_\infty(\ell , j, j') \Big] \|_{{\rm Op}} \lesssim  \e \gamma^{- 1} e^{- \chi^n} \sup_{|\ell|_\zia \leq N_n} {\divisor}(\ell) \\
& \stackrel{Lemma \ref{small divisor con taglio}}{\lesssim} \e \gamma^{- 1} e^{- \chi^n} (1+N_n)^{C(\zia,\mu)N_n^{\frac{1}{1+\zia}}} \\
& \lesssim \e \gamma^{- 1} \sup_{n \in \N} {\rm exp}\Big(- \chi^n + C(\zia) N_n^{\frac{1}{1 + \zia}} \ln(1 + N_n) \Big)\,. 
\end{aligned}
\end{equation}
By the choice of $N_n$ provided in \eqref{costanti convergenza riducibilita}, one obtains that 
$$
\sup_{n \in \N} {\rm exp}\Big(- \chi^n + C(\zia,\mu) N_n^{\frac{1}{1 + \zia}} \ln(1 + N_n) \Big) < \infty
$$
implying that for $\e \gamma^{- 1}$ small enough
$$
\|{\mathcal O}_\infty(\ell , j, j')^{- 1}\Big[ {\mathcal O}_n(\ell, j, j') - {\mathcal O}_\infty(\ell , j, j') \Big] \|_{{\rm Op}} \leq \frac12\,.
$$
Hence by Neumann series ${\mathcal O}_n(\ell, j, j')$ is invertible and $\omega \in \Omega_{n + 1}(\gamma)$. 
\end{proof}
{\bf KAM transformations} 

\noindent
For every $n \geq 1$, we define the transformation $\Psi_n$ as 
\begin{equation}\label{def trasformazioni KAM}
\Psi_n := \Phi_1 \circ \ldots \circ \Phi_n\,. 
\end{equation}
where for any $n \geq 1$, the transformation $\Phi_n = {\rm exp}({\mathcal F}_n)$ is constructed in Theorem \ref{teorema di riducibilita}. Note that for any $n \in \N$, the map $\Psi_n$ is invertible and the inverse is given by 
\begin{equation}\label{trasformazioni KAM inverso}
\Psi_n^{- 1} := \Phi_n^{- 1} \circ \ldots \circ \Phi_1^{- 1}\,. 
\end{equation}
We now show the convergence of the sequence of transformations $(\Psi_n)_{n \in \N}$, in the space ${\mathcal H}\big(\T^\infty_{\frac{\sigma_0}{2}}, {\mathcal B}^{\frac{\sigma_0}{2}} \big)$. 
\begin{lemma}\label{convergenza trasformazioni KAM}
$(i)$ The sequence of transformation $(\Psi_n)_{n \in \N}$ converges to an invertible transformations $\Psi_\infty$, for $\omega \in \Omega_\infty(\gamma)$ w.r. to the norm $|\cdot |_{\frac{\sigma_0}{2}}^{{\rm Lip}(\gamma, \Omega_\infty(\gamma))}$. Furthermore the following bounds hold: ,
$$
|\Psi_\infty - {\rm Id}|_{\frac{\sigma_0}{2}}^{{\rm Lip}(\gamma, \Omega_\infty(\gamma))}\,,\, |\Psi_\infty^{- 1} - {\rm Id}|_{\frac{\sigma_0}{2}}^{{\rm Lip}(\gamma, \Omega_\infty(\gamma))} \lesssim \e \gamma^{- 1}\,. 
$$
$(ii)$ For any $0 < \sigma \leq \frac{\sigma_0}{2}$, for any $s \geq 0$, the maps $\T^\infty_\sigma \to {\mathcal B}({\mathcal H}(\T_\sigma), {\mathcal H}(\T_\sigma))$, $\f \mapsto \Psi_\infty(\f)^{\pm 1}$ and $\T^\infty \to {\mathcal B}(H^s(\T), H^s(\T))$, $\f \mapsto \Psi_\infty(\f)^{\pm 1}$ are bounded. 
\end{lemma}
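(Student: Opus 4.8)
The plan is to feed the output of Theorem~\ref{teorema di riducibilita} --- the skew self-adjoint generators ${\mathcal F}_n$ with ${\mathcal F}_n\in{\mathcal H}\big(\T^\infty_{\frac{\sigma_{n-1}+\sigma_n}{2}},{\mathcal B}^{\frac{\sigma_{n-1}+\sigma_n}{2}}\big)$ and $|{\mathcal F}_n|^{\Lipn{n}}_{\frac{\sigma_{n-1}+\sigma_n}{2}}\lesssim\e\g^{-1}e^{-\chi^{n-1}/2}$ from \eqref{stima trasformazione cal F n - 1}, together with $\Omega_\infty(\gamma)\subseteq\Omega_n(\gamma)$ (Lemma~\ref{lemma inclusione cantor}) --- into the exponential-map Lemma~\ref{lemma mappa esponenziale}. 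First one checks that all the transformations live on one fixed strip: by \eqref{costanti convergenza riducibilita} the $\sigma_n$ decrease to $\sigma_\infty=\sigma_0\big(1-\tfrac{1}{4\pi}\tfrac{\pi^2}{6}\big)=\sigma_0\big(1-\tfrac{\pi}{24}\big)$, and since $\tfrac{\pi}{24}<\tfrac12$ one has $\tfrac{\sigma_{n-1}+\sigma_n}{2}\ge\sigma_n\ge\sigma_\infty>\tfrac{\sigma_0}{2}$ for every $n$, so there is a \emph{fixed} gap $\rho_0:=\sigma_\infty-\tfrac{\sigma_0}{2}>0$. Then one applies Lemma~\ref{lemma mappa esponenziale} with $m=0$, target strip $\tfrac{\sigma_0}{2}$ and $\rho=\tfrac{\sigma_{n-1}+\sigma_n}{2}-\tfrac{\sigma_0}{2}\ge\rho_0$ (the smallness \eqref{smallness lemma exp} being met for $\e\g^{-1}$ small by \eqref{stima trasformazione cal F n - 1}), restricting parameters to $\Omega_\infty(\gamma)$ so that $|\cdot|^{\Lipn{n}}$ is replaced by the smaller $|\cdot|^{{\rm Lip}(\gamma,\Omega_\infty(\gamma))}$. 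This gives $\Phi_n^{\pm1}\in{\mathcal H}(\T^\infty_{\sigma_0/2},{\mathcal B}^{\sigma_0/2})$ with $|\Phi_n^{\pm1}-{\rm Id}|^{{\rm Lip}(\gamma,\Omega_\infty(\gamma))}_{\sigma_0/2}\le c_n$, where $c_n:=C(\sigma_0)\,\e\g^{-1}e^{-\chi^{n-1}/2}$ and $\sum_{n\ge1}c_n\lesssim\e\g^{-1}$ (it is dominated by $c_1$).

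Next one passes to the infinite product. On order-zero $\f$-dependent families the norm $|\cdot|_{\sigma_0/2}$ is submultiplicative (Lemma~\ref{proprieta norma sigma riducibilita vphi x}-$(i)$ with $m=m'=0$; equivalently ${\mathcal B}^{\sigma_0/2}$ is a Banach algebra), and ${\mathcal H}(\T^\infty_{\sigma_0/2},{\mathcal B}^{\sigma_0/2})$ with $|\cdot|^{{\rm Lip}(\gamma,\Omega_\infty(\gamma))}_{\sigma_0/2}$ is complete. Hence, with $\Psi_0:={\rm Id}$, one gets $|\Psi_n|_{\sigma_0/2}\le\prod_{k=1}^n(1+c_k)\le{\rm exp}\big(\sum_k c_k\big)\le2$ uniformly in $n$, while $\Psi_{n+1}-\Psi_n=\Psi_n(\Phi_{n+1}-{\rm Id})$ yields $|\Psi_{n+1}-\Psi_n|^{{\rm Lip}(\gamma,\Omega_\infty(\gamma))}_{\sigma_0/2}\le2c_{n+1}$. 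So $(\Psi_n)$ is Cauchy, converging to $\Psi_\infty$ with $|\Psi_\infty-{\rm Id}|^{{\rm Lip}(\gamma,\Omega_\infty(\gamma))}_{\sigma_0/2}\le\sum_{n\ge1}2c_n\lesssim\e\g^{-1}$. The same argument applied to $\Psi_n^{-1}=\Phi_n^{-1}\circ\ldots\circ\Phi_1^{-1}$, telescoped from the left ($\Psi_{n+1}^{-1}-\Psi_n^{-1}=(\Phi_{n+1}^{-1}-{\rm Id})\Psi_n^{-1}$), produces a limit $\widetilde\Psi_\infty$ with the analogous bound; passing to the limit in $\Psi_n\Psi_n^{-1}=\Psi_n^{-1}\Psi_n={\rm Id}$, which is legitimate since multiplication is continuous for $|\cdot|_{\sigma_0/2}$, gives $\Psi_\infty\widetilde\Psi_\infty=\widetilde\Psi_\infty\Psi_\infty={\rm Id}$, i.e. $\Psi_\infty^{-1}=\widetilde\Psi_\infty$. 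This proves $(i)$.

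For $(ii)$, given $0<\sigma\le\tfrac{\sigma_0}{2}$, monotonicity \eqref{prop monotonia norma vphi x} gives $\Psi_\infty^{\pm1}\in{\mathcal H}(\T^\infty_\sigma,{\mathcal B}^\sigma)$ with $|\Psi_\infty^{\pm1}|_\sigma\le|\Psi_\infty^{\pm1}|_{\sigma_0/2}\lesssim1$; Lemma~\ref{embedding L infty} bounds $\sup_{\f\in\T^\infty_\sigma}\|\Psi_\infty(\f)^{\pm1}\|_{{\mathcal B}^\sigma}$ and Lemma~\ref{lemma azione nostri pseudo}-$(i)$ converts this into a uniform bound on $\|\Psi_\infty(\f)^{\pm1}\|_{{\mathcal B}({\mathcal H}(\T_\sigma),{\mathcal H}(\T_\sigma))}$, while Lemma~\ref{lemma azione nostri pseudo}-$(ii)$ at the fixed strip $\sigma_0/2$ gives $\|\Psi_\infty(\f)^{\pm1}\|_{{\mathcal B}(H^s(\T),H^s(\T))}\lesssim_s(\sigma_0/2)^{-s}\|\Psi_\infty(\f)^{\pm1}\|_{{\mathcal B}^{\sigma_0/2}}\lesssim_s1$ for all $\f\in\T^\infty$. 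The only substantive point is the uniform lower bound $\sigma_n>\tfrac{\sigma_0}{2}$ of the first step: it provides both the single strip on which the whole sequence $(\Psi_n)$ is defined and the fixed gap $\rho_0$ that keeps the exponential-map loss bounded at every step; granted this, the superexponential decay of $|{\mathcal F}_n|$ makes the product converge via the algebra structure of ${\mathcal B}^{\sigma_0/2}$, and invertibility of the limit is a soft limiting argument, so I do not expect any deeper obstacle.
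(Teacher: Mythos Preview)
Your approach is genuinely different from the paper's and in spirit somewhat cleaner. The paper works at the \emph{moving} strips $\sigma_n$: it estimates $|\Psi_{n+1}-\Psi_n|^{\Lipg}_{\sigma_{n+1}}$ via Lemma~\ref{proprieta norma sigma riducibilita vphi x}-$(i)$ with the \emph{shrinking} gap $\rho=\tfrac{\sigma_n-\sigma_{n+1}}{4}$, accepting an $n$-dependent loss $(\sigma_n-\sigma_{n+1})^{-4}$ that is then absorbed into the superexponential $e^{-\chi^n/2}$, and only at the very end passes to $\sigma_0/2$ by monotonicity. You instead observe that $\sigma_n\ge\sigma_\infty>\sigma_0/2$ gives a \emph{fixed} gap $\rho_0$, so all estimates can be done once and for all on the single strip $\sigma_0/2$, with no $n$-dependent constants. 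This is a legitimate simplification; part $(ii)$ is then identical to the paper's.

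There is, however, one slip in your execution. You claim that $|\cdot|^{\Lipg}_{\sigma_0/2}$ is submultiplicative, quoting Lemma~\ref{proprieta norma sigma riducibilita vphi x}-$(i)$ with $m=m'=0$. For the \emph{sup} part this is fine (and indeed ${\mathcal B}^{\sigma}$ is an algebra), but for the \emph{Lipschitz} part the lemma always carries a $\rho^{-2}$ loss with the second factor evaluated at $\sigma+\rho$: this is unavoidable because the Lipschitz seminorm in \eqref{definizione norma lip gamma matrici} lives at order $m+2$, and the term $(\Delta_{\omega_1\omega_2}\Psi_n)(\Phi_{n+1}-{\rm Id})$ requires bounding an order-$2$ left factor against an order-$0$ right factor. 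Consequently the step ``$|\Psi_{n+1}-\Psi_n|^{\Lipg}_{\sigma_0/2}\le 2c_{n+1}$'' is not justified as written: the ``$2$'' is only a bound on $|\Psi_n|^{\rm sup}_{\sigma_0/2}$, not on $|\Psi_n|^{\Lipg}_{\sigma_0/2}$, and there is no strip to spare in your product. The fix is exactly the observation you already made: since every ${\mathcal F}_{n+1}$ lives at $\tfrac{\sigma_n+\sigma_{n+1}}{2}\ge\sigma_\infty=\sigma_0/2+\rho_0$, apply Lemma~\ref{lemma mappa esponenziale} with target strip $\sigma_0/2+\rho_0/2$ (instead of $\sigma_0/2$) to get $|\Phi_{n+1}-{\rm Id}|^{\Lipg}_{\sigma_0/2+\rho_0/2}\lesssim\e\gamma^{-1}e^{-\chi^n/2}$, then use Lemma~\ref{proprieta norma sigma riducibilita vphi x}-$(i)$ with the \emph{fixed} gap $\rho_0/2$ to obtain $|\Psi_{n+1}-\Psi_n|^{\Lipg}_{\sigma_0/2}\lesssim\rho_0^{-2}|\Psi_n|^{\Lipg}_{\sigma_0/2}\,\e\gamma^{-1}e^{-\chi^n/2}$, from which the uniform bound on $|\Psi_n|^{\Lipg}_{\sigma_0/2}$ follows by the same telescoping induction you wrote. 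The invertibility argument then goes through (the continuity of the product holding with this same fixed gap).
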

\begin{proof}
{\sc Proof of $(i)$.}
For any $n \geq 1$, one has that
$$
\Psi_{n + 1} = \Psi_n \circ \Phi_{n + 1} \Longrightarrow \Psi_{n + 1} - \Psi_n = \Psi_n \circ (\Phi_{n + 1} - {\rm Id})\,. 
$$
We estimate now $|\Psi_{n + 1} - \Psi_n|_{\sigma_{n + 1}}^{{\rm Lip}(\gamma, \Omega_\infty(\gamma))}$. Fix $\rho := \frac{\sigma_n - \sigma_{n + 1}}{4}$ such that $\sigma_{n + 1} < \sigma_{n + 1} + 2 \rho = \frac{\sigma_n + \sigma_{n + 1}}{2}$. By applying Lemma \ref{proprieta norma sigma riducibilita vphi x}-$(i)$, one has that 
\begin{equation}\label{stima Psi n + 1 - Psi n}
|\Psi_{n + 1} - \Psi_n|_{\sigma_{n + 1}}^{{\rm Lip}(\gamma, \Omega_\infty(\gamma))} \lesssim \rho^{- 2} |\Psi_n|_{\sigma_{n + 1}}^{{\rm Lip}(\gamma, \Omega_\infty(\gamma))} |\Phi_{n + 1} - {\rm Id}|_{\sigma_{n + 1} + \rho}^{{\rm Lip}(\gamma, \Omega_\infty(\gamma))}\,. 
\end{equation}
Moreover, since $\Phi_{n + 1} = {\rm exp}({\mathcal F}_{n + 1})$, using the estimate \eqref{stima trasformazione cal F n - 1} (at the step $n + 1$) and by applying Lemma \ref{lemma mappa esponenziale} one gets 
\begin{equation}\label{stima Phi n + 1 - Id conv}
\begin{aligned}
|\Phi_{n + 1} - {\rm Id}|_{\sigma_{n + 1} + \rho}^{{\rm Lip}(\gamma, \Omega_\infty(\gamma))} & \lesssim \rho^{- 2} |{\mathcal F}_{n + 1}|_{\sigma_{n + 1} + 2 \rho} \stackrel{\sigma_{n + 1} + 2 \rho = \frac{\sigma_n + \sigma_{n + 1}}{2}}{\lesssim } (\sigma_n - \sigma_{n + 1})^{- 2} |{\mathcal F}_{n + 1}|_{\frac{\sigma_n + \sigma_{n + 1}}{2} }^{{\rm Lip}(\gamma, \Omega_\infty(\gamma))} \\
& \lesssim  (\sigma_n - \sigma_{n + 1})^{- 2} \e \gamma^{- 1} e^{- \frac{\chi^n}{2}} \lesssim \e \gamma^{- 1} 
\end{aligned}
\end{equation}
Thus, the estimates \eqref{stima Psi n + 1 - Psi n}, \eqref{stima Phi n + 1 - Id conv} imply that 
\begin{equation}\label{stima Psi n + 1 - Psi n a}
\begin{aligned}
|\Psi_{n + 1} - \Psi_n|_{\sigma_{n + 1}}^{{\rm Lip}(\gamma, \Omega_\infty(\gamma))} &\lesssim (\sigma_n - \sigma_{n + 1})^{- 4} \e \gamma^{- 1} e^{- \frac{\chi^n}{2}} |\Psi_n|_{\sigma_{n + 1}}^{{\rm Lip}(\gamma, \Omega_\infty(\gamma))} \\
& \lesssim \e \gamma^{- 1} e^{- \frac{\chi^n}{3}} |\Psi_n|_{\sigma_{n}}^{{\rm Lip}(\gamma, \Omega_\infty(\gamma))}
\end{aligned}
\end{equation}
where in the last inequality we have used that $\sigma_{n + 1} < \sigma_n$ and  
$$
\sup_{n \in \N} \Big\{ (\sigma_n - \sigma_{n + 1})^{- 4} e^{- \frac{\chi^n}{2} + \frac{\chi^n}{3}}\Big\} < \infty
$$
and by triangular inequality 
\begin{equation}\label{stima Psi n + 1 - Psi n b}
|\Psi_{n + 1}|_{\sigma_{n + 1}}^{{\rm Lip}(\gamma, \Omega_\infty(\gamma))} \leq |\Psi_{n}|_{\sigma_n}^{{\rm Lip}(\gamma, \Omega_\infty(\gamma))} \Big( 1 + C \e \gamma^{- 1} e^{- \frac{\chi^n}{3}} \Big)
\end{equation}
for some constant $C > 0$. By iterating the latter bound one obtains that 
\begin{equation}\label{stima Psi n + 1 - Psi n c}
|\Psi_{n + 1}|_{\sigma_{n + 1}}^{{\rm Lip}(\gamma, \Omega_\infty(\gamma))} \leq \prod_{j = 0}^n \Big(1 + C  \e \gamma^{- 1} e^{- \frac{\chi^j}{3}}  \Big)\,.
\end{equation}
Passing to the logarithm in the above inequality and using that the series $\sum_{j \geq 0} e^{- \frac{\chi^j}{3}}  $ is convergent, one obtains that 
\begin{equation}\label{stima Psi n + 1 - Psi n d}
C_0 := \sup_{n \in \N}|\Psi_n|^{{\rm Lip}(\gamma, \Omega_\infty(\gamma))}_{\sigma_n} < \infty\,. 
\end{equation}
Now let $n, k \geq 1$. One has that 
\begin{equation}
\begin{aligned}
|\Psi_{n + k} - \Psi_n|_{\frac{\sigma_0}{2}}^{{\rm Lip}(\gamma, \Omega_\infty(\gamma))} & \leq \sum_{j \geq n} |\Psi_{j + 1} - \Psi_j|_{\sigma_{j + 1}}^{{\rm Lip}(\gamma, \Omega_\infty(\gamma))} \stackrel{\eqref{stima Psi n + 1 - Psi n a}, \eqref{stima Psi n + 1 - Psi n d}}{\lesssim} \e \gamma^{- 1} \sum_{j \geq n} e^{- \frac{\chi^j}{3}} \lesssim \e \gamma^{- 1} e^{- \frac{\chi^n}{3}}\,. 
\end{aligned}
\end{equation}
Hence $(\Psi_n)_{n \in \N}$ is a Cauchy sequence w.r. to the norm $|\cdot |^{{\rm Lip}(\gamma, \Omega_\infty(\gamma))}_{\frac{\sigma_0}{2}}$ and hence it coverges to $\Psi_\infty$ with a bound 
$$
|\Psi_\infty - \Psi_n|_{\frac{\sigma_0}{2}}^{{\rm Lip}(\gamma, \Omega_\infty(\gamma))} \lesssim \e \gamma^{- 1} e^{- \frac{\chi^n}{3}}, \quad \forall n \in \N\,. 
$$
Similarly one shows that also the sequence $(\Psi_n^{- 1})_{n \in \N}$ converges to a transformation $\Gamma_\infty$ w.r. to the norm $|\cdot |_{\frac{\sigma_0}{2}}^{{\rm Lip}(\gamma, \Omega_\infty(\gamma))}$ with the same rate of convergence. Furthermore since $\Psi_n \Psi_n^{- 1} = \Psi_n^{- 1} \Psi_n = {\rm Id}$, passing to the limit one obtains that $\Gamma_\infty = \Psi_\infty^{- 1}$. The claimed statement has then been proved. 

\noindent
{\sc Proof of $(ii)$.} The claimed statement follows by the item $(i)$ and by applying Lemmata \ref{embedding L infty}, \ref{lemma azione nostri pseudo}.
\end{proof}

\medskip

\noindent
{\bf Final normal form}

\noindent
We now show the following 
\begin{lemma}\label{proposizione coniugazione finale}
For any $\omega \in \Omega_\infty(\gamma)$ and for any $\f \in \T^\infty_{\sigma_0/3}$, the operator ${\mathcal L}_0(\f; \omega)$ defined in \eqref{def primo op rid} is congugated to the $2 \times 2$ block diagonal operator $\ii {\mathcal D}_\infty$ (see \eqref{definizione blocchi finali}, \eqref{def operatore cal D infty}), namely $(\Psi_\infty)_{\omega*}{\mathcal L}_0(\f; \omega) = \ii {\mathcal D}_\infty(\omega)$
\end{lemma}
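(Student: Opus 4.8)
The plan is to pass to the limit in the conjugation identities $(\Phi_n)_{\omega*}{\mathcal L}_{n-1} = {\mathcal L}_n$ provided by Theorem \ref{teorema di riducibilita}, just as one does in a standard KAM scheme, but carefully tracking the analyticity strips. First I would fix $\omega \in \Omega_\infty(\gamma)$. By Lemma \ref{lemma inclusione cantor} this frequency lies in every $\Omega_n(\gamma)$, so the whole sequence of operators ${\mathcal L}_n$, the block-diagonal operators ${\mathcal D}_n$, the transformations $\Phi_n = {\rm exp}({\mathcal F}_n)$ and the composed maps $\Psi_n = \Phi_1 \circ \dots \circ \Phi_n$ are all well defined at $\omega$. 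Iterating \eqref{coniugazione passo n KAM} gives, for every $n \geq 1$ and every $\f \in \T^\infty_{\sigma_n}$,
\begin{equation}\label{eq plan coniugazione finita}
(\Psi_n)_{\omega*}{\mathcal L}_0(\f) = {\mathcal L}_n(\f) = \ii {\mathcal D}_n + {\mathcal P}_n(\f)\,,
\end{equation}
where I must check that the repeated composition of push-forwards \eqref{push forward} is associative, i.e. $(\Phi_1 \circ \dots \circ \Phi_n)_{\omega*} = (\Phi_n)_{\omega*}\circ\dots\circ(\Phi_1)_{\omega*}$; this is a direct computation from the chain rule for $\omega\cdot\partial_\f$ and the fact that each $\Phi_j$ is bounded and boundedly invertible on the relevant spaces (by Lemma \ref{lemma mappa esponenziale} and the estimate \eqref{stima trasformazione cal F n - 1}). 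Since $\sigma_n \downarrow \sigma_0/3 < \sigma_0/2$ by \eqref{costanti convergenza riducibilita}, the identity \eqref{eq plan coniugazione finita} holds in particular on $\T^\infty_{\sigma_0/3}$ for every $n$.

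Next I would take $n \to \infty$ in each piece of \eqref{eq plan coniugazione finita}. By Lemma \ref{convergenza trasformazioni KAM}-$(i)$ the maps $\Psi_n^{\pm 1}$ converge to $\Psi_\infty^{\pm 1}$ in $|\cdot|_{\sigma_0/2}^{{\rm Lip}(\gamma,\Omega_\infty(\gamma))}$, hence also in $|\cdot|_{\sigma_0/3}$ by the monotonicity \eqref{prop monotonia norma vphi x}; therefore, using the algebra and composition estimates of Lemma \ref{proprieta norma sigma riducibilita vphi x}-$(i)$ together with $\Psi_\infty$ being bounded with bounded inverse, the left-hand side $\Psi_n(\f)^{-1}{\mathcal L}_0(\f)\Psi_n(\f) - \Psi_n(\f)^{-1}\omega\cdot\partial_\f\Psi_n(\f)$ converges to $\Psi_\infty(\f)^{-1}{\mathcal L}_0(\f)\Psi_\infty(\f) - \Psi_\infty(\f)^{-1}\omega\cdot\partial_\f\Psi_\infty(\f) = (\Psi_\infty)_{\omega*}{\mathcal L}_0(\f)$ in the operator norm on a slightly smaller strip (the $\omega\cdot\partial_\f$ term costs a derivative, handled by Lemma \ref{lemma om dot partial vphi} and the room between $\sigma_0/3$ and $\sigma_0/2$; this is exactly why the statement is phrased on $\sigma_0/3$). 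On the right-hand side, \eqref{stima cal P sigma n} gives $|{\mathcal P}_n|_{\sigma_n,-2}^{{\rm Lip}} \leq C_*\e\, e^{-\chi^n} \to 0$, so ${\mathcal P}_n(\f) \to 0$; and \eqref{stima D infty - Dn} gives $|{\mathcal D}_\infty - \widetilde{\mathcal D}_n|_{\sigma,-2}^{{\rm Lip}} \lesssim \e\, e^{-\chi^n}$, while on $\Omega_\infty(\gamma) \subseteq \Omega_n(\gamma)$ one has $\widetilde{\mathcal D}_n = {\mathcal D}_n$, so $\ii{\mathcal D}_n \to \ii{\mathcal D}_\infty$. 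Passing to the limit in \eqref{eq plan coniugazione finita} yields $(\Psi_\infty)_{\omega*}{\mathcal L}_0(\f;\omega) = \ii{\mathcal D}_\infty(\omega)$ for all $\f \in \T^\infty_{\sigma_0/3}$, which is the claim.

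The main obstacle — though it is more bookkeeping than conceptual — is the uniform control of the strips: each KAM step shrinks the analyticity domain from $\sigma_{n-1}$ to $\sigma_n$, and the conjugation involves $\omega\cdot\partial_\f$, which is unbounded and forces a further loss of strip at every stage. One has to verify that the telescoping loss $\sum (\sigma_{n-1}-\sigma_n)$ is summable and bounded by $\sigma_0/2$ (immediate from \eqref{costanti convergenza riducibilita}, since $\sum 1/j^2 = \pi^2/6$), that $\Psi_\infty$ and $\Psi_\infty^{-1}$ are genuinely defined and bounded on $\cH(\T^\infty_{\sigma_0/3},\cB(\cH(\T_\sigma),\cH(\T_\sigma)))$ (Lemma \ref{convergenza trasformazioni KAM}-$(ii)$), and that the limit operator $\ii{\mathcal D}_\infty$ is genuinely $2\times 2$ block-diagonal with $[{\mathcal D}_\infty,\partial_{xx}]=0$ — the latter because each ${\mathcal D}_n$ is block-diagonal in the ${\bf E}_j$ decomposition and the block-diagonal operators form a closed subspace under $|\cdot|_{\sigma,-2}$, so the limit ${\mathcal D}_\infty = {\rm diag}_{j}{\mathcal D}_\infty(j)$ inherits this structure and commutes with $\partial_{xx}$, which acts as the scalar $-j^2$ on each ${\bf E}_j$. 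A final routine check: since each ${\mathcal L}_n$ is skew self-adjoint and each $\Phi_n$ unitary (for real $\f$), the limit ${\mathcal L}_\infty = \ii{\mathcal D}_\infty$ is skew self-adjoint, hence ${\mathcal D}_\infty$ is self-adjoint, consistent with the $2\times 2$ self-adjoint blocks ${\mathcal D}_\infty(j)$.
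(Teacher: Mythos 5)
Your proposal follows essentially the same route as the paper: iterate the conjugation identities $(\Phi_n)_{\omega*}\mathcal L_{n-1}=\mathcal L_n$, pass to the limit using Lemma \ref{convergenza trasformazioni KAM} for $\Psi_n^{\pm1}$, Lemma \ref{lemma om dot partial vphi} for the Cauchy loss incurred by $\omega\cdot\partial_\f$, the estimate \eqref{stima cal P sigma n} to kill $\mathcal P_n$, and \eqref{stima D infty - Dn} together with $\widetilde{\mathcal D}_n=\mathcal D_n$ on $\Omega_\infty(\gamma)\subseteq\Omega_n(\gamma)$ to identify the limit of $\ii\mathcal D_n$. One small numerical slip: from \eqref{costanti convergenza riducibilita} the strip widths $\sigma_n$ decrease to $\sigma_0\bigl(1-\tfrac{\pi}{24}\bigr)$, which is well above $\sigma_0/2$, not to $\sigma_0/3$ as you wrote — but this is harmless, since all you actually need is $\sigma_n>\sigma_0/3$ for every $n$, which certainly holds.
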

\begin{proof}
By applying Theorem \ref{teorema di riducibilita}, by recalling the definition \eqref{def trasformazioni KAM} of the maps $\Psi_n$, $n \in \N$ and using that by Lemma \ref{lemma inclusione cantor}, $\Omega_\infty(\gamma) \subseteq \cap_{n \geq 0} \Omega_n(\gamma)$, one gets that for any $n \in \N$
\begin{equation}\label{sugo di cinghiale}
\ii {\mathcal D}_n(\omega) + {\mathcal P}_n(\f; \omega) = {\mathcal L}_n = (\Psi_n)_{\omega*}{\mathcal L}_0(\f; \omega), \quad \forall \omega \in \Omega_\infty(\gamma)\,. 
\end{equation}
By \eqref{def primo op rid}, \eqref{cal D0 cal P0 inizio KAM} and by Lemmata \ref{convergenza trasformazioni KAM}, \ref{lemma om dot partial vphi}, one has 
\begin{equation}\label{crostata di ciliegie}
|\omega \cdot \partial_\f (\Psi_\infty - \Psi_n) |_{\frac{\sigma_0}{2} - \rho}^{{\rm Lip}(\gamma, \Omega_\infty(\gamma))} \lesssim \rho^{- 1} |\Psi_\infty - \Psi_n|_{\frac{\sigma_0}{2}}^{{\rm Lip}(\gamma, \Omega_\infty(\gamma))} \to 0 \quad \text{as} \quad n \to \infty, \quad \text{and } \quad |{\mathcal L}_0|_{\sigma_0, - 2}^{{\rm Lip}(\gamma, \Omega_\infty(\gamma))} \lesssim 1
\end{equation}
for $\rho > 0$ so that $\frac{\sigma_0}{2} - \rho > 0$. Therefore, by recalling the definition \eqref{push forward}, by the estimates \eqref{crostata di ciliegie} and by applying Lemma \ref{proprieta norma sigma riducibilita vphi x}-$(i)$, one gets that 
\begin{equation}\label{convergenza fine a}
\lim_{n \to \infty} |(\Psi_n)_{\omega*}{\mathcal L}_0 - (\Psi_\infty)_{\omega*}{\mathcal L}_0|_{\frac{\sigma_0}{3}}^{{\rm Lip}(\gamma, \Omega_\infty(\gamma))} = 0\,. 
\end{equation}
By the estimates \eqref{stima cal P sigma n}, \eqref{stima D infty - Dn}, \eqref{convergenza fine a} and passing to the limit in \eqref{sugo di cinghiale} one obtains the claimed statement. 
\end{proof}
\section{Measure estimates}\label{sezione stime di misure}
It remains only to estimate the measure of the set $\Omega_\infty(\gamma)$, defined in \eqref{definizione Omega infinito}.In order to do this, let us start with some preliminary considerations. For any $j \in \N_0$, the $2 \times 2$ block ${\mathcal D}_\infty(j; \omega)$, $\omega \in \Dc$ is self-adjoint and depends in a Lipschitz way on the parameter $\omega$. By \eqref{definizione blocchi finali}, \eqref{stima blocco parziale blocco finale} and by recalling \eqref{cal D0 cal P0 inizio KAM}, \eqref{blocco iniziale riducibilita}, for any $j \in \N$, we can write that 
\begin{equation}\label{prima espansione D infty j}
{\mathcal D}_\infty(j) = \lambda_2 j^2 {\rm Id} + R_\infty(j; \omega)
\end{equation}
where the self-adjoint $2 \times 2$ block $R_\infty(j; \omega)$ satisfies the estimate 
\begin{equation}\label{stima resto D infty j}
\begin{aligned}
&\sup_{\omega \in \mathtt D_\gamma} \| R_\infty(j; \omega) \|_\HS \lesssim \e \langle j \rangle\,, \quad  \| R_\infty(j) \|^{\rm lip}_\HS \lesssim \e \gamma^{- 1}\,.
\end{aligned}
\end{equation}
By applying Lemma \ref{risultato astratto operatori autoaggiunti}, one then obtains that for any $j \in\N$, 
$$
{\rm spec}({\mathcal D}_\infty(j; \omega)) = \{ \mu_j^{(+)}(\omega), \mu_j^{(-)}(\omega)\}, \quad {\rm spec}(R_\infty(j; \omega)) = \{ r_j^{(+)}(\omega), r_j^{(-)}(\omega)\}
$$
where $\mu_j^{(\pm)}$ and $r_j^{(\pm)}$ depend in a Lipschitz way on the parameter $\omega \in \mathtt D_\gamma$ and they satisfy 
\begin{equation}\label{proprieta autovalori D infty j}
\begin{aligned}
& \mu_j^{(\pm)}(\omega) = \lambda_2 j^2 + r_j^{(\pm)}(\omega)\,, \\
& |\lambda_2 - 1| \lesssim \e\,, \quad \sup_{\omega \in \mathtt D_\gamma} |r_j^{(\pm)}(\omega)| \lesssim \e \langle j \rangle, \quad |r_j^{(\pm)}|^{\rm lip} \lesssim \e \gamma^{- 1}\,. 
\end{aligned}
\end{equation}
If $j = 0$ one has $|\mu_0|^{{\rm Lip}(\gamma, \mathtt D_\gamma)} \lesssim \e$. For compactness of notations we set $\mu_0^{(+)} = \mu_0^{(-)} = \mu_0$. 
By applying Lemmata \ref{properties operators matrices} and \ref{risultato astratto operatori autoaggiunti}-$(ii)$ one then obtains that the set $\Omega_\infty(\gamma)$ can be written as 
\begin{equation}\label{espressione Omega infty autovalori}
\begin{aligned}
\Omega_\infty(\gamma) & = \Big\{ \omega \in \mathtt D_\gamma  : |\omega \cdot \ell + \mu_j^{(\sigma)} - \mu_{j'}^{(\sigma')}| \geq \frac{2 \gamma}{{\divisor}(\ell)}, \quad \forall (\ell, j, j') \in \Z^\infty_* \times \N_0 \times \N_0, \quad j \neq j', \quad \sigma, \sigma' \in \{ +, - \} \\
& |\omega \cdot \ell + \mu_j^{(\sigma)} - \mu_{j}^{(\sigma')}| \geq \frac{2 \gamma}{{\divisor}(\ell) \langle j \rangle^2}, \quad \forall (\ell, j) \in (\Z^\infty_* \setminus\{ 0 \}) \times \N_0 , \quad \sigma, \sigma' \in \{ +, - \} \Big\}\,,
\end{aligned}
\end{equation}
where we recall 
\[
{\divisor}(\ell) := \prod_{n\in \N}(1+|\ell_n|^{4} \jap{n}^{4}), \quad \forall \ell \in \Z^\infty_*\,. 
\]
In the remaining part of this section we prove the following Proposition. 
\begin{proposition}\label{proposizione stima misura}
Assume that $\mu> 3$. For $\e \gamma^{- 1}$ and $\gamma$ small enough one has that ${\mathbb P}\Big(\Ro \setminus \Omega_\infty(\gamma) \Big) \lesssim \gamma$. 
\end{proposition}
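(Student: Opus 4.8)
The goal is to estimate the measure of the complement $\Ro \setminus \Omega_\infty(\gamma)$, where $\Omega_\infty(\gamma)$ is the second-order Melnikov set written out in \eqref{espressione Omega infty autovalori}. Accordingly, first I would write
\[
\Ro \setminus \Omega_\infty(\gamma) = \bigcup_{(\ell, j, j')} R_{\ell j j'}^{(\sigma \sigma')}
\]
where the "resonant" sets are, for $j \neq j'$,
\[
R_{\ell j j'}^{(\sigma \sigma')} := \Big\{ \omega \in \mathtt D_\gamma : |\omega \cdot \ell + \mu_j^{(\sigma)}(\omega) - \mu_{j'}^{(\sigma')}(\omega)| < \tfrac{2\gamma}{{\divisor}(\ell)} \Big\}
\]
and analogously with the extra $\langle j\rangle^{-2}$ for the diagonal-in-space ones with $\ell \neq 0$. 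The plan is the standard KAM route: (i) bound the measure of each individual $R_{\ell j j'}^{(\sigma\sigma')}$ by a quantity like $C\gamma \, {\divisor}(\ell)^{-1} \langle j - j'\rangle^{-1}$ (resp.\ $C\gamma\,{\divisor}(\ell)^{-1}\langle j\rangle^{-1}$) by slicing along a well-chosen direction in parameter space; (ii) first discard the region where a large "elliptic'' obstruction forces the set to be empty, reducing to finitely many relevant $(j,j')$ for each $\ell$; (iii) sum over all $(\ell, j, j')$ using the fast decay of $1/{\divisor}(\ell)$ and Lemma \ref{misura} for the $\ell=0$ part, to conclude $\leq C\gamma$.

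\textbf{Key steps.} For step (i), fix $(\ell, j, j')$ with, say, $\ell \neq 0$. I would choose a component index $\bar n$ with $\ell_{\bar n} \neq 0$ and consider the one-parameter family $t \mapsto \omega + t e_{\bar n}$; by \eqref{proprieta autovalori D infty j}, the eigenvalues $\mu_j^{(\sigma)}$ are Lipschitz in $\omega$ with Lipschitz constant $\lesssim \e\gamma^{-1}\langle j\rangle$, while the derivative of $\omega\cdot\ell$ along $e_{\bar n}$ is exactly $\ell_{\bar n}$, of size $\geq 1$. Hence the function $\phi(t) := (\omega + te_{\bar n})\cdot\ell + \mu_j^{(\sigma)} - \mu_{j'}^{(\sigma')}$ satisfies $|\phi(t_1) - \phi(t_2)| \geq (|\ell_{\bar n}| - C\e\gamma^{-1}(\langle j\rangle + \langle j'\rangle)|\ell|_\zia)|t_1 - t_2|$; provided $\e\gamma^{-1}$ is small enough \emph{and $j,j'$ are not too large compared with $|\ell|_\zia$}, this lower bound is $\gtrsim 1$, so the sublevel set in $t$ has measure $\lesssim \gamma/{\divisor}(\ell)$, and Fubini gives the same bound for $R_{\ell j j'}^{(\sigma\sigma')}$. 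The point of step (ii) is precisely to handle the complementary range: since $\mu_j^{(\sigma)} = \lambda_2 j^2 + O(\e\langle j\rangle)$ with $\lambda_2 = 1 + O(\e)$, for $j \neq j'$ we have $|\mu_j^{(\sigma)} - \mu_{j'}^{(\sigma')}| \gtrsim |j^2 - j'^2| \gtrsim |j - j'|(j + j')$, so if $|j - j'|(j+j')$ exceeds $2|\ell|_\zia\|\omega\|_\infty + 1 \geq 2|\omega\cdot\ell|$ the quantity $|\omega\cdot\ell + \mu_j^{(\sigma)} - \mu_{j'}^{(\sigma')}|$ is already $\gtrsim |j-j'|(j+j') \geq 2\gamma/{\divisor}(\ell)$ and $R_{\ell j j'}^{(\sigma\sigma')}$ is empty. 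Thus only $|j - j'| \lesssim |\ell|_\zia$ and $\min(j,j') \lesssim |\ell|_\zia$ survive, i.e.\ at most $\lesssim |\ell|_\zia^2$ relevant pairs $(j,j')$ for each $\ell$ (for the diagonal-in-space terms with $\ell\neq 0$ one uses the refined divisor $2\gamma/({\divisor}(\ell)\langle j\rangle^2)$ and the fact that $|\mu_j^{(+)} - \mu_j^{(-)}| = |r_j^{(+)} - r_j^{(-)}| \lesssim \e\langle j\rangle$, so there the slicing estimate along $e_{\bar n}$ directly gives measure $\lesssim \gamma/({\divisor}(\ell)\langle j\rangle^2)$ with no emptiness reduction needed, but still only finitely many $j$ per $\ell$ after using $j^2 \lesssim \langle j\rangle^2$ against $|\ell|_\zia$... more precisely one keeps all $j$ and sums $\sum_j \langle j\rangle^{-2} < \infty$).

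\textbf{Summation and the obstacle.} Combining, the total bad measure is
\[
{\mathbb P}\big(\Ro\setminus\Omega_\infty(\gamma)\big) \;\lesssim\; {\mathbb P}(\Ro\setminus\mathtt D_\gamma) \;+\; \sum_{\ell \in \Z^\infty_*\setminus\{0\}} \frac{\gamma}{{\divisor}(\ell)} \cdot \#\{\text{relevant } (j,j',\sigma,\sigma')\} \;+\; \sum_{\ell\neq 0}\frac{\gamma}{{\divisor}(\ell)}\sum_{j}\frac{1}{\langle j\rangle^2}\,,
\]
and the first term is $\lesssim\gamma$ by Lemma \ref{misura}. For the series over $\ell$, I would use that $\#\{(j,j')\} \lesssim |\ell|_\zia^2$ together with the key summability fact
\[
\sum_{\ell\in\Z^\infty_*} \frac{(1+|\ell|_\zia)^{2}}{{\divisor}(\ell)} \;=\; \sum_{\ell\in\Z^\infty_*}\frac{(1+|\ell|_\zia)^2}{\prod_{n}(1+|\ell_n|^4\langle n\rangle^4)} \;<\;\infty\,,
\]
which holds because $|\ell|_\zia = \sum_n \langle n\rangle^\zia |\ell_n|$ with $\zia$ (here $\zia = \eta$, and recall we are taking $\mu = 2$ so ${\divisor}$ has the fourth powers) is dominated by the product in the denominator — this is exactly the quantitative strengthening of \eqref{zuzzu} discussed in the introduction, and it is where the hypothesis $\mu > 3$ in the Proposition is used to make the polynomial weight $(1+|\ell|_\zia)^2$ harmless against $\prod_n(1+|\ell_n|^\mu\langle n\rangle^\mu)$. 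I expect the main obstacle to be precisely this last infinite-product summation: one must show that a product over infinitely many coordinates of $(1+|\ell_n|^\mu\langle n\rangle^\mu)^{-1}$, multiplied by a polynomial in $|\ell|_\zia$, is summable over the (countable, finitely-supported) lattice $\Z^\infty_*$, uniformly controlling how the bound degrades — this requires splitting the sum according to the support of $\ell$, estimating $\sum_{\ell_n\in\Z}(1+|\ell_n|^\mu\langle n\rangle^\mu)^{-1} \leq 1 + C\langle n\rangle^{-\mu}$, and taking the product $\prod_n(1+C\langle n\rangle^{-\mu}) < \infty$ when $\mu > 1$, then absorbing the polynomial factor $(1+|\ell|_\zia)^2$ by trading a bit of the exponent, which needs $\mu$ strictly bigger (hence $\mu>3$). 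The rest is bookkeeping. This should be the content of the lemmata in Appendix \ref{appendiceB}, which I would invoke rather than redo.
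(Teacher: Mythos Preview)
Your overall strategy matches the paper's exactly: decompose into resonant sets, prove each has measure $\lesssim \gamma/{\divisor}(\ell)$ (resp.\ $\gamma/({\divisor}(\ell)\langle j\rangle^2)$) via a one-dimensional slicing, use the asymptotic $\mu_j^{(\sigma)} = \lambda_2 j^2 + O(\e\langle j\rangle)$ to obtain the emptiness constraint $|j^2 - j'^2| \lesssim \|\ell\|_1$, and sum using the lemma $\sum_\ell \|\ell\|_1^2/{\divisor}(\ell) < \infty$ from Appendix~\ref{appendiceB}.

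There is however one genuine slip that, as written, breaks your slicing step. You claim that $\mu_j^{(\sigma)}$ has Lipschitz constant $\lesssim \e\gamma^{-1}\langle j\rangle$ in $\omega$, and from this you get
\[
|\phi(t_1)-\phi(t_2)| \;\geq\; \big(|\ell_{\bar n}| - C\e\gamma^{-1}(\langle j\rangle+\langle j'\rangle)\big)\,|t_1-t_2|\,,
\]
which would only be $\gtrsim |t_1-t_2|$ when $j,j'$ are bounded in terms of $(\e\gamma^{-1})^{-1}$, \emph{not} in terms of $|\ell|_\zia$. Since the emptiness reduction only gives $j,j' \lesssim |\ell|_1$, your argument as stated fails for large $\ell$. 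The fix is that you have misread \eqref{proprieta autovalori D infty j}: the constant $\lambda_2$ is $\omega$-independent, and the remainder satisfies $|r_j^{(\pm)}|^{\rm lip} \lesssim \e\gamma^{-1}$ \emph{uniformly in $j$} (the factor $\langle j\rangle$ appears only in the sup bound, not the Lipschitz seminorm). With this correct bound the slicing gives
\[
|\phi(t_1)-\phi(t_2)| \;\geq\; \big(|\ell_{\bar n}| - C\e\gamma^{-1}\big)\,|t_1-t_2| \;\geq\; \tfrac12|t_1-t_2|
\]
for $\e\gamma^{-1}$ small, with no restriction on $j,j'$ or $\ell$ --- exactly as the paper does in Lemma~\ref{misura singolo}. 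Once this is corrected, your proof is the paper's proof.
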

We  note that
\begin{equation}\label{pappa col pomodoro}
{\mathbb P}\Big(\Ro \setminus \Omega_\infty(\gamma)\Big)\leq {\mathbb P} \Big(\Ro \setminus \Dc \Big) + {\mathbb P}\Big(\Dc  \setminus \Omega_\infty(\gamma)\Big)\,.
\end{equation}
In \cite{BMP1:2018}, it is proved that 
\begin{equation}\label{pappa col pomodoro 2}
{\mathbb P}\Big(\Ro \setminus \Dc \Big)\lesssim \gamma\,,
\end{equation}
therefore, we need to estimate the set $\Dc \setminus \Omega_\infty(\gamma)$. 
In order to shorten notations, we define 
\begin{equation}\label{insiemi di indici cal Z1 Z2}
\begin{aligned}
{\mathcal Z}_1 := \Big\{ (\ell, j, j' ) \in \Z^\infty_* \times \N_0 \times \N_0 : j \neq j' \Big\}, \quad {\mathcal Z}_2 := (\Z^\infty_* \setminus \{ 0 \}) \times \N_0\,. 
\end{aligned}
\end{equation}
One has that 
\begin{equation}\label{espressione complementare}
\Dc \setminus \Omega_\infty(\gamma) = \Big(\bigcup_{(\ell, j, j') \in {\mathcal Z}_1} {\mathcal R}_{\ell j j'}(\gamma) \Big) \bigcup\Big( \bigcup_{(\ell, j) \in {\mathcal Z}_2} {\mathcal Q}_{\ell j}(\gamma) \Big)
\end{equation}
where for any $(\ell, j, j') \in {\mathcal Z}_1$, we define 
\begin{equation}\label{def cal R gamma}
{\mathcal R}_{\ell j j'}(\gamma) := \bigcup_{\sigma, \sigma' \in \{+, - \}} \Big\{ \omega \in \Dc : |\omega \cdot \ell + \mu_j^{(\sigma)} - \mu_{j'}^{(\sigma')}| < \frac{2 \gamma}{{\divisor}(\ell)} \Big\}
\end{equation}
and for any $(\ell, j) \in {\mathcal Z}_2$, we define 
\begin{equation}\label{def cal Q gamma}
{\mathcal Q}_{\ell j}(\gamma) := \bigcup_{\sigma, \sigma' \in \{ +, - \}} \Big\{ \omega \in \Dc : |\omega \cdot \ell + \mu_j^{(\sigma)} - \mu_{j}^{(\sigma')}| < \frac{2 \gamma}{{\divisor}(\ell) \langle j \rangle^2} \Big\}\,. 
\end{equation}
\begin{lemma}\label{misura singolo}
$(i)$ Let $(\ell, j, j') \in {\mathcal Z}_1$. If ${\mathcal R}_{\ell j j'}(\gamma) \neq \emptyset$, then $|j^2 - j'^2| \leq C |\ell|_1$ and ${\mathbb P}({\mathcal R}_{\ell j j'}(\gamma)) \lesssim \frac{\gamma}{{\divisor}(\ell)}$. 

\noindent
$(ii)$ Let $(\ell, j) \in {\mathcal Z}_2$. If ${\mathcal Q}_{\ell j}(\gamma) \neq \emptyset$, then ${\mathbb P}({\mathcal Q}_{\ell j}(\gamma)) \lesssim \frac{\gamma}{\langle j \rangle^2 {\divisor}(\ell)}$. 
\end{lemma}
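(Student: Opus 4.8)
\textbf{Proof plan for Lemma \ref{misura singolo}.}

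The plan is to treat each resonant set as a sublevel set of an affine function of $\omega$ in a fixed direction, so that the measure estimate reduces to a one-dimensional argument. For part $(i)$, fix $(\ell,j,j')\in{\mathcal Z}_1$ and $\sigma,\sigma'\in\{+,-\}$, and set $\phi(\omega):=\omega\cdot\ell+\mu_j^{(\sigma)}(\omega)-\mu_{j'}^{(\sigma')}(\omega)$. First I would establish the nonemptiness constraint: if $\phi(\omega)$ is small for some $\omega\in\Dc$, then using $|\omega\cdot\ell|\le 2|\ell|_1$ together with $\mu_j^{(\sigma)}=\lambda_2 j^2+r_j^{(\sigma)}$, $|\lambda_2-1|\lesssim\e$ and $\sup|r_j^{(\pm)}|\lesssim\e\langle j\rangle$ from \eqref{proprieta autovalori D infty j}, one gets $|\lambda_2||j^2-j'^2|\le 2|\ell|_1 + C\e(\langle j\rangle+\langle j'\rangle) + \tfrac{2\gamma}{{\divisor}(\ell)}$; since $\lambda_2$ is bounded below and (if $j\ne j'$) $\langle j\rangle+\langle j'\rangle\lesssim |j^2-j'^2|$ (here I'd use $j\neq j'$ so $|j^2-j'^2|\ge j+j'$, absorbing the $\e$-term into the left side for $\e$ small), this yields $|j^2-j'^2|\le C|\ell|_1$ as claimed. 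Next, for the measure: I would differentiate $\phi$ along the direction $\hat\jmath:=\ell/|\ell|_1\in[-1,1]^\N$, i.e. consider $t\mapsto\phi(\omega+t\hat\jmath)$. The term $\omega\cdot\ell$ contributes derivative $|\ell|_1\ge 1$, while by \eqref{proprieta autovalori D infty j} the eigenvalues $r_j^{(\pm)}$ are Lipschitz with $|r_j^{(\pm)}|^{\rm lip}\lesssim\e\gamma^{-1}$, so the variation of $\mu_j^{(\sigma)}-\mu_{j'}^{(\sigma')}$ along this segment is controlled by $C\e\gamma^{-1}$. Hence $\phi(\omega+t\hat\jmath)$ is (Lipschitz and) monotone in $t$ with incremental ratio $\ge |\ell|_1 - C\e\gamma^{-1}\ge \tfrac12$ once $\e\gamma^{-1}$ is small enough; therefore the set of $t$ for which $|\phi|<\tfrac{2\gamma}{{\divisor}(\ell)}$ has length $\lesssim\tfrac{\gamma}{{\divisor}(\ell)}$. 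Fubini's theorem over the complementary coordinates then gives ${\mathbb P}\big(\{|\phi|<2\gamma/{\divisor}(\ell)\}\big)\lesssim\tfrac{\gamma}{{\divisor}(\ell)}$, and summing over the four sign choices $\sigma,\sigma'$ gives ${\mathbb P}({\mathcal R}_{\ell j j'}(\gamma))\lesssim\tfrac{\gamma}{{\divisor}(\ell)}$.

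For part $(ii)$, fix $(\ell,j)\in{\mathcal Z}_2$, so $\ell\neq 0$ and we compare $\mu_j^{(\sigma)}$ with $\mu_j^{(\sigma')}$ at the \emph{same} index $j$. The argument is identical in structure: set $\psi(\omega):=\omega\cdot\ell+\mu_j^{(\sigma)}(\omega)-\mu_j^{(\sigma')}(\omega)$; the $\lambda_2 j^2$ terms cancel, so $\psi(\omega)=\omega\cdot\ell+r_j^{(\sigma)}(\omega)-r_j^{(\sigma')}(\omega)$, and since $\ell\neq 0$ we have $|\ell|_1\ge 1$. Differentiating along $\hat\jmath=\ell/|\ell|_1$ and using again $|r_j^{(\pm)}|^{\rm lip}\lesssim\e\gamma^{-1}$, we get an incremental ratio $\ge\tfrac12$ for $\e\gamma^{-1}$ small, so for fixed complementary coordinates the $t$-set where $|\psi|<\tfrac{2\gamma}{{\divisor}(\ell)\langle j\rangle^2}$ has length $\lesssim\tfrac{\gamma}{{\divisor}(\ell)\langle j\rangle^2}$; Fubini and summation over $\sigma,\sigma'$ conclude ${\mathbb P}({\mathcal Q}_{\ell j}(\gamma))\lesssim\tfrac{\gamma}{{\divisor}(\ell)\langle j\rangle^2}$.

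The main obstacle is the same one that pervades infinite-dimensional KAM measure estimates: making the ``differentiate in the $\ell$-direction and apply Fubini'' argument rigorous when $\omega$ ranges over an infinite-dimensional cube. Since $\ell\in\Z^\infty_*$ has finite support, only finitely many coordinates of $\omega$ are actually involved in $\omega\cdot\ell$, and one should be careful that the eigenvalue functions $\mu_j^{(\pm)}(\omega)$ — though defined on all of $\Dc$ via the Kirszbraun extension — still have their Lipschitz constant controlled by $\e\gamma^{-1}$ \emph{uniformly}, which is exactly what \eqref{proprieta autovalori D infty j} provides. A secondary subtlety is that $\mu_j^{(\pm)}$ are only Lipschitz, not $C^1$, so ``monotonicity with derivative bounded below'' must be phrased as a two-sided Lipschitz/anti-Lipschitz estimate on $t\mapsto\psi(\omega+t\hat\jmath)$ (the affine part has slope $|\ell|_1$, the remainder has Lipschitz constant $C\e\gamma^{-1}<|\ell|_1/2$), which suffices to bound the measure of a sublevel set of a monotone function. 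I expect the bookkeeping of these Lipschitz constants, rather than any conceptual difficulty, to be where the care is needed; the rest is standard.
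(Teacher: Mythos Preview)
Your proof is correct and follows essentially the same strategy as the paper: bound the eigenvalue difference from below to get the constraint $|j^2-j'^2|\lesssim|\ell|_1$, then slice in a direction where $\omega\cdot\ell$ has derivative $\ge 1$ and use the Lipschitz bound $|r_j^{(\pm)}|^{\rm lip}\lesssim\e\gamma^{-1}$ to control the perturbation. The only difference is the choice of slicing direction: the paper moves along the single coordinate axis ${\bf e}^{(s)}$ with $s:=\min\{n:\ell_n\neq 0\}$ (so the linear part has slope $|\ell_s|\ge 1$ and Fubini for the product measure is immediate), whereas you move along $\hat\jmath=\ell/|\ell|_1$ (slope $|\ell|_1\ge 1$, but a finite-dimensional change of variables on the support of $\ell$ is then implicit before invoking Fubini). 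Both choices yield an incremental ratio $\ge\tfrac12$ for $\e\gamma^{-1}$ small and hence the same bound; the paper's coordinate-direction version is marginally cleaner because no Jacobian enters.
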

\begin{proof}
We prove item $(i)$. The proof of the item $(ii)$ can be done arguing in a similar fashion. Let $j, j' \in \N_0$, $j \neq j'$ and $\sigma, \sigma' \in \{ +, - \}$. By \eqref{proprieta autovalori D infty j} one has that for some constant $C > 0$,
$$
|\mu_j^{(\sigma)} - \mu_{j'}^{(\sigma')}| \geq |\lambda_2| |j^2 - j'^2| - C \e (j + j') - C \e\,.
$$
Using that $\lambda_2 = 1 + O(\e)$ and that $|j + j'| \leq |j^2 - j'^2|$ one obtains that for $\e$ small enough 
\begin{equation}\label{mu j sigma mu j' sigma'}
|\mu_j^{(\sigma)} - \mu_{j'}^{(\sigma')}| \geq \frac12 |j^2 - j'^2|
\end{equation}
implying that ${\mathcal R}_{0 j j'}(\gamma) = \emptyset$ for any $j \neq j'$. Hence if $(\ell, j, j') \in {\mathcal Z}_1$ and ${\mathcal R}_{\ell j j'}(\gamma) \neq \emptyset$ one has that $\ell \neq 0$. Furthermore if $\omega \in {\mathcal R}_{\ell j j'}(\gamma) \neq \emptyset$ one has that by using \eqref{mu j sigma mu j' sigma'}, one obtains that 
\begin{equation}\label{mu j sigma mu j' sigma'2}
\frac12 |j^2 - j'^2| \leq |\mu_j^{(\sigma)} - \mu_{j'}^{(\sigma')}| \leq \frac{2 \gamma}{{\divisor}(\ell)} + |\omega \cdot \ell| \lesssim 1 + \| \omega\|_\infty \| \ell \|_1 \lesssim 1 + \|\ell \|_1\,. 
\end{equation}
Now let 
$$
s := {\rm min}\{ n \in \N : \ell_n \neq 0\}, \quad S := {\rm max}\{ n \in \N : \ell_n \neq 0 \}\,. 
$$
and ${\bf e}^{(s)} = ({\bf e}^{(s)}_n)_{n \in \N}$ the vector whose $n$-th component is $0$ if $n \neq s$ and $1$ if $n = s$. Similarly we define the vector ${\bf e}^{(S)}$. Let 
$$
\psi(t) := (\omega + t {\bf e}^{(s)})\cdot \ell + \mu_j^{\sigma}(\omega + t {\bf e}^{(s)}) - \mu_{j'}^{(\sigma')}(\omega + t {\bf e}^{(s)})\,.
$$
By using the estimate \eqref{proprieta autovalori D infty j}, for $\e \gamma^{- 1}$ small enough, one has that 
$$
|\psi(t_1) - \psi(t_2)| \geq |t_1 - t_2||\ell_s| - C \e \gamma^{- 1 } |t_1 - t_2| \geq \frac12 |t_1 - t_2|\,.
$$
The latter estimate implies that 
$$
\Big| \Big\{ t: \omega + t {\bf e}^{(s)} \in {\mathcal R}_{\ell j j'}(\gamma), \quad |\psi(t)| < \frac{2 \gamma}{{\divisor}(\ell)} \Big\} \Big| \lesssim \frac{\gamma}{{\divisor}(\ell)}\,.
$$
Since ${\mathcal R}_{\ell j j'}(\gamma)$ is a cylinder with at most $S - s$ components, one obtains the desired bound. 

\end{proof}
{\sc Proof of Proposition \ref{proposizione stima misura}.} By recalling \eqref{espressione complementare} and by applying  Lemma \ref{misura singolo}, one gets the estimate 
$$
\begin{aligned}
{\mathbb P}\big(\Dc \setminus \Omega_\infty(\gamma) \big) &\lesssim \sum_{\begin{subarray}{c}
(\ell, j , j') \in {\mathcal Z}_1 \\
|j^2 - j'^2| \leq \|\ell \|_1
\end{subarray}} \frac{\gamma}{{\divisor}(\ell)} + \sum_{\begin{subarray}{c}
(\ell, j) \in {\mathcal Z}_2 
\end{subarray}} \frac{\gamma}{\langle j \rangle^2 {\divisor}(\ell)} \\
& \lesssim \gamma \Big(\sum_{\ell \in \Z^\infty_*} \frac{\| \ell \|_1^2}{{\divisor}(\ell)} + \sum_{\ell \in \Z^\infty_*} \frac{1}{{\divisor}(\ell)} \sum_{j \in \N_0} \frac{1}{\langle j \rangle^2} \Big) \stackrel{Lemma \ref{stima serie piccoli divisori}}{\lesssim} \gamma\,. 
\end{aligned}
$$
The claimed statement then follows by recalling \eqref{pappa col pomodoro}, \eqref{pappa col pomodoro 2}. 

\section{Proof of Theorem \ref{teorema principale} and Corollary \ref{corollario sobolev}}\label{sezione dim teo finali}
Let $\gamma := \e^{a}$, $a \in (0, 1)$. Then the smallness condition $\e \gamma^{- 1} \leq \delta$ is fullfilled by taking $\e \in (0, \e_0)$ with $\e_0$ small enough. By setting $\Omega_\e := \Omega_\infty(\gamma)$, the Proposition \ref{proposizione stima misura} implies \eqref{stima misura main theorem}. For any $\omega \in \Omega_\e$, we define 
\begin{equation}
{\mathcal W}_\infty(\f) := \Phi^{(1)}(\f) \circ \Phi^{(2)} \circ \ldots \circ \Phi^{(7)}(\f) \circ \Psi_\infty(\f) \quad \f \in \T^\infty_{\overline \sigma/ 4}
\end{equation}
where the maps $\Phi^{(1)}, \ldots, \Phi^{(7)}$ are constructed in Section \ref{sezione riduzione ordine} and the map $\Psi_\infty$ is given in Lemma \ref{convergenza trasformazioni KAM}. The properties $(1)$ and $(2)$ on the maps ${\mathcal W}_\infty(\f)^{\pm 1}$ stated in Theorem \ref{teorema principale} are easily deduced from Lemmata \ref{lemma stime step 1}, \ref{lemma stime step 2}, \ref{lemma stime step 3}, \ref{lemma stime step 4}, \ref{lemma stime step 5}, \ref{lemma stime step 5a}, \ref{lemma stime step 7}, \ref{convergenza trasformazioni KAM}-$(ii)$ and from remark \ref{remark riparametrizzazione tempo}. Furthermore, by the same  Lemmata and  \ref{proposizione coniugazione finale} one obtains that $u(t, x)$ is a solution of \eqref{main equation} if and only if $v(\cdot, t) := {\mathcal W}_\infty(\omega t)^{- 1} u(\cdot, t)$, $\omega \in \Omega_\e$ solves the time independent equation $\partial_t v = \ii {\mathcal D}_\infty v$ where ${\mathcal D}_\infty$ is the $2 \times 2$ time independent self-adjoint block-diagonal operator defined in \eqref{definizione blocchi finali}-\eqref{def operatore cal D infty}. The proof of Theorem \ref{teorema principale} is then concluded. 

\noindent
{\sc Proof of Corollary \ref{corollario sobolev}.} Since ${\mathcal D}_\infty$ is a $2 \times 2$ block diagonal self-adjoint operator, the general solution of the equation $\partial_t v = \ii {\mathcal D}_\infty v$ can be written as 
$$
v(x, t) = \sum_{j \in \N_0} e^{\ii t \Pi_j {\mathcal D}_\infty \Pi_j }[\Pi_j v_0]\,. 
$$
Since $\Pi_j {\mathcal D}_\infty \Pi_j : {\bf E}_j \to {\bf E}_j$ is self-adjoint (recall \eqref{bf E alpha}), one has that $\big\| e^{\ii t \Pi_j {\mathcal D}_\infty \Pi_j }[\Pi_j v_0] \big\|_{L^2} = \| \Pi_j v_0\|_{L^2}$ for any $j \in \N_0$. This implies that both analytic and Sobolev norms are preserved, namely for any $\sigma > 0$, $\| v(\cdot , t )\|_\sigma = \| v_0\|_\sigma$ and for any $s \geq 0$, $\| v( \cdot, t) \|_{H^s} = \| v_0\|_{H^s}$. Hence, by using the properties $(1)$ and $(2)$ stated in Theorem \ref{teorema principale}, one obtains that for any $\omega \in \Omega_\e$, the solution $u(\cdot, t) := {\mathcal W}_\infty(\omega t) v( \cdot, t)$ of \eqref{main equation} satisfies the desired bounds both in analytic and Sobolev norms. The proof of the Corollary is therefore concluded. 

\appendix
\section{Holomorphic functions on the infinite dimensional torus}\label{appendice funzioni olomorfe}

We start by proving that, just as in the finite dimensional case, $\cH(\T^\infty_{\s},X)$ is a space of holomorphic  functions in the following sense.
\\
Endow the thickened torus $\T^\infty_{\s}$ with any topology  such that the restriction to a finite dimensional subtorus is a metric, i.e. any topology which is finer that the product topology. Denote by ${\mathcal B}^\sigma(X)$, the space of the bounded, continuous functions $u : \T^\infty_\sigma \to X$ equipped with the sup norm $\| \cdot \|_{{\mathcal B}^\sigma(X)}$. 
	For $N \in \N$, define the space ${\mathcal H}(\T_\sigma^N,X)$ as the space of holomorphic functions from the $N$-dimensional torus $T_\sigma^N : = \prod_{i = 1 }^N\T_{\sigma \langle i \rangle^\zia}$ with values in $X$.
	 Finally let $\widetilde{\mathcal H}(\T^\infty_\sigma, X)$ be the closure of $\cup_{N \in \N} {\mathcal H}^\sigma_N(X)$ in ${\mathcal B}^\sigma(X)$ w.r. to $\| \cdot \|_{{\mathcal B}^\sigma(X)}$.  

\begin{proposition}
	For all $\s,\rho>0$ one has ${\mathcal H}(\T^\infty_\sigma, X)\subseteq \widetilde{\mathcal H}(\T^\infty_\sigma, X) \subseteq  {\mathcal H}(\T^\infty_{\sigma+\rho}, X)$ with the bounds
	\[
	\| u \|_{\widetilde{\mathcal H}(\T^\infty_\sigma, X)} \le \| u \|_{\sigma} \lesssim {\rm exp}\Big(\frac{\tau}{\rho^{\frac{1}{\zia}}} \ln\Big(\frac{\tau}{\rho} \Big) \Big) \| u \|_{\widetilde{\mathcal H}(\T^\infty_{\s+\rho}, X)}
	\]
\end{proposition}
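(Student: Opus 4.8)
The plan is to prove the two inclusions separately. The first inclusion ${\mathcal H}(\T^\infty_\sigma, X)\subseteq \widetilde{\mathcal H}(\T^\infty_\sigma, X)$ together with the bound $\| u \|_{\widetilde{\mathcal H}(\T^\infty_\sigma, X)} \le \| u \|_{\sigma}$ is the easier direction. Given $u \in {\mathcal H}(\T^\infty_\sigma, X)$ with Fourier expansion $u(\f) = \sum_{\ell \in \Z^\infty_*} \widehat u(\ell) e^{\ii \ell \cdot \f}$, I would approximate $u$ by its truncations $u_N := \Pi_N u$ (the projectors of Lemma \ref{lemma astratto proiettore}), each of which depends on only finitely many angles and is holomorphic on the corresponding finite-dimensional torus, hence lies in $\bigcup_{M} {\mathcal H}(\T^M_\sigma, X)$. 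By Lemma \ref{lemma astratto proiettore} one has $\| u - u_N\|_\sigma \le \| \Pi_N^\bot u\|_\sigma \to 0$, and by Lemma \ref{embedding L infty}, $\| u - u_N\|_{L^\infty(\T^\infty_\sigma,X)} \le \|u-u_N\|_\sigma \to 0$; this shows $u$ lies in the ${\mathcal B}^\sigma(X)$-closure of the finite-dimensional holomorphic functions, i.e. $u \in \widetilde{\mathcal H}(\T^\infty_\sigma,X)$, and the norm bound is just Lemma \ref{embedding L infty} again.

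For the second inclusion $\widetilde{\mathcal H}(\T^\infty_\sigma, X) \subseteq {\mathcal H}(\T^\infty_{\sigma+\rho}, X)$, I would first handle finite-dimensional holomorphic functions and then pass to the limit. For $v \in {\mathcal H}(\T^N_\sigma, X)$, the Fourier coefficients $\widehat v(\ell)$ (nonzero only for $\ell$ supported on $\{1,\dots,N\}$) are given by the usual Cauchy-type integral formula over the polydisc, which yields the decay $\|\widehat v(\ell)\|_X \le \| v \|_{{\mathcal B}^\sigma(X)} \prod_{i=1}^N e^{-\sigma \langle i\rangle^\zia |\ell_i|} = \| v\|_{{\mathcal B}^\sigma(X)} e^{-\sigma |\ell|_\zia}$, using the very definition of $|\ell|_\zia = \sum_i \langle i\rangle^\zia |\ell_i|$ and of the thickened finite torus $\T^N_\sigma = \prod_i \T_{\sigma\langle i\rangle^\zia}$. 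Then, for the $(\sigma+\rho)$-norm,
\[
\| v \|_{\sigma+\rho} = \sum_{\ell} e^{(\sigma+\rho)|\ell|_\zia}\|\widehat v(\ell)\|_X \le \Big( \sup_{\ell \in \Z^\infty_*} e^{-\rho|\ell|_\zia} \cdot \#\{\ell : |\ell|_\zia \le t\}\cdots\Big)\,,
\]
and here I would invoke the counting/series estimate already isolated in the paper (the bound behind Lemma \ref{stima diofantea D omega inv}, i.e. Lemma \ref{bound per stima di Cauchy}) which controls $\sum_{\ell} e^{-\rho|\ell|_\zia}$ by ${\rm exp}\big(\tfrac{\tau}{\rho^{1/\zia}} \ln(\tfrac{\tau}{\rho})\big)$; combined with $e^{\sigma|\ell|_\zia}\|\widehat v(\ell)\|_X \le \|v\|_{{\mathcal B}^\sigma(X)}$ this gives $\| v \|_{\sigma+\rho} \lesssim {\rm exp}\big(\tfrac{\tau}{\rho^{1/\zia}} \ln(\tfrac{\tau}{\rho})\big)\| v\|_{{\mathcal B}^\sigma(X)}$. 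For general $u \in \widetilde{\mathcal H}(\T^\infty_\sigma,X)$, take a sequence $v_k \in \bigcup_N {\mathcal H}(\T^N_\sigma,X)$ with $\| u - v_k\|_{{\mathcal B}^\sigma(X)} \to 0$; applying the bound to differences $v_k - v_m$ shows $(v_k)$ is Cauchy in $\|\cdot\|_{\sigma+\rho}$, hence converges there to some $w \in {\mathcal H}(\T^\infty_{\sigma+\rho},X)$, and uniform convergence forces $w = u$. Passing to the limit in the inequality yields $\| u \|_{\sigma+\rho} \lesssim {\rm exp}\big(\tfrac{\tau}{\rho^{1/\zia}} \ln(\tfrac{\tau}{\rho})\big) \| u\|_{\widetilde{\mathcal H}(\T^\infty_\sigma,X)}$.

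The main obstacle is the second inclusion, specifically making rigorous the extraction of Fourier coefficients and their decay for elements of $\widetilde{\mathcal H}(\T^\infty_\sigma,X)$: a priori such a function is only a uniform limit of finite-dimensional holomorphic functions, so one must check that the Cauchy-integral formula for $\widehat v(\ell)$ passes to the limit (it does, by uniform convergence on each fixed finite-dimensional slice) and that the resulting coefficients are consistent and independent of which approximating sequence is used. Once the coefficient decay $\|\widehat u(\ell)\|_X \le \|u\|_{{\mathcal B}^\sigma(X)} e^{-\sigma|\ell|_\zia}$ is established directly for $u \in \widetilde{\mathcal H}(\T^\infty_\sigma,X)$, the summability estimate reducing to Lemma \ref{bound per stima di Cauchy} is routine. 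Everything else — the first inclusion and the trivial norm comparison — follows immediately from Lemmata \ref{lemma astratto proiettore} and \ref{embedding L infty}.
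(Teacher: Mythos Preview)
Your first inclusion is correct and matches the paper's argument.

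The second inclusion, however, has the direction backwards, and as written the argument fails. Note that the norm bound in the statement reads $\|u\|_\sigma \lesssim C(\rho)\,\|u\|_{\widetilde{\mathcal H}(\T^\infty_{\sigma+\rho},X)}$, i.e.\ one controls the $\mathcal H(\T^\infty_\sigma)$-norm by the $\widetilde{\mathcal H}(\T^\infty_{\sigma+\rho})$-norm; the correct inclusion is therefore $\widetilde{\mathcal H}(\T^\infty_{\sigma+\rho},X)\subseteq \mathcal H(\T^\infty_\sigma,X)$ (the chain of inclusions as printed contains a typo). You instead try to prove $\widetilde{\mathcal H}(\T^\infty_\sigma,X)\subseteq \mathcal H(\T^\infty_{\sigma+\rho},X)$, which is false: a function bounded and holomorphic on $\T^\infty_\sigma$ need not extend to the strictly larger strip $\T^\infty_{\sigma+\rho}$. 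Concretely, from the Cauchy estimate $\|\widehat v(\ell)\|_X \le e^{-\sigma|\ell|_\zia}\|v\|_{\mathcal B^\sigma(X)}$ you want to bound
\[
\|v\|_{\sigma+\rho}=\sum_\ell e^{(\sigma+\rho)|\ell|_\zia}\|\widehat v(\ell)\|_X
\le \|v\|_{\mathcal B^\sigma(X)}\sum_\ell e^{+\rho|\ell|_\zia},
\]
and this last sum diverges. The $e^{-\rho|\ell|_\zia}$ you wrote is a sign error.

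The fix is exactly what the paper does: start from $u\in\widetilde{\mathcal H}(\T^\infty_{\sigma+\rho},X)$, take an approximating sequence $u_k\in\mathcal H(\T^{N_k}_{\sigma+\rho},X)$, apply the Cauchy estimate on the \emph{larger} strip to get $\|\widehat u_k(\ell)\|_X\le e^{-(\sigma+\rho)|\ell|_\zia}\|u_k\|_{\mathcal B^{\sigma+\rho}(X)}$, pass to the limit coefficient-wise (your limiting argument for this is fine), and then estimate
\[
\|u\|_\sigma=\sum_\ell e^{\sigma|\ell|_\zia}\|\widehat u(\ell)\|_X
\le \|u\|_{\mathcal B^{\sigma+\rho}(X)}\sum_\ell e^{-\rho|\ell|_\zia}
\lesssim \exp\!\Big(\tfrac{\tau}{\rho^{1/\zia}}\ln\tfrac{\tau}{\rho}\Big)\|u\|_{\widetilde{\mathcal H}(\T^\infty_{\sigma+\rho},X)}
\]
via Lemma~\ref{bound per stima di Cauchy}(ii). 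Apart from this swap of $\sigma$ and $\sigma+\rho$, your strategy coincides with the paper's.
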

\begin{proof} 
Given $N \in \N$, we define the set 
$$
Z^\infty_N := \Big\{ \ell \in \Z^\N : \ell_i = 0, \quad \forall i > N \Big\}\,. 
$$
Given a function $u : \T^\infty_\sigma\to X$, for any $N \in \N$ we define the truncated function 
$$
S_N u(\f) := \sum_{\ell \in \Z^\infty_N} \widehat u(\ell) e^{\im \ell \cdot \f}\,. 
$$ 

	Let us show that $u\in \cH(\T^\infty_\s,X)$ is the limit of $S_N u $ in ${\mathcal B}^\sigma(X)$. If $\ell \in \Z^\infty \setminus \Z_N^\infty$, then there exists $|i | > N$ such that $\ell_i \neq 0$ and hence by the definition of $|\ell|_\zia$ one has $|\ell|_\zia > N^\zia$. Therefore 
	\[
	\begin{aligned}
	\sup_{\f \in \T^\infty_\sigma} \| u(\f) - S_N u(\f)\|_X & = \sup_{\f\in\T^\infty_\s} \Big\| \sum_{ \substack{\ell\in \Z^\infty_* \setminus \Z^\infty_N}}\widehat u(\ell)e^{\im (\f\cdot \ell)} \Big\|_X\le
	\sum_{ \substack{\ell \in \Z^\infty_\zia: |\ell|_\zia > N^\zia}}\| \widehat u(\ell) \|_X e^{\s |\ell|_\zia} \\
	\end{aligned}
	\]
	The right hand side of the above inequality tends to $0$ as $N \to \infty$, since it is the tail of an absolutely convergent series.
To prove the second inclusion we consider
	$u \in \widetilde{\mathcal H}(T^\infty_{\sigma + \rho},X)$. By definition there exists  a sequence $(u_k)_{k \in \N}$ with $u_k \in {\mathcal H}(\T^{N_k}_{\sigma +\rho},X)$, such that $u_k \to u$ w.r. to $\| \cdot \|_{{\mathcal B}^\sigma(X)}$. Since $u_k$ is an analytic function of the finite dimensional torus $\T^{N_k}_{\sigma + \rho}$, we can apply the Cauchy estimate, namely 
	\begin{equation}\label{bla bla}
	\|\widehat u_k(\ell) \|_X \leq e^{- (\sigma + \rho) |\ell|_\zia} \| u_k \|_{{\mathcal H}(\T^{N_k}_{\sigma +\rho},X)}\,, \quad \forall \ell \in \Z^\infty_{N_k}\,. 
	\end{equation}
	Let $\ell \in \Z^\infty_*$ with $|\ell|_\zia < \infty$, then there exists an $\overline N > 0$ such that $\ell \in \Z^\infty_{\overline N}$. Then for any $k \geq k_0$, one has $\ell \in \Z^\infty_{N_k}$. For any $k\ge m \geq k_0$ one has 
	\begin{equation}\label{stima cauchy trasformata fourier}
	\|\widehat u_k (\ell) - \widehat u_m(\ell)\|_X \leq e^{- (\sigma + \rho) |\ell|_\zia} \| u_k - u_m \|_{{\mathcal H}(\T^{N_k}_{\sigma +\rho},X)},
	\end{equation}
	implying that the sequence $(\widehat u_k(\ell))_{k \in \N}$ is a Cauchy sequence. We define 
	$$
	\widehat u(\ell) := \lim_{k \to \infty} \widehat u_k(\ell)\, 
	$$
	and passing to the limit for $k \to \infty$ in \eqref{stima cauchy trasformata fourier}, one obtains that 
	\begin{equation}\label{stima cauchy trasformata fourier 2}
	\|\widehat u_m (\ell) - \widehat u(\ell)\|_X \leq e^{- (\sigma + \rho) |\ell|_\zia} \| u_k - u  \|_{\widetilde{\mathcal H}(\T^\infty_{\sigma +\rho},X)}. 
	\end{equation}
	
	Clearly, passing to the limit in \eqref{bla bla}, one has 
	\begin{equation}\label{bla bla 2}
	\|\widehat u(\ell)\|_X \leq e^{- (\rho + \sigma)|\ell|_\zia} \| u \|_{\widetilde{\mathcal H}(\T^\infty_{\sigma +\rho},X))}\,. 
	\end{equation}
	Let $v(\f ) := \sum_{\ell \in \Z^\infty_*} \widehat u(\ell) e^{\im \ell \cdot \f} $. We  show that $u = v$ by estimating $\|u(\f) - v(\f)\|_X$ pointwise for any $\f \in \T^\infty_\sigma$. We have $\| u(\f) - v(\f) \|_X  = \lim_{k \to \infty} \|u_k(\f) - v(\f )\|_X$ and we estimate 
	$$
	\begin{aligned}
	\|u_k(\f) - v(\f )\|_X   & \leq  \|\sum_{\ell \in \Z^\infty_{N_k}} \widehat u_k(\ell) e^{\im \ell \cdot \f} - \sum_{\ell \in \Z^\infty_*} \widehat u(\ell) e^{\im \ell \cdot \f} \|_X \\
	& \leq  \sum_{\ell \in \Z^\infty_{N_k}} \|\widehat u_k(\ell) - \widehat u(\ell)\|_X + \sum_{\ell \in \Z^\infty_* \setminus \Z^\infty_{N_k}} \|\widehat u(\ell)\|_X  \\
	& \leq  \sum_{\ell \in \Z^\infty_{N_k}} \|\widehat u_k(\ell) - \widehat u(\ell)\|_X + \sum_{|\ell|_\zia \geq N_k} \|\widehat u(\ell)\|_X \\
	& \stackrel{\eqref{stima cauchy trasformata fourier}, \eqref{stima cauchy trasformata fourier 2}}{\leq} \sum_{\ell \in \Z^\infty_{N_k}} e^{- (\sigma + \rho) |\ell|_\zia} \| u_k - u  \|_{{\mathcal H}^{\sigma + \rho}(X)} + \sum_{|\ell|_\zia > N_k} e^{- (\rho + \sigma)|\ell|_\zia} \| u \|_{{\mathcal H}^{\sigma + \rho}(X)}\,.
	\end{aligned}
	$$
	The first term converges to zero since $\sum_{\ell \in \Z^\infty_*} e^{- (\sigma + \rho) |\ell|_\zia}$ is convergent and  $\| u_k - u  \|_{{\mathcal H}^{\sigma + \rho}(X)} \to 0$. The second term converges to zero since it is the tail of a convergent series and $N_k \to \infty$. It remains to estimate $\| u \|_\sigma$. We have 
	$$
	\begin{aligned}
	\| u \|_\sigma & = \sum_{\ell \in \Z^\infty_*} e^{\sigma |\ell|_\zia} \|\widehat u(\ell)\|_X \stackrel{\eqref{bla bla 2}}{\leq} \| u \|_{{\mathcal H}^{\sigma + \rho}(X)} \sum_{\ell \in \Z^\infty_*} e^{- \rho |\ell|_\zia} \,. 
	\end{aligned}
	$$
\end{proof}

\section{technical lemmata}\label{appendiceA}
\subsection{ Linear operators in finite dimension}
Given an operator $A \in {\mathcal B}({\bf E}_j)$, we define its trace as 
\begin{equation}\label{definizione traccia}
\begin{aligned}
{\rm Tr}(A) := A_0^0, \quad A \in {\mathcal B}({\bf E}_0), \\
{\rm Tr}(A) :=  A_j^j  + A_{- j}^{- j}\,, \quad A \in {\mathcal B}({\bf E}_j), \quad  j \in \N\,. 
\end{aligned}
\end{equation}
It is easy to check that if $A, B \in {\mathcal B}({\bf E}_j)$, then 
\begin{equation}\label{proprieta traccia}
{\rm Tr}(A B) = {\rm Tr}(B A)\,.
\end{equation}
For all $j, j' \in \N_0$, the space ${\mathcal B}({\bf E}_{j'}, {\bf E}_j)$ is a Hilbert space\footnote{Actually all the norms on the finite dimensional space ${\mathcal B}({\bf E}_{j'}, {\bf E}_j)$ are equivalent. } equipped by the inner product given for any $X, Y \in {\mathcal B}({\bf E}_{j'}, {\bf E}_j)$ by
\begin{equation}\label{prodotto scalare traccia matrici}
\langle X, Y \rangle := {\rm Tr}(X Y^*)\,.
\end{equation}
This scalar product induces the $L^2$-norm  $\|\cdot\|_\HS$ defined in \eqref{norma L2 blocco}.

\noindent
Given a linear operator ${\bf L} : {\mathcal B}({\bf E}_{j'}, {\bf E}_j) \to {\mathcal B}({\bf E}_{j'}, {\bf E}_j)$, we denote by $\| {\bf L}\|_{{\rm Op}}$ its operatorial norm, when the space ${\mathcal B}({\bf E}_{j'}, {\bf E}_j)$ is equipped by the $L^2$-norm \eqref{norma L2 blocco}, namely
\begin{equation}\label{norma operatoriale su matrici alpha beta}
\| {\bf L}\|_{\rm Op}:= \sup\Big\{  \| {\bf L}(M) \|_\HS : M \in {\mathcal B}({\bf E}_{j'}, {\bf E}_j)\,, \quad \| M\|_\HS \leq 1\Big\}\,.
\end{equation} 
For any operator $A \in {\mathcal B}({\bf E}_j)$ we denote by $M_L(A) : {\mathcal B}({\bf E}_{j'}, {\bf E}_j) \to {\mathcal B}({\bf E}_{j'}, {\bf E}_j)$ the linear operator defined for any $X \in {\mathcal B}({\bf E}_{j'}, {\bf E}_j)$ as 
\begin{equation}\label{definizione moltiplicazione sinistra matrici}
M_L(A) X := A X\,.
\end{equation} 
Similarly, given an operator $B \in {\mathcal B}({\bf E}_{j'})$, we denote by $M_R(B) : {\mathcal B}({\bf E}_{j'}, {\bf E}_j) \to {\mathcal B}({\bf E}_{j'}, {\bf E}_j)$ the linear operator defined for any $X \in {\mathcal B}({\bf E}_{j'}, {\bf E}_j)$ as 
\begin{equation}\label{definizione moltiplicazione destra matrici}
M_R(B) X := X B\,.
\end{equation}
The following elementary estimates hold:
\begin{equation}\label{norma operatoriale ML MR}
\| M_L(A)\|_{{\rm Op}} \leq \| A\|_\HS\,, \quad \| M_R(B)\|_{{\rm Op}} \leq \| B\|_\HS\,.
\end{equation}
We denote by ${\mathcal S}({\bf E}_j)$, the set of the self-adjoint operators form ${\bf E}_j$ onto itself, namely
\begin{equation}\label{cal S E alpha}
{\mathcal S}({\bf E}_j) := \Big\{ A \in {\mathcal L}({\bf E}_j) : A = A^*\Big\}\,. 
\end{equation}
Furthermore, for any $A \in {\mathcal B}({\bf E}_j)$ denote by ${\rm spec}(A)$ the spectrum of $A$. The following Lemma can be proved by using elementary arguments from linear algebra, hence the proof is omitted.
\begin{lemma}\label{properties operators matrices}
	Let $j, j' \in \N_0$, $A \in {\mathcal S}({\bf E}_j)$, $B \in {\mathcal S}({\bf E}_{j'})$, then the following holds: 
	
	\noindent
	$(i)$ The operators $M_L(A)$, $M_R(B)$ defined in \eqref{definizione moltiplicazione sinistra matrici}, \eqref{definizione moltiplicazione destra matrici} are self-adjoint operators with respect to the scalar product defined in \eqref{prodotto scalare traccia matrici}.
	
	\noindent
	$(ii)$ Let $j, j' \in \N$, $A \in {\mathcal S}({\bf E}_j)$, $B \in {\mathcal S}({\bf E}_{j'})$. The spectrum of the operator $M_L(A) \pm M_R(B)$ satisfies 
	$$
	{\rm spec}\Big( M_L(A) \pm M_R(B) \Big) = \Big\{ \lambda \pm \mu : \lambda \in {\rm spec}(A)\,,\quad \mu \in {\rm spec}(B) \Big\}\,.
	$$
	
	\noindent
	$(iii)$ Let $j \in \N$, $A \in {\mathcal S}({\bf E}_j)$ and $B \equiv \lambda_0 \in {\mathcal S}({\bf E}_0)$. Then, the spectrum of the operators $M_L(A) \pm M_R(\lambda_0) \equiv M_L(A) \pm \lambda_0 {\rm Id} : {\mathcal B}({\bf E}_0, {\bf E}_{j}) \to {\mathcal B}({\bf E}_0, {\bf E}_{j})  $ and $M_L(\lambda_0) \pm M_R(A) \equiv \lambda_0 {\rm Id} \pm M_R(A) : {\mathcal B}({\bf E}_j, {\bf E}_0) \to {\mathcal B}({\bf E}_j, {\bf E}_0)$ satisfy 
	$$
 {\rm spec}\Big( M_L(A) \pm \lambda_0 {\rm Id} \Big) = {\rm spec}\Big( \lambda_0 {\rm Id} \pm M_R(A) \Big) =  \Big\{ \lambda \pm \lambda_0 : \lambda \in {\rm spec}(A) \Big\}\,.
	$$
\end{lemma}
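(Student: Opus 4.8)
The statement is Lemma \ref{properties operators matrices}, an elementary linear-algebra fact about the operators $M_L(A)$ and $M_R(B)$ acting on the $2\times 2$ (or $1\times 1$) matrix spaces ${\mathcal B}({\bf E}_{j'},{\bf E}_j)$. The plan is to exploit the identification of ${\mathcal B}({\bf E}_{j'},{\bf E}_j)$ with a tensor product and to diagonalize $A$ and $B$ simultaneously via their own orthonormal eigenbases. First I would set up the inner product \eqref{prodotto scalare traccia matrici} explicitly: for $X,Y\in {\mathcal B}({\bf E}_{j'},{\bf E}_j)$ one has $\langle X,Y\rangle = {\rm Tr}(XY^*) = \sum_{k,k'} X_k^{k'}\overline{Y_k^{k'}}$, so ${\mathcal B}({\bf E}_{j'},{\bf E}_j)$ with $\|\cdot\|_\HS$ is isometric to ${\bf E}_j\otimes {\bf E}_{j'}^*$ with the product Hilbert structure, the matrix $X$ corresponding to $\sum X_k^{k'}\, e_k\otimes e_{k'}^*$ where $e_{\pm j}$ is the exponential basis of ${\bf E}_j$.

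\textbf{Item $(i)$.} Using ${\rm Tr}(PQ)={\rm Tr}(QP)$ from \eqref{proprieta traccia} and $A=A^*$, for any $X,Y$ I compute
$$
\langle M_L(A)X, Y\rangle = {\rm Tr}(AX Y^*) = {\rm Tr}(X(A^*Y)^*) = \langle X, M_L(A^*)Y\rangle = \langle X, M_L(A)Y\rangle,
$$
so $M_L(A)$ is self-adjoint; the argument for $M_R(B)$ is identical, writing $\langle M_R(B)X,Y\rangle = {\rm Tr}(XBY^*) = {\rm Tr}(X(YB^*)^*) = \langle X, M_R(B^*)Y\rangle$. (One must check the adjoint $(YB^*)^*$ is taken in ${\mathcal B}({\bf E}_{j'},{\bf E}_j)$, which is \eqref{definizione matrice aggiunta}; this is routine.)

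\textbf{Items $(ii)$ and $(iii)$.} Since $A\in{\mathcal S}({\bf E}_j)$ is self-adjoint it has an orthonormal eigenbasis $\{v_1,v_2\}$ of ${\bf E}_j$ with $Av_a=\lambda_a v_a$, and likewise $B\in{\mathcal S}({\bf E}_{j'})$ has an orthonormal eigenbasis $\{w_1,w_2\}$ with $Bw_b=\mu_b w_b$ (in the case $j'=0$ or $j=0$ the relevant factor is one-dimensional and the eigenvalue is just $\lambda_0$). Then the rank-one operators $X_{ab}:=v_a\otimes w_b^*$ (i.e. the matrix $X\mapsto \langle \cdot, w_b\rangle v_a$, which lies in ${\mathcal B}({\bf E}_{j'},{\bf E}_j)$) form an orthonormal basis of ${\mathcal B}({\bf E}_{j'},{\bf E}_j)$, and a direct computation gives
$$
M_L(A)X_{ab} = AX_{ab} = \lambda_a X_{ab}, \qquad M_R(B)X_{ab} = X_{ab}B = \mu_b X_{ab},
$$
so $\big(M_L(A)\pm M_R(B)\big)X_{ab} = (\lambda_a\pm\mu_b)X_{ab}$. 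Hence $\{X_{ab}\}$ is a full eigenbasis with eigenvalues $\{\lambda_a\pm\mu_b\}$, which is exactly the claimed description of ${\rm spec}(M_L(A)\pm M_R(B))$ in terms of ${\rm spec}(A)$ and ${\rm spec}(B)$; item $(iii)$ is the special case where one of the factors is ${\bf E}_0$ and the corresponding operator is scalar multiplication by $\lambda_0$, so $M_R(\lambda_0)=\lambda_0{\rm Id}$ and $M_L(\lambda_0)=\lambda_0{\rm Id}$.

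\textbf{Main obstacle.} There is essentially no obstacle: the only points requiring a modicum of care are (a) correctly identifying the adjoint on ${\mathcal B}({\bf E}_{j'},{\bf E}_j)$ so that $M_R(B)^* = M_R(B^*)$, and (b) checking that $X_{ab}B = \mu_b X_{ab}$ rather than $\overline{\mu_b}X_{ab}$ — this holds because $\mu_b$ is real (as $B$ is self-adjoint) and because right-multiplication by $B$ acting on $X = v_a\otimes w_b^*$ produces $v_a\otimes (B^* w_b)^* = v_a\otimes(\mu_b w_b)^* = \mu_b X_{ab}$. For this reason the paper, reasonably, omits the proof; the write-up above is the standard tensor-product argument and could be inserted verbatim if desired.
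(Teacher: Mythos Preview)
Your proof is correct and is precisely the standard tensor-product/eigenbasis argument one would give for this elementary lemma. The paper itself omits the proof entirely, stating only that it ``can be proved by using elementary arguments from linear algebra,'' so there is no alternative approach to compare against; your write-up could serve as the omitted proof verbatim.
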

We finish this Section by recalling some well known facts concerning linear self-adjoint operators on finite dimensional Hilbert spaces. Let ${\mathcal H}$ be a finite dimensional Hilbert space of dimension $n$ equipped by the inner product $( \cdot\,,\,\cdot )_{\mathcal H}$. For any self-adjoint operator $A : {\mathcal H} \to {\mathcal H}$, we order its eigenvalues as
\begin{equation}\label{spettro hilbert astratto}
{\rm spec}(A) := \big\{\lambda_1(A) \leq \lambda_2(A) \leq \ldots \leq \lambda_n(A)\big\}\,.
\end{equation}
\begin{lemma}\label{risultato astratto operatori autoaggiunti}
	Let ${\mathcal H}$ be a Hilbert space of dimension $n$. Then the following holds:
	
	\noindent
	$(i)$ Let $A_1, A_2 : {\mathcal H} \to {\mathcal H}$ be self-adjoint operators. Then their eigenvalues, ordered as in \eqref{spettro hilbert astratto}, satisfy the Lipschitz property 
	$$
	|\lambda_k(A_1) - \lambda_k(A_2)| \leq \| A_1 - A_2 \|_{{\mathcal B}({\mathcal H})}\,, \qquad \forall k = 1, \ldots, n\,.
	$$
	
	\noindent
	$(ii)$ Let $A =  y{\rm Id}_{\mathcal H} + B$, where $y \in \R$, ${\rm Id}_{\mathcal H} : {\mathcal H} \to {\mathcal H}$ is the identity and $B :{\mathcal H} \to {\mathcal H}$ is selfadjoint. Then 
	$$
	\lambda_k(A) = y + \lambda_k(B) \,, \qquad \forall k = 1, \ldots , n\,. 
	$$ 
	
	\noindent
	$(iii)$ Let $A : {\mathcal H} \to {\mathcal H}$ be self-adjoint and assume that ${\rm spec}(A) \subset \R \setminus \{ 0 \}$. Then $A$ is invertible and its inverse satisfies
	$$
	\| A^{- 1}\|_{{\mathcal B}({\mathcal H})} = \dfrac{1}{\min_{k = 1, \ldots, n}|\lambda_k(A)|}\,.
	$$
\end{lemma}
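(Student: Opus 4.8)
The plan is to obtain all three items from the Courant--Fischer (min--max) variational characterization of the eigenvalues of a self-adjoint operator on the finite dimensional Hilbert space $\mathcal{H}$, together with the elementary fact that a self-adjoint operator has operator norm equal to the largest absolute value of its eigenvalues. Concretely, I would first recall that for self-adjoint $A : \mathcal{H} \to \mathcal{H}$, with eigenvalues ordered as in \eqref{spettro hilbert astratto}, one has
\[
\lambda_k(A) = \min_{\substack{V \subseteq \mathcal{H}\\ \dim V = k}}\ \max_{\substack{v \in V\\ \| v \|_{\mathcal{H}} = 1}} ( A v, v )_{\mathcal{H}}\,, \qquad k = 1, \ldots, n\,;
\]
this is the only non-elementary ingredient and follows by diagonalizing $A$ in an orthonormal eigenbasis and a dimension count.

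Granting this, item $(ii)$ is immediate: for any unit vector $v$ one has $(A v, v)_{\mathcal{H}} = y + (B v, v)_{\mathcal{H}}$, so the functional appearing in the min--max for $A$ is the one for $B$ shifted by the constant $y$; taking successively the maximum over the unit sphere of each $k$-dimensional subspace $V$ and then the minimum over all such $V$ yields $\lambda_k(A) = y + \lambda_k(B)$.

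For item $(i)$ I would estimate, for every unit vector $v$, $|( A_1 v, v )_{\mathcal{H}} - ( A_2 v, v )_{\mathcal{H}}| = |( (A_1 - A_2) v, v )_{\mathcal{H}}| \le \| A_1 - A_2 \|_{\mathcal{B}(\mathcal{H})}$ by Cauchy--Schwarz. Hence for each fixed $k$-dimensional subspace $V$ the two maxima over the unit sphere of $V$ differ by at most $\| A_1 - A_2 \|_{\mathcal{B}(\mathcal{H})}$, and therefore so do the minima over all such $V$; this gives $|\lambda_k(A_1) - \lambda_k(A_2)| \le \| A_1 - A_2 \|_{\mathcal{B}(\mathcal{H})}$. (Equivalently one may invoke Weyl's perturbation inequality, which rests on the same spectral decomposition.)

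Finally, for item $(iii)$, since $A$ is self-adjoint it is unitarily diagonalizable with real eigenvalues $\lambda_1(A), \ldots, \lambda_n(A)$; the hypothesis ${\rm spec}(A) \subset \mathbb{R} \setminus \{ 0 \}$ forces all of them to be nonzero, so $A$ is invertible and $A^{-1}$ is self-adjoint with eigenvalues $\lambda_k(A)^{-1}$. Using that the operator norm of a self-adjoint operator equals the maximum of the absolute values of its eigenvalues (again read off from the spectral decomposition), one concludes $\| A^{-1} \|_{\mathcal{B}(\mathcal{H})} = \max_{k} |\lambda_k(A)^{-1}| = \big( \min_{k = 1, \ldots, n} |\lambda_k(A)| \big)^{-1}$. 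There is no real obstacle here: the content is standard finite-dimensional spectral theory, and the only step requiring a line of proof rather than a one-word citation is the min--max formula (or the equivalent spectral decomposition) underpinning all three items; this is precisely why the authors omit the proof.
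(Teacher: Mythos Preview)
Your proof is correct, and it matches the paper's treatment: the paper does not prove this lemma at all but simply introduces it as ``recalling some well known facts concerning linear self-adjoint operators on finite dimensional Hilbert spaces.'' Your use of the Courant--Fischer min--max formula for items $(i)$ and $(ii)$ and the spectral decomposition for item $(iii)$ is exactly the standard route one would take, and your closing remark that this is why the authors omit the proof is on the mark.
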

\subsection{properties of $\cB^{\s,m}$}\label{linop}
\begin{lemma}\label{B algebra cal B sigma}
	Let $\sigma, \rho > 0 $, $m, m' \in \R$ ${\mathcal R} \in {\mathcal B}^{\sigma, m}, {\mathcal Q} \in {\mathcal B}^{\sigma + \rho, m'}$. Then ${\mathcal R} {\mathcal Q} \in {\mathcal B}^{\sigma, m+ m'}$ and $\| {\mathcal R} {\mathcal Q}\|_{{\mathcal B}^{\sigma, m+ m'}} \lesssim_m \rho^{- |m|} \| {\mathcal R}\|_{{\mathcal B}^{\sigma, m}} \| {\mathcal Q}\|_{{\mathcal B}^{\sigma + \rho , m'}}$. 
\end{lemma}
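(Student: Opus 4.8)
\textbf{Proof proposal for Lemma \ref{B algebra cal B sigma}.}

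The plan is to work directly with the block decomposition \eqref{notazione a blocchi} and estimate the Hilbert–Schmidt norms of the blocks of ${\mathcal R}{\mathcal Q}$ by inserting a complete family of projectors. First I would write, for any $j, j'' \in \N_0$,
\[
\Pi_j ({\mathcal R}{\mathcal Q}) \Pi_{j''} = \sum_{j' \in \N_0} \big(\Pi_j {\mathcal R} \Pi_{j'}\big)\big(\Pi_{j'} {\mathcal Q} \Pi_{j''}\big),
\]
using $\sum_{j'} \Pi_{j'} = {\rm Id}$ from \eqref{proprieta Pi j E j}, and then apply the submultiplicativity of the Hilbert–Schmidt norm with respect to composition (together with the fact that each ${\bf E}_{j'}$ is at most two-dimensional, so all the relevant constants are absolute) to get
\[
\|\Pi_j ({\mathcal R}{\mathcal Q}) \Pi_{j''}\|_{\HS} \leq \sum_{j' \in \N_0} \|\Pi_j {\mathcal R} \Pi_{j'}\|_{\HS}\,\|\Pi_{j'} {\mathcal Q} \Pi_{j''}\|_{\HS}.
\]
Then I would multiply by $e^{\sigma|j - j''|}\langle j''\rangle^{-(m+m')}$, sum over $j \in \N_0$, and take the supremum over $j'' \in \N_0$, which is exactly $\|{\mathcal R}{\mathcal Q}\|_{{\mathcal B}^{\sigma, m+m'}}$ by \eqref{definizione classe cal B sigma}.

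The core of the argument is then bookkeeping with the weights. Using the triangle inequality $|j - j''| \leq |j - j'| + |j' - j''|$ I would split $e^{\sigma|j-j''|} \leq e^{\sigma|j-j'|}e^{\sigma|j'-j''|}$, and I would insert the factor $\langle j'\rangle^{-m}\langle j'\rangle^{m}$ to match the definition of the two norms: the first factor $e^{\sigma|j-j'|}\langle j'\rangle^{-m}\|\Pi_j{\mathcal R}\Pi_{j'}\|_{\HS}$ is summable over $j$ with bound $\|{\mathcal R}\|_{{\mathcal B}^{\sigma,m}}$ uniformly in $j'$, while the second factor $e^{(\sigma+\rho)|j'-j''|}\langle j''\rangle^{-m'}\|\Pi_{j'}{\mathcal Q}\Pi_{j''}\|_{\HS}$ is controlled by $\|{\mathcal Q}\|_{{\mathcal B}^{\sigma+\rho,m'}}$. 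The mismatch to absorb is the leftover weight $\langle j'\rangle^{m}$ (coming from the $\langle j'\rangle^{m}$ I inserted) against $\langle j''\rangle^{-m'}$ versus $\langle j''\rangle^{-(m+m')}$: writing $\langle j'\rangle^{m} \lesssim_m \langle j''\rangle^{|m|}\langle j'-j''\rangle^{|m|}$ (Peetre), the factor $\langle j''\rangle^{|m|}$ combines with $\langle j''\rangle^{-m'}$ to give $\langle j''\rangle^{-m'+|m|}$, which I need to be $\lesssim \langle j''\rangle^{-m'}$ — fine since $|m|\ge 0$ only if $m\le 0$; for $m>0$ one instead distributes the $\langle j'\rangle^{m}$ the other way, bounding $\langle j'\rangle^{m}\lesssim \langle j\rangle^{m}\langle j-j'\rangle^{m}$ depending on the sign. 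In all cases the extra polynomial factor $\langle j'-j''\rangle^{|m|}$ (or $\langle j-j'\rangle^{|m|}$) is swallowed by a fraction of the gain $e^{\rho|j'-j''|}$ in ${\mathcal Q}$'s norm, at the cost of the constant $\sup_{t\ge 0} t^{|m|}e^{-\rho t} \lesssim_m \rho^{-|m|}$. This is precisely where the $\rho^{-|m|}$ in the statement comes from.

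I expect the main (and only real) obstacle to be handling the sign of $m$ correctly when deciding whether to trade the weight $\langle j'\rangle^{|m|}$ against $\langle j''\rangle$ or against $\langle j\rangle$, i.e. making sure that in every case the surviving $\langle\cdot\rangle$-power on the ``output'' index $j''$ is exactly $-(m+m')$ and the spare polynomial factor lands on a difference of indices where it can be absorbed by the exponential gap. Once that case split is set up, the remaining steps — Peetre's inequality $\langle a+b\rangle \lesssim \langle a\rangle\langle b\rangle$, the elementary bound $\sum_{k\in\Z} e^{-\frac{\rho}{2}|k|} \lesssim \rho^{-1}$, and $\sup_{t\ge 0}t^{|m|}e^{-\frac{\rho}{2}t}\lesssim_m \rho^{-|m|}$ — are routine and give the claimed estimate. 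Note this lemma is the scalar-in-$\f$ analogue of Lemma \ref{proprieta norma sigma riducibilita vphi x}-$(i)$ and is in fact invoked in its proof, so it must be proved first; the $\f$-dependent version then follows by the triangle inequality $|\ell|_\zia \le |\ell-k|_\zia + |k|_\zia$ on top of this one, exactly as carried out there.
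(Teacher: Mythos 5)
Your plan is the same as the paper's proof: insert the complete family of block projectors, use Hilbert--Schmidt submultiplicativity, split the exponential weight by the triangle inequality $|j-j''|\le|j-j'|+|j'-j''|$, insert $\langle j'\rangle^{-m}\langle j'\rangle^{m}$ to reconstitute the two norms, and absorb the leftover polynomial via $\sup_{t\ge 0}t^{|m|}e^{-\rho t}\lesssim_m\rho^{-|m|}$ against a portion of the extra decay that ${\mathcal Q}\in{\mathcal B}^{\sigma+\rho,m'}$ provides. That is exactly what the paper does.

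The one place your sketch would get stuck if followed literally is the sign case split, which is in fact unnecessary. After the insertion, the leftover factor is $\langle j'\rangle^m\langle j''\rangle^{-m}$, and the single Peetre bound
$\langle j'\rangle^m\langle j''\rangle^{-m}\lesssim_m\langle j'-j''\rangle^{|m|}$
holds for all $m\in\R$: for $m\ge0$ write $\langle j'\rangle^m\lesssim\langle j''\rangle^m\langle j'-j''\rangle^m$ and cancel the $\langle j''\rangle^{m}$; for $m<0$ write $\langle j''\rangle^{|m|}\lesssim\langle j'\rangle^{|m|}\langle j'-j''\rangle^{|m|}$ and cancel the $\langle j'\rangle^{|m|}$. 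This is what the paper invokes, with no branching. Your alternative for the other sign, bounding $\langle j'\rangle^m\lesssim\langle j\rangle^m\langle j-j'\rangle^m$, would not close: it introduces an uncontrolled $\langle j\rangle^m$ on the output index $j$, which is summed over and carries no compensating weight in the definition \eqref{definizione classe cal B sigma}. (Also, your parenthetical ``fine since $|m|\ge0$ only if $m\le0$'' is garbled --- as stated, $\langle j''\rangle^{|m|-m-m'}\lesssim\langle j''\rangle^{-m'}$ requires $|m|\le m$, i.e.\ $m\ge0$, so your case labels are reversed.) None of this affects the conclusion once you replace the split by the uniform Peetre estimate.
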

\begin{proof}
	{\sc Proof of $(i)$}
	By using the $2 \times 2$ block representation of linear operators, one has that the operator ${\mathcal C} := {\mathcal R} {\mathcal Q}$ admits the representation ${\mathcal C} = \sum_{j, j' \in \N_0} \Pi_j {\mathcal C} \Pi_{j'}$ where 
	\begin{equation}\label{def C R Q}
	\Pi_j {\mathcal C} \Pi_{j'} = \sum_{k \in \N_0} (\Pi_j {\mathcal R} \Pi_k)(\Pi_k {\mathcal Q} \Pi_{j'})\,, \quad \forall j, j' \in \N_0\,. 
	\end{equation}
	Using that by triangular inequality $e^{\sigma |j - j'|} \leq e^{\sigma |j - k|} e^{\sigma |k - j'|}$, for any $j'\in \Z$ 
	\begin{equation}\label{pippo 0}
	\begin{aligned}
	\sum_{j \in \N_0} e^{\sigma|j - j'|} \|\Pi_j {\mathcal C} \Pi_{j'}\|_\HS \langle j' \rangle^{- (m + m')} & \leq \sum_{j, k \in \N_0} e^{\sigma|j - j'|} \| \Pi_j {\mathcal R} \Pi_k \|_\HS \| \Pi_k  {\mathcal Q} \Pi_{j'}\|_\HS \langle j' \rangle^{- (m + m')} \\
	& \leq \sum_{j, k \in \N_0} e^{\sigma|j - k|} \| \Pi_j {\mathcal R} \Pi_k\|_\HS  \langle k \rangle^{- m } e^{\sigma |k - j'|} \| \Pi_k  {\mathcal Q} \Pi_{j'}\|_\HS \langle j' \rangle^{-  m'} \langle k \rangle^m \langle j' \rangle^{- m }\,. 
	\end{aligned}
	\end{equation}
	Using that 
	$$
	\langle k \rangle^m \langle j' \rangle^{- m} \lesssim_m 1 + \langle k - j' \rangle^{|m|} \lesssim_m \langle k - j' \rangle^{|m|}
	$$
	the inequality \eqref{pippo 0} implies that 
	\begin{equation}\label{pippo 1}
	\begin{aligned}
	\sum_{j \in \N_0} e^{\sigma|j - j'|} \| \Pi_j {\mathcal C} \Pi_{j'}\|_\HS \langle j' \rangle^{- (m + m')} & \lesssim_m \sum_{j, k \in \N_0} e^{\sigma|j - k|} \| \Pi_j {\mathcal R} \Pi_k\|_\HS  \langle k \rangle^{- m } e^{\sigma |k - j'|} \langle k - j' \rangle^{|m|} \| \Pi_k  {\mathcal Q} \Pi_{j'}\|_\HS \langle j' \rangle^{-  m'} \\
	& \lesssim_m \sup_{k \in \N_0}\Big( \sum_{j \in \N_0}e^{\sigma|j - k|} \| \Pi_j {\mathcal R} \Pi_k\|_\HS  \langle k \rangle^{- m }  \Big) \sum_{k \in \N_0} e^{\sigma |k - j'|} \langle k - j' \rangle^{|m|} \| \Pi_k  {\mathcal Q} \Pi_{j'}\|_\HS \langle j' \rangle^{-  m'} \\
	& \lesssim_m \| {\mathcal R}\|_{{\mathcal B}^{\sigma, m}} \sum_{k \in \N_0} e^{(\sigma + \rho) |k - j'|}  \langle k - j' \rangle^{|m|} e^{- \rho|k - j'|} \| \Pi_k  {\mathcal Q} \Pi_{j'}\|_\HS \langle j' \rangle^{-  m'}\,.
	\end{aligned}
	\end{equation}
	Using that 
	$$
	\sup_{x \geq 0} x^{|m|} e^{- \rho x} \lesssim_m \rho^{- |m|}
	$$
	one gets 
	$$
	\sum_{k \in \N_0} e^{(\sigma + \rho) |k - j'|}  \langle k - j' \rangle^{|m|} e^{- \rho|k - j'|} \| \Pi_k  {\mathcal Q} \Pi_{j'}\|_\HS \langle j' \rangle^{-  m'} \lesssim_m \rho^{- |m|} \| {\mathcal Q}\|_{{\mathcal B}^{\sigma + \rho, m'}}
	$$
	and then the claimed statement follows. 
\end{proof}

\begin{lemma}\label{stima moltiplicazione}
	Let $\sigma > 0$, $a \in {\mathcal H}(\T_{\sigma + \rho})$. Then the multiplication operator ${\mathcal M}_a : u(x) \mapsto a(x) u (x)$ is in ${\mathcal B}^\sigma$ and $\| {\mathcal M}_a\|_{{\mathcal B}^\sigma} \lesssim \rho^{- 1} \| a \|_{\sigma + \rho}$. 
\end{lemma}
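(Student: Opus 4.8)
The plan is to work directly with the matrix representation of ${\mathcal M}_a$ in the exponential basis $\{e^{\im k x}\}_{k\in\Z}$ and to estimate the decay norm $\|\cdot\|_{{\mathcal B}^\sigma}$ from \eqref{definizione classe cal B sigma} block by block. First I would observe that, since $a\in{\mathcal H}(\T_{\sigma+\rho})$ is in particular a continuous function on $\T$, the operator ${\mathcal M}_a$ is a bounded operator on $L^2(\T)$, so there is nothing to check about its domain. Writing $a=\sum_{n\in\Z}\widehat a_n e^{\im n x}$, one has ${\mathcal M}_a[e^{\im k' x}]=a(x)e^{\im k' x}=\sum_{k\in\Z}\widehat a_{k-k'}e^{\im k x}$, so the matrix of ${\mathcal M}_a$ is the Toeplitz matrix $({\mathcal M}_a)_k^{k'}=\widehat a_{k-k'}$.

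Next, for fixed $j,j'\in\N_0$ I would bound the Hilbert--Schmidt norm \eqref{norma L2 blocco} of the block $\Pi_j{\mathcal M}_a\Pi_{j'}$ by the $\ell^1$ norm of its entries, namely $\|\Pi_j{\mathcal M}_a\Pi_{j'}\|_{\HS}\le\sum_{|k|=j,\,|k'|=j'}|\widehat a_{k-k'}|$ (a sum of at most four terms, fewer when $j=0$ or $j'=0$ since ${\bf E}_0$ is one-dimensional). The key elementary point is that each difference $k-k'$ appearing here satisfies $|k-k'|\ge|j-j'|$: its modulus is either $|j-j'|$ or $j+j'$, and $|j-j'|\le j+j'$. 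Hence $e^{\sigma|j-j'|}\|\Pi_j{\mathcal M}_a\Pi_{j'}\|_{\HS}\le\sum_{|k|=j,\,|k'|=j'}e^{\sigma|k-k'|}|\widehat a_{k-k'}|$, i.e. the block-grouped weight is dominated by the per-entry weight $e^{\sigma|k-k'|}$, which is exactly the weight occurring in $\|a\|_\sigma$.

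Then I would sum over $j\in\N_0$ keeping $j'$ fixed. As $j$ runs over $\N_0$, the index $k$ with $|k|=j$ runs exactly once through all of $\Z$, so the double sum reorganizes as $\sum_{k'\,:\,|k'|=j'}\sum_{k\in\Z}e^{\sigma|k-k'|}|\widehat a_{k-k'}|=\sum_{k'\,:\,|k'|=j'}\|a\|_\sigma\le 2\|a\|_\sigma$. Taking the supremum over $j'\in\N_0$ gives $\|{\mathcal M}_a\|_{{\mathcal B}^\sigma}\le 2\|a\|_\sigma$, and since $\|a\|_\sigma\le\|a\|_{\sigma+\rho}$ (and $\rho^{-1}\ge 1$ in the regime $\rho<1$ in which the lemma is used) the claimed estimate $\|{\mathcal M}_a\|_{{\mathcal B}^\sigma}\lesssim\rho^{-1}\|a\|_{\sigma+\rho}$ follows a fortiori; in fact the factor $\rho^{-1}$ is not needed and is retained only for uniformity with Lemma \ref{lemma moltiplicazione vphi x}.

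I do not expect a genuine obstacle: this is a routine bound for a Toeplitz operator. The only points requiring a little care are the bookkeeping of the $j=0$ and $j'=0$ blocks, where the subspace ${\bf E}_0$ is one-dimensional and the block has fewer entries, and the inequality $|j-j'|\le|k-k'|$ that lets one pass from the coarse weight in the definition of $\|\cdot\|_{{\mathcal B}^\sigma}$ to the weight $e^{\sigma|k-k'|}$ appearing in the norm $\|a\|_\sigma$.
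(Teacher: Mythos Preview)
Your proof is correct and in fact sharper than the paper's. The paper bounds each Fourier coefficient individually via $|\widehat a(n)|\le e^{-(\sigma+\rho)|n|}\|a\|_{\sigma+\rho}$, obtains $\|\Pi_j{\mathcal M}_a\Pi_{j'}\|_{\HS}\lesssim e^{-(\sigma+\rho)|j-j'|}\|a\|_{\sigma+\rho}$, and then sums the geometric series $\sum_j e^{-\rho|j-j'|}\lesssim\rho^{-1}$; this is where the factor $\rho^{-1}$ enters. You instead use the pointwise inequality $|j-j'|\le|k-k'|$ to replace the block weight $e^{\sigma|j-j'|}$ by the per-entry weight $e^{\sigma|k-k'|}$ and then reassemble the full $\ell^1$ sum into $\|a\|_\sigma$, obtaining $\|{\mathcal M}_a\|_{{\mathcal B}^\sigma}\le 2\|a\|_\sigma$ with no loss at all. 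Your observation that the $\rho^{-1}$ is superfluous is correct; the paper keeps it only because in the $\f$-dependent setting of Lemma~\ref{lemma moltiplicazione vphi x} the estimate is applied with the same shape. One small caveat: your final sentence implicitly assumes $\rho\le 1$ to conclude $2\le C\rho^{-1}$; this is the only regime used in the paper, but strictly speaking the lemma as stated does not impose it, so it is cleanest to simply record the stronger bound $\|{\mathcal M}_a\|_{{\mathcal B}^\sigma}\lesssim\|a\|_\sigma$ and note that it implies the claimed one whenever $\rho\lesssim 1$.
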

\begin{proof}
	One easily see that the multiplication operator ${\mathcal M}_a$ admits the $2 \times 2$ block representation ${\mathcal M}_a = \sum_{j, j' \in \N_0} \Pi_j {\mathcal M}_a \Pi_{j'}$ where for any $j, j' \in \N_0$, the operator $\Pi_j {\mathcal M}_a \Pi_{j'}$ is represented by the matrices
	$$
	\begin{aligned}
	 \begin{pmatrix}
	\widehat a(j - j') & \widehat a(j + j') \\
	\widehat a(- j - j') & \widehat a(- j + j')\,,
	\end{pmatrix} \quad j, j' \in \N\,,\quad  \begin{pmatrix}
	\widehat a(j) \\
	\widehat a(- j)
	\end{pmatrix}\quad j \in \N\,, \quad  \big( \widehat a(j'), 
	\widehat a(- j')  \big)\quad j' \in \N\,.
	\end{aligned}
	$$
	Using that $a \in {\mathcal H}(\T_{\sigma + \rho})$, one obtains that 
	$$
	\begin{aligned}
	& |\widehat a(j - j')|, |\widehat a(- j + j')| \leq \| a \|_{\sigma + \rho} e^{- (\sigma + \rho)|j - j'|}\,, \\
	& |\widehat a(j + j')|, |\widehat a(- j - j')| \leq \| a \|_{\sigma + \rho} e^{- (\sigma + \rho)|j + j'|}\,.
	\end{aligned}
	$$
	Using that for any $j, j' \in \N_0$, $e^{- (\sigma + \rho)|j + j'|} \leq e^{- (\sigma + \rho)|j - j'|}$, one gets that 
	$$
	\| \Pi_j {\mathcal M}_a \Pi_{j'}\|_\HS \lesssim \| a \|_{\sigma + \rho} e^{- (\sigma + \rho)|j - j'|}, \quad \forall j, j' \in \N_0\,. 
	$$
	Therefore for any $j' \in \N_0$, 
	$$
	\sum_{j \in \N_0} e^{\sigma |j - j'|} \| \Pi_j {\mathcal M}_a \Pi_{j'}\|_\HS \lesssim \| a \|_{\sigma + \rho} \sum_{j \in \N_0} e^{- \rho |j - j'|} \lesssim \rho^{- 1} \| a \|_{{\mathcal H}^{\sigma + \rho}_x}\,.
	$$
	The thesis then follows by recalling the definition \eqref{definizione classe cal B sigma}. 
\end{proof}
\subsection{Properties of torus diffeomorphisms}
In Subsection \ref{sezione riparametrizzazione tempo}, we have considered  diffeomorphisms of the form 
\begin{equation}\label{omega alpha}
\f \mapsto \f + \omega \alpha(\f)
\end{equation}
where $\alpha \in {\mathcal H}(\T^\infty_{\sigma + \rho})$, $\sigma, \rho > 0$ and $\omega \in \Dc$. By Lemma \ref{lemma diffeo inverso}, for $\e = \e(\rho)$ small enough, if $\| \alpha\|_{{\mathcal H}^{\sigma + \rho}} \leq \e$, then the diffeomorphism \eqref{omega alpha} is invertible and its inverse has the form 
\begin{equation}\label{omega tilde alpha}
\vartheta \mapsto \vartheta + \omega \widetilde \alpha(\vartheta)
\end{equation}
where $\widetilde \alpha \in {\mathcal H}(\T^\infty_\sigma)$ and $\| \widetilde \alpha\|_{\sigma} \lesssim \| \alpha\|_{\sigma + \rho}$. 
Note that by \eqref{omega alpha}, \eqref{omega tilde alpha}, one can easily deduce the formulae
\begin{equation}\label{formule omega alpha alpha tilde}
\begin{aligned}
& 1 + \omega \cdot \partial_\vartheta \tilde \alpha(\vartheta) = \frac{1}{1 + \omega \cdot \partial_\f \alpha(\vartheta + \omega \widetilde \alpha(\vartheta))},  \\
&  1 + \omega \cdot \partial_\f  \alpha(\f) = \frac{1}{1 + \omega \cdot \partial_\vartheta \widetilde \alpha(\f  + \omega  \alpha(\f))}\,.
\end{aligned}
\end{equation}
The following lemma will be used in the reduction procedure of Section \ref{sezione riduzione ordine}, in order to show that some averages do not depend on the parameter $\omega \in \Omega$.
\begin{lemma}\label{lemma per media lambda 1}
	The following holds:
	
	\noindent
	Let $\omega \in \Dc$ be a Diophantine frequency and let $a$ be a function in ${\mathcal H}(\T^\infty_\sigma)$. Then $\int_{\T^\infty } \omega \cdot \partial_\vartheta a(\vartheta)\, d \vartheta = 0$. As a consequence one has
	\begin{equation}\label{media cambio variabile}
	\int_{\T^\infty}\Big( 1 + \omega \cdot \partial_\vartheta \widetilde \alpha(\vartheta) \Big) d \vartheta = 1
	\end{equation}
	and for any $\ell \in \Z^\infty_* \setminus \{ 0 \}$, 
	\begin{equation}\label{derivata totale cambio variabile}
	\int_{\T^\infty} e^{\ii \ell \cdot \big(\vartheta + \omega \widetilde \alpha(\vartheta) \big)} \Big( 1 + \omega \cdot \partial_\vartheta \widetilde \alpha(\vartheta)\Big)\, d \vartheta = 0\,.
	\end{equation}
\end{lemma}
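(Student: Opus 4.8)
The plan is to prove the three claims in turn, starting from the first and deducing the other two as consequences. For the first claim, $\int_{\T^\infty} \omega\cdot\partial_\vartheta a(\vartheta)\, d\vartheta = 0$, I would appeal directly to the averaging identity \eqref{proprieta media infinita} together with the Fourier representation of $\omega\cdot\partial_\vartheta a$ in \eqref{definizione omega dot partial vphi}. Indeed, if $a(\vartheta) = \sum_{\ell\in\Z^\infty_*}\widehat a(\ell)e^{\ii\ell\cdot\vartheta}$, then $\omega\cdot\partial_\vartheta a(\vartheta) = \sum_{\ell\in\Z^\infty_*}\ii(\omega\cdot\ell)\widehat a(\ell)e^{\ii\ell\cdot\vartheta}$, whose zero-th Fourier coefficient is $\ii(\omega\cdot 0)\widehat a(0) = 0$; applying \eqref{proprieta media infinita} (valid since $\omega\cdot\partial_\vartheta a \in {\mathcal H}(\T^\infty_{\sigma'}, \C)$ for $\sigma' < \sigma$ by Lemma \ref{lemma om dot partial vphi}) gives that its infinite-dimensional average vanishes. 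Note the Diophantine hypothesis on $\omega$ is not actually needed for this step; it only guarantees that $\widetilde\alpha$ is well defined, which is relevant for the applications below.

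For \eqref{media cambio variabile}, I would write $\int_{\T^\infty}(1 + \omega\cdot\partial_\vartheta\widetilde\alpha(\vartheta))\, d\vartheta = \int_{\T^\infty} 1\, d\vartheta + \int_{\T^\infty}\omega\cdot\partial_\vartheta\widetilde\alpha(\vartheta)\, d\vartheta = 1 + 0 = 1$, using the first claim applied to $a = \widetilde\alpha$ (which lies in ${\mathcal H}(\T^\infty_\sigma)$ by Lemma \ref{punto fisso diffeo del toro 1} / Proposition \ref{lemma diffeo inverso}) and the trivial fact that the average of the constant function $1$ is $1$.

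For \eqref{derivata totale cambio variabile}, the key observation is that the integrand is an exact angular derivative. Precisely, for $\ell\in\Z^\infty_*\setminus\{0\}$, I would check that
\[
\omega\cdot\partial_\vartheta\Big( e^{\ii\ell\cdot(\vartheta + \omega\widetilde\alpha(\vartheta))} \Big) = \ii\big(\omega\cdot\ell\big)\Big(1 + \omega\cdot\partial_\vartheta(\ell\cdot\omega\widetilde\alpha(\vartheta))/(\omega\cdot\ell)\Big)e^{\ii\ell\cdot(\vartheta+\omega\widetilde\alpha(\vartheta))},
\]
but a cleaner route is to recall that $\vartheta\mapsto\vartheta+\omega\widetilde\alpha(\vartheta)$ is the inverse diffeomorphism of $\f\mapsto\f+\omega\alpha(\f)$, so the substitution $\f = \vartheta+\omega\widetilde\alpha(\vartheta)$ has "Jacobian factor" exactly $1 + \omega\cdot\partial_\vartheta\widetilde\alpha(\vartheta)$ along the flow direction; more elementarily, using \eqref{formule omega alpha alpha tilde} one has $1 + \omega\cdot\partial_\vartheta\widetilde\alpha(\vartheta) = (1 + \omega\cdot\partial_\f\alpha(\f))^{-1}\big|_{\f=\vartheta+\omega\widetilde\alpha(\vartheta)}$, so that the integrand equals $\frac{1}{\ii(\omega\cdot\ell)}\,\omega\cdot\partial_\vartheta\big(e^{\ii\ell\cdot(\vartheta+\omega\widetilde\alpha(\vartheta))}\big)$ once one verifies the chain rule $\omega\cdot\partial_\vartheta\big[g(\vartheta+\omega\widetilde\alpha(\vartheta))\big] = (\omega\cdot\partial_\f g)(\vartheta+\omega\widetilde\alpha(\vartheta))\cdot(1+\omega\cdot\partial_\vartheta\widetilde\alpha(\vartheta))$ with $g(\f) = e^{\ii\ell\cdot\f}$. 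Since $\omega\cdot\ell\neq 0$ (as $\omega\in\Dc$ and $\ell\neq 0$), this identifies the integrand as $\omega\cdot\partial_\vartheta$ of a function in ${\mathcal H}(\T^\infty_{\sigma'})$ (by Lemma \ref{composizione funzioni analitiche T infty}, since $\|\omega\widetilde\alpha\|_{\sigma'}$ is small), and the first claim finishes the argument.

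The main obstacle, I expect, is the rigorous justification of the chain rule for $\omega\cdot\partial_\vartheta$ applied to a composition $g(\vartheta+\omega\widetilde\alpha(\vartheta))$ in the infinite-dimensional analytic setting — one must ensure the formal manipulation of totally convergent Fourier series is licit, i.e. that term-by-term differentiation and the composition estimate of Lemma \ref{composizione funzioni analitiche T infty} combine correctly, and that $e^{\ii\ell\cdot(\vartheta+\omega\widetilde\alpha(\vartheta))}$ genuinely lies in the analytic class ${\mathcal H}(\T^\infty_{\sigma'})$ with a well-defined $\omega$-derivative. Once one observes that $\vartheta\mapsto\vartheta+\omega\widetilde\alpha(\vartheta)$ is a diffeomorphism of $\T^\infty_{\sigma'}$ whose generator (pushed forward) is $\omega\cdot\partial_\vartheta$ up to the scalar factor $\rho(\vartheta) := 1+\omega\cdot\partial_\vartheta\widetilde\alpha(\vartheta)$ — which is exactly the content of \eqref{formule omega alpha alpha tilde} — the identity \eqref{derivata totale cambio variabile} becomes the statement that the average of a total $\omega$-derivative vanishes, which is the first claim.
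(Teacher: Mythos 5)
Your proof is correct and follows essentially the same route as the paper: the key observation that the integrand in \eqref{derivata totale cambio variabile} equals $\frac{1}{\ii\omega\cdot\ell}\,\omega\cdot\partial_\vartheta\big(e^{\ii\ell\cdot(\vartheta+\omega\widetilde\alpha(\vartheta))}\big)$ is exactly the one the authors use, and \eqref{media cambio variabile} is then an immediate consequence of the first claim in both treatments. The only small difference is in the first claim, where you cite \eqref{proprieta media infinita} directly (after noting $\omega\cdot\partial_\vartheta a\in\mathcal H(\T^\infty_{\sigma'})$ for $\sigma'<\sigma$ so that the averaging lemma applies), whereas the paper re-runs the finite-truncation argument from scratch; your shortcut is cleaner and uses nothing new.
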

\begin{proof}
	Let $N \in \N$. Then 
	We split 
	$$
	\omega \cdot \partial_\vartheta a(\vartheta) = \sum_{\ell \neq 0\,,\,|\ell|_\zia \leq N} \ii \omega \cdot \ell \widehat a(\ell) e^{\ii \ell \cdot \vartheta} + \sum_{|\ell|_\zia > N} \ii \omega \cdot \ell \widehat a(\ell) e^{\ii \ell \cdot \vartheta}\,. 
	$$
	Since $a$ is an analytic function, the second term on the right hand side goes to zero as $N \to + \infty$. Moreover 
	$$
	\int_{\T^N} \sum_{\ell \neq 0\,,\,|\ell|_\zia \leq N} \ii \omega \cdot \ell \widehat a(\ell) e^{\ii \ell \cdot \vartheta}\, d \vartheta = \sum_{\ell \neq 0\,,\,|\ell|_\zia \leq N} \ii \omega \cdot \ell \widehat a(\ell)  \int_{\T^N}e^{\ii \ell \cdot \vartheta} \, d \vartheta = 0\,.
	$$
	Therefore one deduces that 
	$$
	\int_{\T^\infty} a (\vartheta)\, d \vartheta = \lim_{N \to \infty} \frac{1}{(2 \pi)^N}\int_{\T^N} \sum_{|\ell|_\zia > N} \ii \omega \cdot \ell \widehat a(\ell) e^{\ii \ell \cdot \vartheta}\, d \vartheta = 0\,.
	$$
	The equality \eqref{media cambio variabile} follows immediately by the previous claim. The equality \eqref{derivata totale cambio variabile}, follows observing that since $\ell \in \Z^\infty_* \setminus \{ 0 \}$ and $\omega$ is Diophantine, one has that 
	$$
	\begin{aligned}
	e^{\ii \ell \cdot \big(\vartheta + \omega \widetilde \alpha(\vartheta) \big)} \Big( 1 + \omega \cdot \partial_\vartheta \widetilde \alpha(\vartheta)\Big) & = \frac{1}{\ii \omega \cdot \ell} \omega \cdot \partial_\vartheta \Big( e^{\ii \ell \cdot \big(\vartheta + \omega \widetilde \alpha(\vartheta) \big)}\Big)
	\end{aligned}
	$$
	hence the result follows by applying the first claim. 
\end{proof}

\begin{lemma}[\bf Moser composition lemma]\label{moser type lemma}
	Let $f : B_R(0) \to \C$ be an holomorphic function defined in a neighbourhood of the origin $B_R(0)$ of the complex plane $\C$. Then the composition operator $F (u) := f \circ u$ is a well defined non linear map ${\mathcal H}(\T^\infty_\sigma) \to {\mathcal H}(\T^\infty_\sigma)$.  
\end{lemma}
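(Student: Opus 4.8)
\textbf{Proof proposal for Lemma \ref{moser type lemma} (Moser composition).}

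The plan is to reduce the statement to a power-series estimate: since $f$ is holomorphic on $B_R(0)$, write $f(z) = \sum_{n \geq 0} c_n z^n$ with $\sum_{n \geq 0} |c_n| r^n < \infty$ for every $r < R$, and we must show that for $u \in {\mathcal H}(\T^\infty_\sigma)$ with $\| u \|_\sigma$ small enough (say $\| u \|_\sigma \leq r < R$) the series $\sum_{n \geq 0} c_n u^n$ converges in ${\mathcal H}(\T^\infty_\sigma)$. First I would invoke Lemma \ref{Lemma prodotto}: the space ${\mathcal H}(\T^\infty_\sigma)$ (taking $X = \C$, which is trivially a Banach algebra) is closed under products with $\| u v \|_\sigma \leq \| u \|_\sigma \| v \|_\sigma$, hence by induction $u^n \in {\mathcal H}(\T^\infty_\sigma)$ with $\| u^n \|_\sigma \leq \| u \|_\sigma^n$. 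Therefore
$$
\sum_{n \geq 0} |c_n|\, \| u^n \|_\sigma \leq \sum_{n \geq 0} |c_n|\, \| u \|_\sigma^n < \infty
$$
whenever $\| u \|_\sigma < R$, so the partial sums form a Cauchy sequence in the Banach space ${\mathcal H}(\T^\infty_\sigma)$, and $F(u) := \sum_{n \geq 0} c_n u^n$ is a well-defined element of ${\mathcal H}(\T^\infty_\sigma)$.

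The only remaining point is to check that this $F(u)$, defined as a norm-convergent series, actually coincides pointwise with $f \circ u$. For this I would use Lemma \ref{embedding L infty}: convergence in $\| \cdot \|_\sigma$ implies uniform convergence on $\T^\infty_\sigma$, so for each fixed $\f \in \T^\infty_\sigma$ one has $\sum_{n=0}^{N} c_n u(\f)^n \to F(u)(\f)$; on the other hand, since $|u(\f)| \leq \| u \|_\sigma < R$ by Lemma \ref{embedding L infty}, the scalar Taylor series of $f$ at $0$ converges at $u(\f)$ to $f(u(\f))$. Hence $F(u)(\f) = f(u(\f))$ for all $\f$, i.e. $F(u) = f \circ u$.

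I do not expect a genuine obstacle here: the statement is essentially a packaging of the submultiplicativity of the $\| \cdot \|_\sigma$ norm (Lemma \ref{Lemma prodotto}) together with the sup-bound (Lemma \ref{embedding L infty}). The one place that requires a word of care is the implicit smallness hypothesis — the lemma as stated speaks of a map ${\mathcal H}(\T^\infty_\sigma) \to {\mathcal H}(\T^\infty_\sigma)$, but it is only defined on the ball $\{ \| u \|_\sigma < R \}$ (or, if one wants a map defined near a specific point, on a neighbourhood of a constant function whose value lies in $B_R(0)$); in the applications in Section \ref{sezione riduzione ordine} this is exactly how it is used (e.g. $f(u) = (1+u)^{-1/2}$ with $|u| \leq \tfrac12$). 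If a Lipschitz-in-$\omega$ version is wanted, the same argument applies verbatim with $\| \cdot \|_\sigma$ replaced by $\| \cdot \|_\sigma^{\Lipg}$, using Lemma \ref{stime lip} to carry the product estimate of Lemma \ref{Lemma prodotto} over to the weighted norm, and writing $F(u(\omega_1)) - F(u(\omega_2)) = \sum_{n \geq 1} c_n (u(\omega_1)^n - u(\omega_2)^n)$ together with the telescoping identity $a^n - b^n = \sum_{k=0}^{n-1} a^k (a-b) b^{n-1-k}$.
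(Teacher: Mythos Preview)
Your proof is correct and follows essentially the same route as the paper's: expand $f$ as a power series, use the submultiplicativity from Lemma~\ref{Lemma prodotto} to bound $\|u^n\|_\sigma \le \|u\|_\sigma^n$, and conclude absolute convergence of $\sum c_n u^n$ in ${\mathcal H}(\T^\infty_\sigma)$. You are in fact more careful than the paper, which stops at the norm estimate without verifying the pointwise identity $F(u)=f\circ u$ or flagging the implicit restriction $\|u\|_\sigma<R$; your additional remarks on both points (and on the Lipschitz variant) are correct and appropriate.
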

\begin{proof}
	Clearly, since $f(z) = \sum_{n \geq 0} a_n z^n$ is analytic, for any $z \in \C$, $|z| < R$, the series $\sum_{n \geq 0} |a_n| |z|^n$ is convergent. Moreover, Let $u \in {\mathcal H}(\T^\infty_\sigma)$ with $\| u \|_{\sigma} \leq r < R$. By applying Lemma \ref{Lemma prodotto}, for any $n \geq 1$, $u^n \in {\mathcal H}(\T^\infty_\sigma)$ and $\| u^n \|_{\sigma} \leq \| u \|_{\sigma}^n \leq r^n$. The series $\sum_{n \geq 0} a_n u^n$ is absolutely convergent w.r. to $\| \cdot \|_{\sigma}$. Indeed , one has
	$$
	\Big\| \sum_{n \geq 0} a_n u^n \Big\|_{\sigma} \leq \sum_{n \geq 0} |a_n| \| u \|^n_{\sigma} \leq \sum_{n \geq 0} |a_n| r^n < \infty\,.
	$$ 
	this implies that $F(u) = \sum_{n \geq 0} a_n u^n$ belongs to the space ${\mathcal H}(\T^\infty_\sigma)$ and the proof of the lemma is concluded. 
\end{proof}
\section{some estimates of constants}\label{appendiceB}

\begin{lemma}\label{bound per stima di Cauchy}

\noindent
$(i)$ Let $\mu_1,\mu_2 > 0$. Then 
$$
\sup_{\begin{subarray}{c}
\ell \in \Z^\infty_* \\
|\ell|_\zia < \infty
\end{subarray}} \prod_{i}( 1+  \langle i \rangle^{\mu_1} | \ell_i|^{\mu_2}) e^{- \rho |\ell|_\zia} \leq  {\rm exp}\Big(\frac{\tau}{\rho^{\frac{1}{\zia}}} \ln\Big(\frac{\tau}{\rho} \Big) \Big)
$$
for some constant $\tau = \tau(\zia, \mu_1, \mu_2) > 0$. 

\noindent
$(ii)$ Let $\rho > 0$. Then $\sum_{\ell \in \Z^\infty_*} e^{- \rho |\ell|_\zia}  \lesssim {\rm exp}\Big(\frac{\tau}{\rho^{\frac{1}{\zia}}} \ln\Big(\frac{\tau}{\rho} \Big) \Big)
$, for some constant $\tau = \tau(\zia) > 0$. 
\end{lemma}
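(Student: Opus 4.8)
\textbf{Proof proposal for Lemma \ref{bound per stima di Cauchy}.}

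The plan is to reduce both statements to a single scalar optimization: controlling, for each fixed index $i$, the contribution $\log(1+\langle i\rangle^{\mu_1}|\ell_i|^{\mu_2}) - \rho\, i^\zia |\ell_i|$ coming from the $i$-th slot, and then summing over $i$. First I would observe that since $|\ell|_\zia = \sum_i i^\zia|\ell_i|$, the product in $(i)$ factorizes as $\prod_i \big(1+\langle i\rangle^{\mu_1}|\ell_i|^{\mu_2}\big)e^{-\rho\, i^\zia|\ell_i|}$, so it suffices to bound each factor and then bound the product by bounding the sum of the logarithms. For a single factor, write $t=|\ell_i|\in\N_0$; then $\log\big(1+\langle i\rangle^{\mu_1}t^{\mu_2}\big) \le \mu_1\log\langle i\rangle + \mu_2\log(1+t) + C$, while the exponential gives $-\rho\, i^\zia t$. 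Splitting into the regime $t=0$ (factor $=1$, contributes nothing) and $t\ge 1$, I would use the elementary inequality $\mu_2\log(1+t) \le \tfrac{\rho}{2} i^\zia t + C(\mu_2)\log\!\big(\tfrac{1}{\rho\, i^\zia}\big)_+ $ (valid because $\sup_{t\ge 1}\big(\mu_2\log(1+t)-\tfrac{\rho}{2}i^\zia t\big)$ is of order $\log(1/(\rho i^\zia))$ when $\rho i^\zia$ is small and is $\le C$ otherwise), leaving the remaining $-\tfrac{\rho}{2}i^\zia t$ to absorb the $\mu_1\log\langle i\rangle$ term whenever $t\ge 1$: indeed $-\tfrac{\rho}{2}i^\zia + \mu_1\log\langle i\rangle \le 0$ as soon as $i \ge i_0(\rho)$ with $i_0(\rho)\sim (\rho^{-1}\log(1/\rho))^{1/\zia}$, and for the finitely many $i< i_0(\rho)$ one simply bounds $\mu_1\log\langle i\rangle \le \mu_1\log\langle i_0(\rho)\rangle$.

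Carrying out the bookkeeping, the total of the logarithms over all $i$ with $\ell_i\ne 0$ is at most $C\sum_{i<i_0(\rho)}\big(\log\langle i_0(\rho)\rangle + \log(1/(\rho i^\zia))_+\big) + C$, and since there are at most $i_0(\rho)\sim \rho^{-1/\zia}\log(1/\rho)^{1/\zia}$ such indices, each contributing a logarithmic factor $\lesssim \log(\tau/\rho)$, the sum is $\lesssim \rho^{-1/\zia}\log(1/\rho)^{1+1/\zia}$, which is absorbed into $\tfrac{\tau}{\rho^{1/\zia}}\log(\tfrac{\tau}{\rho})$ after enlarging $\tau$. Exponentiating gives the claimed bound in $(i)$. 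For $(ii)$, I would deduce it from $(i)$ by a standard trick: choose $\mu_1,\mu_2$ (say $\mu_1=\mu_2=4$) and note
$$
\sum_{\ell\in\Z^\infty_*} e^{-\rho|\ell|_\zia} = \sum_{\ell\in\Z^\infty_*} \Big(\prod_i (1+\langle i\rangle^{4}|\ell_i|^4)\,e^{-\tfrac{\rho}{2}|\ell|_\zia}\Big)\cdot \prod_i (1+\langle i\rangle^{4}|\ell_i|^4)^{-1} e^{-\tfrac{\rho}{2}|\ell|_\zia},
$$
bound the first bracket by the supremum from $(i)$ with $\rho$ replaced by $\rho/2$, and bound the remaining sum $\sum_\ell \prod_i(1+\langle i\rangle^4|\ell_i|^4)^{-1}$ by a convergent infinite product $\prod_i\big(1+2\sum_{t\ge1}(1+\langle i\rangle^4 t^4)^{-1}\big)\le \prod_i(1+C\langle i\rangle^{-4}) <\infty$, which is a fixed constant independent of $\rho$ (here I would invoke, or re-prove, the summability used in Lemma \ref{stima serie piccoli divisori}).

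The main obstacle is the sharpness of the exponent $1/\zia$ in $(i)$: one must not be wasteful when converting the product over infinitely many indices into the single scalar bound, because a crude estimate that treats the $\log\langle i\rangle$ terms separately from the exponential decay would produce a divergent sum over $i$. The key point is that the geometric decay $e^{-\rho i^\zia t}$ in the index $i$ kills the polynomial-in-$i$ weight $\langle i\rangle^{\mu_1}$ for all $i$ beyond a threshold $i_0(\rho)=O(\rho^{-1/\zia}\mathrm{polylog}(1/\rho))$, so only $O(\rho^{-1/\zia}\mathrm{polylog})$ indices ever contribute, and each contributes only a logarithmic amount — this is exactly the mechanism that produces the $\tfrac{\tau}{\rho^{1/\zia}}\log(\tfrac{\tau}{\rho})$ shape. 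Making the choice of $i_0(\rho)$ and the per-index logarithmic bound quantitatively consistent so that the final sum telescopes into the advertised form is the delicate step; everything else is routine one-variable calculus (optimizing $x\mapsto \mu_2\log(1+x) - cx$) and geometric-series summation.
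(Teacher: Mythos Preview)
Your strategy coincides with the paper's: factorize over $i$, set $f_i(t)=\ln(1+\langle i\rangle^{\mu_1}t^{\mu_2})-\rho\langle i\rangle^\zia t$, bound $\max_{t\ge 1}f_i$, locate a threshold $i_0(\rho)$ beyond which this maximum is nonpositive, and sum the logarithmic contributions for $i<i_0(\rho)$. For (ii) your reduction to (i) via a convergent reference product $\prod_i(1+\langle i\rangle^{a}|\ell_i|^{a})^{-1}$ is exactly the paper's argument (the paper takes $a=2$ rather than $4$, which is immaterial).

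There is one genuine gap. Your final absorption step is wrong: the quantity you honestly obtain, $\rho^{-1/\zia}(\ln(1/\rho))^{1+1/\zia}$, is \emph{not} bounded by $\tau\rho^{-1/\zia}\ln(\tau/\rho)$ for any fixed $\tau$, since $\ln(\tau/\rho)=\ln\tau+\ln(1/\rho)\sim\ln(1/\rho)$ while you carry an extra factor $(\ln(1/\rho))^{1/\zia}$. In fact your threshold $i_0(\rho)\sim(\rho^{-1}\ln(1/\rho))^{1/\zia}$ is the correct one and the extra logarithmic power is genuinely present in the supremum: with $\zia=1$, $\mu_1=\mu_2=2$ and $\ell_i=1$ for $1\le i\le N$, $N\sim\rho^{-1}\ln(1/\rho)$, the exponent is $\sim\rho^{-1}(\ln(1/\rho))^2$. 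The paper's own proof has the same lacuna --- its threshold $(\tfrac{2C_0}{\zia\rho})^{1/\zia}$ does not actually force $C_0\ln\langle i\rangle-\tfrac{\rho}{2}\langle i\rangle^\zia\le 0$ for small $\rho$ --- so the stated form of the lemma is slightly optimistic. Since every application in the paper only needs the exponent to grow polynomially in $1/\rho$ (so as to be dominated by $\chi^n$ in the KAM iteration with $\rho=\sigma_n-\sigma_{n+1}\sim n^{-2}$), the clean fix is simply to record the bound you actually prove rather than claim an absorption that does not hold.
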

\begin{proof}

\noindent
{\sc Proof of $(i)$.}  We remark that the left hand side can be expressed as
 \[
\exp\Big(\sum_i  - \rho \langle i \rangle^\zia |\ell_i|+ \ln\big( 1 + \langle i \rangle^{\mu_1} |\ell_i|^{\mu_2} \big) \Big)=:  {\rm exp}(\sum_i f_i(|\ell_i|))\]
where 
\begin{equation}\label{barbetta 2a}
 f_i(x) := \ln \big( 1 + \langle i \rangle^{\mu_1} x^{\mu_2} \big) - \rho \langle i \rangle^\zia x\,.  
\end{equation}
then the result follows essentially word by word from Lemma 7.2 of \cite{BMP1:2018} where it is proved in the special case $\mu_1= 2+q$, $\mu_2= 2$.
Since $f_i(0) = 0$, it is enough to estimate ${\rm max }_{x \geq 1} f_i(x)$, in order to bound the series $\sum_{i} f_i(|\ell_i|)$. 
One has that for any $x \geq 1$
$$
\begin{aligned}
f_i(x) & \leq \ln(2 \langle i \rangle^{\mu_1} x^{\mu_2}) - \rho \langle i \rangle^\zia x \leq C_0(\mu_1) \ln(\langle i \rangle) + \mu_2 \ln(x) - \rho \langle i \rangle^\zia x =: g_i(x) 
\end{aligned}
$$
for some constant $C_0(\mu_1) > 0$ and hence 
\[{\rm max}_{x \geq 1} f_i \leq {\rm max}_{x \geq 1} g_i\,. 
\]
Using that $\ln(x) \leq x$ for any $x \geq 1$, one has that 
$$
g_i(x) \leq C_0(\mu_1) \ln(\langle i \rangle) - \frac{\rho \langle i \rangle^\zia}{2} x, \quad \forall i \geq \Big( \frac{2 \mu_2}{\rho} \Big)^{\frac{1}{\zia}}\,. 
$$
Furthermore, 
$$
C_0(\mu_1) \ln(\langle i \rangle) - \frac{\rho \langle i \rangle^\zia}{2} x \leq 0, \quad \forall i \geq \Big( \frac{2 C_0(\mu_1)}{\zia \rho} \Big)^{\frac{1}{\zia}}
$$
and hence 
$$
g_i(x) \leq 0, \quad \forall i \geq \Big(\frac{C_1}{\rho}\Big)^{\frac{1}{\zia}}, \quad C_1 \equiv C_1(\mu_1, \mu_2, \zia) := {\rm max}\{ \frac{2 C_0(\mu_1)}{\zia}\,, 2 \mu_2 \}\,. 
$$
If $i \leq \frac{C_1}{\rho^{\frac{1}{\zia}}}$, a direct calculation shows that the maximum of $g_i$ is achieved at the point 
$
x_i = \frac{\mu_2}{\rho \langle i \rangle^\zia}
$
and 
\begin{equation*}
g_i(x_i)  = C_0 \ln(\langle i \rangle) + \mu_2 \ln\Big( \frac{\mu_2}{\rho \langle i \rangle^\zia} \Big) - \mu_2  \leq  \frac{C_0}{\zia}
 \ln\Big( \frac{C_1}{\rho} \Big) + \mu_2 \ln\Big( \frac{\mu_2}{\rho} \Big) \leq C_2 \ln\Big( \frac{C_2}{\rho} \Big)
\end{equation*}
for some constant $C_2 = C_2(\zia, \mu_1, \mu_2) > 0$ large enough. Thus
$$
\sum_{i} f_i(x) \leq \sum_{i \leq {C_1}{\rho^{-\frac{1}{\zia}}}} g_i(x) \leq \frac{C_1}{\rho^{\frac{1}{\zia}}} C_2 \ln\Big( \frac{C_2}{\rho} \Big)
$$
%
%
%
%
%
%

\noindent
{\sc Proof of $(ii)$.}
By Lemma 4.1 of \cite{BMP1:2018}, one has 
$$
\sum_{\ell \in \Z^\infty_*} \prod_{i}\frac{1}{1 + \langle i \rangle^{2} |\ell_i|^2} \leq C_0 < \infty\,.
$$
Therefore 
$$
\begin{aligned}
\sum_{\ell \in \Z^\infty_*} e^{- \rho |\ell|_\zia} & = \sum_{\ell \in \Z^\infty_*} \prod_{i}\frac{1}{1 + \langle i \rangle^{2} |\ell_i|^2} e^{- \rho \langle i \rangle^\zia |\ell_i|}\big( 1 + \langle i \rangle^{2} |\ell_i|^2 \big) \\
& \lesssim  \sup_{\ell \in \Z^\infty_*} \Big(\prod_i e^{- \rho \langle i \rangle^\zia |\ell_i|}\big( 1 + \langle i \rangle^{2} |\ell_i|^2 \big) \Big) \,. 
\end{aligned}
$$
The claimed statement then follows by item $(i)$ with $\mu_1=\mu_2=2$. 

\end{proof}
\begin{lemma}[Small divisor estimate]\label{small divisor con taglio}
Let $\mu_1, \mu_2 \geq  1$. We have the following estimate for $N\gg 1$
\begin{equation}
\label{taglio}
\sup_{\ell\in \Z^\infty_*:\; |\ell|_{\zia}<N} \prod_{i}(1+\jap{i}^{\mu_1} |\ell_i|^{\mu_2}) \le (1+N)^{C(\zia,\mu_1, \mu_2)N^{\frac{1}{1+\zia}}}
\end{equation}
for some constant $C(\zia, \mu_1, \mu_2)>0$.
\end{lemma}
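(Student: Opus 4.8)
\textbf{Proof proposal for Lemma \ref{small divisor con taglio}.}

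The plan is to reduce the supremum over $\ell$ to a counting-and-optimization problem. First I would take the logarithm: writing $S(\ell) := \sum_{i\in\N}\ln\bigl(1+\langle i\rangle^{\mu_1}|\ell_i|^{\mu_2}\bigr)$, the claim \eqref{taglio} is equivalent to the bound $S(\ell)\le C(\zia,\mu_1,\mu_2)\,N^{\frac1{1+\zia}}\ln(1+N)$ for all $\ell$ with $|\ell|_\zia<N$. Since only finitely many $\ell_i$ are nonzero, and for those $\ln(1+\langle i\rangle^{\mu_1}|\ell_i|^{\mu_2})\le \ln\bigl(2\langle i\rangle^{\mu_1}|\ell_i|^{\mu_2}\bigr)\le C_0(\mu_1,\mu_2)\bigl(\ln\langle i\rangle+\ln(1+|\ell_i|)\bigr)$, it suffices to bound, under the constraint $\sum_i \langle i\rangle^\zia|\ell_i|<N$, the two sums $\sum_{i:\ell_i\ne0}\ln\langle i\rangle$ and $\sum_i \ln(1+|\ell_i|)$.

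The key observation is that the constraint $\sum_i\langle i\rangle^\zia|\ell_i|<N$ forces both the support of $\ell$ to lie in a bounded range of indices and the total "mass" $\sum_i|\ell_i|$ to be controlled. Concretely: if $\ell_i\ne0$ then $\langle i\rangle^\zia\le N$, so $i\le N^{1/\zia}$; moreover the number of nonzero components is at most the number of indices $i$ with $\langle i\rangle^\zia<N$, which is $\lesssim N^{1/\zia}$ — but this alone is too weak. The sharper route is to note that $\sum_i|\ell_i|\le \sum_i\langle i\rangle^\zia|\ell_i|<N$ gives $\sum_i\ln(1+|\ell_i|)\le \#\{i:\ell_i\ne0\}\cdot\ln(1+N)$, and independently one optimizes $\#\{i:\ell_i\ne 0\}$ against the constraint: to make the support as large as possible one should put $|\ell_i|=1$ on the indices $i$ with smallest weight $\langle i\rangle^\zia$, i.e. $i=1,\dots,K$, and then $\sum_{i=1}^K\langle i\rangle^\zia\sim K^{1+\zia}<N$ gives $K\lesssim N^{1/(1+\zia)}$. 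This is precisely where the exponent $\frac1{1+\zia}$ comes from. For the term $\sum_{i:\ell_i\ne0}\ln\langle i\rangle$ one uses $\ln\langle i\rangle\le \frac1\zia\ln N$ on the support together with the same bound $\#\{i:\ell_i\ne0\}\lesssim N^{1/(1+\zia)}$, giving $\sum_{i:\ell_i\ne 0}\ln\langle i\rangle\lesssim_\zia N^{1/(1+\zia)}\ln N$. Combining, $S(\ell)\lesssim_{\zia,\mu_1,\mu_2} N^{1/(1+\zia)}\ln(1+N)$, which exponentiates to \eqref{taglio}.

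I expect the main obstacle to be making the optimization of $\#\{i:\ell_i\ne 0\}$ rigorous in general, rather than just in the extremal "all $|\ell_i|=1$" configuration: one must argue that no configuration respecting $\sum_i\langle i\rangle^\zia|\ell_i|<N$ can have support of size exceeding $C\,N^{1/(1+\zia)}$, which follows because the $m$ indices of smallest weight already contribute at least $\sum_{i=1}^m\langle i\rangle^\zia\ge c\,m^{1+\zia}$ to the sum, so $m^{1+\zia}\le C N$. Once this combinatorial lemma is in place, the rest is the routine logarithmic bookkeeping sketched above; one may also simply cite the argument of Lemma 7.2 of \cite{BMP1:2018}, of which this is the truncated ($|\ell|_\zia<N$) variant with the supremum replaced by an explicit finite bound.
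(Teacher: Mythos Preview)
Your proposal is correct and follows essentially the same approach as the paper: both arguments take logarithms, prove the key combinatorial fact that the number of nonzero components of $\ell$ is $\lesssim N^{1/(1+\zia)}$ via $\sum_{j=1}^k \langle i_j\rangle^\zia \ge \sum_{j=1}^k j^\zia \simeq k^{1+\zia}$, and then bound each nonzero term by a constant times $\ln(1+N)$. The only cosmetic difference is that the paper bounds $\langle i\rangle^{\mu_1}|\ell_i|^{\mu_2}\le (\langle i\rangle|\ell_i|)^\mu$ directly (with a case split on $\zia\ge 1$ versus $\zia<1$ to control $\langle i\rangle|\ell_i|$ by a power of $N$), whereas you split $\ln\langle i\rangle$ and $\ln(1+|\ell_i|)$ separately, which avoids that case distinction.
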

\begin{proof}
For $\ell$  fixed, let us denote by $k$ the number of non-zero components of $\ell$. We claim that $k \lesssim_\zia N^{\frac{1}{1+\zia}}$, indeed
\[
N\ge |\ell|_\zia= \sum_{j=1}^k \jap{i_j}^\zia|\ell_{i_j}|\ge   \sum_{j=1}^k \jap{i_j}^\zia \ge  \sum_{j=1}^k j^\zia \simeq_\zia k^{1+\zia}
\]
and the claim follows. 
Now if $\zia\ge 1$ we have $\jap{i} |\ell_i| \le \jap{i}^\eta |\ell_i| \le N$ and setting $\mu := {\rm max}\{ \mu_1, \mu_2 \}$
\[
\sup_{\ell\in \Z^\infty_*:\; |\ell|_{\zia}\le N} \sum_{i}\ln (1+\jap{i}^{\mu_1} |\ell_i|^{\mu_2})  \lesssim_\zia N^{\frac{1}{1+\zia}} \ln (1+N^\mu)\lesssim_{\zia,\mu} N^{\frac{1}{1+\zia}} \ln (1+N).
\]
otherwise if  $\zia\le 1$  one has $\jap{i} |\ell_i| \le  (\jap{i}^\zia |\ell_i| )^{\frac{1}{\zia}} \le N^{\frac{1}{\zia}} $ and again
\[
\sup_{\ell\in \Z^\infty_*:\; |\ell|_{\zia}\le N} \sum_{i}\ln (1+\jap{i}^{\mu_1} |\ell_i|^{\mu_2})  \lesssim_\zia N^{\frac{1}{1+\zia}} \ln (1+N^{\frac{\mu}{\zia}})\lesssim_{\zia,\mu} N^{\frac{1}{1+\zia}} \ln (1+N).
\]

\end{proof}
\begin{lemma}\label{stima serie piccoli divisori}
For $\mu_1,\mu_2>3$, one  has that $\sum_{\ell \in \Z^\infty_*} \frac{\|\ell \|_1^2}{{\divisor}(\ell)} < \infty$ where $\divisor (\ell) := \prod_{i\in\N }(1 + \langle i \rangle^{\mu_1} |\ell_i|^{\mu_2})$. 
\end{lemma}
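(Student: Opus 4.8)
\textbf{Proof plan for Lemma \ref{stima serie piccoli divisori}.} The plan is to reduce the convergence of $\sum_{\ell \in \Z^\infty_*} \|\ell\|_1^2 / {\divisor}(\ell)$ to the convergence of a product of one-dimensional sums, exploiting the fact that ${\divisor}(\ell)$ is multiplicative in the components $\ell_i$. First I would bound $\|\ell\|_1^2 = \big(\sum_i |\ell_i|\big)^2 \le \big(\sum_i \langle i\rangle |\ell_i|\big)^2$ and, using the elementary inequality $\big(\sum_i a_i\big)^2 \le \big(\sum_i a_i^2\big)\big(\sum_i 1_{a_i\neq 0}\big) \le \big(\sum_i a_i^2\big)^2$ applied with $a_i = \langle i\rangle|\ell_i|$ when $a_i\geq 1$ (note each nonzero term is $\geq 1$), or more crudely $\|\ell\|_1^2 \lesssim \sum_{i,i'} \langle i\rangle\langle i'\rangle |\ell_i||\ell_{i'}| \le \sum_i \langle i\rangle^2 \ell_i^2 \prod_{i'} (1 + \langle i'\rangle^{\mu_1}|\ell_{i'}|^{\mu_2})^{1/2}$ after absorbing cross terms. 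A cleaner route: write $\|\ell\|_1^2 \le \prod_i (1+\langle i\rangle^2 \ell_i^2)$, which holds because the left side is $\le \big(\sum_i \langle i\rangle|\ell_i|\big)^2$ and for nonnegative integers $n_i$ with $\sum n_i^2$ finitely supported one has $\big(\sum_i \langle i\rangle n_i\big)^2 \le \prod_i (1+\langle i\rangle^2 n_i^2)$ (each factor $\geq 1$, and the product dominates the square of the sum by expanding and comparing term by term). Since $\mu_1,\mu_2 > 3$, we have $\mu_1 \geq 3+\epsilon$, $\mu_2\geq 3+\epsilon$ for some $\epsilon>0$, so $1+\langle i\rangle^2\ell_i^2 \leq (1+\langle i\rangle^{\mu_1}|\ell_i|^{\mu_2})^{2/\mu}$ for an appropriate exponent, giving $\|\ell\|_1^2/{\divisor}(\ell) \le \prod_i (1+\langle i\rangle^{\mu_1}|\ell_i|^{\mu_2})^{-\delta}$ with $\delta = 1 - 2/\min(\mu_1,\mu_2) > 1/3 > 0$.

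Next I would factorize the sum. Since $\ell_i \neq 0$ only for finitely many $i$, and the summand has now been bounded by $\prod_i w_i(\ell_i)$ with $w_i(n) := (1+\langle i\rangle^{\mu_1}|n|^{\mu_2})^{-\delta}$ and $w_i(0)=1$, the standard argument (used e.g.\ in Lemma 4.1 of \cite{BMP1:2018}) gives
\begin{equation}\label{fattorizzazione divisori}
\sum_{\ell \in \Z^\infty_*} \prod_{i\in\N} w_i(\ell_i) \le \prod_{i\in \N} \Big( 1 + \sum_{n \neq 0} (1+\langle i\rangle^{\mu_1} |n|^{\mu_2})^{-\delta} \Big) = \prod_{i\in\N} \big( 1 + S_i \big)\,,
\end{equation}
where $S_i := \sum_{n\geq 1} 2 (1+\langle i\rangle^{\mu_1} n^{\mu_2})^{-\delta}$. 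The infinite product \eqref{fattorizzazione divisori} converges if and only if $\sum_i S_i < \infty$. To estimate $S_i$, I would split the sum over $n$: since $\mu_2 \delta > 1$ (because $\delta > 1/\mu_2 \iff \delta \mu_2 > 1$, which we arranged), we have $S_i \le 2\sum_{n\geq 1} \langle i\rangle^{-\mu_1\delta} n^{-\mu_2\delta} = C \langle i\rangle^{-\mu_1 \delta}$ with $C = 2\zeta(\mu_2\delta) < \infty$. Then $\sum_i S_i \le C \sum_i \langle i\rangle^{-\mu_1\delta}$, which is finite precisely when $\mu_1 \delta > 1$. The condition $\mu_1,\mu_2 > 3$ is exactly what guarantees $\delta$ can be chosen so that both $\mu_1\delta>1$ and $\mu_2\delta>1$: indeed with $\mu := \min(\mu_1,\mu_2) > 3$ one takes $\delta = 1 - 2/\mu \in (1/3, 1)$, and then $\mu_j \delta \ge \mu\delta = \mu - 2 > 1$.

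The main obstacle I anticipate is getting the exponent bookkeeping right: one must verify that the crude bound $\|\ell\|_1^2 \le \prod_i(1+\langle i\rangle^2\ell_i^2)$ combined with $(1+\langle i\rangle^2\ell_i^2) \le (1+\langle i\rangle^{\mu_1}|\ell_i|^{\mu_2})^{2/\mu}$ really leaves a surplus power $\delta = 1 - 2/\mu$ of ${\divisor}(\ell)$ in the denominator with $\delta\mu > 1$ in \emph{both} indices, which forces the hypothesis $\mu_1, \mu_2 > 3$ rather than merely $> 1$. The inequality $(1+ab)\le (1+a^{p}b^{q})^{1/r}$ for suitable $p,q\ge 1$ and $r\ge 1$ needs a short justification (e.g.\ $1+ab \le 1 + a^p b^q$ when $a,b\ge 1$ since $a^p\ge a$, $b^q\ge b$; and the case $a<1$ or $b<1$ i.e.\ $\ell_i=0$ is trivial), but this is routine. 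Once \eqref{fattorizzazione divisori} and the estimate $S_i \lesssim \langle i\rangle^{-\mu_1\delta}$ are in place, taking logarithms in the product and using $\ln(1+S_i)\le S_i$ concludes the proof immediately.
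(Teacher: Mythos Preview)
Your overall strategy—bound $\|\ell\|_1^2$ by a product and then factorize the sum into one-dimensional pieces—is sound and actually more streamlined than the paper's approach (which organizes the sum by the smallest and largest nonzero indices $s(\ell), S(\ell)$ and estimates the inner products by hand). Your Step~1 is correct: the inequality $\|\ell\|_1^2 \le \prod_i(1+\langle i\rangle^2\ell_i^2)$ does hold, and your ``expand and compare'' sketch can be made rigorous by noting that the diagonal terms $\ell_i^2$ are dominated by the singleton terms $\langle i\rangle^2\ell_i^2$ in the product expansion, while each cross term $2|\ell_i||\ell_j|$ is dominated by the pair term $\langle i\rangle^2\langle j\rangle^2\ell_i^2\ell_j^2$ (using $\langle i\rangle\langle j\rangle \ge 2$ for $i\neq j$ and $|\ell_i||\ell_j|\ge 1$).

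There is, however, a genuine error in Step~2. The pointwise inequality $(1+\langle i\rangle^2\ell_i^2)\le (1+\langle i\rangle^{\mu_1}|\ell_i|^{\mu_2})^{2/\mu}$ is \emph{false}: take $i=1$, $\ell_i=1$, any $\mu_1=\mu_2=\mu>2$, then the left side is $2$ while the right side is $2^{2/\mu}<2$. Your justification ``$1+ab\le 1+a^pb^q$'' only shows the bound with exponent $1$, not $2/\mu<1$, so it does not support the claim. This gap is easily repaired in two ways. Either carry the missing factor: for $\ell_i\neq 0$ one has $1+\langle i\rangle^2\ell_i^2 \le 2(\langle i\rangle|\ell_i|)^2 \le 2(1+\langle i\rangle^{\mu_1}|\ell_i|^{\mu_2})^{2/\mu}$, giving an extra $2^{k(\ell)}$ where $k(\ell)$ is the number of nonzero components, which is harmlessly absorbed into the one-dimensional sums (replace your $w_i(n)$ by $2(1+\langle i\rangle^{\mu_1}|n|^{\mu_2})^{-\delta}$ for $n\neq 0$). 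Or, more cleanly, skip the exponent $\delta$ entirely and bound the ratio directly: set $w_i(n):=(1+\langle i\rangle^2 n^2)/(1+\langle i\rangle^{\mu_1}|n|^{\mu_2})$, so $w_i(0)=1$ and $w_i(n)\le 2\langle i\rangle^{2-\mu_1}|n|^{2-\mu_2}$ for $n\neq 0$; then $\sum_{n\neq 0}w_i(n)\le 4\zeta(\mu_2-2)\langle i\rangle^{2-\mu_1}$, and the infinite product converges since $\mu_1,\mu_2>3$. With either fix your argument goes through and yields the result with less bookkeeping than the paper's $s,S$-decomposition.
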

\begin{proof}
The proof is very similar to the one of the measure estimate Lemma 4.1 of \cite{BMP1:2018}.
	For $\ell\in \Z^\infty_*$ 
let $s=s(\ell)$ be the smallest index $i$ such that $\ell_i \neq 0$ and $S=S(\ell)$ be the biggest. 
Recalling
		\begin{equation*}
	\prod_{n\in\N}\frac{1}{(1+|\ell_n|^{\mu_1} n^{{\mu_2}})}
	= \prod_{s(\ell)\le n\le S(\ell)}\frac{1}{(1+|\ell_n|^{\mu_1} |n|^{{\mu_2}})} 
	\end{equation*}	
	Now
	\begin{align}
	&\sum_{\ell \in \Z^\infty_*} \frac{\|\ell \|_1^2}{{\divisor}(\ell)}\le \sum_{s\in\N} \sum_{\substack{ \ell: s(\ell)= S(\ell)=s}}\frac{|\ell_s|^2}{(1+|\ell_s|^{\mu_1} |s|^{{\mu_2}})}\label{mamma2}\\
	+& \sum_{S\in\N}\sum_{0<s<S }(S-s)^2\sum_{\substack{ \ell: s(\ell)=s,\\ S(\ell)=S}}\prod_{s\le n\le S }\frac{\jap{\ell_n}^2}{(1+|\ell_n|^{\mu_1} |n|^{{\mu_2}})} .\label{mamma3}
	\end{align}
	Now  for $\mu_1> 3$
	\[
	\sum_{h=1}^\infty  \frac{h^2}{(1+h^{\mu_1} |n|^{{\mu_2}})} \le \sum_{h=1}^\infty  \frac{1}{h^{\mu_1-2} |n|^{{\mu_2}}} \le  \frac{c(\mu_1)}{|n|^{{\mu_2}}}
	\]
	hence
	\[
	\sum_{h\in\Z} \frac{\jap{h}^2}{(1+|h|^{\mu_1} |n|^{{\mu_2}+\fp})} \le  1+ \frac{c(\mu_1)}{|n|^{{\mu_2}}}.
	\]
Consequently for $\mu_2>1$,
	\eqref{mamma2} is bounded by  
	\[
	c(\mu_1)\sum_{s>0} |s|^{-{\mu_2}} \le c_3(\mu_1,\mu_2)\g.
	\]
	\\
	Regarding  \eqref{mamma3}, 
	 we have
	\begin{align*}
&\sum_{\substack{ \ell: s(\ell)=s,\\ S(\ell)=S}}\prod_{s\le n\le S }\frac{\jap{\ell_n}^2}{(1+|\ell_n|^{\mu_1} |n|^{{\mu_2}})} \le \frac{c(\mu_1)^2}{|s|^{{\mu_2}}|S|^{{\mu_2}}}\prod_{s<n<S}(1+ \frac{c(\mu_1)}{|n|^{{\mu_2}}})= \frac{c(\mu_1)^2}{|s|^{{\mu_2}}|S|^{{\mu_2}}}\exp\Big(\sum_{s<n<S}\ln(1+ \frac{c(\mu_1)}{|n|^{{\mu_2}}})\Big)\le \\
&\frac{c(\mu_1)^2}{|s|^{{\mu_2}}|S|^{{\mu_2}}}\exp\Big(\sum_{n\in\N} \frac{c(\mu_1)}{|n|^{{\mu_2}}}\Big) \le \frac{c_1(\mu_1)}{|s|^{{\mu_2}}|S|^{{\mu_2}}}
	\end{align*}
	consequently \eqref{mamma3} is bounded by
	\[
	\sum_{S\in\N}\sum_{0<s<S }(S-s)^2 \frac{c_1(\mu_1)}{|s|^{{\mu_2}}|S|^{{\mu_2}}}<\infty
	\]
	provided that $\mu_2>3$.
\end{proof}

\bibliographystyle{alpha}
\bibliography{biblioAlmostPeriodic.bib}
\end{document}